\newcommand{\norm}[1]{\left\lVert#1\right\rVert}
\newcommand{\normdot}{{\|\!\cdot\!\|}}
\newcommand*\diff{\mathop{}\!\mathrm{d}}
\newcommand{\Leb}{\mathscr{L}}
\newcommand{\N}{\mathbb{N}}
\newcommand{\R}{\mathbb{R}}
\newcommand{\p}{\mathtt p} 
\newcommand{\de}{\ensuremath{\, \mathrm d}} 
\newcommand\restr[2]{{
  \left.\kern-\nulldelimiterspace 
  #1 
  \right|_{#2} 
  }}
\DeclarePairedDelimiter\abs{\lvert}{\rvert}%
\let\oldabs\abs
\def\abs{\@ifstar{\oldabs}{\oldabs*}}
\newcommand{\sfD}{\mathsf D}
\newcommand{\Dreg}{\sfD^+_{reg}}
\newcommand{\cd}{\mathsf{CD}}
\newcommand{\bm}{\mathsf{BM}}
\newcommand{\MCP}{\mathsf{MCP}}
\newcommand{\X}{\mathsf{X}}
\newcommand{\M}{\mathcal{M}}
\newcommand{\J}{\mathcal{J}}
\newcommand{\di}
{\mathsf d} 
\newcommand{\m}{\mathfrak m} 
\DeclareMathOperator{\Geo}{Geo}
\newcommand{\Prob}{\mathscr{P}}
\newcommand{\dis}{\mathcal D}
\newcommand{\sF}{sub-Fin\-sler }
\newcommand{\sr}{sub-Rie\-man\-nian }
\newcommand{\sinom}{\sin_\Omega}
\newcommand{\cosom}{\cos_\Omega}
\newcommand{\sinomp}{\sin_{\Omega^\circ}}
\newcommand{\cosomp}{\cos_{\Omega^\circ}}
\newcommand{\e}{{\rm e}}
\newcommand{\hei}{\mathbb{H}}
\renewcommand{\epsilon}{\varepsilon}
\renewcommand{\phi}{\varphi}
\title{\textbf{Measure contraction property and curvature-dimension condition on sub-Finsler Heisenberg groups}}
\date{\today}
\author{Samu\"el Borza\footnote{Scuola Internazionale Superiore di Studi Avanzati (SISSA, Trieste). \textit{E-mail}:  \href{mailto:sborza@sissa.it}{sborza@sissa.it}}, \ 
Mattia Magnabosco\footnote{Mathematical Institute, University of Oxford. \textit{E-mail}:  \href{mailto:mattia.magnabosco@maths.ox.ac.uk}{mattia.magnabosco@maths.ox.ac.uk}}, \ 
Tommaso Rossi\footnote{Laboratoire Jacques-Louis Lions, Sorbonne Universit\'e. \textit{E-mail}: \href{mailto:tommaso.rossi@inria.fr}{tommaso.rossi@inria.fr}} \ 
and Kenshiro Tashiro\footnote{Department of Mathematics, Tohoku University. \textit{E-mail}:  \href{mailto:kenshiro.tashiro.b2@tohoku.ac.jp}{kenshiro.tashiro.b2@tohoku.ac.jp}}}
\newtheoremstyle{remark}
        {10pt}
        {10pt}
        {}
        {}
        {\itshape}
        {.}
        {.4em}
        {}
\newtheoremstyle{proof}
        {10pt}
        {10pt}
        {}
        {}
        {\itshape}
        {.}
        {.4em}
        {}
\newtheoremstyle{definition}
        {10pt}
        {10pt}
        {}
        {}
        {\bfseries}
        {.}
        {.4em}
        {}
\newtheoremstyle{theorem}
        {10pt}
        {10pt}
        {\slshape}
        {}
        {\bfseries}
        {.}
        {.4em}
        {}
\theoremstyle{theorem}
\newtheorem{theorem}{Theorem}[section]
\newtheorem{prop}[theorem]{Proposition}
\newtheorem{corollary}[theorem]{Corollary}
\newtheorem{lemma}[theorem]{Lemma}
\newtheorem{conj}[theorem]{Conjecture}
\theoremstyle{definition}
\newtheorem{definition}[theorem]{Definition}
\theoremstyle{remark}
\newtheorem{remark}[theorem]{Remark}
\newtheorem{example}[theorem]{Example}
\newtheorem{notation}[theorem]{Notation}
\theoremstyle{proof}
\newtheorem*{pro}{Proof}
 {\popQED\end{pro}}
\renewcommand\xleftrightarrow[2][]{%
  \ext@arrow 9999{\longleftrightarrowfill@}{#1}{#2}}
\newcommand\longleftrightarrowfill@{%
  \arrowfill@\leftarrow\relbar\rightarrow}
\begin{document}

\maketitle

\begin{abstract}
    In this paper, we investigate the validity of synthetic curvature-dimension bounds in the \sF Heisenberg group, equipped with a positive smooth measure. Firstly, we study the measure contraction property, in short $\MCP$, proving that its validity depends on the norm generating the \sF structure. Indeed, we show that, if it is neither $C^1$ nor strongly convex, the associated Heisenberg group does not satisfy $\MCP(K,N)$ for any pair of parameters $K\in \R$ and $N\in (1,\infty)$. 
    On the contrary, we prove that the \sF Heisenberg group, equipped with a $C^{1,1}$ and strongly convex norm, and with the Lebesgue measure, satisfies $\MCP(0,N)$ for some $N\in (1,\infty)$. Additionally, we provide a lower bound on the optimal dimensional parameter, and we also study the case of $C^1$ and strongly convex norms. Secondly, we address the validity of the curvature-dimension condition pioneered by Sturm and Lott-Villani, in short $\cd(K,N)$. We show that the \sF Heisenberg group, equipped with a $C^1$ and strongly convex norm, and with a positive smooth measure, does not satisfy the $\cd(K,N)$ condition for any pair of parameters $K\in \R$ and $N\in (1,\infty)$. Combining this result with our findings regarding the measure contraction property, we conclude the failure of the $\cd$ condition in the Heisenberg group for every \sF structure.
\end{abstract}

\tableofcontents

\section{Introduction}

In the seminal contributions \cite{sturm2006,sturm2006-1,lott--villani2009}, Sturm and Lott--Villani introduced the celebrated curvature-dimension condition, in short $\cd(K,N)$. This was the first successful notion of synthetic Ricci curvature bound in the non-smooth setting of metric measure spaces. The crucial observation was that a weighted Riemannian manifold has generalized Ricci curvature bounded below by $K\in\R$ and dimension bounded above by $N\in (1,+\infty]$ if and only if the so-called $N$-\emph{R\'enyi entropy functional} is $(K,N)$-convex along Wasserstein geodesics, cf.\ \cite{CEMCS,vonRenesseSt}. While the former is a differential notion, the latter relies only on the underlying metric structure and on a reference measure, thus it can be promoted to a \emph{definition} of synthetic curvature-dimension bound for metric measure spaces. Subsequently, it was showed in \cite{MR2546027} that the same relation between the $\cd$ condition and the (flag) Ricci curvature holds in smooth Finsler manifolds. 

While in Riemannian and Finsler geometry, the curvature-dimension condition \`a la Lott--Sturm--Villani is consistent with a lower bound on the Ricci curvature, the same does not hold in \sr and \sF geometry. The latter are a broad generalization of Riemannian and Finsler geometry, where a smoothly varying \emph{norm} is defined on a subset of preferred directions, called distribution. In a series of contributions \cite{juillet2010,juillet2020,magnaboscorossi,rizzistefani}, it has been showed that the $\cd$ condition fails in \sr geometry. More precisely, given a (truly) \sr manifold $M$, equipped with a smooth positive measure $\m$, the metric measure space $(M,\di_{SR},\m)$ does not satisfy the $\cd(K,N)$ condition, for any choice of parameters $K\in\R$ and $N\in (1,+\infty]$. Note that, the positivity of the measure is not merely a technical assumption as there are examples of \sr manifolds, equipped with smooth measures vanishing at some points, that satisfy the $\cd$ condition, see \cite{panwei,Pan23}. In addition, recent developments \cite{borzatashiro,magnabosco2023failure} have shown that the $\cd$ condition fails also in a large class of \sF manifolds, corroborating the general belief that it should fail in \sF geometry. 

These results show that the classical $\cd$ condition is not suitable to study curvature in the setting of sub-Riemannian and \sF manifolds. Motivated by this, Barilari, Mondino and Rizzi \cite{barilari2022unified} introduced and studied a generalized version of the $\cd$ condition, defined for gauged metric measure spaces. They showed in particular that any compact fat \sr manifold, with a suitable gauge, satisfies their version of the $\cd$ condition. 

A different classical curvature-dimension bound, which instead holds in some examples of sub-Rieman\-nian and \sF manifolds, is the so-called \emph{measure contraction property}, or $\MCP(K,N)$ for brevity. This condition was introduced by Ohta in \cite{ohta2007} and it prescribes a control (depending on $K$ and $N$) on the contraction rate of volumes along geodesics. Although the $\MCP(K,N)$ condition is \emph{weaker} than the $\cd(K,N)$ condition, in the setting of (unweighted) Riemannian manifolds, it is equivalent to having Ricci curvature bounded below by $K\in\R$ and dimension equal to $N\in (1,+\infty)$. As mentioned, the $\MCP$ condition holds in a large class of \sr manifolds, including many Carnot groups, cf.\ \cite{MR3110060,MR3502622,MR3848070,MR4245620,borza2022,Nicolussi-Zhang2023}, and in the (sub-Finsler) $\ell^p$-Heisenberg group for $1<p\leq2$, as shown by the first- and fourth-named authors in \cite{borzatashiro}. In particular, for $p=2$, they recovered a result of \cite{juillet2010}, where it was shown that the \sr Heisenberg group, equipped with the Lebesgue measure $\Leb^3$, satisfies $\MCP(0,5)$ with sharp constants.


In this paper, we complete the study initiated in \cite{borzatashiro,magnabosco2023failure}, investigating whether the measure contraction property $\MCP(K,N)$ holds in the \sF Heisenberg group, equipped with a \emph{general} norm. The \sF Heisenberg group is a stratified real Lie group of dimension 3, where a norm $\normdot$ is defined on the first layer of its Lie algebra. In this setting, we prove both positive and negative results, showing that the validity of $\MCP(K,N)$ depends on the \emph{smoothness} and \emph{convexity} properties of the reference norm $\normdot$. This is particularly interesting if compared to what happens in $\R^n$ (with the Lebesgue measure $\Leb^n$), which satisfies $\cd(0,n)$, and thus $\MCP(0,n)$, if equipped with any distance induced by a norm, see the appendix of \cite{villani}.

Firstly, we study in detail the geometry of the \sF Heisenberg group, making use of convex trigonometry (cf. Subsections \ref{sec:convex_trigtrig} and \ref{sec:tre3}), in the way that it was introduced in \cite{lok}. Building upon \cite{Bereszynski,lok2}, we provide a precise and general description of geodesics and their uniqueness: one of our main results (Proposition \ref{prop:summary}) highlights that uniqueness of geodesics depends on how ``flat'' the polar of the unit ball of the reference norm is and extends \cite{breuillard--ledonne2013}. Secondly, this allows us to identify a \sF exponential map and study its regularity properties, in relation to the properties of $\normdot$. The main observation is that the smoothness of the norm $\normdot$ influences the positivity of the Jacobian of the exponential map, while the convexity of $\normdot$ determines its regularity. As the Jacobian of the exponential map controls the infinitesimal volume distortion of geodesics (cf. Subsection \ref{subsec:jacobian}), we use it to develop many useful criteria to address the validity of the $\MCP$ condition in the \sF Heisenberg group.
Our first result in this direction (Proposition \ref{prop:MCPgeneral}) is valid for any norm and provides a general necessary condition for $\MCP$ to hold, in term of the Jacobian of the exponential map. Our second result (Proposition \ref{prop:phoenix}) shows that, if the reference norm $\normdot$ is $C^1$ and strictly convex, the same condition is also sufficient. Finally, a refined analysis of the differentiability properties of the Jacobian allows us to provide further \emph{differential characterizations} for $\MCP$, if the reference norm $\normdot$ is $C^1$ and strongly convex, cf. Corollary \ref{prop:diffcharacMCP} and Proposition \ref{prop:MCPlimsup}.

Relying on these criteria, we deduce our main result showing the failure of the measure contraction property.

\begin{theorem}\label{thm:MAINnomcp}
     Let $\hei$ be the \sF Heisenberg group, equipped with a norm $\normdot$ which is not $C^1$ or not strongly convex, and let $\m$ be a positive smooth measure on $\hei$. Then, the metric measure space $(\hei, \di, \m)$ does not satisfy the measure contraction property $\MCP(K,N)$ for every $K\in \R$ and $N\in (1,\infty)$.
\end{theorem}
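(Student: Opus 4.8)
The plan is to reduce $\MCP(K,N)$ to a pointwise property of the Jacobian $J$ of the \sF exponential map and then to violate that property by reading off a degeneracy of $J$ from the convex trigonometry of $\normdot$. First I would invoke the general necessary condition of Proposition~\ref{prop:MCPgeneral}, which holds for an arbitrary norm: if $(\hei,\di,\m)$ satisfies $\MCP(K,N)$, then along almost every geodesic emanating from a fixed point the Jacobian must be strictly positive on the open interval and must satisfy a quantitative lower bound of the type $J(t)\ge c(K,N,t)\,J(1)$ with $c(K,N,t)>0$ for $t\in(0,1)$. Because $\m$ is a positive smooth measure, we may write $\m=e^{-V}\Leb^3$ with $V$ smooth, and along a geodesic collapsing to a point the density ratio tends to a strictly positive constant; hence the smooth weight alters $J$ only by a factor locally bounded between positive constants and can neither restore positivity nor change the exponent governing the vanishing of $J$. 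It therefore suffices to refute the necessary condition for the purely geometric Jacobian, and the conclusion follows simultaneously for every positive smooth $\m$.

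I would then split according to the two defects, using the description of geodesics and of the exponential map obtained via convex trigonometry in Subsections~\ref{sec:convex_trigtrig}, \ref{sec:tre3} and \ref{subsec:jacobian}. Recall that the horizontal projections of geodesics are traced by the trigonometric functions $\cosom,\sinom$ of the unit ball $\Omega$, that the co-vectors move along $\partial\Omega^\circ$ according to $\cosomp,\sinomp$, and that $J$ is assembled from these functions together with the area functional producing the vertical displacement. Suppose first that $\normdot$ is \emph{not} $C^1$. Then $\Omega$ has a corner and, by duality, the polar $\Omega^\circ$ contains a nondegenerate boundary segment; by Proposition~\ref{prop:summary} this flat face is exactly what generates non-uniqueness of geodesics. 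Concretely, as the co-vector traverses the flat face the duality map is pinned at the corner of $\Omega$, so the initial velocity does not depend, to first order, on the co-vector parameter along the segment. This produces a degenerate direction for the differential of the exponential map and forces $J$ to vanish identically on a set of co-vectors of positive measure. Positivity of $J$ therefore fails on a positive-measure set, which already contradicts the necessary condition and excludes $\MCP(K,N)$ for every $K$ and every finite $N$.

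Suppose instead that $\normdot$ is \emph{not} strongly convex. Then the generalized curvature of $\partial\Omega$ vanishes somewhere---either along a flat segment of $\Omega$ or at an isolated point of zero curvature, as for the $\ell^p$ ball with $p>2$---so that the Hessian of the dual norm degenerates at the corresponding co-vector. Here $J$ remains positive, but the degeneracy spoils its \emph{regularity}: tracking the geodesics whose velocity points in the degenerate direction and whose endpoints tend to the vertical axis, I would estimate $J(t)$ as the geodesic shrinks and show that its rate of vanishing is incompatible with the lower bound $J(t)\ge c(K,N,t)\,J(1)$ for any finite $N$. In the companion regime of $C^1$ and strongly convex norms this behavior is exactly what is captured by the refined criteria of Corollary~\ref{prop:diffcharacMCP} and Proposition~\ref{prop:MCPlimsup}; losing strong convexity is precisely what makes the associated limit fail to be finite, so no admissible $N$ survives.

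The main obstacle, in both cases, is to convert the qualitative defect of the norm into the quantitative statement about $J$ demanded by Proposition~\ref{prop:MCPgeneral}: namely, to show in the non-$C^1$ case that $J$ genuinely vanishes on a set of positive measure (rather than on a negligible set of boundary directions), and in the non-strongly-convex case to pin down the exact rate at which $J$ collapses along the distinguished geodesics approaching the cut locus. Both require a careful asymptotic analysis of $\cosom,\sinom,\cosomp,\sinomp$ near the degenerate angle and of the area functional. Once these estimates are secured, combining them with the necessary condition and the density-independence argument of the first step yields the failure of $\MCP(K,N)$ for all $K\in\R$ and $N\in(1,\infty)$, as claimed.
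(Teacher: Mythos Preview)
Your overall architecture---reduce $\MCP(K,N)$ to a Jacobian inequality (Proposition~\ref{prop:MCPgeneral}) and then violate it from a defect of the norm---is correct and matches the paper's. The reduction from a general smooth $\m$ to $\Leb^3$ is also fine, though the paper does it via dilations and pmGH-stability (Proposition~\ref{prop:Knoncipiace}) rather than a local density argument. For the non-$C^1$ case your mechanism is right: the flat face of $\partial\Omega^\circ$ pins the velocity, so for small $t$ the map $G_t$ collapses a three-dimensional set of initial data into a one-dimensional set (equivalently $\J_R(\phi,\omega t)=0$ while $\J_R(\phi,\omega)>0$ by Proposition~\ref{lem:reducedJ}). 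The paper phrases this as a branching argument (Theorem~\ref{thm:noCDnonC1}) rather than via the Jacobian criterion, but the content is the same.

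The genuine gap is in the ``not strongly convex'' half. You treat it as a single case with a single mechanism (``rate of vanishing of $J$ as the geodesic shrinks toward the vertical axis''), but this does not match what actually happens, and the paper needs two separate arguments. When $\normdot$ is $C^1$ but \emph{not strictly convex}, $\partial\Omega$ has a flat segment and the map $\psi\mapsto(\cosom(\psi_\circ),\sinom(\psi_\circ))$ has a \emph{jump discontinuity} (Lemma~\ref{lem:limits}); the reduced Jacobian $\J_R(\bar\phi,\cdot)$ is then discontinuous at a finite parameter $2\pi_{\Omega^\circ}-\bar\phi$, and one violates \eqref{eq:equivalent_reducedJ} by choosing $\omega t$ and $\omega$ on opposite sides of the jump with $t$ close to $1$ (Theorem~\ref{thm:noMCPnonstrctly}). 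When $\normdot$ is $C^1$ and strictly but not strongly convex, $\J_R$ is continuous and positive---so no vanishing occurs---but $C_\circ$ fails to be Lipschitz at some angle, which forces $\omega\mapsto\log\J_R(\phi,\omega)$ to fail Lipschitz at $\omega=\pi_{\Omega^\circ}$; this contradicts the differential form \eqref{eq:MCPsuffC1strictly} of the necessary condition (Theorem~\ref{thm:noMCPC1strictlynotstrongly}). Neither argument is about $J$ tending to zero, and neither takes place near the cut locus or along geodesics approaching the vertical axis.

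Finally, you invoke Corollary~\ref{prop:diffcharacMCP} and Proposition~\ref{prop:MCPlimsup} to handle the loss of strong convexity ``by contrapositive'', but those statements are proved \emph{under} the hypothesis that $\normdot$ is $C^1$ and strongly convex (they need $C_\circ$ Lipschitz to even make sense of $N(\phi,\omega)$), so they are unavailable here. The correct tool in the non-strongly-convex regime is the bare necessary condition of Proposition~\ref{prop:MCPgeneral} (or, for strictly convex norms, Proposition~\ref{prop:phoenix}), applied at carefully chosen finite parameters where the discontinuity or non-Lipschitz behaviour of $\J_R$ is visible.
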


Remarkably, Theorem \ref{thm:MAINnomcp} shows that the properties of the reference norm $\normdot$ do not only influence the optimal constants $K$ and $N$ for which the measure contraction $\MCP(K,N)$ holds, but the validity of the condition itself. 
The two main properties of $\normdot$ causing the failure of $\MCP$ are the non-smoothness and the lack of (strong) convexity and, notably, each of these two properties is reflected in a \emph{specific singular behavior} of geodesics that we can exploit to show the failure of $\MCP$, with two different strategies.

Indeed, if the reference norm $\normdot$ is not $C^1$, we show that geodesics can branch, even though they are unique, cf. Theorem \ref{thm:noCDnonC1}. This behavior, which was already observed in \cite{magnabosco2023failure} for strictly convex not $C^1$ norms, has independent interest, as examples of branching spaces usually occur when geodesics are not unique. Instead, when the reference norm $\normdot$ is not strongly convex, we take advantage of the loss of regularity of the Jacobian of the exponential map. The proof of this case is divided into two parts: the first part (Theorem \ref{thm:noMCPnonstrctly}) addresses the scenario where the norm is not even strictly convex, while the second part (Theorem \ref{thm:noMCPC1strictlynotstrongly}) closes the remaining gap. More in details, when the norm is not strictly convex we exploit discontinuity points of the Jacobian and contradict the necessary condition of Proposition \ref{prop:MCPgeneral}. While, if the norm is strictly but not strongly convex, we contradict the equivalent characterization of Proposition \ref{prop:phoenix}, exploiting the fact that the Jacobian, while continuous, fails to be Lipschitz.


\medskip 

Subsequently, we investigate the validity of the measure contraction property in the \sF Heisenberg group, when the reference norm is \emph{at least} $C^1$ and strongly convex. The central object of our analysis is the \emph{angle correspondence map} $C_\circ:\R\to\R$, which represents the duality map from $(\R^2,\normdot_*)$ to $(\R^2,\normdot)$, interpreted at the level of generalized angles, see Subsection \ref{sec:convex_trigtrig} for a precise definition. According to classical convex geometry,
the duality map corresponds to the differential of the dual norm and,
therefore, the angle correspondence map $C_\circ$ has its regularity tied to the regularity of the norm $\normdot$, see Proposition \ref{prop:regularityCo}. 
In our analysis, the angle correspondence appears in the asymptotic expansion of the Jacobian of the exponential map. Hence, when the norm $\normdot$ is $C^{1,1}$ and strongly convex, we shall observe that the map $C_\circ$ is bi-Lipschitz and we obtain an important positive result (Corollary \ref{cor:C11andstrongly}).


\begin{theorem}\label{thm:MAINmcp}
     Let $\hei$ be the \sF Heisenberg group, equipped with a $C^{1,1}$ and strongly convex norm $\normdot$, and let $\Leb^3$ be the Lebesgue measure on $\hei$. Then, the metric measure space $(\hei, \di, \Leb^3)$ satisfies the measure contraction property $\MCP(0,N)$ for some $N\in (1,\infty)$.
\end{theorem}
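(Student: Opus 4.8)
The plan is to verify the measure contraction property by controlling the Jacobian of the sub-Finsler exponential map and invoking the sufficient criterion established earlier for $C^1$ and strongly convex norms (Proposition \ref{prop:phoenix}). Since we assume $\normdot$ is $C^{1,1}$ and strongly convex --- which in particular makes it $C^1$ and strictly convex --- the characterization of Proposition \ref{prop:phoenix} applies, and it suffices to check the Jacobian condition it isolates. The reference measure being $\Leb^3$ removes any weight-induced complications, so the distortion of volume along geodesics is governed purely by the geometric Jacobian.

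The key analytic input is the regularity of the angle correspondence map $C_\circ$. By Proposition \ref{prop:regularityCo}, the regularity of $C_\circ$ is tied to that of the norm; when $\normdot$ is $C^{1,1}$ and strongly convex, both the norm and its dual have uniformly bounded and bounded-away-from-zero second-order behavior in the weak sense, so the duality map is bi-Lipschitz. Concretely, strong convexity of $\normdot$ gives $C^{1,1}$ regularity of the dual norm $\normdot_*$, hence a Lipschitz duality map, while the $C^{1,1}$ hypothesis on $\normdot$ yields strong convexity of $\normdot_*$, hence a Lipschitz inverse; combining the two makes $C_\circ$ bi-Lipschitz, which is precisely the content of Corollary \ref{cor:C11andstrongly}. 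I would first state and use this bi-Lipschitz property as the structural fact driving the estimate.

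Next I would insert the bi-Lipschitz bounds on $C_\circ$ into the asymptotic expansion of the Jacobian of the exponential map developed in Subsection \ref{subsec:jacobian}. The goal is to extract, uniformly over the relevant range of initial covectors and times, a two-sided polynomial-type control on the Jacobian that matches the profile required by $\MCP(0,N)$ for a suitable finite $N$. Because we only claim $\MCP(0,N)$ (curvature parameter $K=0$), the target inequality simplifies: it requires the contraction rate to be bounded below by a power $s^{N-1}$ of the interpolation parameter (up to constants absorbed into $N$), rather than by the sharp trigonometric distortion coefficients needed for $K\neq 0$. The bi-Lipschitz control on $C_\circ$ converts the possibly delicate oscillatory contributions in the Jacobian into quantities comparable to those of the Riemannian Heisenberg case, where $\MCP(0,5)$ is known; thus one expects a finite, though not necessarily sharp, $N$ to work.

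The main obstacle I anticipate is the uniformity of the Jacobian estimate near the singular locus of the exponential map --- that is, near the cut and conjugate points and near the abnormal/degenerate directions where $C_\circ$ and its derivatives could a priori blow up. Strong convexity and $C^{1,1}$ regularity are exactly what prevent degeneration: strong convexity keeps the dual unit ball from flattening (so the Jacobian does not vanish to infinite order), while $C^{1,1}$ prevents corners in the dual that would make the Jacobian discontinuous, in the spirit of how the failure cases of Theorem \ref{thm:MAINnomcp} arise. I would therefore devote the bulk of the argument to showing that, under these two hypotheses, the lower bound on the Jacobian degrades at most polynomially as one approaches the singular set, so that the $\MCP$ profile with a large but finite $N$ is respected. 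Once this uniform lower bound is secured, feeding it into Proposition \ref{prop:phoenix} (or the differential characterization of Corollary \ref{prop:diffcharacMCP}) yields $\MCP(0,N)$ and completes the proof.
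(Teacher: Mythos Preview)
Your strategy is correct and matches the paper's: the $C^{1,1}$ and strongly convex hypotheses make $C_\circ$ bi-Lipschitz (via Proposition~\ref{prop:regularityCo} applied to both $\normdot$ and $\normdot_*$, using Proposition~\ref{prop:propunderduality}), hence $1/A \le C_\circ' \le A$ almost everywhere, and this controls the Jacobian well enough to invoke one of the $\MCP$ criteria.

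Two corrections are needed. First, your citation of Corollary~\ref{cor:C11andstrongly} for the bi-Lipschitz fact is circular, since that corollary \emph{is} Theorem~\ref{thm:MAINmcp}; the bi-Lipschitz property is derived in its proof, from the propositions you already name. Second, and more substantively, the ``asymptotic expansion'' you need is not in Subsection~\ref{subsec:jacobian}: the explicit formula \eqref{eq:reduced_Jac} there expresses $\J_R$ through trigonometric functions composed with $C_\circ$, and it is not at all transparent how to insert bounds on $C_\circ'$ into it. The decisive technical tool is the integral Taylor expansion of Proposition~\ref{prop:formulaJRdJRint}, which rewrites $\J_R(\phi,\omega)$ and $\partial_\omega\J_R(\phi,\omega)$ as iterated integrals whose integrands are products of factors of $C_\circ'$, plus remainders controlled by \eqref{eq:boundremainder}--\eqref{eq:boundDremainder}. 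Once one has two-sided bounds on $C_\circ'$, these formulas yield a uniform bound on $N(\phi,\omega)=1+\omega\,\partial_\omega\J_R/\J_R$, and the conclusion follows from Proposition~\ref{prop:MCPlimsup}; the paper packages this as Theorem~\ref{thm:MCPinstrongly} specialized to $\alpha\equiv 2$. Without identifying this integral representation, the step ``insert bi-Lipschitz bounds into the expansion'' remains a gap.
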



\noindent Note that, as the Heisenberg group admits a one-parameter family of dilations (cf.\ \cite{MR3283670}), by the scaling property of $\MCP$, it is sufficient to investigate the validity of the measure contraction property with $K=0$.

Combining Theorem \ref{thm:MAINnomcp} and Theorem \ref{thm:MAINmcp}, we obtain an almost complete picture describing the validity of $\MCP$ in the \sF Heisenberg group. However, these results do not cover the case when the norm $\normdot$ is strongly convex and $C^1$ but not $C^{1,1}$. In Section \ref{sec:afterstrongly}, we identify different behaviors for this intermediate case, showing that $\MCP$ can both hold or fail. The findings of Section \ref{sec:afterstrongly} supporting the validity of $\MCP$ can be summarized in the following theorem (cf.\ Theorem \ref{thm:MCPinstrongly}).

\begin{theorem}
\label{thm:stronglysmallo?}
    Let $\hei$ be the \sF Heisenberg group, equipped with a $C^{1}$ and strongly convex norm $\normdot$ and with the Lebesgue measure $\Leb^3$.
    If the derivative of $C_\circ$ is asymptotically and uniformly equivalent to a fractional polynomial at its zero points, see \eqref{eq:fractionalcondition} and \eqref{eq:uniformABalpha},
    then $(\hei,\di,\Leb^3)$ satisfies the $\MCP(0,N)$ for some $N\in(1,\infty)$.
\end{theorem}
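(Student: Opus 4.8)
The plan is to verify the differential characterization of $\MCP(0,N)$ obtained in Proposition \ref{prop:MCPlimsup}, which, for a $C^1$ and strongly convex norm, reduces the validity of the measure contraction property to a single pointwise power-type inequality for the Jacobian of the \sF exponential map. Since $\hei$ carries a one-parameter family of dilations and the case $K=0$ is scale invariant, it suffices to work with $K=0$, and the whole analysis can be carried out on the reduced set of parameters describing geodesics via convex trigonometry.

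First, I would recall the asymptotic expansion of the Jacobian established in Subsection \ref{subsec:jacobian}, in which the angle correspondence map $C_\circ$ and its derivative $C_\circ'$ appear as the leading data. Substituting this expansion into the criterion of Proposition \ref{prop:MCPlimsup}, the $\MCP(0,N)$ inequality becomes a condition involving $C_\circ$ and $C_\circ'$ along the geodesic flow. The key qualitative dichotomy is between the points where $C_\circ'$ stays bounded away from zero and the zero points of $C_\circ'$, since by Proposition \ref{prop:regularityCo} the failure of $C^{1,1}$ regularity of $\normdot$ is precisely reflected in the degeneration of $C_\circ'$ at those points; this is exactly the feature that obstructs the bi-Lipschitz argument used for $\mathsf{C}^{1,1}$ norms in Corollary \ref{cor:C11andstrongly}.

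Second, I would split the analysis accordingly. On the complement of a neighbourhood of the zero set of $C_\circ'$, the derivative is uniformly bounded below, and a compactness argument together with the strong convexity of $\normdot$ yields the required inequality with some finite exponent, exactly as in the $C^{1,1}$ case. Near a zero point $\theta_0$ of $C_\circ'$, I would instead insert the fractional polynomial asymptotics $C_\circ'(\theta)\sim A\,\normc{\theta-\theta_0}^{\alpha}$ provided by hypothesis \eqref{eq:fractionalcondition} and check that the resulting degeneration of the Jacobian is of pure power type, so that the $\MCP(0,N)$ inequality survives with an exponent $N$ that grows with the flatness parameter $\alpha$. The uniformity assumption \eqref{eq:uniformABalpha} guarantees that the constants $A$ and the exponents $\alpha$ do not deteriorate across the (possibly infinitely many) zero points, so that a single finite $N$ controls every critical region simultaneously; taking $N$ to be the maximum of the exponents produced by the two regimes then gives $\MCP(0,N)$.

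The main obstacle is the critical-region analysis: near the zeros of $C_\circ'$ the Jacobian degenerates, and the delicate point is to propagate the fractional power control on $C_\circ'$ to a clean power-law lower bound for the ratio of Jacobians along the contraction, with the correct dependence of $N$ on $\alpha$, while keeping all the constants uniform in $\theta_0$ ranging over the zero set. Everything else — the reduction to $K=0$, the expansion of the Jacobian, and the regular-region estimate — is comparatively routine given the criteria already established.
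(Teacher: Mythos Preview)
Your outline is directionally right—reduce to Proposition \ref{prop:MCPlimsup} and control $N(\phi,\omega)=1+\omega\,\partial_\omega\J_R/\J_R$ as $(\phi,\omega)\to(\phi^\star,0)$—but there is a genuine gap at exactly the place you flag as the ``main obstacle''. You write that you would ``recall the asymptotic expansion of the Jacobian established in Subsection~\ref{subsec:jacobian}'', but that subsection only contains the closed formula \eqref{eq:reduced_Jac} in terms of the generalized trigonometric functions and the factorization \eqref{eq:dJK}. From those expressions one \emph{cannot} read off how $\J_R(\phi,\omega)$ and $\omega\,\partial_\omega\J_R(\phi,\omega)$ degenerate when $C_\circ'$ vanishes to some order $\alpha-2$ at $\phi^\star$: the trigonometric functions hide the dependence on $C_\circ'$ in a nonlocal way, and the ratio $\omega\mathcal{K}/\J_R$ in \eqref{eq:dJK} is an indeterminate $0/0$ as $\omega\to0$.

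What the paper actually does is derive a new Taylor expansion with integral remainder (Proposition~\ref{prop:formulaJRdJRint}) that rewrites the leading part of $\J_R(\phi,\omega)$ as the double integral
\[
\frac{1}{2}\int_{\phi}^{\phi+\omega}\!\int_{\phi}^{\phi+\omega}(t-s)^2\,C_\circ'(t)\,C_\circ'(s)\,\diff s\,\diff t,
\]
with an explicitly controlled remainder \eqref{eq:boundremainder}, and similarly for $\partial_\omega\J_R$. Only after this formula can one substitute the two-sided bound \eqref{eq:fractionalcondition}: the leading integrals become the explicit homogeneous functions $P_{\alpha}(\phi-\phi^\star,\omega)$ of \eqref{def:Pphiomega}, and the rescaling $s=\omega/(\phi-\phi^\star)$ reduces the $\MCP$ criterion to boundedness of the one-variable ratio $s\,\partial_s P_{\alpha}(s)/P_{\alpha}(s)$, which is established directly from \eqref{eq:P(s)}--\eqref{eq:discussiondP/P}. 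The uniformity \eqref{eq:uniformABalpha} is then what makes this bound independent of $\phi^\star$. Your proposed regular/critical dichotomy is a reasonable organizational choice, but without the integral formula you have no mechanism to execute the critical-region step; that formula is the missing idea.
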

The proof of Theorem \ref{thm:stronglysmallo?} hinges upon the characterization of $\MCP$ given in Proposition \ref{prop:diffcharacMCP} and a novel Taylor expansion of the Jacobian of the exponential map with integral remainder (see Proposition \ref{prop:formulaJRdJRint}). This theorem generalizes the validity of $\MCP$ for $\ell^p$-norm with $p\in(1,2)$ that was obtained in \cite{borzatashiro}. We remark that we indeed obtain Theorem \ref{thm:MAINmcp} as a corollary of Theorem \ref{thm:stronglysmallo?}.

On the one hand, the sufficient conditions identified in Theorem \ref{thm:stronglysmallo?} are not necessary. Indeed, we provide an example (cf. Example \ref{example:monotone}) of \sF Heisenberg group, equipped with a $C^{1}$ and strongly convex norm $\normdot$, satisfying the $\MCP$ condition, but where $C_\circ'$ does not behave as a fractional polynomial. On the other hand, as displayed by Example \ref{example:oscilliation}, removing the uniformity assumption may lead to the failure of the $\MCP$ condition.  

\medskip

An additional relevant byproduct of our study is a lower bound on the so-called \emph{curvature exponent} $N_{\mathrm{curv}}$ of the \sF Heisenberg group. The curvature exponent is the minimal parameter $N$ such that $\hei$ (equipped with the Lebesgue measure $\Leb^3$) satisfies $\MCP(0,N)$. To get an estimate of the curvature exponent, we need a slightly stronger assumption on $C_\circ$ (cf. Theorem \ref{thm:MCPinstronglysmallo}).
\begin{theorem}\label{thm:curvature_exponent}
    Let $\hei$ be the \sF Heisenberg group, equipped with a $C^{1}$ and strongly convex norm $\normdot$ and with the Lebesgue measure $\Leb^3$.
    If the derivative of $C_\circ$ has maximal fractional order $s\geq 0$, see \eqref{eq:smallocondition},
    then $(\hei,\di,\Leb^3)$ satisfies the $\MCP(0,N)$ for some $N\in (1,\infty)$, and $N_{\mathrm{curv}} \geq 2s + 5$.
\end{theorem}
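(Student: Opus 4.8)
The plan is to separate the two conclusions. The validity of $\MCP(0,N)$ for some $N\in(1,\infty)$ requires no new work: the hypothesis that $C_\circ'$ has maximal fractional order $s$, namely condition \eqref{eq:smallocondition}, is a particular instance of the asymptotic behaviour assumed in Theorem~\ref{thm:stronglysmallo?}, and in particular it entails both the fractional-polynomial bound \eqref{eq:fractionalcondition} and the uniformity \eqref{eq:uniformABalpha}. Hence the first assertion is inherited directly from Theorem~\ref{thm:stronglysmallo?}, and the entire novelty lies in the \emph{lower bound} $N_{\mathrm{curv}}\geq 2s+5$. Since $\MCP(0,N)$ is monotone in $N$, this estimate is equivalent to showing that $\MCP(0,N)$ \emph{fails} whenever $N<2s+5$, which I would establish by contradicting the necessary condition of Proposition~\ref{prop:MCPgeneral} (or, for the sharp threshold, the $\limsup$ characterisation of Proposition~\ref{prop:MCPlimsup}) along a degenerating family of geodesics.

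The engine of the argument is the Taylor expansion with integral remainder of the Jacobian $J_R$ of the exponential map furnished by Proposition~\ref{prop:formulaJRdJRint}, where $C_\circ$ and $C_\circ'$ enter explicitly. First I would fix a zero $\theta_0$ of $C_\circ'$ realising the maximal fractional order, so that $C_\circ'(\theta)\sim A\,\abs{\theta-\theta_0}^{s}$ as $\theta\to\theta_0$, with the uniform control encoded in \eqref{eq:smallocondition}; geometrically $\theta_0$ marks the direction in which the polar body $\Omega^\circ$ is flattest. I would then select a one-parameter family of final covectors whose parameters simultaneously drive the time datum towards the cut time---so that the endpoint tends to the vertical axis, where volume contraction is most severe---and the angular datum towards the degenerate direction $\theta_0$, and substitute the local expansion of $C_\circ'$ into the integral-remainder formula.

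The crux is to read off the sharp exponent from this substitution. Propagating the order-$s$ vanishing of $C_\circ'$ through the expansion, the contraction rate of $J_R$ along the chosen family should be governed by a power of the rescaling parameter with exponent exactly $2s+5$: the cut-time degeneration alone reproduces the curvature exponent $5$ of the sub-Riemannian Heisenberg group \cite{juillet2010}, while the order-$s$ vanishing of $C_\circ'$ at $\theta_0$ supplies the additional $2s$, the factor $2$ reflecting that the angular degeneracy is transported to the vertical coordinate, whose homogeneous weight is $2$. Consequently the necessary condition of Proposition~\ref{prop:MCPgeneral} is violated for every $N<2s+5$, which yields $N_{\mathrm{curv}}\geq 2s+5$.

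I expect the main obstacle to be the sharp control of the integral remainder in Proposition~\ref{prop:formulaJRdJRint} near the degenerate direction: one must verify that the leading contribution is \emph{exactly} of order $2s+5$ and is neither cancelled nor absorbed by lower-order terms, and it is precisely here that the uniformity in \eqref{eq:smallocondition}, as opposed to a merely pointwise asymptotic for $C_\circ'$, becomes indispensable. A secondary, more technical difficulty is to guarantee that the degenerating family stays admissible, that is, minimising up to the cut time under consideration, so that $J_R$ genuinely records the volume contraction entering the definition of $\MCP$, and that this family realises the $\limsup$ in Proposition~\ref{prop:MCPlimsup}.
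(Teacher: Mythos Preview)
Your reduction of the first assertion to Theorem~\ref{thm:stronglysmallo?} is correct, and you have identified the right computational engine, namely Proposition~\ref{prop:formulaJRdJRint}, together with the right diagnostic, namely Proposition~\ref{prop:MCPgeneral} / Corollary~\ref{prop:diffcharacMCP}.

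The genuine gap is your choice of degenerating family. You propose to send the covector towards the cut time so that the endpoint approaches the vertical axis, and to read the exponent as ``$5$ from cut time $+$ $2s$ from angular degeneracy''. This does not work: as shown in the proof of Proposition~\ref{prop:MCPlimsup}, the function $N(\phi,\omega)=1+\omega\,\partial_\omega\J_R/\J_R$ stays \emph{bounded} (in fact $N(\phi,\omega)\leq 1$) near $\omega=\pm 2\pi_{\Omega^\circ}$, because there $\omega\,\mathcal{K}(\phi,\omega)<0$ while $C_\circ'\geq 0$ and $\J_R>0$. The only regime in which $N(\phi,\omega)$ can be large is $\omega\to 0$; the exponent $5$ in the sub-Riemannian case does not arise here from the cut locus but from the scaling $\J_R(\phi,\omega)\sim c\,\omega^4$ as $\omega\to 0$.

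The paper's argument runs as follows. Fix $\phi^\star$ with $\alpha(\phi^\star)=q:=s+2$ and take sequences $(\phi_n,\omega_n)\to(\phi^\star,0)$ with $\omega_n/(\phi_n-\phi^\star)\to\sigma\in[-\infty,+\infty]$. Substituting \eqref{eq:smallocondition} into \eqref{eq:JRaround0}--\eqref{eq:JRderivaround0} and absorbing the remainders via \eqref{eq:boundremainder}--\eqref{eq:boundDremainder} gives
\[
\J_R(\phi,\omega)=A^2 P_q(\phi-\phi^\star,\omega)+o\big(P_q(\phi-\phi^\star,\omega)\big),\qquad
\omega\,\partial_\omega\J_R(\phi,\omega)=A^2\,\omega\,\partial_\omega P_q(\phi-\phi^\star,\omega)+o(\cdots),
\]
with $P_q$ the explicit homogeneous integral \eqref{def:Pphiomega}. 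After rescaling one finds $N(\phi_n,\omega_n)\to 1+\sigma\,\partial_\sigma P_q(\sigma)/P_q(\sigma)$, and \eqref{eq:discussiondP/P} shows this limit equals $2q$ as $\sigma\to\pm\infty$. Corollary~\ref{prop:diffcharacMCP} then forces $N_{\mathrm{curv}}\geq 2q+1=2s+5$. So the ``$5+2s$'' you anticipate is really ``$2\alpha+1$ with $\alpha=s+2$'', and it is extracted entirely at $\omega\to 0$.

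If you prefer to stay with Proposition~\ref{prop:MCPgeneral} rather than the differential version, there is an even more direct route: take $\phi=\phi^\star$ itself. Then the expansion above with $\phi-\phi^\star=0$ gives $\J_R(\phi^\star,\psi)\sim C\,\psi^{2q}$ as $\psi\to 0$, so for fixed $\omega$ and $t\to 0$ the inequality $\J_R(\phi^\star,\omega t)\geq t^{N-1}\J_R(\phi^\star,\omega)$ forces $t^{2q}\gtrsim t^{N-1}$, i.e.\ $N\geq 2q+1$. This recovers the bound with no need for the two-parameter family (and no role whatsoever for the cut time).
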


An important consequence of Theorem \ref{thm:curvature_exponent} is that if the reference norm is $C^2$ and strongly convex, then the curvature exponent $N_{\mathrm{curv}}\geq 5$ (cf. Corollary \ref{cor:C11andstrongly}).
However, the lower bound $5$ is not sharp for a general $C^2$ norm.
Indeed, we observe that there is a $C^2$ and strongly convex norm such that $N_{\mathrm{curv}}>5$ (cf. Example \ref{ex:greaterthan5}).
These facts lead us to formulate the following conjecture.

\begin{conj}
    The metric measure space $(\hei,\di,\mathscr{L}^3)$ satisfies $\MCP(0,5)$ if and only if the reference norm is the $\ell^2$-norm, that is to say $(\hei,\di)$ is the sub-Riemannian Heisenberg group.
\end{conj}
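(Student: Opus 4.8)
The plan is to treat the two implications separately, since the entire difficulty lies in one of them. The ``if'' direction is classical: when the reference norm is the $\ell^2$-norm, $(\hei,\di)$ is the sub-Riemannian Heisenberg group, and Juillet's theorem (recalled in the introduction) yields $\MCP(0,5)$ with sharp constant. Thus I would concentrate on the ``only if'' direction, namely that any norm for which $(\hei,\di,\Leb^3)$ satisfies $\MCP(0,5)$ must coincide with the $\ell^2$-norm.

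For the ``only if'' direction, the first step is to invoke Theorem~\ref{thm:MAINnomcp} to restrict attention to norms that are $C^1$ and strongly convex, since otherwise $\MCP(0,N)$ fails for every $N$, in particular $\MCP(0,5)$. For such norms, I would use the differential characterization of $\MCP$ (Proposition~\ref{prop:diffcharacMCP}) to recast $\MCP(0,5)$ as a pointwise bound on the Jacobian of the exponential map along each geodesic ray: writing $j_v(t)$ for the Jacobian restricted to the ray through the initial covector $v$, the condition amounts to the monotonicity of $t \mapsto j_v(t)/t^{2}$, equivalently $t\,(\log j_v)'(t) \le 2$ for every admissible $v$ and every $t$ up to the cut time, so that $N_{\mathrm{curv}} = 3 + \sup_{v,t} t\,(\log j_v)'(t)$. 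Since $N_{\mathrm{curv}} \ge 2s+5$ in general (Theorem~\ref{thm:curvature_exponent}) and $N_{\mathrm{curv}} \ge 5$ for $C^2$ norms (Corollary~\ref{cor:C11andstrongly}), the hypothesis $N_{\mathrm{curv}} = 5$ forces the maximal fractional order $s$ to vanish; I may therefore assume from the outset that $C_\circ'$ is bounded away from zero, i.e.\ that the angle correspondence map $C_\circ$ is bi-Lipschitz.

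The heart of the argument is then a rigidity statement: the pointwise inequality $t\,(\log j_v)'(t) \le 2$, holding for all $v$ and all $t$, should force $C_\circ = \id$, which by the duality interpretation of $C_\circ$ (Proposition~\ref{prop:regularityCo}) characterizes the $\ell^2$-norm. To extract this I would substitute the integral-remainder Taylor expansion of the Jacobian (Proposition~\ref{prop:formulaJRdJRint}) into the inequality and analyze it order by order in $t$. The leading behavior reproduces the sub-Riemannian value and is compatible with equality as $t \to 0$, while the first correction is governed precisely by the deviation of $C_\circ$ from the identity. The goal is to show that if $C_\circ(\theta_0) \ne \theta_0$ for some $\theta_0$, then choosing the initial momentum corresponding to $\theta_0$ and a suitable intermediate time $t$ produces a correction term of the wrong sign, violating $t\,(\log j_v)'(t) \le 2$ and hence contradicting $\MCP(0,5)$.

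The main obstacle is exactly this last rigidity step. The inequality cannot be decided by its behavior as $t \to 0$ alone, because the geodesic dimension already equals $5$ for every $C^{1,1}$ strongly convex norm, so the asymptotics at the origin carry no information separating $\ell^2$ from its competitors; the distinguishing effect lives in the intermediate-time regime, up to the cut time, and must be controlled uniformly over all initial covectors. Example~\ref{ex:greaterthan5} confirms this, exhibiting a $C^2$ strongly convex norm with $N_{\mathrm{curv}} > 5$, so the obstruction to $N_{\mathrm{curv}} = 5$ sits precisely where the explicit control from convex trigonometry is weakest. Converting the qualitative fact ``$C_\circ \ne \id$ somewhere'' into a quantitative violation of the Jacobian inequality at some pair $(v,t)$, while ruling out cancellations in the integral remainder, is the delicate point; a complete proof would most plausibly require isolating a monotonicity or convexity functional of $C_\circ$ that directly bounds $\sup_{v,t} t\,(\log j_v)'(t)$ from below and is minimized exactly at $C_\circ = \id$.
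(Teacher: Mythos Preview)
The statement you are attempting to prove is labelled in the paper as a \emph{Conjecture}, not a theorem; the paper offers no proof, only the supporting evidence of Corollary~\ref{cor:C11andstrongly} and Example~\ref{ex:greaterthan5}. There is therefore no ``paper's own proof'' to compare against, and the honest status of the ``only if'' direction is that it is open.

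Your proposal reflects this: it is a strategy outline rather than a proof, and you correctly isolate the crux, namely the rigidity step showing that $N(\phi,\omega)\le 5$ for all $(\phi,\omega)$ forces the angle correspondence to be that of the Euclidean norm. You also correctly observe that the short-time asymptotics carry no information (since $\limsup_{(\phi,\omega)\to(\phi^\star,0)}N(\phi,\omega)=5$ for every $C^2$ strongly convex norm), so the obstruction must be extracted at intermediate times, exactly as Example~\ref{ex:greaterthan5} suggests. But you do not carry this out, and you say so yourself in the final paragraph.

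Two places in the outline are weaker than presented. First, the reduction to ``$C_\circ'$ bounded away from zero'' via Theorem~\ref{thm:curvature_exponent} presupposes the fractional-polynomial hypothesis \eqref{eq:smallocondition}, which is an \emph{assumption} of that theorem, not a consequence of $\MCP(0,5)$; you would first need to show that $\MCP(0,5)$ rules out all the other behaviors discussed in Section~\ref{sec:afterstrongly}. Second, even granting the rigidity step, the assertion that ``$C_\circ=\id$'' characterizes the $\ell^2$-norm is itself a statement requiring proof (one must argue that $C_\circ'\equiv\mathrm{const}$ forces $\Omega^\circ$ to be a Euclidean disk, up to a linear map). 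Neither of these is the main obstacle, but both would need to be addressed in a complete argument.
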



In the last section of this paper we prove the failure of the $\cd$ condition in the \sF Heisenberg group, equipped with a $C^1$ strongly convex norm $\normdot$ and with a smooth measure $\m$ (cf. Theorem \ref{thm:noCDstrongly}). In light of Theorems \ref{thm:MAINmcp} and \ref{thm:stronglysmallo?}, this is a completely non-trivial result as the weaker $\MCP$ condition may hold in this case. Our argument is a substantial refinement of the one presented in \cite{magnabosco2023failure} for $C^{1,1}$ and strictly convex norms, based on an improved analysis of the correspondence map $C_\circ$. Combining the findings of Theorem \ref{thm:noCDstrongly} with Theorem \ref{thm:MAINnomcp}, we obtain the following negative result, valid for \emph{any} reference norm.
 \begin{theorem}\label{thm:MAINnocd}
  Let $\hei$ be the \sF Heisenberg group, equipped with a norm $\normdot$ and with a positive smooth measure $\m$. Then, the metric measure space $(\hei,\di,\m)$ does not satisfy the $\cd(K,N)$ condition, for every $K\in\R$ and $N\in (1,\infty)$.
\end{theorem}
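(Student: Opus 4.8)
The plan is to prove Theorem~\ref{thm:MAINnocd} by a straightforward case division based on the regularity and convexity properties of the reference norm $\normdot$, reducing everything to the two cornerstone results already established in the excerpt. The logical skeleton is a dichotomy: either the norm $\normdot$ is \emph{both} $C^1$ \emph{and} strongly convex, or it is not. These two cases exhaust all possibilities, and each is handled by a result whose statement is already available.

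In the first case, where $\normdot$ is $C^1$ and strongly convex, the claim follows directly from Theorem~\ref{thm:noCDstrongly}, which asserts precisely that $(\hei,\di,\m)$ fails $\cd(K,N)$ for every $K\in\R$ and $N\in(1,\infty)$ under exactly these hypotheses on the norm, for any positive smooth measure $\m$. In the complementary case, where $\normdot$ is \emph{not} $C^1$ or \emph{not} strongly convex, I would invoke Theorem~\ref{thm:MAINnomcp}, which gives the failure of the weaker condition $\MCP(K,N)$ for all $K\in\R$ and $N\in(1,\infty)$. The implication then rests on the general principle, recalled in the introduction, that $\cd(K,N)$ is \emph{stronger} than $\MCP(K,N)$: if $\cd(K,N)$ held for some pair $(K,N)$, then $\MCP(K,N)$ would hold as well, contradicting Theorem~\ref{thm:MAINnomcp}. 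Hence $\cd(K,N)$ must fail for every such pair.

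Assembling these two cases, every reference norm falls into exactly one of them, and in each the metric measure space fails $\cd(K,N)$ for all admissible parameters; this yields the theorem for any norm $\normdot$ and any positive smooth measure $\m$. I would write this out as two short paragraphs keyed to the dichotomy, with the only substantive remark being the reminder that the $\cd$-to-$\MCP$ implication transfers failure of the weaker property upward to the stronger one.

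The proof itself presents essentially no obstacle, since it is a packaging argument: all the analytic difficulty has been front-loaded into Theorem~\ref{thm:noCDstrongly} (the delicate refinement of the correspondence-map analysis) and into Theorem~\ref{thm:MAINnomcp} (the branching and Jacobian-regularity arguments). The only point requiring minimal care is ensuring the two hypotheses partition the space of norms cleanly, so that no norm is left uncovered and the parameter ranges $K\in\R$, $N\in(1,\infty)$ match verbatim across both invoked results; this is immediate from their statements. I therefore expect the write-up to be a few lines invoking the dichotomy and the $\cd \Rightarrow \MCP$ monotonicity, with no computation.
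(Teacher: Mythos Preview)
Your proposal is correct and matches the paper's own approach exactly: the paper states Theorem~\ref{thm:MAINnocd} as the combination of Theorem~\ref{thm:noCDstrongly} (for $C^1$ strongly convex norms) with Theorem~\ref{thm:MAINnomcp} (for the complementary case, via $\cd\Rightarrow\MCP$). The only cosmetic point is that Theorem~\ref{thm:noCDstrongly} is stated as failure of the Brunn--Minkowski inequality $\bm(K,N)$ rather than $\cd(K,N)$ directly, but since $\cd(K,N)\Rightarrow\bm(K,N)$ this yields the same conclusion.
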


The \sF Heisenberg groups are the unique (up to isometries) \sF Carnot groups with Hausdorff dimension less than $5$ (or with topological dimension less than or equal to $3$), see \cite[Def.\ 10.3]{ABB-srgeom} for a precise definition of Carnot group. Therefore, Theorem \ref{thm:MAINnocd} corroborates the following conjecture, already formulated in \cite{magnabosco2023failure}. 

\begin{conj}\label{conj:carnot}
    Let $G$ be a \sF Carnot group, endowed with a positive smooth measure $\m$. Then, the metric measure space $(G,\di_{SF},\m)$ does not satisfy the $\cd(K,N)$ condition for any $K\in\R$ and $N\in(1,\infty)$.
\end{conj}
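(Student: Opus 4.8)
The plan is to reduce Conjecture \ref{conj:carnot} to the two mechanisms isolated for the Heisenberg group in Theorem \ref{thm:MAINnocd}, and to exploit the stratified structure to lower the step. First, every \sF Carnot group $G$ carries dilations $\{\delta_\lambda\}_{\lambda>0}$ acting homogeneously on $\di_{SF}$ and scaling $\m$ (up to a smooth positive factor that the argument can absorb) by a fixed power of $\lambda$; by the scaling behaviour of the curvature-dimension condition this reduces the problem to disproving $\cd(0,N)$ for all $N\in(1,\infty)$.

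Second, I would perform a structural reduction to step two. Writing $\mathfrak g=V_1\oplus V_2\oplus\cdots$ for the stratified Lie algebra, the subspace $I=\bigoplus_{i\ge 3}V_i$ is an ideal, and $\bar G=G/\exp(I)$ is a \sF Carnot group of step two whose first layer and norm $\normdot$ are inherited from $G$, and onto which the positive smooth measure $\m$ descends. The quotient projection $\pi\colon G\to\bar G$ is a submetry for the left-invariant distances, so $\cd(0,N)$ on $G$ should descend to $\cd(0,N)$ on $\bar G$. Establishing this descent is the first technical point: the available quotient theorems for $\cd$ are stated for \emph{compact} (or proper) isometric group actions, whereas here $\exp(I)\cong\R^{\dim I}$ acts by translations, so one must either extend these theorems to this translational action or argue locally, using that the fibres of $\pi$ are flat cosets.

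Third, I would prove the conjecture for step-two Carnot groups by importing the norm-regularity dichotomy of this paper. When $\normdot$ is not $C^1$ or not strongly convex, I would show that even the weaker $\MCP$ condition fails, exactly as in Theorem \ref{thm:MAINnomcp}: the non-regularity of $\normdot$ forces a discontinuity or a non-Lipschitz degeneracy of the Jacobian of the \sF exponential map along geodesics of $\bar G$, contradicting the general necessary condition for $\MCP$; since $\cd$ implies $\MCP$, these cases are settled. When $\normdot$ is $C^1$ and strongly convex, where $\MCP$ may well hold, I would extend the refined analysis behind Theorem \ref{thm:noCDstrongly}. In step two the normal Hamiltonian flow is governed by a linear action of the dual of $V_2$, so the exponential map and its Jacobian remain computable; I would isolate a one-parameter family of geodesics reaching a fixed vertical direction in $V_2$, compute the leading order of the volume distortion through the correspondence map $C_\circ$, and show that the interpolating densities violate the $(0,N)$-convexity of the Rényi entropy for every $N$.

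The main obstacle lies beyond step two, and is twofold. On the structural side, the descent of $\cd$ along the non-compact submetry $\pi$ must be justified rigorously. On the geometric side, once the step exceeds two there is no convex-trigonometric normal form for geodesics, the exponential map and its Jacobian are no longer explicit, and \emph{abnormal} minimizers appear and interact with the regularity of $\normdot$; controlling the volume distortion uniformly over all vertical directions and all admissible norms is precisely what is missing. A realistic path is therefore to establish the conjecture first for step-two corank-one groups (the higher Heisenberg groups $\hei^n$), then for general step-two groups via the analysis above, and only afterwards to attack the passage to arbitrary step.
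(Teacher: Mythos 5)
There is a fundamental mismatch here: the statement you are proving is Conjecture \ref{conj:carnot}, which the paper deliberately leaves \emph{open}. The paper proves it only in the special case of the Heisenberg group (Theorem \ref{thm:MAINnocd}), which is the unique \sF Carnot group of topological dimension at most $3$, and explicitly presents the general statement as a conjecture supported by that result. So there is no proof in the paper to compare against, and your text is, correctly understood, a research programme rather than a proof. Judged as such, it has two pivotal gaps, both of which you partially acknowledge but which are more serious than your phrasing suggests.

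First, the descent step. The known quotient theorems for the $\cd$ condition (quotients by isometric group actions) require compactness of the group, and the fibre group $\exp(I)\cong\R^{\dim I}$ is non-compact. This is not merely a technicality to be ``extended'': already the measure does not descend, since the pushforward of a positive smooth measure along a projection with non-compact fibres is not locally finite, so one must replace it by a disintegration and then prove from scratch that entropy convexity survives; no such statement is available, and arguing ``locally'' does not obviously help because $\cd(K,N)$ is not a local-to-global-free condition in possibly branching \sF spaces. Second, even granting descent, you land on a \emph{general} step-two Carnot group, not on the Heisenberg group: quotienting by an ideal cannot lower the rank of the first layer, and $\cd$ does not restrict to subgroups, so the paper's results are not applicable. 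Your proposed treatment of the step-two case by ``importing'' Theorems \ref{thm:MAINnomcp} and \ref{thm:noCDstrongly} understates the obstruction: the entire machinery of the paper (convex trigonometry, the explicit exponential map \eqref{eq:xyz}, the reduced Jacobian $\J_R$, the correspondence map $C_\circ$) is specific to rank $2$ and dimension $3$; for higher-rank step-two groups with a general norm there is no analogous normal form for geodesics, geodesics need not be unique in the same pattern, and the Jacobian estimates underlying Propositions \ref{prop:MCPgeneral}, \ref{prop:phoenix} and \ref{prop:dopopocafatica} have no current counterpart. In short, the two steps your reduction rests on are precisely the open content of the conjecture, so the proposal, while a reasonable roadmap (and consistent in spirit with how the paper motivates the conjecture), does not constitute a proof.
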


 Our interest in Carnot groups stems from the fact that they are the only metric spaces that are locally compact, geodesic, isometrically homogeneous and self-similar (i.e. admitting a dilation), cf. \cite{MR3283670}. According to this property, sub-Finsler Carnot groups naturally arise as (unique) metric tangents of metric measure spaces, as showed in \cite{MR2865538}. As the metric measure tangents of a $\cd(K,N)$ space are $\cd(0,N)$, the study of the $\cd(K,N)$ condition in \sF Carnot groups, and especially the validity of Conjecture \ref{conj:carnot}, has the potential to provide deep insights on the structure of tangents of $\cd(K,N)$ spaces. This could be of significant interest, particularly in connection with Bate's recent work \cite{MR4506771}, which establishes a criterion for rectifiability in metric measure spaces, based on the structure of metric tangents.

\subsection*{Acknowledgments} 
This project has received funding from the European Research Council (ERC) under the European Union’s Horizon 2020 research and innovation program (grant agreement No. 945655). M.M. acknowledges support from the Royal Society through the Newton International Fellowship (award number: NIF$\backslash$R1$\backslash$231659). T.R. acknowledges support from the DFG through the collaborative research centre ``The mathematics of emerging effects'' (CRC 1060, Project-ID 211504053) and  from the ANR-DFG project ``CoRoMo'' (ANR-22-CE92-0077-01). K.T. is partially supported by JSPS KAKENHI grant numbers 18K03298, 19H01786, 23K03104.

\section{Preliminaries}

\subsection{The \texorpdfstring{$\cd(K,N)$}{CD(K,N)} and the \texorpdfstring{$\MCP(K,N)$}{MCP(K,N)} conditions} \label{sec:CD}

A metric measure space is a triple $(\X,\di,\m)$ where $(\X,\di)$ is a complete and separable metric space and $\m$ is a locally finite Borel measure on it. In the following, we denote by $C([0, 1], \X)$ the space of continuous curves from $[0, 1]$ to $\X$. For every $t \in [0, 1]$ we call $e_t \colon C([0, 1], \X) \to \X$ the evaluation map, i.e. $e_t(\gamma) := \gamma(t)$. A curve $\gamma\in C([0, 1], \X)$ is said to be a \textit{geodesic} if 
\begin{equation}
    \di(\gamma(s), \gamma(t)) = |t-s| \cdot  \di(\gamma(0), \gamma(1)) \quad \text{for every }s,t\in[0,1].
\end{equation}
We denote by $\Geo(\X)$ the space of all geodesics on $(\X,\di)$. The metric space $(\X,\di)$ is said to be geodesic if every pair of points $x,y \in \X$ can be connected with a curve $\gamma\in \Geo(\X)$. 
We denote by $\Prob(\X)$ the set of Borel probability measures on $\X$ and by $\Prob_2(\X) \subset \Prob(\X)$ the set of those having finite second moment. We endow the space $\Prob_2(\X)$ with the Wasserstein distance $W_2$, defined by
\begin{equation}
\label{eq:defW2}
    W_2^2(\mu_0, \mu_1) := \inf_{\pi \in \mathsf{Adm}(\mu_0,\mu_1)}  \int \di^2(x, y) \, \de \pi(x, y),
\end{equation}
where $\mathsf{Adm}(\mu_0, \mu_1)$ is the set of all admissible transport plans between $\mu_0$ and $\mu_1$, namely all the measures $\pi \in \Prob(\X\times\X)$ such that $(\p_1)_\sharp \pi = \mu_0$ and $(\p_2)_\sharp \pi = \mu_1$, where $\p_i$, for $i=1,2
$, is the projection onto the $i$-th factor. The metric space $(\Prob_2(\X),W_2)$ is itself complete and separable, moreover, if $(\X,\di)$ is geodesic, then $(\Prob_2(\X),W_2)$ is geodesic as well. In this case, every geodesic $(\mu_t)_{t\in [0,1]}$ in $(\Prob_2(\X),W_2)$ can be represented with a measure $\eta \in \Prob(\Geo(\X))$, i.e.\ $\mu_t = (e_t)_\# \eta$.

\paragraph{The \texorpdfstring{$\cd(K,N)$}{CD(K,N)} condition.} 

We present the curvature-dimension condition, or $\cd(K,N)$ for brevity, firstly introduced by Sturm and Lott--Villani in \cite{sturm2006-1,sturm2006,lott--villani2009}. For every $K \in \R$, $N\in (1,\infty)$ and $t\in [0,1]$, the \emph{distortion coefficients} are the functions:

\begin{equation}\label{eq:tau}
    \tau_{K,N}^{(t)}(\theta):=t^{\frac{1}{N}}\left[\sigma_{K, N-1}^{(t)}(\theta)\right]^{1-\frac{1}{N}},\qquad\forall\,\theta\geq 0
\end{equation}
where
\begin{equation}
\sigma_{K,N}^{(t)}(\theta):= 
\begin{cases}

\displaystyle  +\infty & \textrm{if}\  N\pi^{2}\leq K\theta^{2}, \crcr
\displaystyle  \frac{\sin(t\theta\sqrt{K/N})}{\sin(\theta\sqrt{K/N})} & \textrm{if}\  0 < K\theta^{2} < N\pi^{2}, \crcr
t & \textrm{if}\ 
K =0,  \crcr
\displaystyle   \frac{\sinh(t\theta\sqrt{-K/N})}{\sinh(\theta\sqrt{-K/N})} & \textrm{if}\ K < 0.
\end{cases}
\end{equation}

\begin{definition}[$\cd(K,N)$ condition]\label{def:CD}
A metric measure space $(\X,\di,\m)$ is said to be a $\cd(K,N)$ \emph{space} (or to satisfy the $\cd(K,N)$ condition) if for every pair of measures $\mu_0=\rho_0\m,\mu_1= \rho_1 \m \in \Prob_2(\X)$, absolutely continuous with respect to $\m$, there exists a $W_2$-geodesic $(\mu_t)_{t\in [0,1]}$ connecting them and induced by $\eta \in \Prob(\Geo(\X))$, such that $\mu_t =\rho_t \m \ll \m$ for every $t\in [0,1]$ and the following inequality holds for every $N'\geq N$ and every $t \in [0,1]$
\begin{equation}\label{eq:CDcond}
    \int_\X \rho_t^{1-\frac 1{N'}} \de \m \geq \int_{\X \times \X} \Big[ \tau^{(1-t)}_{K,N'} \big(\di(x,y) \big) \rho_{0}(x)^{-\frac{1}{N'}} +    \tau^{(t)}_{K,N'} \big(\di(x,y) \big) \rho_{1}(y)^{-\frac{1}{N'}} \Big]    \de\pi( x,y),
\end{equation}
where $\pi= (e_0,e_1)_\# \eta$.
\end{definition}

\paragraph{The Brunn--Minkowski inequality.} One of the main merits of the $\cd(K,N)$ condition is that it is sufficient to deduce geometric and functional inequalities that hold in the smooth setting. An example, which is particularly relevant to this paper, is the so-called Brunn--Minkowski inequality, whose definition in a metric measure space requires the notion of midpoints.

\begin{definition}[Midpoints]
     Let $(\X,\di)$ be a metric space and let $A,B \subset \X$ be two Borel subsets. Then for $t\in (0,1)$, we define the set of $t$-\emph{midpoints} between $A$ and $B$ as
    \begin{align}
        M_t(A,B) := 
        \{
            x \in \X \, : \, x = \gamma(t)
                \, , \, 
            \gamma \in \Geo(\X)
                \, , \, 
            \gamma(0) \in A  
                \ \text{and} \  
            \gamma(1) \in B
        \} 
            \, .
    \end{align}
\end{definition}

\begin{definition}[Brunn--Minkowski inequality]
    \label{def:bruno}
    Given $K \in \R$ and $N\in (1,\infty)$, we say that a metric measure space $(\X,\di, \m)$ satisfies the \emph{Brunn--Minkowski inequality} $\bm(K,N)$ if, for every nonempty $A,B \subset \text{spt}(\m)$ Borel subsets and every $t \in (0,1)$, we have
        \begin{align}   \label{eq:bm}
            \m \big(M_t(A,B)\big) \big)^ \frac{1}{N} 
                \geq 
            \tau_{K,N}^{(1-t)} (\Theta(A,B)) \cdot \m(A)^ \frac{1}{N} 
                + 
            \tau_{K,N}^{(t)} (\Theta(A,B)) \cdot \m(B)^ \frac{1}{N}
                \, ,
        \end{align}
     where 
    \begin{align}   \label{eq:def_Theta}
        \Theta(A,B):=
        \left\{
        \begin{array}{ll}\displaystyle    \inf_{x \in A,\, y \in B} \di(x, y) 
                & \text { if } K \geq 0 \, , \\ 
        \displaystyle
            \sup _{x \in A,\, y \in B} \di(x, y) 
                & \text { if } K<0 \, .
        \end{array}
        \right. 
    \end{align}
\end{definition}

As already mentioned, the Brunn--Minkowski inequality is a consequence of the $\cd(K,N)$ condition, in particular we have that 
\begin{equation*}
    \cd(K,N)\quad \implies\quad \bm(K,N),
\end{equation*}
for every $K\in \R$ and every $N\in (1,\infty)$. In Section \ref{sec:noCD}, we will prove the failure of the $\cd(K,N)$ condition for every choice of the parameters $K\in \R$ and $N\in (1,\infty)$, by contradicting the Brunn--Minkowski inequality $\bm(K,N)$. A priori, this is a stronger result than the one stated in Theorem \ref{thm:MAINnocd}, as in principle the Brunn--Minkowski inequality is weaker than the $\cd(K,N)$ condition. However, recent developments (cf. \cite{seminalpaper,Magnabosco-Portinale-Rossi:2022b}) suggest that the Brunn--Minkowski $\bm(K,N)$ could be equivalent to the $\cd
(K,N)$ condition in a wide class of metric measure spaces. 

\paragraph{The \texorpdfstring{$\MCP(K,N)$}{MCP(K,N)} condition.}

A way to relax the condition of $\cd(K,N)$ involves requiring it only when the first marginal degenerates to $\delta_x$, a delta-measure at $x\in\text{spt}(\m)$, and the second marginal is $\frac{\m|_A}{\m(A)}$, for some Borel set $A\subset\X$ with $0<\m(A)<\infty$.
This is the idea behind behind the so-called measure contraction property, introduced by Ohta in \cite{ohta2007}. 

\begin{definition}[$\mathsf{MCP}(K,N)$ condition]
\label{def:mcp}
    Given $K\in\R$ and $N\in (1,\infty)$, a metric measure space $(\X,\di,\m)$ is said to satisfy the \emph{measure contraction property} $\mathsf{MCP}(K,N)$ if for every $x\in\text{spt}(\m)$ and every Borel set $A\subset\X$ with $0<\m(A)<\infty$, there exists a $W_2$-geodesic induced by $\eta \in \Prob(\Geo(\X))$ connecting $\delta_x$ and $\frac{\m|_A}{\m(A)}$ such that, for every $t\in[0,1]$,
    \begin{equation}
    \label{eq:mcp_def}
        \frac{1}{\m(A)}\m\geq(e_t)_\#\Big(\tau_{K,N}^{(t)}\big(\di(\gamma(0),\gamma(1))\big)^N\eta(\text{d}\gamma)\Big).
    \end{equation}
\end{definition}

\noindent The $\MCP(K,N)$ condition is weaker than the $\cd(K,N)$ one, i.e. 
\begin{equation*}
    \cd(K,N)\quad \implies \quad\MCP(K,N),
\end{equation*}
for every $K\in \R$ and every $N\in (1,\infty)$, cf. \cite[Lem.\ 6.13]{MR4309491}.

\begin{remark}
\label{rmk:SIUUUUUUU}
Let us recall a useful equivalent formulation of the inequality \eqref{eq:mcp_def}, which holds whenever geodesics are unique, we refer the reader to \cite[Lem.\ 2.3]{ohta2007} for further details. Consider $x\in\text{spt}(\m)$ and a Borel set $A\subset\X$ with $0<\m(A)<\infty$. Assume that for every $y\in A$, there exists a unique geodesic $\gamma_{x,y}:[0,1]\to \X$ joining $x$ and $y$. Then, \eqref{eq:mcp_def} is verified for the marginals $\delta_x$ and $\frac{\m|_A}{\m(A)}$ if and only if 
\begin{equation}
\label{eq:tj_pantaloncini}
    \m\big(M_t(\{x\},A'))\big)\geq \int_{ A'}\tau_{K,N}^{(t)}(\di(x,y))^N \de\m(y),\qquad\text{for any Borel set }A'\subset A.
\end{equation}
\end{remark}

We recall below the definition of the curvature exponent.

\begin{definition}[Curvature exponent] \label{def:curvatureexponent}
    Let $(\X,\di,\m)$ be a metric measure space satisfying the $\MCP(0,N)$ for some $N\in (1,+\infty)$. The \emph{curvature exponent} of $\X$ is defined as 
    \begin{equation}
        N_{\mathrm{curv}}:=\inf\{N\in  (1,+\infty):(\X,\di,\m) \text{ is } \MCP(0,N)\}.
    \end{equation}
\end{definition}

\paragraph{Scaling and stability properties.} The $\cd(K,N)$ condition and the measure contraction property $\MCP(K,N)$ enjoy several properties that validate them as synthetic curvature dimension bounds. Among them, we only mention the ones necessary for our purposes:
\begin{itemize}
    \item \textit{scaling property} (cf.\ \cite{sturm2006}): If $(\X,\di,\m)$ is a $\cd(K,N)$ (resp. $\MCP(K,N)$) space, for every $\alpha,\beta>0$ the scaled space $(\X,\alpha \di,\beta \m)$ is a $\cd(\alpha^{-2}K,N)$ (resp. $\MCP(\alpha^{-2} K,N)$) space.
    \item \textit{$\mathrm{pmGH}$-stability} (cf.\ \cite{GigMonSav}) Let $\{(\X_n,\di_n, \m_n,p_n)\}_{n\in \N}$ be a sequence of pointed metric measure spaces (i.e.\ metric measure spaces with a distinguished point) converging to $(\X_\infty,\di_\infty,\m_\infty,p_\infty)$ in the pointed measured Gromov-Hausdorff convergence. Assume that for every $n\in\N$, $(\X_n,\di_n, \m_n)$ is a $\cd(K_n,N_n)$ (resp. $\MCP(K_n,N_n)$) space, for two sequences $(K_n)_{n\in \N} \subset \R$ and $(N_n)_{n\in \N} \subset (1,\infty)$ converging to $K_\infty \in \R$ and $N_\infty\in (1, \infty)$, respectively. Then, the limit space $(\X_\infty,\di_\infty,\m_\infty)$ is a $\cd(K_\infty,N_\infty)$ (resp. $\MCP(K_\infty,N_\infty)$) space.
\end{itemize}

\subsection{Convex trigonometry}
\label{sec:convex_trigtrig}

In this section, we recall the definition and main properties of the convex trigonometric functions, firstly introduced in \cite{lok}. 
Let $\Omega\subset \R^2$ be a convex, compact set, such that $O:=(0,0)\in \text{Int} (\Omega)$ and denote by $\pi_{\Omega}$ its surface area. 

\begin{definition}[Convex trigonometric functions]
    Let $\theta\in\R$ denote a generalized angle. If $0\leq \theta < 2 \pi_{\Omega}$ define $P_\theta$ as the point on the boundary of $\Omega$, such that the area of the sector of $\Omega$ between the rays $Ox$ and $OP_{\theta}$ is $\frac{1}{2}\theta$ (see Figure \ref{fig:convextrig1}). Moreover, define $\sinom(\theta)$ and $\cosom(\theta)$ as the coordinates of the point $P_\theta$, i.e.
    \begin{equation*}
        P_\theta = \big( \cosom(\theta), \sinom(\theta) \big).
    \end{equation*}
    Finally, extend these trigonometric functions outside the interval $[0,2\pi_{\Omega})$ by periodicity (of period $2 \pi_{\Omega}$), so that for every $k\in \mathbb Z$
    \begin{equation*}
        \cosom(\theta)= \cosom(\theta+2k \pi_{\Omega}), \quad \sinom(\theta)= \sinom(\theta+2k \pi_{\Omega}) \quad \text{and}\quad P_\theta = P_{\theta +2k\pi_{\Omega}}.
    \end{equation*}
\end{definition}
\noindent In particular, the maps $P,\sinom,\cosom$ are well-defined on the quotient $\R/ 2 \pi_\Omega \mathbb Z$.

 Observe that by definition $\sinom(0)=0$ and that when $\Omega$ is the Euclidean unit ball we recover the classical trigonometric functions.

\begin{lemma}
\label{lem:facile_cazzo}
    The map $P:\R/ 2 \pi_\Omega \mathbb Z \to  \partial \Omega \subset \R^2$ that associate to the angle $\theta$ the vector $P_\theta$, is bi-Lipschitz.
\end{lemma}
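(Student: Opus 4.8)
The plan is to factor $P$ through the \emph{Euclidean angular} parametrization of $\partial\Omega$, where both the bi-Lipschitz estimates and the change of parameter become elementary. Since $\Omega$ is convex and compact with $O\in\mathrm{Int}(\Omega)$, I would first fix radii $0<r\le R<\infty$ with $B(O,r)\subset\Omega\subset B(O,R)$, so that every boundary point has Euclidean norm in $[r,R]$. Writing $u_\phi=(\cos\phi,\sin\phi)$, let $\rho(\phi)$ be the radial function, i.e.\ the unique $\rho>0$ with $\rho\,u_\phi\in\partial\Omega$. Then $r\le\rho\le R$ and, crucially, $\rho$ is Lipschitz on the circle $\R/2\pi\mathbb{Z}$: indeed $\rho=1/g$, where $g$ is the Minkowski gauge of $\Omega$, which is convex, hence locally Lipschitz, and is pinched between $1/R$ and $1/r$ on the unit circle.

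Next I would show that the map $Q:\phi\mapsto\rho(\phi)\,u_\phi$ from $\R/2\pi\mathbb{Z}$ to $\partial\Omega$ is bi-Lipschitz, denoting by $\alpha\le\pi$ the circle distance between $\phi_1$ and $\phi_2$, which is exactly the angle at $O$ between the two rays. For the upper bound,
\[
|Q(\phi_1)-Q(\phi_2)|\le|\rho(\phi_1)-\rho(\phi_2)|+\rho(\phi_2)\,|u_{\phi_1}-u_{\phi_2}|\le\big(\mathrm{Lip}(\rho)+R\big)\,\alpha,
\]
using $|u_{\phi_1}-u_{\phi_2}|=2\sin(\alpha/2)\le\alpha$. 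For the lower bound, the law of cosines together with $\rho\ge r$ and the elementary inequality $1-\cos\alpha\ge 2\alpha^2/\pi^2$ gives
\[
|Q(\phi_1)-Q(\phi_2)|^2=\rho(\phi_1)^2+\rho(\phi_2)^2-2\rho(\phi_1)\rho(\phi_2)\cos\alpha\ge 2r^2(1-\cos\alpha)\ge\Big(\tfrac{2r}{\pi}\Big)^2\alpha^2,
\]
whence $|Q(\phi_1)-Q(\phi_2)|\ge\tfrac{2r}{\pi}\,\alpha$.

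It then remains to compare the angular parameter $\phi$ with the area parameter $\theta$. By the polar-coordinate area formula, the sector of $\Omega$ between $Ox$ and the ray through $\rho(\phi)u_\phi$ has area $\tfrac12\int_0^\phi\rho(\psi)^2\,\de\psi$; by the very definition of $P_\theta$ this area equals $\tfrac12\theta$, so that $\theta=\int_0^\phi\rho^2$ and $P_\theta=Q(\phi(\theta))$. Since $\rho$ is continuous, $\theta(\phi)$ is a $C^1$ increasing bijection between the circles of circumferences $2\pi$ and $2\pi_\Omega$, with $\theta'(\phi)=\rho(\phi)^2\in[r^2,R^2]$; consequently both $\phi\mapsto\theta(\phi)$ and its inverse $\theta\mapsto\phi(\theta)$ are bi-Lipschitz. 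As $P$ is the composition of the two bi-Lipschitz maps $\theta\mapsto\phi(\theta)$ and $Q$, it is itself bi-Lipschitz, which is the claim.

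The only non-elementary ingredient is the Lipschitz regularity of the radial function $\rho$ (equivalently, the Lipschitz upper bound for $Q$): this is the step I expect to carry the real, if standard, content, and it is precisely where the planar convexity of $\Omega$ enters. Every other estimate is a direct computation.
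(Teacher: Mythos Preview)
Your argument is correct and takes a genuinely different route from the paper. You factor $P$ as the composition of the Euclidean-angle parametrization $Q:\phi\mapsto\rho(\phi)u_\phi$ with the change of parameter $\theta\leftrightarrow\phi$ given by the polar area formula $\theta=\int_0^\phi\rho^2$; both factors are bi-Lipschitz by elementary estimates (Lipschitz radial function via convexity of the gauge, the law of cosines together with Jordan's inequality for $Q$, and $\theta'(\phi)=\rho(\phi)^2\in[r^2,R^2]$ for the reparametrization). The paper instead works directly in the $\theta$-parameter and compares areas: for the Lipschitz bound on $P$ it uses that the triangle $\triangle P_{\theta_1}OP_{\theta_2}$ sits inside the sector of area $\tfrac12|\theta_1-\theta_2|$ and has height from $O$ uniformly bounded below; for $P^{-1}$ it encloses the sector in a dilated triangle $\tfrac{2R}{r}\cdot\triangle P_{\theta_1}OP_{\theta_2}$ and bounds its area above by a constant times $\|P_{\theta_1}-P_{\theta_2}\|$. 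Your approach is more modular and delivers explicit global constants in one pass, at the price of invoking the Lipschitz regularity of the Minkowski gauge; the paper's proof is entirely self-contained and geometric, never leaving the area interpretation of $\theta$, but proceeds through a local-to-global step on the compact quotient.
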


\begin{proof}
    First of all, observe that, by convexity of $\Omega$, the map $P$ is bijective and thus invertible. 
    We now prove that $P$ is Lipschitz. 
    By compactness of $\partial \Omega$, we can find $\epsilon>0$ and a positive constant $K$ such that, if $\theta_1, \theta_2 \in \R$ with $0<|\theta_1-\theta_2|<\epsilon$, we have 
    \begin{equation}
        \di_{eu}(O,l_{\theta_1,\theta_2}) \geq K ,
    \end{equation}
    where $l_{\theta_1,\theta_2}$ is the line joining $P_{\theta_1}$ and $P_{\theta_2}$. Then, given any $\theta_1, \theta_2 \in \R$ such that $0<|\theta_1-\theta_2|<\epsilon$, we can deduce that  
    \begin{equation}
        \frac{1}{2} |\theta_2-\theta_1 |\geq \Leb^2\Big(\triangle P_{\theta_1} O P_{\theta_2}\Big) \geq \frac{1}{2} K \cdot \norm{P_{\theta_2}-P_{\theta_1}}_{eu},
    \end{equation}
    where $\triangle P_{\theta_1}OP_{\theta_2}$ denotes the triangle of vertices $P_{\theta_1}, O$ and $P_{\theta_2}$. This proves that $P$ is locally $K$-Lipschitz everywhere, and thus $K$-Lipschitz, since $\R/ 2 \pi_\Omega \mathbb Z$ is compact. 

    We are left to show that $P^{-1}$ is Lipschitz. Let $\theta_1 \neq \theta_2 \in \R$ such that $\angle P_{\theta_1}O P_{\theta_2} <  \frac \pi 2$, where $\angle P_{\theta_1}O P_{\theta_2}$ is the Euclidean angle between $O P_{\theta_1}$ and $O P_{\theta_2}$. In this case, we consider the quantities 
    \begin{equation}
        r:= \min\{  \norm{x}_{eu}: x\in \partial \Omega\} \quad \text{and} \quad  R:= \max\{ \norm{x}_{eu}: x\in \partial \Omega\},
    \end{equation}
    and observe that the section of $\Omega$ between the rays $OP_{\theta_1}$ and $OP_{\theta_2}$ is contained in the triangle $ \frac{2R}{r} \cdot \triangle P_{\theta_1} O P_{\theta_2}$. In fact, every point in the line segment joining $ \frac{2R}{r} P_{\theta_1}$ and $ \frac{2R}{r} P_{\theta_2}$ has Euclidean norm strictly bigger than $R$. Hence, we deduce that, for every $\theta_1 \neq \theta_2 \in \R$ such that $\angle P_{\theta_1}O P_{\theta_2} < \frac \pi 2$, we have
    \begin{equation}
        \frac{1}{2} |\theta_2-\theta_1 |\leq \Leb^2\bigg(  \frac{2R}{r} \cdot \triangle P_{\theta_1} O P_{\theta_2}\bigg) =  \frac{4R^2}{r^2}  \Leb^2\Big( \triangle P_{\theta_1} O P_{\theta_2}\Big)  \leq  \frac{2R^2}{r^2}  R \cdot \norm{P_{\theta_2}-P_{\theta_1}}_{eu} .
    \end{equation}
    Therefore, we can conclude that the map $P^{-1}$ is locally $\frac{4 R^3}{r^2}$-Lipschitz, thus it is also $\frac{4 R^3}{r^2}$-Lipschitz.
\end{proof}

\begin{remark}
\label{rmk:Lip}
    Since the projection
    $\R^2 \ni (x,y)\mapsto x \in \R$ (resp. $\R^2 \ni(x,y)\mapsto y\in \R$) is Lipschitz,
    we deduce that the generalized trigonometric function $\cosom$ (resp. $\sinom$) is Lipschitz continuous.
\end{remark}

\begin{figure}[ht]
    \begin{minipage}[c]{.47\textwidth}

    \centering
    \begin{tikzpicture}[scale=0.8]

    \draw[white](1.1,3.25)--(0.92142,4.75);
    \fill[color=black!10!white](3,0)--(0,0)--(1.1,3.25)--(3.6,4);
    \fill[white](3,0) .. controls (3.6,2.8) and (2.4,3.8)..(0,3)--(0,4)--(4,4);
    \draw[->] (-4,0)--(4,0);
    \draw[->] (0,-4)--(0,4);
    \draw[very thick] (3,0) .. controls (3.6,2.8) and (2.4,3.8)..(0,3)..controls (-1.2,2.6) and (-2,2.4).. (-2.6,0)..controls (-3.4,-3) and (-2,-3.4).. (0,-3.2)..controls (1.4,-3) and (2.4,-2.4).. (3,0);
    \draw[dotted,blue,thick] (1.1,3.25) --(0,3.25);
    \draw[very thick] (1.1,3.25) --(0,0);
    \draw[very thick] (3,0) --(0,0);
    \draw[very thick,blue](0,3.25)--(0,0);
    \draw[very thick,red](1.1,0)--(0,0);
    \draw[dotted,red,thick] (1.1,3.25) --(1.1,0);
    \filldraw[black] (0,0) circle (1.5pt);
    \filldraw[blue] (0,3.25) circle (1.5pt);
    \filldraw[red] (1.1,0) circle (1.5pt);
    \filldraw[black] (1.1,3.25) circle (1.5pt);

    \node at (-0.9,2)[label=south:${\color{blue}{\sin_\Omega(\theta)}}$] {};
    \node at (0.9,0)[label=south:${\color{red}{\cos_\Omega(\theta)}}$] {};
    \node at (-0.3,0.1)[label=south:$O$] {};
    \node at (2,2)[label=south:$\frac 12 \theta$] {};
    \node at (-2,-1.8)[label=south:$\Omega$] {};
    \node at (1.2,4.2)[label=south:$P_\theta$] {};

    \end{tikzpicture}
    \caption{Values of the generalized trigonometric functions $\cosom$ and $\sinom$.}
    \label{fig:convextrig1}
    
\end{minipage}%
\hfill
\begin{minipage}[h]{.47\textwidth}

    \centering
    \begin{tikzpicture}[scale=0.8]

    \fill[color=black!10!white](3,0)--(0,0)--(1.1,3.25)--(3.6,4);
    \fill[white](3,0) .. controls (3.6,2.8) and (2.4,3.8)..(0,3)--(0,4)--(4,4);
    \draw[->] (-4,0)--(4,0);
    \draw[->] (0,-4)--(0,4);
    \draw[very thick] (3,0) .. controls (3.6,2.8) and (2.4,3.8)..(0,3)..controls (-1.2,2.6) and (-2,2.4).. (-2.6,0)..controls (-3.4,-3) and (-2,-3.4).. (0,-3.2)..controls (1.4,-3) and (2.4,-2.4).. (3,0);
    \draw[very thick] (1.1,3.25) --(0,0);
    \draw[very thick] (3,0) --(0,0);
    \filldraw[black] (0,0) circle (1.5pt);
    \filldraw[black] (1.1,3.25) circle (1.5pt);
    \draw(1.1,3.25)--(-1,3);
    \draw(1.1,3.25)--(3.2,3.5);
    \draw[very thick, ->](1.1,3.25)--(0.92142,4.75);

    \node at (-0.3,0.1)[label=south:$O$] {};
    \node at (2,2)[label=south:$\frac 12 \theta$] {};
    \node at (-2,-1.8)[label=south:$\Omega$] {};
    \node at (1.5,3.3)[label=south:$P_\theta$] {};
    \node at (1.5,4.7)[label=south:$Q_{\psi}$] {};

    \end{tikzpicture}
   \caption{Representation of the correspondence $\theta\xleftrightarrow{\Omega} \psi$.}
    \label{fig:convextrig2}
    
\end{minipage}
\end{figure}

Consider now  the polar set:
\begin{equation*}
    \Omega^\circ := \{p\in \R^2\, :\, \langle p,x\rangle\leq 1 \text{ for every }x\in \Omega\},
\end{equation*}
which is itself a convex, compact set such that $O\in \text{Int}(\Omega^\circ)$. Therefore, we can consider the trigonometric functions $\sinomp$ and $\cosomp$. Observe that, by definition of polar set, it holds that 
\begin{equation}
\label{eq:NO_PYTHAGOREAN_IDENTITY_COMETIPERMETTI}
    \cosom(\theta) \cosomp(\phi) + \sinom(\theta)\sinomp(\phi)\leq 1,\qquad \text{for every } \theta,\phi\in \R.
\end{equation}

\begin{definition}[Correspondence]
    We say that two angles $\theta,\phi\in \R$ \emph{correspond} to each other and write $\theta \xleftrightarrow{\Omega} \phi$ if the vector $Q_\phi:= (\cosomp(\phi),\sinomp(\phi))$ determines a half-plane containing $\Omega$ (see Figure \ref{fig:convextrig2}).
\end{definition}

By the bipolar theorem \cite[Thm.\ 14.5]{Rockafellar+1970}, it holds that $\Omega^{\circ \circ}=\Omega$ and this allows to prove the following symmetry property for the correspondence just defined.

\begin{prop}\label{prop:correspondence}
    Let $\Omega\subset \R^2$ be a convex and compact set, with $O\in \text{Int} (\Omega)$. Given two angles $\theta,\phi\in \R$, $\theta\xleftrightarrow{\Omega} \phi$ if and only if $\phi\xleftrightarrow{\Omega^\circ} \theta$.
    Moreover, the following analogous of the Pythagorean equality holds:
    \begin{equation}\label{eq:pytagorean}
        \theta\xleftrightarrow{\Omega} \phi \qquad \text{ if and only if }\qquad \cosom(\theta) \cosomp(\phi) + \sinom(\theta)\sinomp(\phi)= 1.
    \end{equation}
\end{prop}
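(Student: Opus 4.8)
The plan is to reduce both the correspondence relation and the Pythagorean equality to a single symmetric scalar identity, namely $\langle Q_\phi, P_\theta\rangle = 1$, where $P_\theta = (\cosom(\theta),\sinom(\theta))\in\partial\Omega$ and $Q_\phi = (\cosomp(\phi),\sinomp(\phi))\in\partial\Omega^\circ$.

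First I would unpack the definition of correspondence. By definition of the polar set, $Q_\phi\in\partial\Omega^\circ$ satisfies $\langle Q_\phi, x\rangle\leq 1$ for every $x\in\Omega$, so the half-plane $H_\phi:=\{x\in\R^2 : \langle Q_\phi, x\rangle\leq 1\}$ always contains $\Omega$; this is exactly the content of inequality \eqref{eq:NO_PYTHAGOREAN_IDENTITY_COMETIPERMETTI} evaluated at $(\theta,\phi)$. Consequently, the only substantive requirement in $\theta\xleftrightarrow{\Omega}\phi$, as depicted in Figure \ref{fig:convextrig2}, is that the boundary line $\partial H_\phi = \{x : \langle Q_\phi,x\rangle = 1\}$ be a supporting line of $\Omega$ at the point $P_\theta$. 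Since $\Omega\subset H_\phi$ holds automatically, this supporting condition is equivalent to $P_\theta\in\partial H_\phi$, that is, to the equality $\langle Q_\phi, P_\theta\rangle = \cosom(\theta)\cosomp(\phi)+\sinom(\theta)\sinomp(\phi) = 1$. This already establishes the Pythagorean equality \eqref{eq:pytagorean}.

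Next I would deduce the symmetry from the bipolar theorem. By \cite[Thm.\ 14.5]{Rockafellar+1970} we have $\Omega^{\circ\circ}=\Omega$ as sets, so the convex-trigonometric data associated with the base set $\Omega^\circ$ read off as follows: the angle-$\phi$ point on $\partial\Omega^\circ$ is $Q_\phi$, now playing the role of ``$P$'', while the angle-$\theta$ point on $\partial(\Omega^\circ)^\circ = \partial\Omega$ is precisely $P_\theta$, now playing the role of ``$Q$'', the normalization being consistent because $(\Omega^\circ)^\circ = \Omega$ as sets. Applying the reduction of the previous paragraph with $\Omega^\circ$ in place of $\Omega$, the relation $\phi\xleftrightarrow{\Omega^\circ}\theta$ is equivalent to $\langle P_\theta, Q_\phi\rangle = 1$, the automatic containment now being $\Omega^\circ\subset\{y : \langle P_\theta, y\rangle\leq 1\}$, which holds since $P_\theta\in(\Omega^\circ)^\circ = \Omega$. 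As the inner product is symmetric, both relations $\theta\xleftrightarrow{\Omega}\phi$ and $\phi\xleftrightarrow{\Omega^\circ}\theta$ are equivalent to the same identity $\langle Q_\phi, P_\theta\rangle = 1$, which proves the claimed equivalence.

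The main point to handle carefully is the interpretation of the definition: one must observe that the half-plane containment in the definition of correspondence is automatic from polarity, so that the relation collapses to the single scalar equality, and then verify that the convex-trigonometric parametrization of $\Omega^\circ$ indeed returns $Q_\phi$ and that of $(\Omega^\circ)^\circ$ returns the original $P_\theta$. This last identification is where the bipolar theorem is essential, and it also requires checking that the surface-area normalizations used to define the angles for $(\Omega^\circ)^\circ$ and for $\Omega$ coincide, which follows immediately from $(\Omega^\circ)^\circ = \Omega$.
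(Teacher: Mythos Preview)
Your proposal is correct and follows precisely the approach the paper indicates: the paper does not spell out a proof of this proposition but merely notes before stating it that ``by the bipolar theorem \cite[Thm.\ 14.5]{Rockafellar+1970}, it holds that $\Omega^{\circ\circ}=\Omega$ and this allows to prove the following symmetry property,'' which is exactly the reduction you carry out. Your unpacking of the correspondence as the scalar identity $\langle Q_\phi, P_\theta\rangle = 1$ (supporting hyperplane condition), followed by the observation that bipolarity makes this identity symmetric in the roles of $\Omega$ and $\Omega^\circ$, is the intended argument.
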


\noindent The correspondence $\theta\xleftrightarrow{\Omega} \phi$ is not one-to-one in general, in fact if the boundary of $\Omega$ has a corner at the point $P_\theta$, the angle $\theta$ corresponds to an interval of angles (in every period). Nonetheless, we can define a monotone multi-valued
map $C^\circ$ that maps an angle $\theta$ to the maximal closed interval containing angles corresponding to $\theta$ i.e. $\theta\xleftrightarrow{\Omega} C^\circ(\theta)$.
This function has the following periodicity property:
\begin{equation}
\label{eq:periodicity_Ccirc}
    C^\circ(\theta+2\pi_{\Omega} k)=  C^\circ(\theta) +2\pi_{\Omega^\circ} k \qquad \text{ for every }k\in \mathbb Z,
\end{equation}
where $\pi_{\Omega^\circ}$ denotes the surface area of $\Omega^\circ$. Analogously, we can define the map $C_\circ$ associated to the correspondence $\phi\xleftrightarrow{\Omega^\circ} \theta$, and it satisfies an analogue of \eqref{eq:periodicity_Ccirc}. Note that $C_\circ$ and $C^\circ$ are monotone and multi-valued maps, thus their composition is monotone and multi-valued as well. In particular, $C_\circ \circ C^\circ(\theta)$ is an interval containing $\theta$, according to Proposition \ref{prop:correspondence}. The analogous property holds for the composition $C^\circ \circ C_\circ(\phi)$, and, for the sequel, we define the functions $\delta_{\pm}:\R/2\pi_{\Omega^\circ}\mathbb Z\to [0,\infty)$ so that
\begin{equation}\label{eq:interval}
    C^\circ \circ C_\circ(\phi)=[\phi-\delta_-(\phi),\phi+\delta_+(\phi)].
\end{equation}
Observe that the set $\{Q_\psi:\psi\in [\phi-\delta_-(\phi),\phi+\delta_+(\phi)]\}$ is the maximal segment of $\partial \Omega^\circ$ containing $Q_\phi$.

\begin{prop}\label{prop:difftrig}
    Let $\Omega\subset\R^2$ as above. The trigonometric functions $\sinom$ and $\cosom$ are differentiable almost everywhere (cf. Remark \ref{rmk:Lip}). Their differentiability points coincide with the angles $\theta$ where $C^\circ$ is single-valued and it holds that
    \begin{equation*}
        \sinom'(\theta)= \cosomp(C^\circ(\theta)) \qquad \text{and}\qquad \cosom'(\theta)= - \sinomp(C^\circ(\theta)).
    \end{equation*}
    Naturally, the analogous result holds for the trigonometric functions $\sinomp$ and $\cosomp$.
\end{prop}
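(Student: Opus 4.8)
The plan is to exploit the geometric meaning of the sector area together with the polar duality encoded in the correspondence. First I would record that, by Remark \ref{rmk:Lip}, both $\sinom$ and $\cosom$ are Lipschitz, so by Lebesgue's differentiation theorem for Lipschitz functions of one real variable they are differentiable outside a set of measure zero; this settles the almost-everywhere differentiability. It then remains to identify the differentiability points and to compute the derivatives. Throughout I write $P_\theta'=(\cosom'(\theta),\sinom'(\theta))$ and denote by $\times$ the $2\times 2$ determinant of two planar vectors.

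Fix an angle $\theta$ at which $P_\theta=(\cosom(\theta),\sinom(\theta))$ is differentiable. The defining property of $\theta$ is that the sector of $\Omega$ between $Ox$ and $OP_\theta$ has area $\tfrac12\theta$; the infinitesimal sector swept as $\theta$ increases is the triangle $O\,P_\theta\,P_{\theta+\de\theta}$, whose signed area is $\tfrac12\,(P_\theta\times P_\theta')\,\de\theta$. Differentiating the relation between area and angle therefore yields
\[
\cosom(\theta)\sinom'(\theta)-\sinom(\theta)\cosom'(\theta)=1, \qquad (\star)
\]
so in particular $P_\theta'\neq 0$. Next, let $\phi$ be any angle with $\theta\xleftrightarrow{\Omega}\phi$, so that $\{x:\langle x,Q_\phi\rangle\leq 1\}$ is a supporting half-plane of $\Omega$ at $P_\theta$, with $\langle P_\theta,Q_\phi\rangle=1$ by \eqref{eq:pytagorean}. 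Since $P_{\theta+h}\in\Omega$ we have $\langle P_{\theta+h},Q_\phi\rangle\leq 1$ with equality at $h=0$; dividing by $h$ and letting $h\to 0^\pm$ forces $\langle P_\theta',Q_\phi\rangle=0$, that is
\[
\cosom'(\theta)\cosomp(\phi)+\sinom'(\theta)\sinomp(\phi)=0. \qquad (\star\star)
\]

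From here I would first deduce single-valuedness at differentiability points: if two distinct angles $\phi_1\neq\phi_2$ corresponded to $\theta$, then $P_\theta'$ would be orthogonal to the two linearly independent vectors $Q_{\phi_1},Q_{\phi_2}$, forcing $P_\theta'=0$ and contradicting $(\star)$. Hence $C^\circ(\theta)$ is single-valued and we set $\phi=C^\circ(\theta)$. Then $(\star)$ and $(\star\star)$ constitute a linear system for $(\cosom'(\theta),\sinom'(\theta))$ whose coefficient determinant equals $-\big(\cosom(\theta)\cosomp(\phi)+\sinom(\theta)\sinomp(\phi)\big)=-1$, again by \eqref{eq:pytagorean}. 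Solving by Cramer's rule gives precisely $\cosom'(\theta)=-\sinomp(C^\circ(\theta))$ and $\sinom'(\theta)=\cosomp(C^\circ(\theta))$, as claimed.

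To close the proof I must establish the converse inclusion, namely that $P$ is differentiable at every angle where $C^\circ$ is single-valued; this is the step I expect to be the main obstacle, since it requires controlling the area parametrization near a point without assuming regularity of the derivative. Here I would argue that the monotone area parametrization of the convex curve $\partial\Omega$ admits one-sided derivatives at every angle: as $h\to 0^\pm$ the chord direction $(P_{\theta+h}-P_\theta)/\norm{P_{\theta+h}-P_\theta}_{eu}$ converges to the one-sided tangent $\tau^\pm$ of $\partial\Omega$ at $P_\theta$, while the sector-area identity $\tfrac12\,\modu{h}=\tfrac12\,\modu{P_\theta\times(P_{\theta+h}-P_\theta)}+o(\norm{P_{\theta+h}-P_\theta}_{eu})$ controls the speed, yielding one-sided derivatives $P_\pm'(\theta)=\tau^\pm/(P_\theta\times\tau^\pm)$ with $P_\theta\times\tau^\pm>0$. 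These one-sided derivatives coincide exactly when $\partial\Omega$ admits a unique supporting line at $P_\theta$, which by definition of the correspondence is precisely the case in which $C^\circ(\theta)$ is single-valued; thus single-valuedness forces differentiability, completing the identification of the differentiability set. The analogous statements for $\sinomp$ and $\cosomp$ follow by exchanging the roles of $\Omega$ and $\Omega^\circ$ and invoking the symmetry of Proposition \ref{prop:correspondence}.
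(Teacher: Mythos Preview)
The paper does not supply a proof of Proposition \ref{prop:difftrig}; it is stated as a preliminary fact from the convex-trigonometry literature introduced in \cite{lok} (cf.\ the opening of Section \ref{sec:convex_trigtrig}). So there is no paper proof to compare against, and your task reduces to giving a self-contained argument.

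Your proof is correct. The area identity $(\star)$ is precisely the differential form of the sector-area parametrization, and the support condition $(\star\star)$ is the first-order consequence of the Pythagorean identity \eqref{eq:pytagorean}; together they form an invertible $2\times 2$ system whose Cramer solution gives the claimed formulas. Your argument that differentiability forces single-valuedness is fine, though you might make explicit why $Q_{\phi_1}$ and $Q_{\phi_2}$ are linearly independent when $\phi_1\neq\phi_2$ both correspond to $\theta$: since $\langle P_\theta,Q_{\phi_i}\rangle=1$ the two vectors cannot be antipodal, and injectivity of $Q$ on a period rules out equality, so they span $\R^2$ and $(\star\star)$ would force $P_\theta'=0$, contradicting $(\star)$.

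The converse direction is also correct, and your outline is the standard route. The two facts you invoke are (i) every convex curve admits one-sided tangent directions $\tau^\pm$ at each boundary point, and (ii) the bi-Lipschitz property of $P$ (Lemma \ref{lem:facile_cazzo}) makes the lune between chord and arc of order $o(h)$, so the one-sided derivatives $P'_\pm(\theta)=\tau^\pm/(P_\theta\times\tau^\pm)$ are well defined. They coincide exactly when $\tau^+$ and $\tau^-$ determine the same line, i.e.\ when $\partial\Omega$ has a unique supporting line at $P_\theta$, which is single-valuedness of $C^\circ(\theta)$. The only point I would add explicitly is that $P_\theta\times\tau^\pm\neq 0$: since $O\in\mathrm{Int}(\Omega)$, no radius $OP_\theta$ is tangent to $\partial\Omega$, so the normalization is legitimate.
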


According to the previous proposition, letting $\mathsf{D}_0 \subset \R/2\pi_{\Omega^\circ}\mathbb{Z}$ be the set of differentiability points of $\sinomp$ and $\cosomp$, $\mathsf{D}_0$ is a $\Leb^1$-full-measure set and corresponds to the set where $C_\circ$ is a single-valued map. From now on, in order to ease the notation, we will sometimes use the shorthand:
\begin{equation}
    \quad \phi_\circ = C_\circ(\phi),
\end{equation}
for the angles $\phi\in \mathsf{D}_0$, where the correspondence map $C_\circ$ is single-valued.

\subsection{Convex trigonometry associated with a norm}\label{sec:tre3}

In the following, we study the convex trigonometry associated with the closed unit ball of a norm $\normdot$ on $\R^2$, i.e. $\Omega:= \bar B^{\norm{\cdot}}_1(0)$. In this case, the polar set $\Omega^\circ$ is the closed unit ball $\bar B^{\norm{\cdot}_*}_1(0)$ of the dual norm $\normdot_*$.

We say that a function $f:\R^2\to \R$ is \emph{strictly convex} if, for any $x,y\in\R^2$ such that $x\neq y$, 
\begin{equation}
    f(tx+(1-t)y)<tf(x)+(1-t)f(y),\qquad\forall\, t\in (0,1).
\end{equation}
Furthermore, let $\normdot:\R^2\to \R_{\geq 0}$ be a norm, then we say that $f$ is \emph{strongly convex} with respect to $\normdot$ if there exists $\alpha>0$ such that, for every $x,y\in\R^n$, 
\begin{equation}
    f(tx+(1-t)y)\leq tf(x)+(1-t)f(y)-\frac{\alpha}{2}t(1-t)\norm{x-y}^2,\qquad\forall\,t\in[0,1].
\end{equation}

Let $\normdot:\R^2\to\R_{\geq 0}$ be a norm on $\R^2$ and define $f_\Omega:\R^2\to \R_{\geq 0}$ to be the function given by $f_\Omega(x):=\frac{1}{2}\|x\|^2$. Similarly, we define $f_{\Omega^\circ}:\R^2\to \R_{\geq 0}$ as $f_{\Omega^\circ}(x):=\frac{1}{2}\|x\|_*^2$, where $\normdot_*$ is the dual norm. 

\begin{definition}[Strictly and strongly convex norm]
    We say that $\normdot$ is \emph{strictly convex} if the function $f_\Omega$ is strictly convex. Similarly, we say that $\normdot$ is \emph{strongly convex} if $f_{\Omega}$ is strongly convex with respect to $\normdot$. 
\end{definition}

Note that, according to \cite[Prop.\ 1.6]{MR1079061}, $\normdot$ is strictly convex if and only if the associated unit ball $\Omega=\bar B^{\norm{\cdot}}_1(0)$ is a strictly convex set. Whereas, \cite[Prop.\ 2.11]{MR1079061} implies that $\normdot$ is strongly convex if and only if the associated unit ball $\Omega$ is a uniformly convex set. We recall below a well-known result on the relation between a norm $\normdot$ and its dual norm $\normdot_*$, cf. \cite[Prop.\ 2.6]{MR1623472}.

\begin{prop}\label{prop:propunderduality}
    Let $\normdot:\R^2\to\R_{>0}$ be a norm on $\R^2$, and let $\normdot_*$ be its dual norm, then:
    \begin{itemize}
        \item [(i)] $\normdot$ is a strictly convex norm if and only if $\normdot_*$ is a $C^1$ norm, i.e.  $f_{\Omega^\circ}$ is $C^1$;
        \item [(ii)] $\normdot$ is a strongly convex norm if and only if $\normdot_*$ is a $C^{1,1}$ norm, i.e.  $f_{\Omega^\circ}$ is $C^{1,1}$.
    \end{itemize}
\end{prop}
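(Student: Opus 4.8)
The plan is to prove Proposition~\ref{prop:propunderduality} by reducing both statements to the general duality between strict and strong convexity of a convex function and the smoothness of its Legendre--Fenchel conjugate. The key observation is that, for the norms under consideration, the functions $f_\Omega(x) = \tfrac12\|x\|^2$ and $f_{\Omega^\circ}(x) = \tfrac12\|x\|_*^2$ are convex conjugates of one another. Indeed, the Legendre transform of $\tfrac12\|\cdot\|^2$ is exactly $\tfrac12\|\cdot\|_*^2$, a standard fact in convex analysis (see, e.g., \cite[Ex.~11.10]{Rockafellar+1970} or the references around \cite{MR1623472}). Once this conjugacy is in place, both (i) and (ii) follow from the well-known general principle: a closed convex function $f$ is essentially strictly convex if and only if its conjugate $f^*$ is essentially differentiable, and $f$ is strongly convex (with respect to a norm) if and only if $f^*$ is differentiable with a Lipschitz gradient, i.e. $C^{1,1}$.

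First I would record the conjugacy $f_\Omega^* = f_{\Omega^\circ}$ and its symmetric counterpart $f_{\Omega^\circ}^* = f_\Omega$, which holds because the bipolar theorem gives $\Omega^{\circ\circ} = \Omega$, hence $(\normdot_*)_* = \normdot$. For part (i), I would invoke the duality theorem of Asplund--Rockafellar type: for a pair of conjugate convex functions, strict convexity of one is equivalent to Gateaux (equivalently, in finite dimensions, continuous) differentiability of the other. Applying this with $f = f_\Omega$ and $f^* = f_{\Omega^\circ}$ gives that $f_\Omega$ is strictly convex if and only if $f_{\Omega^\circ}$ is $C^1$, which is precisely statement (i) after unwinding the definitions of strictly convex and $C^1$ norm. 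For part (ii), I would use the quantitative refinement of this duality: a convex function is strongly convex with modulus $\alpha$ (relative to a norm) if and only if its conjugate is differentiable with an $\alpha^{-1}$-Lipschitz gradient. This is the statement that strong convexity and $C^{1,1}$-smoothness are dual, which applied to the conjugate pair $(f_\Omega, f_{\Omega^\circ})$ yields exactly statement (ii).

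The main technical point, and the place where a little care is needed, is the passage between the \emph{norm}-level definitions used in the paper (strict/strong convexity of $\normdot$ defined via $f_\Omega$, and $C^1$/$C^{1,1}$ regularity of $\normdot_*$ defined via $f_{\Omega^\circ}$) and the \emph{function}-level statements of the convex-analytic duality theorems. Since the paper has already defined strict and strong convexity of a norm directly in terms of the squared functions $f_\Omega$, and $C^1$/$C^{1,1}$ regularity of $\normdot_*$ directly as the corresponding regularity of $f_{\Omega^\circ}$, this translation is essentially built into the definitions, so the reduction is immediate. One should only observe that $f_\Omega$ is finite, convex, coercive and lower semicontinuous on all of $\R^2$ (being a multiple of a squared norm), so that the conjugation machinery applies without the pathologies that can arise for functions taking the value $+\infty$.

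The hard part, if any, is verifying the quantitative duality in part (ii) with the correct norm in the strong-convexity modulus: the paper's definition of strong convexity of $f_\Omega$ is taken \emph{with respect to $\normdot$ itself}, and one must check that this matches the norm controlling the Lipschitz constant of $\nabla f_{\Omega^\circ}$. This is consistent because the Lipschitz condition for $\nabla f_{\Omega^\circ}$ should naturally be measured in the dual norm $\normdot_*$ on the domain and $\normdot$ on the target, which is precisely the pairing dictated by the conjugacy. I would cite \cite[Prop.~2.6]{MR1623472} for the finite-dimensional statement, so that the proof reduces to quoting this reference and identifying the conjugate pair; no genuinely new estimate is required.
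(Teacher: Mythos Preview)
Your proposal is correct and aligns with the paper's approach: the paper does not give a proof at all but simply states the proposition as a well-known result, citing \cite[Prop.\ 2.6]{MR1623472}, which is exactly the reference you invoke for the quantitative duality. Your sketch of the Legendre--Fenchel argument is the standard proof underlying that citation, so nothing further is needed.
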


Furthermore, we can relate the regularity of the angle correspondence $C_\circ$ with the regularity of the norm. Define the map $Q:\R/ 2 \pi_{\Omega^\circ} \mathbb Z \to  \partial \Omega^\circ \subset \R^2$ that associate to the angle $\phi$ the vector $Q_\phi$ (this is the analogous map of the one defined in Lemma \ref{lem:facile_cazzo} for $\Omega$). 

\begin{lemma}\label{lem:subdifferential}
    Let $\normdot$ be a norm and let $\phi\in \R/2\pi_{\Omega^\circ}\mathbb{Z}$ be an angle such that $Q_\phi\in\partial\Omega^\circ$ is a differentiability point of $\normdot_*$. Then, the angle correspondence map $C_\circ$ from $\Omega^\circ$ to $\Omega$ satisfies
    \begin{equation}
    C_\circ(\phi)=P^{-1}\circ \diff\normdot_*\circ Q(\phi)=P^{-1}\circ \diff_{Q_\phi}\normdot_*.
    \end{equation}
\end{lemma}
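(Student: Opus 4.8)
The plan is to unwind the definitions of the angle correspondence map and of the convex trigonometric functions, and then recognize the correspondence relation as exactly the statement that $\diff_{Q_\phi}\normdot_*$ is the unique vector realizing the supporting-hyperplane duality between $\Omega^\circ$ and $\Omega$. First I would recall that, by definition of the correspondence, $\phi\xleftrightarrow{\Omega^\circ}\theta$ means that the vector $P_\theta=(\cosom(\theta),\sinom(\theta))\in\partial\Omega$ determines a supporting half-plane of $\Omega^\circ$ at $Q_\phi$; by Proposition \ref{prop:correspondence} this is equivalent to the Pythagorean identity $\scal{P_\theta}{Q_\phi}=\cosom(\theta)\cosomp(\phi)+\sinom(\theta)\sinomp(\phi)=1$. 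Thus $C_\circ(\phi)$ is characterized as the angle $\theta$ (equivalently, via $P$, the point $P_\theta\in\partial\Omega$) for which $\scal{P_\theta}{Q_\phi}=1$ and $P_\theta$ supports $\Omega^\circ$ at $Q_\phi$.

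The core of the argument is to identify this supporting vector with the differential $\diff_{Q_\phi}\normdot_*$. Since $\normdot_*$ is positively $1$-homogeneous, Euler's identity gives $\scal{\diff_{Q_\phi}\normdot_*}{Q_\phi}=\norm{Q_\phi}_*=1$, because $Q_\phi\in\partial\Omega^\circ=\partial\bar B^{\normdot_*}_1(0)$. Moreover, the gradient of a norm at a boundary point of its unit ball is precisely an outer normal to that unit ball, hence $\diff_{Q_\phi}\normdot_*$ determines the supporting hyperplane of $\Omega^\circ$ at $Q_\phi$; combined with the normalization $\scal{\diff_{Q_\phi}\normdot_*}{Q_\phi}=1$, this means $\diff_{Q_\phi}\normdot_*$ lies on $\partial\Omega=\partial\bar B^{\normdot}_1(0)$ (here I would invoke that $\Omega^{\circ\circ}=\Omega$ from the bipolar theorem, already used before Proposition \ref{prop:correspondence}, to see that the dual-norm gradient lands in $\partial\Omega$). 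Therefore $\diff_{Q_\phi}\normdot_*$ is exactly the point $P_\theta\in\partial\Omega$ that supports $\Omega^\circ$ at $Q_\phi$ and satisfies the Pythagorean identity, i.e. $\diff_{Q_\phi}\normdot_*=P_{C_\circ(\phi)}$. Applying $P^{-1}$ (which is well-defined and bi-Lipschitz by Lemma \ref{lem:facile_cazzo}) yields $C_\circ(\phi)=P^{-1}\circ\diff_{Q_\phi}\normdot_*=P^{-1}\circ\diff\normdot_*\circ Q(\phi)$, as claimed.

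The two points requiring care are the \emph{uniqueness} of the corresponding angle and the standard but essential fact that $\diff_{Q_\phi}\normdot_*$ is a \emph{normal} to $\Omega^\circ$. Uniqueness is where the differentiability hypothesis enters: the assumption that $Q_\phi$ is a differentiability point of $\normdot_*$ guarantees that $\Omega^\circ$ has a unique supporting hyperplane at $Q_\phi$, so the vector $P_\theta$ supporting $\Omega^\circ$ there is unique and $C_\circ(\phi)$ is genuinely single-valued at $\phi$ (consistent with Proposition \ref{prop:difftrig}); without it, $C_\circ$ could be interval-valued and the identity with the single vector $\diff_{Q_\phi}\normdot_*$ would fail. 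The normality claim is a routine consequence of $1$-homogeneity: for $x$ on the unit sphere $\{\normdot_*=1\}$, the level set is a boundary of the sublevel convex body and $\nabla\normdot_*(x)$ is orthogonal to its tangent, hence an outer normal to $\Omega^\circ$.

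I expect the main obstacle to be purely bookkeeping rather than conceptual: one must carefully match the two descriptions of ``the supporting vector''—the geometric one coming from the correspondence (a point of $\partial\Omega$ cutting out a half-plane through $Q_\phi$) and the analytic one coming from the gradient—and verify that the normalizations agree so that the supporting vector is not merely parallel to $\diff_{Q_\phi}\normdot_*$ but equal to it. This hinges on checking that the half-plane $\{\scal{P_\theta}{\cdot}\le 1\}$ from the definition of correspondence is the same as the supporting half-plane $\{\scal{\diff_{Q_\phi}\normdot_*}{\cdot}\le 1\}$ of $\Omega^\circ$ at $Q_\phi$, which is precisely what the Euler-identity normalization $\scal{\diff_{Q_\phi}\normdot_*}{Q_\phi}=1$ secures.
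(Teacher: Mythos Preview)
Your proposal is correct and follows essentially the same route as the paper: identify the correspondence $\phi\xleftrightarrow{\Omega^\circ}\theta$ via the Pythagorean identity as the statement that $P_\theta$ is the dual vector of $Q_\phi$, then recognize this dual vector as $\diff_{Q_\phi}\normdot_*$ and apply $P^{-1}$. The only difference is that where the paper delegates the identification ``dual vector $=\diff_{Q_\phi}\normdot_*$'' to an external reference (\cite[Lem.~3.6]{magnabosco2023failure}), you supply the argument directly via Euler's identity and the supporting-hyperplane characterization, which is a fine and self-contained alternative.
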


\begin{proof}
    According to the Pythagorean identity \eqref{eq:pytagorean}, $\phi\xleftrightarrow{\Omega^\circ} \theta=C_\circ(\phi)$ if and only if $P_\theta$ is a dual vector of $Q_\phi$. Thus, if $Q_\phi$ is a differentiability point of $\normdot_*$, \cite[Lem.\ 3.6]{magnabosco2023failure} ensures that
    \begin{equation}\label{eq:correspondence}
        (\cosom(\theta),\sinom(\theta))=P_\theta = \diff_{Q_\phi} \normdot_*.
    \end{equation}
    The thesis follows from the definition of the maps $P,Q$, recalling also that $P$ is invertible.
\end{proof}

\begin{prop}\label{prop:regularityCo}
     Let $\normdot$ be a norm on $\R^2$ and $C_\circ$ be the angle correspondence map from $\Omega^\circ$ to $\Omega$, then:
    \begin{itemize}
        \item[(i)] $\normdot$ is a $C^1$ norm if and only if $C_\circ$ is strictly increasing,
        \item[(ii)] $\normdot$ is a strictly convex norm if and only if $C_\circ$ is single valued at every angle and continuous,
        \item[(iii)] $\normdot$ is a strongly convex norm if and only if  $C_\circ$ is Lipschitz continuous.
    \end{itemize}
\end{prop}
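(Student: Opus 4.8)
The plan is to translate the convexity and smoothness properties of $\normdot$ into geometric features of the convex bodies $\Omega$ and $\Omega^\circ$ via polarity, and then to read these off from the structure of the maximal monotone multi-valued map $C_\circ$. The central dictionary I will set up is the following pair of equivalences for $C_\circ$ viewed as a monotone map (recall that, by definition, $C_\circ(\phi)$ is the \emph{maximal} closed interval of angles $\theta$ with $\phi\xleftrightarrow{\Omega^\circ}\theta$): first, $C_\circ$ is multi-valued at $\phi$ — i.e.\ $C_\circ(\phi)$ is a nondegenerate interval — precisely when $\partial\Omega^\circ$ has a corner at $Q_\phi$; second, $C_\circ$ is constant on a nondegenerate interval of angles precisely when the corresponding arc of $\partial\Omega^\circ$ is a flat edge, equivalently (by the bipolar theorem and Proposition \ref{prop:correspondence}) when $\partial\Omega$ has a corner at the dual contact point $P_\theta$. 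Both statements follow from unwinding the Pythagorean characterisation \eqref{eq:pytagorean}: $\phi\xleftrightarrow{\Omega^\circ}\theta$ means exactly that $Q_\phi$ is an outer normal to $\Omega$ at $P_\theta$, so the fibre of $\theta$'s over a fixed $\phi$ is the contact set of the supporting line of $\Omega$ with normal $Q_\phi$, while the fibre of $\phi$'s over a fixed $\theta$ is the normal cone of $\Omega$ at $P_\theta$.

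With this dictionary, statements (i) and (ii) reduce to classical facts together with Proposition \ref{prop:propunderduality}. For (ii), $C_\circ$ is single-valued at every angle iff $\partial\Omega^\circ$ has no corners, iff $\normdot_*$ is $C^1$, iff $\normdot$ is strictly convex by Proposition \ref{prop:propunderduality}(i); moreover, since $C_\circ$ is a maximal monotone map, a jump at some $\phi_0$ would force $C_\circ(\phi_0)$ to equal the whole interval between its one-sided limits and hence be multi-valued, so single-valuedness everywhere already yields continuity and the two requirements in (ii) coincide. For (i), the second equivalence of the dictionary shows that $C_\circ$ fails to be strictly increasing — i.e.\ is locally constant on some nondegenerate interval — exactly when $\partial\Omega$ has a corner; since a planar convex body has $C^1$ boundary precisely when it has no corners, and this is equivalent to $\normdot$ being $C^1$, we conclude that $C_\circ$ is strictly increasing iff $\normdot$ is $C^1$.

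For item (iii) the argument is more computational and rests on Lemma \ref{lem:subdifferential}. If $\normdot$ is strongly convex then, by Proposition \ref{prop:propunderduality}(ii), $\normdot_*$ is $C^{1,1}$; in particular $\normdot_*$ is differentiable at every $Q_\phi$, so Lemma \ref{lem:subdifferential} yields the global identity $C_\circ=P^{-1}\circ\diff\normdot_*\circ Q$. Since $P^{-1}$ is Lipschitz by Lemma \ref{lem:facile_cazzo}, $Q$ is Lipschitz by its analogue for $\Omega^\circ$, and $\diff\normdot_*$ is Lipschitz by $C^{1,1}$-regularity, the composition $C_\circ$ is Lipschitz. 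Conversely, if $C_\circ$ is Lipschitz then it is in particular single-valued and continuous, so by item (ii) applied to $\normdot$ we already know that $\normdot$ is strictly convex and hence $\normdot_*$ is $C^1$; this makes $\diff\normdot_*$ defined everywhere and equal to $P\circ C_\circ\circ Q^{-1}$, which is Lipschitz because $P$ is Lipschitz and $Q^{-1}$ is Lipschitz (again by Lemma \ref{lem:facile_cazzo} and its dual). Therefore $\normdot_*$ is $C^{1,1}$ and Proposition \ref{prop:propunderduality}(ii) gives the strong convexity of $\normdot$.

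I expect the main obstacle to be the careful verification of the dictionary in the first paragraph — in particular, matching the two distinct degeneracies of the monotone map $C_\circ$ (being multi-valued versus being locally constant) with the two dual phenomena (corners of $\Omega^\circ$ versus corners of $\Omega$), and checking that the maximality built into the definition of $C_\circ$ is exactly what upgrades single-valuedness to continuity. Once this convex-geometric bookkeeping is in place, items (i) and (ii) are immediate, and item (iii) is a routine chaining of Lipschitz maps through Lemmas \ref{lem:subdifferential} and \ref{lem:facile_cazzo}.
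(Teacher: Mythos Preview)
Your proof is correct and follows essentially the same approach as the paper: both rely on the duality of Proposition~\ref{prop:propunderduality}, the bi-Lipschitz parametrizations $P$ and $Q$ of Lemma~\ref{lem:facile_cazzo}, and (for item (iii)) the identity $C_\circ=P^{-1}\circ\diff\normdot_*\circ Q$ from Lemma~\ref{lem:subdifferential}. The only cosmetic difference is that for (i) and (ii) you unpack the geometric dictionary (corners/flat edges versus multi-valuedness/local constancy) explicitly, whereas the paper is terser, citing \cite{lok} for (i) and invoking Lemma~\ref{lem:subdifferential} already for (ii).
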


\begin{proof}
     $\textsl{(i)}$ According to Proposition \ref{prop:propunderduality}, the norm $\normdot$ is $C^1$ if and only if the dual norm $\normdot_*$ is strictly convex. The equivalence between strict convexity of the reference set (in this case $\Omega^\circ$) and strict monotonicity of the angle correspondence has been already observed in \cite[Sec.\ 3]{lok}. \\
     $\textsl{(ii)}$ According to Proposition \ref{prop:propunderduality}, the norm $\normdot$ is strictly convex if and only if the dual norm $\normdot_*$ is $C^1$. This is equivalent to asking that $\diff\normdot_*$ is continuous. The thesis follows from Lemma \ref{lem:subdifferential} and Lemma \ref{lem:facile_cazzo}.\\
     $\textsl{(iii)}$ The thesis can be proven similarly to item $\textsl{(ii)}$, observing that the map $C_\circ=P^{-1}\circ \diff\normdot_* \circ Q$ is Lipschitz if and only if $\diff\normdot_*$ is Lipschitz, as a consequence of Lemma \ref{lem:facile_cazzo}.  
\end{proof}

Finally, since in this case the set $\Omega$ is symmetric with respect to the origin, we have the following identities for the generalized trigonometric functions
\begin{equation}\label{eq:symmetry1}
    \cosomp(\phi \pm \pi_{\Omega^\circ})=-\cosomp(\phi) \quad \text{ and }\quad \sinomp(\phi \pm \pi_{\Omega^\circ})=-\sinomp(\phi)\qquad \forall \phi\in \R.
\end{equation}
The same symmetry guarantees the following property of the correspondence $C^\circ$:
\begin{equation}\label{eq:symmetry2}
    C_\circ(\phi \pm \pi_{\Omega^\circ})= C_\circ (\phi) \pm \pi_{\Omega}.
\end{equation}

\begin{remark}
    All the properties listed above, namely Lemma \ref{lem:subdifferential}, Proposition \ref{prop:regularityCo}, as well as the analogous of \eqref{eq:symmetry1} and \eqref{eq:symmetry2}, hold for $\cosom$, $\sinom$ and $C^\circ$.
\end{remark}

\section{The \sF geometry of the Heisenberg group}
\label{sec:saymyname}

We present here the \sF Heisenberg group and study its geodesics. Let us consider the Lie group $M=\R^3$, equipped with the non-commutative group law, defined by
\begin{equation}
    (x, y, z) \star (x', y', z') = \bigg(x+x',y+y',z+z'+\frac12(xy' - x'y)\bigg),\qquad\forall\,(x, y, z), (x', y', z')\in\R^3,
\end{equation}
with identity element $\e=(0,0,0)$. We define the left-invariant vector fields
\begin{equation}
    X_1:=\partial_x-\frac{y}2\partial_z,\qquad X_2:=\partial_y+\frac{x}2\partial_z.
\end{equation}
The associated distribution of rank $2$ is $\dis:=\text{span}\{X_1,X_2\}$. It can be easily seen that $\dis$ is bracket-generating. Then, letting $\normdot:\R^2\to\R_{\geq 0}$ be a norm, the \emph{\sF Heisenberg group} $\hei$ is the Lie group $M$ equipped with the \sF structure $(\dis,\normdot)$. For further details on \sF geometry, we refer to \cite[Sec.\ 2.2]{magnabosco2023failure}. We define the associated left-invariant norm on $\dis$ as 
\begin{equation}
    \|v\|_\dis:=\|(u_1,u_2)\|,\qquad \text{for every }v=u_1X_1+u_2X_2\in \dis. 
\end{equation}
A curve $\gamma: [0,1]\to \hei$ is \emph{admissible} if its velocity $\dot\gamma(t)$ exists almost everywhere and there exists a function $u=(u_1,u_2)\in L^2([0,1];\R^2)$ such that
\begin{equation}
\label{eq:admissible_curve}
    \dot\gamma(t)= u_1(t)X_1(\gamma(t))+u_2(t)X_2(\gamma(t))\in\dis_{\gamma(t)},\qquad\text{for a.e. }t\in [0,1].
\end{equation}
The function $u$ is called \emph{the control}. We define the \emph{length} of an admissible curve:
\begin{equation}
    \ell(\gamma):=\int_0^1 \norm{\dot\gamma(t)}_{\dis} \de t\in[0,\infty).
\end{equation}
For every couple of points $q_0,q_1\in M$, define the \emph{\sF distance} between them as
\begin{equation*}
    \di (q_0,q_1) := \inf \left\{\ell(\gamma)\, :\, \gamma \text{ admissible, } \gamma(0)=q_0 \text{ and }\gamma(1)=q_1\right\}.
\end{equation*}

We recall that the Chow--Rashevskii Theorem ensures that the \sF distance on $\hei$ is finite, continuous on and the induced topology is the manifold one. 

\begin{remark}
    Since both the norm and the distribution are left-invariant, the left-translations defined by
\begin{equation}
\label{eq:left_translations}
    L_p:\hei\to\hei;\qquad L_p(q):=p\star q,
\end{equation}
are isometries for every $p\in \hei$.
\end{remark}

\begin{definition}
    Let $\hei$ be the Heisenberg group. We say that a Borel measure $\m$ on $\hei$ is \emph{smooth} if it is absolutely continuous with respect the Lebesgue measure $\Leb^3$ with a smooth and strictly positive density.    
\end{definition}

\subsection{Geodesics in the Heisenberg group}

In the \sF Heisenberg group, the geodesics were originally studied in \cite{Busemann} and \cite{Bereszynski} for the three-dimensional case and in \cite{lok2} for general left-invariant structures on higher-dimensional Heisenberg groups.
Now, we define the map $G_t$ which plays the role of a \emph{\sF exponential map} from the origin at time $t$, as justified by Propositions \ref{prop:respira} and \ref{prop:summary} below.

\begin{definition}\label{def:exponentialmap}
    Let $\hei$ be the \sF Heisenberg group, equipped with a norm $\normdot$, let $\Omega$ be the associated closed unit ball and $\Omega^\circ$ its polar. Let 
    \begin{equation}
        \mathscr{U}:=\R_{>0}\times \R/2\pi_{\Omega^\circ}\mathbb{Z}\times \{(-2\pi_{\Omega^\circ},2\pi_{\Omega^\circ})\setminus\{0\}\}.
    \end{equation} 
    For every $t\in \R$, we define the continuous mapping
    $G_t:\mathscr{U}\to\hei$ such that for any $(r,\phi,\omega)\in \mathscr{U}$, $G_t(r,\phi,\omega):=(x_t,y_t,z_t)$, where
\begin{equation}\label{eq:xyz}
    \begin{cases}
    \begin{aligned}
        x_t(r,\phi,\omega) &= \frac{r}{\omega}\left(\sinomp(\phi+\omega t) - \sinomp(\phi)\right),\\
        y_t(r,\phi,\omega) &= -\frac{r}{\omega}\left(\cosomp(\phi+\omega t) - \cosomp(\phi)\right),\\
        z_t(r,\phi,\omega) &= \frac{r^2}{2\omega^2}\left(\omega t + \cosomp(\phi+\omega t) \sinomp(\phi) - \sinomp(\phi+\omega t ) \cosomp(\phi)\right).
    \end{aligned}
    \end{cases}
\end{equation}
We stress that the domain $\mathscr{U}$ does not contain $\omega=0,\pm 2\pi_{\Omega^\circ}$. 
\end{definition}

 The curve $[0,1]\ni t\mapsto  G_t(r,\phi,\omega)\in \hei$ satisfies the Pontryagin maximum principle, cf. \cite[Thm.\ 4]{lok2}, thus, as a consequence of \cite[Thm.\ 1]{Bereszynski}, we deduce the following result.

\begin{prop}
    \label{prop:respira}
    The curve $\gamma_{(r,\phi,\omega)}:[0,1]\to \hei$, defined as $\gamma_{(r,\phi,\omega)}(t):=G_t(r,\phi,\omega)$ is a geodesic between its endpoints $\e=\gamma_{(r,\phi,\omega)}(0)$ and $\gamma_{(r,\phi,\omega)}(1)$.
\end{prop}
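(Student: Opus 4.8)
The plan is to establish two facts: that $\gamma_{(r,\phi,\omega)}$ is admissible with constant speed $r$, and that it is length-minimizing between its endpoints. Together with the constant-speed parametrization, these give the geodesic property in the metric sense of Section \ref{sec:CD}.

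First I would check admissibility by differentiating \eqref{eq:xyz}. Abbreviating $\psi(t) := \phi + \omega t$ and using the differentiation rules of Proposition \ref{prop:difftrig} in the form $\sinomp'(\psi) = \cosom(C_\circ(\psi))$ and $\cosomp'(\psi) = -\sinom(C_\circ(\psi))$ --- valid for a.e.\ $\psi$ since $\sinomp,\cosomp$ are Lipschitz (Remark \ref{rmk:Lip}), hence for a.e.\ $t$ as $\omega\neq 0$ --- one finds
\begin{equation*}
    \dot x_t = r\cosom(C_\circ(\psi)),\qquad \dot y_t = r\sinom(C_\circ(\psi)).
\end{equation*}
Thus the horizontal velocity $(\dot x_t,\dot y_t) = r\,P_{C_\circ(\psi)}$ lies on $r\,\partial\Omega$, so $\norm{(\dot x_t,\dot y_t)} \equiv r$. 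It then remains to verify $\dot z_t = \tfrac12(x_t\dot y_t - y_t\dot x_t)$, which is exactly condition \eqref{eq:admissible_curve} for the control $u = (\dot x_t,\dot y_t)$. Differentiating $z_t$ and inserting the same rules, this identity reduces to the Pythagorean equality \eqref{eq:pytagorean} for the corresponding pair $\psi \xleftrightarrow{\Omega^\circ} C_\circ(\psi)$, with the terms evaluated at the fixed angle $\phi$ cancelling between the two sides. Hence $\gamma_{(r,\phi,\omega)}$ is admissible of constant speed $r$, and its length on $[0,1]$ equals $r$.

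The curve being admissible, it satisfies the Pontryagin maximum principle by \cite[Thm.\ 4]{lok2}, as recalled before the statement. The substantive step is to upgrade ``extremal'' to ``minimizer'': being a critical point of the length functional does not by itself force global minimality, and normal extremals generically stop minimizing past their cut time. Here the constraint built into $\mathscr{U}$, namely $\omega\in(-2\pi_{\Omega^\circ},2\pi_{\Omega^\circ})\setminus\{0\}$, is exactly what keeps the curve within the minimizing regime: over $[0,1]$ the angle $\psi$ sweeps an interval of length $|\omega| < 2\pi_{\Omega^\circ}$, i.e.\ strictly less than one full period, so the projection to the $(x,y)$-plane does not close up. Invoking the description of minimizers in \cite[Thm.\ 1]{Bereszynski} then yields that $\gamma_{(r,\phi,\omega)}$ is length-minimizing between $\e$ and $\gamma_{(r,\phi,\omega)}(1)$.

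Finally I would assemble the conclusion. Writing $\gamma := \gamma_{(r,\phi,\omega)}$, minimality of $\gamma$ together with constant speed $r$ forces every sub-arc $\gamma|_{[s,t]}$ to be minimizing as well, so $\di(\gamma(s),\gamma(t)) = r\,|t-s|$; taking $s=0$, $t=1$ gives $\di(\gamma(0),\gamma(1)) = r$, and therefore $\di(\gamma(s),\gamma(t)) = |t-s|\,\di(\gamma(0),\gamma(1))$ for all $s,t\in[0,1]$, which is the defining property of a geodesic. I expect the minimality step to be the main obstacle: it is not computational but hinges on the parameter restriction $|\omega|<2\pi_{\Omega^\circ}$ and on the external classification of \cite[Thm.\ 1]{Bereszynski}, whereas the admissibility computation, though it requires care with the convex-trigonometric identities, is routine.
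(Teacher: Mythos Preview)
Your proposal is correct and follows the same route as the paper, which simply cites \cite[Thm.\ 4]{lok2} for the Pontryagin maximum principle and \cite[Thm.\ 1]{Bereszynski} for minimality. You have added a careful admissibility computation and an explanation of why the restriction $|\omega|<2\pi_{\Omega^\circ}$ keeps the extremal in the minimizing regime, which the paper leaves implicit; this extra detail is sound and the verification $\dot z_t=\tfrac12(x_t\dot y_t-y_t\dot x_t)$ via the Pythagorean identity \eqref{eq:pytagorean} is exactly right.
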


\begin{prop} 
\label{prop:summary}
Let $\hei$ be the \sF Heisenberg group, equipped with a norm $\normdot$ and let 
\begin{equation}
\mathscr{R}:=\{(r,\phi,\omega)\in \mathscr{U}:  -2\pi_{\Omega^\circ}+\delta_+(\phi)<\omega<2\pi_{\Omega^\circ}-\delta_-(\phi)\},
\end{equation}
where $\delta_\pm$ are defined as in \eqref{eq:interval}. For $(r,\phi,\omega)\in\mathscr{R}$, the curve $\gamma_{(r,\phi,\omega)}:[0,1]\to\hei$; $\gamma_{(r,\phi,\omega)}(t)=G_t(r,\phi,\omega)$ is the unique geodesic between its endpoints.
    
If the norm $\normdot$ is $C^1$, then $\mathscr{R}=\mathscr{U}$. While, if the norm $\normdot$ is strictly convex, then $G_t$ is continuously extended to $\omega = 0$ by 
\begin{equation}
\label{eq:straight_lines}
    \begin{cases}
    \begin{aligned}
        x_t(r,\phi,0) &= \left(r\cosom(\phi_\circ)\right) t,\\
        y_t(r,\phi,0) &= \left(r\sinom(\phi_\circ)\right) t,\\
        z_t(r,\phi,0) &= 0,
    \end{aligned}
    \end{cases}
\end{equation}
where we use the shorthand $\phi_\circ:=C_\circ(\phi)$. Finally, if the norm $\normdot$ is $C^1$ and strictly convex, then $G_1:\mathscr{U}\to\{(x,y,z)\in\hei: z\neq 0,(x,y)\neq(0,0)\}$ is a homeomorphism.
\end{prop}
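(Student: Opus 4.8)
The plan is to prove the four assertions separately, using that the curves $\gamma_{(r,\phi,\omega)}$ are already known to be geodesics (Proposition \ref{prop:respira}), the classification of Heisenberg geodesics from \cite{Bereszynski,lok2}, and the regularity dictionary of Subsection \ref{sec:tre3}. I would dispose first of the two structural reductions, then tackle the uniqueness on $\mathscr{R}$ (the crux), and finally assemble the homeomorphism. For the implication ``$\normdot$ is $C^1\Rightarrow\mathscr{R}=\mathscr{U}$'', recall that $\normdot$ is $C^1$ iff $\normdot_*$ is strictly convex (Proposition \ref{prop:propunderduality}(i) read on the dual pair), i.e.\ iff $\Omega^\circ$ is a strictly convex body; equivalently, by Proposition \ref{prop:regularityCo}(i), iff $C_\circ$ is strictly increasing. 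In either reading $\partial\Omega^\circ$ contains no nondegenerate segment, so the maximal segment through $Q_\phi$ reduces to $\{Q_\phi\}$ and $\delta_\pm(\phi)\equiv 0$ in \eqref{eq:interval}. The defining inequalities of $\mathscr{R}$ then collapse to $-2\pi_{\Omega^\circ}<\omega<2\pi_{\Omega^\circ}$, which is precisely the constraint already built into $\mathscr{U}$; hence $\mathscr{R}=\mathscr{U}$.

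For the continuous extension at $\omega=0$ under strict convexity, I would pass to the limit in \eqref{eq:xyz}. Strict convexity of $\normdot$ makes $C_\circ$ single-valued and continuous (Proposition \ref{prop:regularityCo}(ii)), so $\sinomp,\cosomp$ are everywhere differentiable and, by the $\Omega^\circ$-analogue of Proposition \ref{prop:difftrig}, $\sinomp'(\phi)=\cosom(\phi_\circ)$ and $\cosomp'(\phi)=-\sinom(\phi_\circ)$. Letting $\omega\to0$ in the difference quotients defining $x_t,y_t$ yields $rt\cosom(\phi_\circ)$ and $rt\sinom(\phi_\circ)$, matching \eqref{eq:straight_lines}. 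For $z_t$ I would use the geometric reading of the bracket: the quantity $\omega t+\cosomp(\phi+\omega t)\sinomp(\phi)-\sinomp(\phi+\omega t)\cosomp(\phi)$ equals twice the area enclosed between the arc $\{Q_\psi:\psi\in[\phi,\phi+\omega t]\}$ and its chord $[Q_\phi,Q_{\phi+\omega t}]$. Since $\normdot$ strictly convex forces $\partial\Omega^\circ$ to be $C^1$ (Proposition \ref{prop:propunderduality}(i)), this area is $o(\omega^2)$, so $z_t=\tfrac{r^2}{2\omega^2}\,o(\omega^2)\to 0$, giving the claimed extension.

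The genuinely hard part is the uniqueness on $\mathscr{R}$. Here I would first invoke the Pontryagin analysis of \cite{lok2} with \cite[Thm.\ 1]{Bereszynski} to ensure that every minimizer from $\e$ is, up to reparametrization, some $\gamma_{(r,\phi,\omega)}$ with $|\omega|\le 2\pi_{\Omega^\circ}$ (curves turning by more than a full revolution stop minimizing). Uniqueness then amounts to injectivity of the endpoint map on $\mathscr{R}$: if $\gamma_{(r,\phi,\omega)}$ and $\gamma_{(r',\phi',\omega')}$ share both endpoints, I want to force them to coincide. Reading \eqref{eq:xyz}, the planar endpoint $(x_1,y_1)$ is the chord $Q_{\phi+\omega}-Q_\phi$ scaled by $r/\omega$ and rotated by $-\tfrac\pi2$, while $z_1$ records the enclosed area. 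The crucial point is that a competing geodesic with identical data can arise only when the covector path $t\mapsto\phi+\omega t$ crosses a flat segment of $\partial\Omega^\circ$ and, after almost a full turn, \emph{re-enters} the flat segment from which it started --- exactly the configuration that the thresholds $2\pi_{\Omega^\circ}-\delta_-(\phi)$ and $-2\pi_{\Omega^\circ}+\delta_+(\phi)$ are built to forbid. I expect this flat-segment bookkeeping, together with the correct accounting of the multivaluedness of $C_\circ$ on those segments, to be the main obstacle; conceptually it shows that $\mathscr{R}$ is precisely the locus where $t=1$ precedes the cut time, extending the cut-locus description of \cite{breuillard--ledonne2013}.

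Finally, assume $\normdot$ is both $C^1$ and strictly convex. By the first reduction $\mathscr{R}=\mathscr{U}$, so every $\gamma_{(r,\phi,\omega)}$ with $(r,\phi,\omega)\in\mathscr{U}$ is the unique geodesic joining its endpoints. I would then show $G_1:\mathscr{U}\to\{(x,y,z)\in\hei: z\neq 0,\ (x,y)\neq(0,0)\}$ is a homeomorphism in three steps. Continuity is immediate from \eqref{eq:xyz}. For injectivity, the uniqueness just established reduces the question to faithfulness of the parametrization: under both hypotheses $C_\circ$ is a strictly increasing homeomorphism, so the initial velocity recovers $\phi$, the speed recovers $r$, and the turning recovers $\omega$. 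For surjectivity, any $p$ off the $z$-axis and off the plane $\{z=0\}$ admits a minimizer from $\e$ (completeness and Chow--Rashevskii), necessarily of the form $\gamma_{(r,\phi,\omega)}$ with $\omega\neq 0$ (otherwise $z=0$) and $|\omega|<2\pi_{\Omega^\circ}$ (otherwise $(x,y)=(0,0)$), i.e.\ with parameters in $\mathscr{U}$. Since $\mathscr{U}$ is a $3$-dimensional manifold and the target is open in $\R^3$, Brouwer's invariance of domain upgrades the continuous bijection $G_1$ to a homeomorphism.
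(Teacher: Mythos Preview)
Your handling of the three subsidiary claims is correct, with two pleasant variations: for the extension to $\omega=0$ you use a geometric arc-versus-chord area estimate where the paper integrates the horizontality identity $\dot z=\tfrac12(x\dot y-y\dot x)$ and applies dominated convergence; for the homeomorphism you invoke invariance of domain where the paper argues via properness of $G_1$. Both alternatives are valid.

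The uniqueness argument, however, has a real gap, which you yourself signal with ``I expect this \dots\ to be the main obstacle.'' You propose to reduce uniqueness to injectivity of $(r,\phi,\omega)\mapsto G_1(r,\phi,\omega)$ and then assert that coincidence of chord and swept area pins down the arc unless the covector path re-enters its initial flat segment of $\partial\Omega^\circ$. Two things are missing. First, when $\normdot_*$ is not strictly convex a competing minimizer need not be of the form $\gamma_{(r',\phi',\omega')}$ at all (its projection may mix circular arcs with straight runs along the flat pieces of $\partial\Omega^\circ$), so injectivity on that family is not the whole question. Second, even granting the reduction, you give no mechanism for why two subarcs of isoperimetric profiles sharing the same chord and the same enclosed area must coincide.

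The paper supplies exactly this mechanism via Noskov's threshold (Proposition~\ref{prop:Noskov}): a $2$-homogeneous $\mu$ such that whenever $|z|>\mu(x,y)$, every minimizer projects to a subarc of an isoperimetric profile that is unique \emph{up to translation}. Crucially, the paper does not argue at $t=1$; it extends the curve to a time $T>1$ with $\pi_{\Omega^\circ}<\omega T<2\pi_{\Omega^\circ}-\delta_-(\phi)$ and, by comparing with the complementary arc of parameters $(r,\phi+\omega T-\pi_{\Omega^\circ},\omega)$ on $[0,2\pi_{\Omega^\circ}/\omega-T]$, shows $z_T>\tfrac{r^2}{\omega^2}\pi_{\Omega^\circ}>\mu(x_T,y_T)$. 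Noskov then forces any second minimizer to project onto a \emph{translate} of the same profile; a short convexity argument shows the translation vector must be parallel to the chord $OP$, whence three collinear boundary points yield $[O,P]\subset\partial\Omega^\circ_{(r,\phi,\omega)}$, contradicting $\omega T<2\pi_{\Omega^\circ}-\delta_-(\phi)$. Your flat-segment intuition is the right endgame, but without Noskov's result and the extension-to-$T$ trick you have no route to it.
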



\begin{notation}
    We fix two important notations:
    \begin{itemize}
        \item[(i)] Denote by $ \Omega^\circ_{(r,\phi,\omega)}$ the following transformation of $\Omega^\circ=\overline B^{\|\cdot\|_*}_1(0)$:   
    \begin{equation}
          \Omega^\circ_{(r,\phi,\omega)}:= R_{-\pi/2}\left[\frac{r}{\omega}(\Omega^\circ-(\cosomp(\phi),\sinomp(\phi)))\right],
    \end{equation}
where $R_{-\pi/2}$ is counter-clockwise rotation in the plane of angle $-\pi/2$. 
    \item[(ii)] For a continuous curve $\gamma=(x,y,z):[0,1]\to \hei$ and for every $t\in [0,1]$, we denote by $A_t(\gamma)$ the oriented area that is swept by the vector joining $(0,0)$ with $(x(s),y(s))$, for $s\in [0,t]$.
    \end{itemize}
\end{notation}

The proof of Proposition \ref{prop:summary} is based on the following results.
\begin{prop}[{\cite[Thm.\ 1]{Bereszynski}}]
\label{prop:z=area}
    Let $\hei$ be the \sF Heisenberg group, equipped with a norm $\normdot$. Consider the curve $\gamma_{(r,\phi,\omega)}$ for some $(r,\phi,\omega)\in\mathscr{U}$. Then, the curve $t\mapsto (x_t(r,\phi,\omega), y_t(r,\phi,\omega))$, which is the projection of $\gamma_{(r,\phi,\omega)}$ onto $\{z=0\}$, belongs to the boundary of $\Omega^\circ_{(r,\phi,\omega)}$. Moreover, for every $t\in\big[0,\frac{2\pi_{\Omega^\circ}}{|\omega|}\big]$, we have $z(t)=A_t(\gamma)$.
\end{prop}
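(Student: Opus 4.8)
The plan is to verify both assertions by direct computation from the explicit formulas \eqref{eq:xyz}: the first one is an algebraic identification of the projected curve with the boundary parametrization of $\Omega^\circ_{(r,\phi,\omega)}$, while the second is the sector-area (Leibniz) formula combined with a derivative identity for the convex trigonometric functions. Both reduce, after the right substitution, to bookkeeping.

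For the first assertion, recall that $\partial\Omega^\circ$ is parametrized by $\psi\mapsto Q_\psi=(\cosomp(\psi),\sinomp(\psi))$ for $\psi\in\R/2\pi_{\Omega^\circ}\mathbb{Z}$. The map defining $\Omega^\circ_{(r,\phi,\omega)}$ is the affine bijection $p\mapsto R_{-\pi/2}\big[\tfrac{r}{\omega}(p-Q_\phi)\big]$, where $R_{-\pi/2}(a,b)=(b,-a)$; being an affine bijection, it carries $\partial\Omega^\circ$ onto $\partial\Omega^\circ_{(r,\phi,\omega)}$. Setting $\psi=\phi+\omega t$ and computing the image of $Q_\psi$, I would obtain exactly $\big(\tfrac{r}{\omega}(\sinomp(\phi+\omega t)-\sinomp(\phi)),\,-\tfrac{r}{\omega}(\cosomp(\phi+\omega t)-\cosomp(\phi))\big)=(x_t,y_t)$. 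Hence the projected curve lies on $\partial\Omega^\circ_{(r,\phi,\omega)}$, and since $\psi$ sweeps a full period as $t$ runs over $[0,2\pi_{\Omega^\circ}/|\omega|]$, the whole boundary is traced.

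For the area identity, the key ingredient is the \emph{derivative Pythagorean identity} $\cosomp(\psi)\sinomp'(\psi)-\sinomp(\psi)\cosomp'(\psi)=1$, valid at almost every $\psi$. I would derive it from the $\Omega^\circ$-analogue of Proposition \ref{prop:difftrig}, which gives $\sinomp'(\psi)=\cosom(C_\circ(\psi))$ and $\cosomp'(\psi)=-\sinom(C_\circ(\psi))$ at differentiability points, together with the Pythagorean equality \eqref{eq:pytagorean}: since $\psi\xleftrightarrow{\Omega^\circ}C_\circ(\psi)$ is equivalent to $C_\circ(\psi)\xleftrightarrow{\Omega}\psi$ by Proposition \ref{prop:correspondence}, one gets $\cosomp(\psi)\cosom(C_\circ(\psi))+\sinomp(\psi)\sinom(C_\circ(\psi))=1$, which is precisely the claimed identity. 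The oriented area swept by the radius vector is $A_t(\gamma)=\tfrac{1}{2}\int_0^t(x_s\dot y_s-y_s\dot x_s)\de s$. Substituting $\dot x_s=r\sinomp'(\phi+\omega s)$, $\dot y_s=-r\cosomp'(\phi+\omega s)$ and writing $u=\phi+\omega s$, the identity collapses the integrand to $\tfrac{r^2}{\omega}\big[1+\sinomp(\phi)\cosomp'(u)-\cosomp(\phi)\sinomp'(u)\big]$; integrating term by term via the fundamental theorem of calculus and cancelling the symmetric terms $\cosomp(\phi)\sinomp(\phi)$ yields exactly the expression for $z_t$ in \eqref{eq:xyz}.

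The main technical point, rather than a genuine obstacle, is that the convex trigonometric functions $\sinomp,\cosomp$ are only Lipschitz (hence differentiable almost everywhere and absolutely continuous), so the derivative formulas and the identity hold only almost everywhere. This suffices: the sector-area formula for the rectifiable curve $s\mapsto(x_s,y_s)$ is an honest Lebesgue integral, and the fundamental theorem of calculus applies to absolutely continuous functions, so the computation goes through unchanged. One should finally note that the argument is uniform in the sign of $\omega$, since the signed factor $r/\omega$ and the signed area $A_t$ both carry the orientation automatically, and the range of $t$ corresponds to one full (appropriately oriented) traversal of $\partial\Omega^\circ$.
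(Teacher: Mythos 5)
Your proposal is correct, but note that the paper itself offers no proof of this proposition: it is imported wholesale from \cite[Thm.\ 1]{Bereszynski}, so there is no internal argument to compare against. Your direct verification is a sound, self-contained substitute, and it is consistent with the paper's conventions at every point where a sign could go wrong: with $R_{-\pi/2}(a,b)=(b,-a)$, the image of $Q_{\phi+\omega t}$ under $p\mapsto R_{-\pi/2}\big[\tfrac{r}{\omega}(p-Q_\phi)\big]$ is exactly $(x_t,y_t)$ from \eqref{eq:xyz}, and affine bijections (including the negative scaling when $\omega<0$) carry $\partial\Omega^\circ$ onto $\partial\Omega^\circ_{(r,\phi,\omega)}$. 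Your ``derivative Pythagorean identity'' $\cosomp(\psi)\sinomp'(\psi)-\sinomp(\psi)\cosomp'(\psi)=1$ follows a.e.\ exactly as you say, from the $\Omega^\circ$-analogue of Proposition \ref{prop:difftrig} together with \eqref{eq:pytagorean} and Proposition \ref{prop:correspondence} (alternatively, it is the statement that the generalized angle is the area parametrization, i.e.\ the sector area grows at rate $\tfrac12$); inserting it into $A_t=\tfrac12\int_0^t(x_s\dot y_s-y_s\dot x_s)\de s$ and applying the fundamental theorem of calculus for the Lipschitz functions $\sinomp,\cosomp$ reproduces $z_t$ from \eqref{eq:xyz} after the $\cosomp(\phi)\sinomp(\phi)$ terms cancel — I checked the algebra and it closes. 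Two small remarks: your use of the Leibniz integral as the definition of the oriented swept area $A_t(\gamma)$ is the standard reading of the paper's verbal definition for a Lipschitz curve, and it matches the horizontality relation $\dot z_t=\tfrac12(x_t\dot y_t-y_t\dot x_t)$ that the paper itself invokes in the proof of Proposition \ref{prop:summary}; and your computation in fact proves the area identity for all $t\geq 0$, with the restriction to $\big[0,\tfrac{2\pi_{\Omega^\circ}}{|\omega|}\big]$ in the statement only reflecting that the geometric ``swept region'' interpretation is clean for at most one traversal of the boundary.
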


\begin{prop}[{\cite[Thm.\ 6]{noskov2008}}]\label{prop:Noskov}
    There exists a continuous function $\mu:\R^2 \to \R_{\geq 0}$, which is $2$-homogeneous, i.e. $\mu(\lambda p)= \lambda^2 \mu(p)$ for every $\lambda \in \R$, such that if $|z| > \mu(x,y)$ then any geodesic connecting the origin $\e$ with the point $(x,y,z)$ projects to a subpath of a unique (up to translations) isoperimetric profile on the plane $\{z=0\}$.
\end{prop}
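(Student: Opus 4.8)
The plan is to read the statement off from the isoperimetric description of geodesics in Proposition~\ref{prop:z=area}. By that result, the planar projection of any geodesic $\gamma_{(r,\phi,\omega)}$ is an arc of the boundary of $\Omega^\circ_{(r,\phi,\omega)}$, which is a translate of the rescaled Wulff shape $\tfrac{r}{|\omega|}\partial\Omega^\circ$ (rigidly rotated by the fixed angle $-\pi/2$), and $z_1=A_1(\gamma)$ is the swept area. Thus \emph{every} geodesic from $\e$ to $(x,y,z)$ automatically projects to a subpath of an isoperimetric profile, and the only genuine content of the statement is the \emph{uniqueness up to translation} of that profile, i.e.\ the uniqueness of its scale $\lambda:=r/|\omega|$ (the orientation being always the same fixed rotation of $\partial\Omega^\circ$). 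Hence I would restate the goal as: construct a continuous $2$-homogeneous $\mu$ such that, whenever $|z|>\mu(x,y)$, all geodesics reaching $(x,y,z)$ share the same scale $\lambda$.

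The first structural input is the dilation symmetry $\delta_s(x,y,z)=(sx,sy,s^2z)$, which is a homothety of $\di$ (cf.\ \cite{MR3283670}): it sends geodesics to geodesics and multiplies the profile scale by $s$. Consequently the set of ``bad'' heights (those realised by two distinct scales) above $(sx,sy)$ is exactly $s^2$ times the one above $(x,y)$, which forces the relation $\mu(\lambda p)=\lambda^2\mu(p)$ and lets me define $\mu$ on the unit circle and extend it by $2$-homogeneity. The second input is a scale-invariant reduction read off from \eqref{eq:xyz}: writing $\lambda=r/|\omega|$ and $\hat c:=(x,y)/|(x,y)|$, both the direction $\hat c$ of $(x_1,y_1)$ and the dimensionless ratio $\kappa:=z_1/|(x_1,y_1)|^2$ depend only on $(\phi,\omega)$, while $\lambda$ fixes the overall magnitude. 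Uniqueness of the scale is therefore equivalent to injectivity, modulo $\lambda$-preserving symmetries, of the map $(\phi,\omega)\mapsto(\hat c,\kappa)$, and the threshold condition $|z|>\mu(x,y)$ becomes a condition $|\kappa|>\mu(\hat c)$ on the scale-invariant ratio alone.

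The core of the argument is the analysis of the regime of large $|\kappa|$. Using \eqref{eq:xyz} and the convex-trigonometric identities, I would show that $|\kappa|\to\infty$ occurs precisely as $|\omega|\uparrow 2\pi_{\Omega^\circ}$: there the projected arc nearly closes, so $z_1\to \lambda^2\pi_{\Omega^\circ}>0$ (the area of the almost-complete Wulff shape, the cross terms in \eqref{eq:xyz} vanishing in the limit since $\phi+\omega\equiv\phi$), while $|(x_1,y_1)|\to 0$ linearly in the gap $2\pi_{\Omega^\circ}-|\omega|$. A short computation with \eqref{eq:xyz} moreover identifies the chord direction in this limit with the (rotated) tangent to $\partial\Omega^\circ$ at the point $Q_\phi$ closing the gap, so that $\hat c$ determines $\phi$ wherever that tangent is uniquely defined, and $|\kappa|$ diverges monotonically in the gap. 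Together these give injectivity, hence a unique scale, in a one-sided neighbourhood of the full loop. I would then define $\mu(\hat c)$ as the supremum of $|\kappa|$ over the complementary (non-injective, possibly branching) set of parameters, and extend by $2$-homogeneity; finiteness of this supremum follows because injectivity already holds for all $|\omega|$ sufficiently close to $2\pi_{\Omega^\circ}$, so the bad set is confined to a compact region $\{|\omega|\le 2\pi_{\Omega^\circ}-\epsilon\}$ on which \eqref{eq:xyz} is continuous and $|\kappa|$ is bounded.

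The main obstacle is precisely the case where $\normdot$ is neither $C^1$ nor strongly convex, so that $\Omega^\circ$ has corners or flat segments and the maps $\sinomp,\cosomp,C_\circ$ become non-smooth or multivalued (with $\delta_\pm>0$). Corners permit the branching of geodesics and flats permit distinct arcs sharing the same chord and swept area, and both enlarge the non-injectivity set; the delicate point is to verify that these phenomena stay confined to bounded $|\kappa|$, i.e.\ away from the full loop where the dominant area term $\tfrac{\lambda^2}{2}\,\omega$ restores monotonicity of $\kappa$, and simultaneously that the resulting threshold $\mu$ varies continuously as $\hat c$ crosses the finitely many directions corresponding to corners and segments of $\partial\Omega^\circ$. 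Establishing this uniform control, through the explicit formulas \eqref{eq:xyz} and the bi-Lipschitz estimates of Lemma~\ref{lem:facile_cazzo}, is where the bulk of the work lies.
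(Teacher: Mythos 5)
There is a preliminary point of comparison: the paper does not prove this proposition at all — it is imported verbatim from \cite[Thm.\ 6]{noskov2008} (with Remark~\ref{rmk:mu} recording what $\mu$ actually is), and it is then \emph{used} as an input in the proof of Proposition~\ref{prop:summary}. So your attempt has to stand on its own, and it contains a genuine gap right at the opening reduction. You claim that, by Proposition~\ref{prop:z=area}, ``every geodesic from $\e$ to $(x,y,z)$ automatically projects to a subpath of an isoperimetric profile,'' so that the only content left is uniqueness of the scale $\lambda=r/|\omega|$. But Proposition~\ref{prop:z=area} applies only to the explicitly parametrized Pontryagin extremals $\gamma_{(r,\phi,\omega)}$ of Definition~\ref{def:exponentialmap}; it says nothing about an arbitrary length minimizer between $\e$ and $(x,y,z)$. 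The projection of an arbitrary geodesic is a solution of a planar Dido problem (minimize $\normdot$-length with prescribed endpoints and prescribed signed area), and when the norm is not strictly convex such minimizers with slack or small area constraint are massively non-unique and need not lie on the boundary of any translate of $\frac{r}{\omega}\Omega^\circ$ (think of staircase-type minimizers for crystalline norms). This is precisely why a nonzero threshold $\mu$ exists at all, and why $\mu\equiv 0$ exactly in the strictly convex case (Remark~\ref{rmk:mu}). Your reduction therefore proves, at best, a uniqueness-of-scale statement \emph{within the exponential family} $(r,\phi,\omega)$, not the statement ``any geodesic.'' There is also a circularity hazard: the paper invokes Proposition~\ref{prop:Noskov} inside the proof of Proposition~\ref{prop:summary} exactly because one cannot yet assert that every geodesic is of the form $\gamma_{(r,\phi,\omega)}$; an argument for Proposition~\ref{prop:Noskov} that presupposes this classification assumes the very thing the proposition is needed to supply.

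Two secondary issues compound this. First, your $\mu$ (a supremum of the dimensionless ratio $|\kappa|=|z|/|(x,y)|^2$ over a ``bad'' parameter set) is not the quantity in the theorem: by Remark~\ref{rmk:mu}, $\mu(p)$ is the \emph{infimum of the positive areas swept by subpaths of isoperimetric profiles joining $0$ to $p$}; you would have to prove these agree, in addition to the continuity you defer. Second, even granting the reduction, the missing variational core is untouched: one must show that a minimizer's projection solves the Dido problem and that, once the swept area exceeds $\mu(x,y)$, every solution is an arc of the (unique up to translation, Busemann/Wulff-type) isoperimetric set — the rotated polar body — with a uniquely determined homothety factor; this is the convex-geometric content of Noskov's theorem for general, possibly non-smooth and non-strictly convex norms. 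Your dilation argument for the $2$-homogeneity $\mu(\lambda p)=\lambda^2\mu(p)$ is correct and is indeed the natural mechanism, and your asymptotics near $|\omega|\uparrow 2\pi_{\Omega^\circ}$ ($z_1\to\lambda^2\pi_{\Omega^\circ}$, chord length vanishing) are sound for the parametrized family, but these pieces do not close the gap above.
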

    \begin{remark}
        \label{rmk:mu}
        According to \cite[Thm.\ 6]{noskov2008}, for every $p\neq 0 \in \R^2$ the quantity $\mu(p)$ is the infimum of the positive areas swept by the subpaths of isoperimetric profiles joining $0$ to $p$. In particular, if the norm is strictly convex, then $\partial\Omega^\circ$ is $C^1$, and $\mu\equiv 0$.  Finally, we stress that the uniqueness of the isoperimetric profile has to be intended up to translations in $\R^2$.  
    \end{remark}

\begin{proof}[Proof of Proposition \ref{prop:summary}]

First of all, Proposition \ref{prop:respira} tells us that $\gamma_{(r,\phi,\omega)}$ is a geodesic. We are now going to prove uniqueness assuming $\omega>0$, without loss of generality. As $(r,\phi,\omega)\in\mathscr{R}$, we can fix $T>1$ such that $\pi_{\Omega^\circ}<\omega T< 2\pi_{\Omega^\circ}-\delta_-(\phi)$. Observe that the lower bound on $\omega T$ ensures that the area $A_T(\gamma_{(r,\phi,\omega)})$ is bigger than half the area of $\Omega^\circ_{(r,\phi,\omega)}$. Moreover, recalling the definition of $\delta_-(\phi)$, cf. \eqref{eq:interval}, we have that $(x_T(r,\phi,\omega),y_T(r,\phi,\omega))$ does not lie on the flat segment of $\partial \Omega^\circ_{(r,\phi,\omega)}$ containing $O:=(0,0)$. In particular, thanks to Proposition \ref{prop:z=area}, we deduce that $z_T(r,\phi,\omega)> \frac{r^2}{\omega^2}\pi_{\Omega^\circ}$. 

On the other hand, take $\phi'= \phi+\omega T - \pi_{\Omega^\circ}$ and $T'= \frac{2 \pi_{\Omega^\circ}}{\omega}-T$ and observe that, keeping in mind \eqref{eq:symmetry1}, we can explicitly compute that 
\begin{equation}\label{eq:iltruccone}
    \big(x_T(r,\phi,\omega),y_T(r,\phi,\omega)\big) = \big(x_{T'}(r,\phi',\omega),y_{T'}(r,\phi',\omega)\big).
\end{equation}
Moreover, since $\pi_{\Omega^\circ} < \omega T< 2\pi_{\Omega^\circ}-\delta_-(\phi)$, we have that $0 < \omega T'< \pi_{\Omega^\circ}$. Therefore, the area $A_{T'}(\gamma_{(r,\phi',\omega)})$ is smaller than half the area of $\Omega^\circ_{(r,\phi',\omega)}$. In particular, we deduce that $A_{T'}(\gamma_{(r,\phi',\omega)}) < \frac{r^2}{\omega^2}\pi_{\Omega^\circ}$. Then, keeping in mind Remark \ref{rmk:mu} and by Proposition \ref{prop:z=area} and \eqref{eq:iltruccone}, we can then conclude that 
    \begin{equation}
        z_T(r,\phi,\omega)> \frac{r^2}{\omega^2}\pi_{\Omega^\circ} > A_{T'}(\gamma_{(r,\phi',\omega)}) \geq \mu\big(x_T(r,\phi,\omega),y_T(r,\phi,\omega)\big).
    \end{equation}
    Thus, by Proposition \ref{prop:Noskov}, we conclude that any geodesic connecting $\e$ and $\gamma_{(r,\phi,\omega)}(T)$ projects to a subpath of an isoperimetric profile. 
    
    Assume by contradiction that there exist two distinct geodesics joining $\e$ and $\gamma_{(r,\phi,\omega)}(T)$. Both geodesics must project to a subpath of an isoperimetric profile, therefore we find two distinct subpaths  
    \begin{equation}
        \Gamma_1\subset \partial\Omega^\circ_{(r,\phi,\omega)},\qquad\Gamma_2\subset \partial\Omega^\circ_{(r',\phi',\omega')}
    \end{equation}
    of isoperimetric profiles, enclosing the same signed area (thus $\frac{r}{\omega}=\frac{r'}{\omega'}$), and joining $O=(0,0)$ and $P:=(x_T(r,\phi,\omega),y_T(r,\phi,\omega))$.
    From Proposition \ref{prop:Noskov}, cf. Remark \ref{rmk:mu}, there exists a translation $T_v:\R^2\to\R^2$ such that 
    \begin{equation}
    \label{eq:translation_v}
        T_v(x,y)=(x,y)+v,\qquad T_v(\partial\Omega^\circ_{(r,\phi,\omega)})=\partial\Omega^\circ_{(r',\phi',\omega')},
    \end{equation}
    for some $v\in\R^2$. Now, let $\ell$ be the straight line through $O$ and $P$ and let $R_i$ be the region bounded by $\Gamma_i$ and $\ell$, for $i=1,2$. Then, by our assumptions, $R_i$ has swept area $z_T(r,\phi,\omega)$, for $i=1,2$. Thus, as $T_v$ is a translation, the swept area of the regions $R_2$ and $T_v(R_1)$ is the same. But $T_v(R_1)$ is bounded by $T_v(\Gamma_1)\subset\partial\Omega^\circ_{(r',\phi',\omega')}$ and by the line $T_v(\ell)$, hence, to not change the signed area, $\ell$ must be fixed by $T_v$, meaning that $v$ is parallel to the vector $OP$. 
    Assume, without loss of generality, that $v=\lambda\, OP$ for $\lambda>0$. Then, we claim that 
    \begin{equation}
        \label{eq:claim_flat_segment_norm}
        [O,P]\subset\partial\Omega^\circ_{(r,\phi,\omega)},
    \end{equation}
    where $[O,P]:=\{s P : s\in [0,1]\}$. Indeed, according to \eqref{eq:translation_v}, the point $Q:= (T_v)^{-1}(O)= -v$ belongs to $\partial\Omega^\circ_{(r,\phi,\omega)}$ and, by construction, it is distinct from $O$ and $P$. Then we find three distinct extremal points $O,Q$ and $P$ for the convex set $\Omega^\circ_{(r,\phi,\omega)}$, belonging to $\ell$. Thus the claim \eqref{eq:claim_flat_segment_norm} is verified, by convexity.
    
    To conclude the proof, we recall that the assumption $\omega T<2\pi_{\Omega^\circ}-\delta_-(\phi)$ implies that the endpoint $P$ must not lie on the flat segment of $\partial\Omega^\circ_{(r,\phi,\omega)}$ containing $O$.
     This is in contradiction with \eqref{eq:claim_flat_segment_norm} and concludes the proof of the first part of the statement, since $T>1$ and any restriction of a geodesic is still a geodesic.

We now show the second part of the statement. If the norm $\normdot$ is $C^1$,
then $C^\circ$ is single valued and the whole interval $C_\circ(\phi)$ are mapped to a single point.
Therefore $\mathscr{R}=\mathscr U$.

If the norm $\normdot$ is strictly convex, then $C_\circ$ is single valued and continuous (cf.\ Proposition \ref{prop:regularityCo}),
and we can compute the limits
\begin{equation}
    \lim_{\omega\to 0}x_t(r,\phi,\omega)=(r\cosom(\phi_\circ))t\quad \text{and} \quad \lim_{\omega\to 0}y_t(r,\phi,\omega)=-(r\sinom(\phi_\circ))t,
\end{equation}
cf. Proposition \ref{prop:difftrig}. Moreover, since $\gamma_{(r,\phi,\omega)}$ is horizontal, then $\dot z_t=\frac{1}{2}(x_t\dot{y}_t-y_t\dot{x}_t)$, and we can obtain that
\begin{align*}
    z_t(r,\phi,\omega)=\frac{1}{2}\int_0^t&x_t\dot{y}_t-y_t\dot{x}_t\diff t\\
    =\frac{1}{2}\int_0^t&r\Big[\frac{\sinomp(\phi+\omega t) - \sinomp(\phi)}{\omega}\sinom((\phi+\omega t)_\circ)\\
    &+\frac{\cosomp(\phi+\omega t) - \cosomp(\phi)}{\omega}\cosom((\phi+\omega t)_\circ)\Big]\diff t 
\end{align*}
converges to $0$ as $\omega\to 0$, by the dominated convergence theorem.

Finally, if the norm $\normdot$ is $C^1$ and strictly convex, then $\mu\equiv 0$ (cf. Remark \ref{rmk:mu}) and $\mathscr{R}=\mathscr{U}$. Therefore, according to the first part of the statement, for every $p\in\{(x,y,z)\in\hei : z\neq 0,(x,y)\neq (0,0)\}$, there exists a unique geodesic of the form $\gamma_{(r,\phi,\omega)}$ for a unique $(r,\phi,\omega)\in \mathscr{U}$, joining $\e$ and $p$. This implies that $G_1$ is a bijection onto the subset $\{(x,y,z)\in\hei: z\neq 0,(x,y)\neq (0,0)\}$. In addition, $G_1$ is homeomorphism onto its image since it is continuous and proper on $\mathscr{U}$.
\end{proof}

\begin{remark}\label{rmk:chesbatti}
    From Definition \ref{def:exponentialmap}, we can observe that
    \begin{equation}
        G_t(r,\phi,\omega)=G_1(tr,\phi,t\omega).
    \end{equation}
    In particular, when the the norm $\normdot$ is $C^1$ and strictly convex, for every $t\in(0,1]$ Proposition \ref{prop:summary} guarantees that the map 
    \begin{equation}
        G_t: \R_{>0}\times \R/2\pi_{\Omega^\circ}\mathbb{Z} \times(-2\pi_{\Omega^\circ}/t,2\pi_{\Omega^\circ}/t)\setminus\{0\} \to \{(x,y,z)\in\hei: z\neq 0,(x,y)\neq (0,0)\}
    \end{equation}
    is a homeomorphism. Note that this identifies a sub-Finsler analogue to the so-called cotangent injectivity domain of sub-Riemannian geometry.
\end{remark}

\subsection{The Jacobian of the exponential map}\label{subsec:jacobian}

In this section we study the Jacobian of the exponential map \eqref{eq:xyz}, in particular when it is well-defined and its properties depending on the assumption we make on the reference norm $\normdot$. This object will play a fundamental role in this work, as it will allow us to formulate strategies to address the validity of the $\MCP(K,N)$ and $\cd(K,N)$ conditions (cf. Section \ref{sec:CDMCPH}).

In this first proposition we identify the set of differentiability points of the exponential map
and provide the explicit expression of the Jacobian at these points. We recall that $\mathsf D_0\subset \R/2\pi_{\Omega^\circ}\mathbb{Z}$ is the set of differentiability points of the functions $\sinomp$ and $\cosomp$, cf.\ Section \ref{sec:convex_trigtrig}.
\begin{prop}
\label{prop:NONO}
    Let $\hei$ be the \sF Heisenberg group, equipped with a norm $\normdot$. Given any $(r,\phi,\omega)\in \mathscr{U}$ with $\phi\in \mathsf{D}_0$, for every $t\in [0,1]$ such that $\phi+ \omega t\in \mathsf{D}_0$,
    the exponential map at time $t$
    \begin{equation}
        (r,\phi,\omega)\longmapsto G_t(r,\phi,\omega)
    \end{equation}
    is differentiable in $(r,\phi, \omega)$ with Jacobian 
    \begin{equation}\label{eq:jacobian}
    \begin{split}
         \J_t (r,\phi, \omega) = \frac{r^3 t}{ \omega^4} \bigg[2 &- \Big(\sinomp(\phi + \omega t) \sinom(\phi_\circ)+ \cosomp(\phi + \omega t) \cosom(\phi_\circ)\Big)  \\
         &- \Big(\sinom\big(( \phi + \omega t)_\circ\big) \sinomp( \phi)+ \cosom\big( ( \phi + \omega t)_\circ \big) \cosomp( \phi)\Big)\\
         &- \omega t \Big(\sinom\big((\phi +  \omega t)_\circ\big) \cosom(\phi_\circ) - \cosom\big((\phi + \omega t)_\circ\big) \sinom(\phi_\circ) \Big) \bigg]. 
    \end{split}
    \end{equation}
    
\end{prop}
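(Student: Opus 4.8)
The plan is to establish differentiability first, and then to carry out the $3\times 3$ determinant computation by hand, keeping it tractable with a single column operation and the Pythagorean equality \eqref{eq:pytagorean}. For differentiability, observe that since $\phi\in\mathsf{D}_0$ and $\phi+\omega t\in\mathsf{D}_0$, the functions $\sinomp,\cosomp$ are differentiable at both $\phi$ and $\phi+\omega t$; as the affine maps $(r,\phi,\omega)\mapsto\phi$ and $(r,\phi,\omega)\mapsto\phi+\omega t$ are smooth and $\omega\neq 0$, the chain rule for functions differentiable at a point shows that each of $\sinomp(\phi),\cosomp(\phi),\sinomp(\phi+\omega t),\cosomp(\phi+\omega t)$ is differentiable at $(r,\phi,\omega)$, hence so is $G_t$, being assembled from these through smooth operations. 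The partial derivatives are then computed using Proposition \ref{prop:difftrig}, namely $\sinomp'(\phi)=\cosom(\phi_\circ)$ and $\cosomp'(\phi)=-\sinom(\phi_\circ)$, together with the analogous identities at $\phi+\omega t$.

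For the determinant, I would write $S=\sinomp(\phi)$, $C=\cosomp(\phi)$, $S_1=\sinomp(\phi+\omega t)$, $C_1=\cosomp(\phi+\omega t)$ and $a=\cosom(\phi_\circ)$, $b=\sinom(\phi_\circ)$, $a_1=\cosom((\phi+\omega t)_\circ)$, $b_1=\sinom((\phi+\omega t)_\circ)$, and assemble the matrix of partials $\partial_{(r,\phi,\omega)}G_t$ from \eqref{eq:xyz}. The key simplification is the column operation $C_\omega\mapsto C_\omega+\tfrac{r}{\omega}C_r$, which leaves the determinant unchanged: a direct check shows that it cancels the awkward finite-difference quantities $S_1-S$ and $C_1-C$ appearing in the $\omega$-column, reducing that column to $\big(\tfrac{r a_1 t}{\omega},\,\tfrac{r b_1 t}{\omega},\,\tfrac{r^2 t}{2\omega^2}(1-b_1 S-a_1 C)\big)^\top$ (for the bottom entry one uses $\partial_\omega z_t+\tfrac{r}{\omega}\partial_r z_t=\tfrac{r^2t}{2\omega^2}(1-b_1S-a_1C)$). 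Factoring $\tfrac{r^3 t}{\omega^4}$ out of the resulting matrix—every term of the Leibniz expansion carries exactly this power of $r$, since the $x,y$ rows scale linearly and the $z$ row quadratically in $r$—leaves a matrix $M$ whose entries depend only on $S,C,S_1,C_1,a,b,a_1,b_1$ and $\omega t$, so that the goal becomes the scalar identity $\det M=2-(bS_1+aC_1)-(b_1S+a_1C)-\omega t(ab_1-a_1b)$.

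The collapse to this closed form is driven by the Pythagorean equality. Since $\phi_\circ=C_\circ(\phi)$, Proposition \ref{prop:correspondence} and \eqref{eq:pytagorean} give $aC+bS=1$, and likewise $a_1C_1+b_1S_1=1$ at $\phi+\omega t$. These two identities are used twice. First, after the column operation $C_2\mapsto C_2-C_3$ on $M$, they rewrite the two bottom-row entries as $\tfrac12\big(a(C_1-C)+b(S_1-S)\big)$ and $\tfrac12\big(a_1(C_1-C)+b_1(S_1-S)\big)$. Then, setting $u=S_1-S$, $v=C_1-C$ and $W=\omega t+C_1S-S_1C$ and expanding $\det M$ along the first row, the quadratic part collects as $-(bu+av)(b_1u+a_1v)$, while the remaining piece is $(a_1b-ab_1)W$. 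Using $bu+av=aC_1+bS_1-1$ and $b_1u+a_1v=1-(a_1C+b_1S)$ produces the two linear terms and a constant $1$, and a final application of both Pythagorean identities shows $(aC_1+bS_1)(a_1C+b_1S)-(ab_1-a_1b)(C_1S-S_1C)=(aC+bS)(a_1C_1+b_1S_1)=1$, which upgrades the constant to $2$ and leaves precisely the $\omega t$-term. I expect the main obstacle to be exactly this bookkeeping; the column operation above is what makes it manageable, and the two Pythagorean identities are what force the expression to collapse to the stated expression \eqref{eq:jacobian}.
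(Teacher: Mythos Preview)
Your proposal is correct and follows the same approach as the paper, which simply notes that differentiability comes from Proposition \ref{prop:difftrig} and that ``after routine computations, we end up with \eqref{eq:jacobian}''. Your column operation $C_\omega\mapsto C_\omega+\tfrac{r}{\omega}C_r$ and the systematic use of the Pythagorean identity \eqref{eq:pytagorean} at both $\phi$ and $\phi+\omega t$ are precisely the bookkeeping that makes those routine computations tractable, and your final algebraic identity $(aC_1+bS_1)(a_1C+b_1S)-(ab_1-a_1b)(C_1S-S_1C)=(aC+bS)(a_1C_1+b_1S_1)=1$ is exactly the collapse needed.
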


\begin{proof} 
   Note that, since $ \phi, \phi+   \omega t\in \mathsf{D}_0$, the trigonometric functions $\cosomp$ and $\sinomp$ are differentiable at $\phi$ and $\phi+  \omega t$ and, in addition, the correspondence map $C_\circ$ is single-valued at $\phi$ and $ \phi+ \omega t$. It is then possible to apply Proposition \ref{prop:difftrig} to explicitly differentiate the quantities $x_t(r,\phi,\omega)$, $y_t(r,\phi,\omega)$ and  $z_t(r,\phi,\omega)$ (cf. \eqref{eq:xyz}). After routine computations, we end up with \eqref{eq:jacobian}.
\end{proof}

\begin{remark}
\label{rmk:Jdefinedae}
Since $\mathsf{D}_0$ has full $\Leb^1$-measure in $\R/2\pi_{\Omega^\circ}\mathbb{Z}$, Fubini's theorem implies that for $\Leb^{2}$-a.e. $(\phi,\psi)\in \R/2\pi_{\Omega^\circ}\mathbb{Z}\times (-2\pi_{\Omega^\circ},2\pi_{\Omega^\circ})$, we have $\phi, \phi+\psi\in\mathsf{D}_0$. In particular, given any $t$, the Jacobian $\J_t (r,\phi, \omega) $ is defined for $\Leb^3$-almost every $(r,\phi, \omega)\in \mathscr U$. In addition, if the reference norm $\normdot$ is strictly convex, then, according to Proposition \ref{prop:regularityCo}, we know that the map $C_\circ$ is everywhere single valued (and then $\mathsf{D}_0= \R/ 2\pi_{\Omega^\circ}\mathbb{Z}$) and continuous. Therefore, the map $G_t$ is $C^1$, as it is everywhere differentiable and its Jacobian \eqref{eq:jacobian} is continuous. 
\end{remark}

We define the \emph{reduced Jacobian} as the measurable function $\J_R:\R/2\pi_{\Omega^\circ}\mathbb Z\times (-2\pi_{\Omega^\circ},2\pi_{\Omega^\circ})\to\R$, such that for every $\phi$ and $\psi$ satisfying $\phi\in \mathsf{D}_0$ and $\phi + \psi \in \mathsf{D}_0$, we have 
    \begin{equation}\label{eq:reduced_Jac}
    \begin{split}
         \J_R (\phi, \psi) := 2 &- \Big(\sinomp( \phi +  \psi) \sinom( \phi_\circ)+ \cosomp( \phi +  \psi) \cosom( \phi_\circ)\Big)  \\
         &- \Big(\sinom\big(( \phi +  \psi)_\circ\big) \sinomp( \phi)+ \cosom\big( ( \phi +  \psi)_\circ \big) \cosomp( \phi)\Big)\\
         &- \psi \Big(\sinom\big(( \phi +  \psi)_\circ\big) \cosom( \phi_\circ) - \cosom\big(( \phi +  \psi)_\circ\big) \sinom( \phi_\circ) \Big). 
    \end{split}
    \end{equation}
According to Proposition \ref{prop:NONO}, the Jacobian in \eqref{eq:jacobian}, when defined, can be expressed as 
\begin{equation}\label{eq:J-redJ}
    \J_t (r,\phi, \omega) = \frac{r^3 t}{\omega^4} \J_R (\phi, \omega t ).
\end{equation}

\begin{remark}
   Along the same lines of Remark \ref{rmk:Jdefinedae}, we deduce that the reduced Jacobian $\J_R(\phi,\psi)$ is defined $\Leb^2$-almost everywhere. Then, Lebesgue differentiation theorem ensures that for $\Leb^2$-almost every $(\phi,\psi)$, we have that $\phi, \phi+\psi\in\mathsf{D}_0$ and $(\phi,\psi)$ is a Lebesgue point for $\J_R$.
    Applying Fubini's theorem, we deduce the existence of a $\Leb^1$-full measure set $\bar {\mathsf{D}}_0\subset\mathsf{D}_0$ such that, for every $\phi \in \bar {\mathsf{D}}_0$, we have that $\phi+ \psi \in \mathsf{D}_0$ and $(\phi,\psi)$ is a Lebesgue point for $\J_R$, for $\Leb^1$-almost every $\psi \in (-2\pi_{\Omega^\circ},2\pi_{\Omega^\circ})$.
\end{remark}

\begin{prop}\label{lem:reducedJ}
    Let $\hei$ be the \sF Heisenberg group, equipped with a norm $\normdot$. Given $\phi\in\mathsf{D}_0$, we have that $\J_R(\phi,\psi)>0$ for every $\psi\in(\pi_{\Omega^\circ},2\pi_{\Omega^\circ}-\delta_-(\phi))$ such that $\phi+\psi\in\mathsf{D}_0$.
        In particular, if the reference norm $\normdot$ is $C^1$, then $\J_R(\phi,\psi)>0$ for every $\phi\in \mathsf{D}_0$ and every $\psi\in(\pi_{\Omega^\circ},2\pi_{\Omega^\circ})$ such that $\phi+\psi\in\mathsf{D}_0$.
\end{prop}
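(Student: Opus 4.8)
The plan is to rewrite the reduced Jacobian \eqref{eq:reduced_Jac} in a geometrically transparent form, using the boundary points $P_\theta=(\cosom(\theta),\sinom(\theta))\in\partial\Omega$ and $Q_\phi=(\cosomp(\phi),\sinomp(\phi))\in\partial\Omega^\circ$. Writing $\langle\cdot,\cdot\rangle$ for the Euclidean product and $u\times v:=u_1v_2-u_2v_1$ for the scalar cross product in the plane, the three bracketed terms of \eqref{eq:reduced_Jac} are precisely $\langle Q_{\phi+\psi},P_{\phi_\circ}\rangle$, $\langle Q_\phi,P_{(\phi+\psi)_\circ}\rangle$ and $P_{\phi_\circ}\times P_{(\phi+\psi)_\circ}$, so that
\[
\J_R(\phi,\psi)=2-\langle Q_{\phi+\psi},P_{\phi_\circ}\rangle-\langle Q_\phi,P_{(\phi+\psi)_\circ}\rangle-\psi\big(P_{\phi_\circ}\times P_{(\phi+\psi)_\circ}\big).
\]
Since $\phi,\phi+\psi\in\mathsf{D}_0$, the correspondents $\phi_\circ=C_\circ(\phi)$ and $(\phi+\psi)_\circ=C_\circ(\phi+\psi)$ are single-valued, and the Pythagorean identity \eqref{eq:pytagorean} yields $\langle Q_\phi,P_{\phi_\circ}\rangle=\langle Q_{\phi+\psi},P_{(\phi+\psi)_\circ}\rangle=1$. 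Splitting the constant $2$ accordingly, I would rewrite
\[
\J_R(\phi,\psi)=\big(1-\langle Q_{\phi+\psi},P_{\phi_\circ}\rangle\big)+\big(1-\langle Q_\phi,P_{(\phi+\psi)_\circ}\rangle\big)-\psi\big(P_{\phi_\circ}\times P_{(\phi+\psi)_\circ}\big),
\]
and observe that, by the polarity inequality \eqref{eq:NO_PYTHAGOREAN_IDENTITY_COMETIPERMETTI}, both parenthesized terms are nonnegative. It then remains only to control the sign of the cross-product term.

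The heart of the argument is to show that $P_{\phi_\circ}\times P_{(\phi+\psi)_\circ}\le 0$ on the whole range $\psi\in(\pi_{\Omega^\circ},2\pi_{\Omega^\circ}-\delta_-(\phi))$. For this I would use three facts: the parametrization $\theta\mapsto P_\theta$ traverses $\partial\Omega$ counterclockwise, so the polar angle of $P_\theta$ is continuous and strictly increasing over one period; the correspondence $C_\circ$ is monotone nondecreasing; and the central symmetry \eqref{eq:symmetry2} (together with the analogue of \eqref{eq:symmetry1} for $\Omega$) gives $(\phi+\pi_{\Omega^\circ})_\circ=\phi_\circ+\pi_\Omega$, i.e.\ $P_{(\phi+\pi_{\Omega^\circ})_\circ}=-P_{\phi_\circ}$. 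Monotonicity then forces $(\phi+\psi)_\circ\in[\phi_\circ+\pi_\Omega,\phi_\circ+2\pi_\Omega)$ for $\psi$ in the stated range; moreover the upper bound $\psi<2\pi_{\Omega^\circ}-\delta_-(\phi)$, combined with the description \eqref{eq:interval} of the flat segment of $\partial\Omega^\circ$ through $Q_\phi$, ensures that $Q_{\phi+\psi}$ has not yet reached that segment, so $(\phi+\psi)_\circ$ has not reached the corner value $\phi_\circ+2\pi_\Omega$ and $P_{(\phi+\psi)_\circ}\ne P_{\phi_\circ}$. Hence $P_{(\phi+\psi)_\circ}$ lies on the counterclockwise arc running from $-P_{\phi_\circ}$ up to (but excluding) $P_{\phi_\circ}$, on which $P_{\phi_\circ}\times P_{(\phi+\psi)_\circ}\le 0$, with equality exactly when $P_{(\phi+\psi)_\circ}=-P_{\phi_\circ}$.

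With this, the conclusion follows from a short case distinction, recalling that $\psi>\pi_{\Omega^\circ}>0$. If $P_{\phi_\circ}\times P_{(\phi+\psi)_\circ}<0$, the cross-product term $-\psi\big(P_{\phi_\circ}\times P_{(\phi+\psi)_\circ}\big)$ is strictly positive and, together with the two nonnegative parenthesized terms, gives $\J_R(\phi,\psi)>0$. If instead the cross product vanishes, then $P_{(\phi+\psi)_\circ}=-P_{\phi_\circ}$, so $1-\langle Q_\phi,P_{(\phi+\psi)_\circ}\rangle=1+\langle Q_\phi,P_{\phi_\circ}\rangle=2$ by \eqref{eq:pytagorean}, and again $\J_R(\phi,\psi)\ge 2>0$. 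This proves the main assertion. For the final claim, if $\normdot$ is $C^1$ then, applying Proposition \ref{prop:propunderduality} to $\normdot_*$, the dual norm $\normdot_*$ is strictly convex, hence $\Omega^\circ$ is a strictly convex set with no flat boundary segment; thus $\delta_-(\phi)=0$ for every $\phi$ and the admissible range widens to $(\pi_{\Omega^\circ},2\pi_{\Omega^\circ})$.

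The main obstacle I anticipate is the sign analysis of the cross-product term: it requires translating the monotonicity and central symmetry of convex trigonometry into the precise statement that $P_{(\phi+\psi)_\circ}$ sweeps the correct half of $\partial\Omega$, and, crucially, pinning down the role of $\delta_-(\phi)$ in excluding the degenerate return $P_{(\phi+\psi)_\circ}=P_{\phi_\circ}$, which is exactly the threshold at which the Jacobian would otherwise vanish. The remainder is the routine bookkeeping of identifying the terms of \eqref{eq:reduced_Jac} with the scalar and cross products above and invoking the Pythagorean and polarity relations.
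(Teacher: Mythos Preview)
Your proof is correct and follows essentially the same approach as the paper: both decompose $\J_R(\phi,\psi)$ into two ``polarity defect'' terms bounded via \eqref{eq:NO_PYTHAGOREAN_IDENTITY_COMETIPERMETTI} and a signed cross-product term, then argue that the cross product $P_{\phi_\circ}\times P_{(\phi+\psi)_\circ}$ is nonpositive on the given range by tracking where $P_{(\phi+\psi)_\circ}$ lies on $\partial\Omega$ via monotonicity and the symmetry \eqref{eq:symmetry2}. Your explicit case split at the end---treating separately the possibility $P_{(\phi+\psi)_\circ}=-P_{\phi_\circ}$ and computing $1-\langle Q_\phi,-P_{\phi_\circ}\rangle=2$---is in fact slightly more careful than the paper, which asserts the Euclidean angle lies in the open interval $(\pi,2\pi)$ without explicitly ruling out the anti-parallel configuration; your treatment covers that boundary case cleanly.
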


\begin{proof}
    
First of all, according to \eqref{eq:pytagorean}, we have that 
    \begin{equation}\label{eq:in1}
        \sinomp( \phi +  \psi) \sinom( \phi_\circ)+ \cosomp( \phi +  \psi) \cosom( \phi_\circ) \leq 1 ,
    \end{equation}
    \begin{equation}\label{eq:in2}
        \sinom\big(( \phi +  \psi)_\circ\big) \sinomp( \phi)+ \cosom\big( ( \phi +  \psi)_\circ \big) \cosomp( \phi) \leq 1,
    \end{equation}
    and the equalities hold if and only if $\phi+\psi\in C^\circ\circ C_\circ(\phi)$. We claim that
    \begin{equation}\label{eq:in3}
       - \sinom\big(( \phi +  \psi)_\circ\big) \cosom( \phi_\circ) + \cosom\big(( \phi +  \psi)_\circ\big) \sinom( \phi_\circ) \geq0,
    \end{equation}
    with the equality holding under the same condition. On the one hand, the quantity at the left-hand side can be interpreted as the scalar product of the vectors 
    \begin{equation}\label{eq:scalar1}
        \big( \cosom( \phi_\circ), \sinom( \phi_\circ)\big) \quad \text{and} \quad \big( - \sinom\big(( \phi +  \psi)_\circ\big), \cosom\big(( \phi +  \psi)_\circ\big)\big). 
    \end{equation}
    On the other hand, observe that the (Euclidean) angle between the vectors 
    \begin{equation}\label{eq:scalar2}
        \big( \cosom( \phi_\circ), \sinom( \phi_\circ)\big) \quad \text{and} \quad \big( \cosom\big( (\phi+\psi)_\circ\big), \sinom( (\phi+\psi)_\circ)\big)
    \end{equation}
    is in $(\pi,2\pi)$ for every $\psi \in (\pi_{\Omega^\circ}, 2 \pi_{\Omega^\circ}-\delta_-(\phi))$. Therefore, inequality \eqref{eq:in3} follows by noticing that the second vector in \eqref{eq:scalar1} is the second vector in \eqref{eq:scalar2} rotated by an (Euclidean) angle of $\frac\pi 2$. Keeping in mind \eqref{eq:reduced_Jac}, the thesis follows by putting together \eqref{eq:in1}, \eqref{eq:in2} and \eqref{eq:in3}.

    The second part of the statement is trivial, because, if the reference norm $\normdot$ is $C^1$, we have that $\delta_-(\phi)=0$ for every $\phi \in \R/2\pi_{\Omega^\circ}\mathbb{Z}$.
\end{proof}

By requiring more regularity on the reference norm $\normdot$, we can prove a stronger version of Proposition \ref{lem:reducedJ}, taking advantage of the following lemma.

\begin{lemma}
    \label{cor:diff_points_positive_measure}
    Assume that the reference norm $\normdot$ is $C^1$ and strongly convex. Let $\sfD_1^+\subset \R/2\pi_{\Omega^\circ}\mathbb{Z}$ be the set of angles where $C_\circ$ is differentiable with positive derivative. Then, $\sfD_1^+ \cap I$ has positive $\Leb^1$-measure, for every open interval $I\subset \R/2\pi_{\Omega^\circ}\mathbb{Z}$.
\end{lemma}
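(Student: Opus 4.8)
The plan is to extract from Proposition \ref{prop:regularityCo} the two regularity properties that the standing hypotheses confer on the correspondence map, and then run a soft measure-theoretic argument. Since $\normdot$ is $C^1$, item \textsl{(i)} of Proposition \ref{prop:regularityCo} tells us that $C_\circ$ is \emph{strictly increasing}; since $\normdot$ is strongly convex, item \textsl{(iii)} tells us that $C_\circ$ is \emph{Lipschitz continuous}. Using the quasi-periodicity \eqref{eq:symmetry2}, we may regard $C_\circ$ as a strictly increasing, Lipschitz function on the whole of $\R$, so it suffices to argue on a bounded lift of the given interval $I$. These two facts are the only input I will use.

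Being Lipschitz, $C_\circ$ is differentiable $\Leb^1$-almost everywhere and is absolutely continuous, so the fundamental theorem of calculus applies to it on every bounded interval. Moreover, since $C_\circ$ is increasing, at every point of differentiability its derivative is nonnegative; hence $C_\circ'\geq 0$ $\Leb^1$-a.e., and by definition $\sfD_1^+$ is precisely the set of differentiability points at which this a.e.-defined derivative is \emph{strictly} positive. In particular, within any interval the complement of $\sfD_1^+$ consists of the (null) set of non-differentiability points together with the set where $C_\circ'=0$.

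I would then argue by contradiction. Fix an open interval $I$ and suppose that $\Leb^1(\sfD_1^+\cap I)=0$. Since the non-differentiability points form a $\Leb^1$-null set and $C_\circ'\geq 0$ a.e., this assumption forces $C_\circ'=0$ at $\Leb^1$-a.e. point of $I$. Taking any $\phi_1<\phi_2$ in $I$ and invoking absolute continuity, we obtain
\[
    C_\circ(\phi_2)-C_\circ(\phi_1)=\int_{\phi_1}^{\phi_2} C_\circ'(s)\,\diff s = 0,
\]
which contradicts the strict monotonicity of $C_\circ$. Therefore $\Leb^1(\sfD_1^+\cap I)>0$, as claimed.

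The argument is entirely soft, so I do not expect a genuine computational obstacle. The only point requiring care is the correct bookkeeping of the almost-everywhere statements: the heart of the matter is that a strictly increasing function which is Lipschitz (hence absolutely continuous) cannot have a derivative vanishing almost everywhere on an interval. This is precisely where both hypotheses on $\normdot$ enter, each through a different item of Proposition \ref{prop:regularityCo} — the $C^1$ assumption supplying strict monotonicity and the strong convexity supplying the Lipschitz regularity that makes the fundamental theorem of calculus available.
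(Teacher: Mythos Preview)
Your proof is correct and is essentially the same as the paper's: both use Proposition \ref{prop:regularityCo} to get that $C_\circ$ is strictly increasing and Lipschitz (hence absolutely continuous), and then invoke the fundamental theorem of calculus on a subinterval to force $C_\circ'$ to be positive on a set of positive measure. The only cosmetic difference is that the paper argues directly (the integral $\int_a^b C_\circ'>0$ forces $\sfD_1^+\cap I$ to have positive measure) whereas you phrase it as a contradiction.
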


\begin{proof}
    According to item $\textsl{(iii)}$ of Proposition \ref{prop:regularityCo}, the map $C_\circ$ is Lipschitz and therefore it is differentiable $\Leb^1$-almost everywhere and absolutely continuous. Moreover, item $\textsl{(i)}$ of Proposition \ref{prop:regularityCo} guarantees that $C_\circ$ is strictly increasing. In particular, given any open interval $I=(a,b)$, we deduce that 
    \begin{equation}
        \int_{a}^{b} C'_\circ(\psi) \de \psi = C_\circ(b) - C_\circ(a)>0.
    \end{equation}
    Therefore the set $\sfD_1^+ \cap I$ must have positive $\Leb^1$-measure. 
\end{proof}

\begin{prop}
    \label{prop:stronglyJacpositivity}
    Let $\hei$ be the sub-Finsler Heisenberg group, equipped with a $C^1$ and strongly convex norm $\normdot$. Then, the reduced Jacobian $\J_R$ is everywhere non-negative and we have $\J_R(\phi, \psi) = 0$ if and only if $\psi = 0$.
\end{prop}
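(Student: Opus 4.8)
The plan is to first recast the reduced Jacobian in a transparent vectorial form, and then treat the ``easy'' and ``hard'' ranges of $\psi$ separately, the latter through an integral representation. Write $u_s := (\cosomp(\phi+s),\sinomp(\phi+s)) \in \partial\Omega^\circ$ and $v_s := (\cosom((\phi+s)_\circ),\sinom((\phi+s)_\circ))\in\partial\Omega$, and abbreviate $u:=u_0$, $u':=u_\psi$, $v:=v_0$, $v':=v_\psi$. Since $\normdot$ is strictly convex, $C_\circ$ is single-valued everywhere and the Pythagorean equality \eqref{eq:pytagorean} gives $\scal{u}{v}=\scal{u'}{v'}=1$. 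Recognizing the three bracketed expressions in \eqref{eq:reduced_Jac} as $\scal{u'}{v}$, $\scal{u}{v'}$ and the cross product $v\times v'$, and using the two normalizations, I would obtain the compact identity
\[
\J_R(\phi,\psi)=\scal{u-u'}{v-v'}-\psi\,(v\times v'),
\]
valid for every $\phi$ and every $\psi\in(-2\pi_{\Omega^\circ},2\pi_{\Omega^\circ})$, which isolates the two structurally different contributions.

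Next I would dispose of the elementary regimes. At $\psi=0$ one has $u'=u$, $v'=v$, so $\J_R=0$. For $|\psi|\in(\pi_{\Omega^\circ},2\pi_{\Omega^\circ})$ the strict positivity is precisely the content of Proposition \ref{lem:reducedJ} (whose $C^1$ form forces $\delta_-\equiv 0$). Moreover the identity above is invariant under the time-reversal $\J_R(\phi,\psi)=\J_R(\phi+\psi,-\psi)$, which merely swaps $u\leftrightarrow u'$ and $v\leftrightarrow v'$; this lets me restrict attention to $\psi\in(0,\pi_{\Omega^\circ}]$.

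The heart of the argument is this remaining range, where $v\times v'>0$, so the term $-\psi\,(v\times v')$ is negative and the two contributions compete: a term-by-term estimate as in Proposition \ref{lem:reducedJ} is no longer available. To overcome this I would pass to an integral representation. Because $\normdot$ is $C^1$ and strictly convex, both boundary parametrizations $P$ and $Q$ are everywhere differentiable, and Proposition \ref{prop:difftrig} yields the kinematic relations $\dot u_s=R_{\pi/2}v_s$ (everywhere) and $\dot v_s=C_\circ'(\phi+s)\,R_{\pi/2}u_s$ (a.e.\ $s$), where $R_{\pi/2}$ is the rotation by $\pi/2$; here strong convexity enters decisively, since it makes $C_\circ$ Lipschitz (Proposition \ref{prop:regularityCo}), hence $s\mapsto v_s$ absolutely continuous so that the fundamental theorem of calculus applies. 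Expressing $u-u'$, $v-v'$ and $v\times v'$ as integrals of these derivatives, using the orthogonality of $R_{\pi/2}$ together with $\scal{R_{\pi/2}a}{b}=a\times b$, then integrating out one variable and symmetrizing, I expect to reach
\[
\J_R(\phi,\psi)=\tfrac12\int_0^\psi\!\!\int_0^\psi(\sigma-\tau)\,C_\circ'(\phi+\tau)\,C_\circ'(\phi+\sigma)\,(u_\tau\times u_\sigma)\,\de\tau\,\de\sigma.
\]

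From this formula the conclusion follows readily. For $\tau,\sigma\in[0,\psi]$ with $\psi\le\pi_{\Omega^\circ}$ one has $|\sigma-\tau|\le\pi_{\Omega^\circ}$, and by the central symmetry of $\Omega^\circ$ the cross product $u_\tau\times u_\sigma$ has the same sign as $\sigma-\tau$, vanishing only when $\sigma=\tau$ or $|\sigma-\tau|=\pi_{\Omega^\circ}$; hence the integrand $(\sigma-\tau)(u_\tau\times u_\sigma)$ is non-negative and, as $C_\circ'\ge0$, so is $\J_R$. For strict positivity when $\psi>0$ I would invoke Lemma \ref{cor:diff_points_positive_measure}: since $C_\circ'>0$ on a set of positive measure in any interval, the set of $(\tau,\sigma)$ with $\sigma\ne\tau$ and $C_\circ'(\phi+\tau),C_\circ'(\phi+\sigma)>0$ has positive planar measure, where the integrand is strictly positive, so $\J_R(\phi,\psi)>0$. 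Combined with the elementary regimes and the symmetry reduction, this gives $\J_R\ge0$ everywhere with $\J_R=0$ if and only if $\psi=0$. The step I expect to demand the most care is the rigorous justification of the integral representation — namely checking that the a.e.\ differentiability of $C_\circ$ coming from strong convexity, together with the everywhere differentiability of $P$ and $Q$ coming from $C^1$-smoothness and strict convexity, genuinely license the use of the fundamental theorem of calculus and Fubini's theorem throughout the chain of manipulations.
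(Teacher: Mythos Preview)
Your argument is correct and takes a genuinely different route from the paper's proof. Both proofs share the same skeleton --- the trivial case $\psi=0$, Proposition \ref{lem:reducedJ} for $|\psi|\in(\pi_{\Omega^\circ},2\pi_{\Omega^\circ})$, reduction by symmetry to $\psi\in(0,\pi_{\Omega^\circ}]$, and Lemma \ref{cor:diff_points_positive_measure} for strict positivity --- but they diverge on the core step. The paper differentiates: it computes $\partial_\psi\J_R(\phi,\psi)=C_\circ'(\phi+\psi)\,\mathcal K(\phi,\psi)$, then shows $\mathcal K>0$ on $(0,\pi_{\Omega^\circ}]$ by introducing the auxiliary function $h(t)=\mathcal K(\phi+\psi-t,t)$, checking $h(0)=0$ and $h'\ge 0$, and finally integrates back using absolute continuity. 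You instead produce a closed double-integral formula for $\J_R$ itself, whose integrand $(\sigma-\tau)(u_\tau\times u_\sigma)\,C_\circ'(\phi+\tau)C_\circ'(\phi+\sigma)$ is manifestly non-negative on $[0,\psi]^2$ once $\psi\le\pi_{\Omega^\circ}$. Your identity is in fact an exact, un-expanded precursor of the later Proposition \ref{prop:formulaJRdJRint}: Taylor-expanding $u_\tau\times u_\sigma\approx(\sigma-\tau)$ recovers the leading $(t-s)^2$ term there. What your approach buys is transparency and a formula potentially reusable elsewhere; what the paper's differential route buys is that it avoids the Fubini/symmetrisation bookkeeping and isolates the factorisation $\partial_\psi\J_R=C_\circ'\,\mathcal K$, which is exactly the object exploited throughout Section \ref{sec:afterstrongly}. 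The step you flag --- that the Lipschitz regularity of $C_\circ$ from strong convexity makes $s\mapsto v_s$ absolutely continuous, so that the fundamental theorem of calculus and Fubini apply to bounded integrands on compact domains --- is precisely what is needed, and no obstacle arises there.
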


\begin{proof}
    From the explicit formula \eqref{eq:reduced_Jac}, it is easy to check that if $\psi = 0$, then $\J_R(\phi, \psi) = 0$. We now focus on proving that if $\psi \neq 0$, then $\J_R(\phi, \psi) > 0$. By the periodicity of the trigonometric functions, we can assume $\phi\in \R/2\pi_{\Omega^\circ}\mathbb{Z}$ and, without loss of generality, fix $\psi\in (0,2 \pi_{\Omega^\circ})$, as the case for negative $\psi$ is completely analogous. 
    Proposition \ref{lem:reducedJ} ensures that $\J_R(\phi, \psi) > 0$ for every $\phi \in \R/2\pi_{\Omega^\circ}\mathbb{Z}$ and $\psi \in \ointerval{\pi_{\Omega^\circ}}{2 \pi_{\Omega^\circ}}$. To prove positivity for the other values of $\psi$, we preliminary observe that, under the hypothesis of this proposition, the map $C_\circ$ is Lipschitz, cf. Proposition \ref{prop:regularityCo}. In particular, $C_\circ$ is absolutely continuous and differentiable $\Leb^1$-almost everywhere and consequently, given any $\phi \in \R/2\pi_{\Omega^\circ}\mathbb{Z}$, the function $\psi \mapsto \J_R(\phi, \psi)$ is itself absolutely continuous and differentiable $\Leb^1$-almost everywhere. For every differentiability point $\psi\in (0,\pi_{\Omega^\circ}]$ of $\J_R(\phi,\cdot)$, we have
    \begin{equation}
        \label{eq:dJK}
        \partial_\psi \J_R(\phi, \psi) = C'_\circ(\phi + \psi) \mathcal{K}(\phi, \psi),
    \end{equation} where
    \begin{multline}
    \label{def:functionK}
        \mathcal{K}(\phi, \psi) := \sin_{\Omega^\circ}(\phi + \psi)  \cos_{\Omega^\circ}(\phi) - \cos_{\Omega^\circ}(\phi + \psi) \sin_{\Omega^\circ}(\phi) \\
     - \psi \Big( \cos_{\Omega^\circ}(\phi + \psi) \cos_{\Omega}(\phi_\circ) + \sin_{\Omega^\circ}(\phi + \psi) \sin_{\Omega}(\phi_\circ) \Big).
    \end{multline}
    We claim that $\mathcal{K(\phi, \psi)} > 0$ for all $\psi \in \linterval{0}{\pi_{\Omega^\circ}}$. In order to prove this statement, we consider the function $h:\R \to \R$ defined as
    \begin{equation}
        t \mapsto \mathcal{K}(\phi + \psi - t, t),
    \end{equation}
    a direct computation shows that $h(0)=0$. Moreover, reasoning as before, we can deduce that $h$ is absolutely continuous and differentiable almost everywhere with derivative:
    \begin{equation}
        h'(t)= - t C_\circ'(\phi+\psi -t) \left( \cos_{\Omega^\circ}(\phi+\psi) \sin_{\Omega^\circ}(\phi+\psi -t) - \sin_{\Omega^\circ}(\phi+\psi) \cos_{\Omega^\circ}(\phi+\psi -t)\right).
    \end{equation}
    The term between parentheses can be interpreted as the inner product between the vectors
    \begin{equation}
        \big(\cos_{\Omega^\circ}(\phi+\psi ), \sin_{\Omega^\circ}(\phi + \psi)\big ) \quad \text{ and } \quad  \big(\sin_{\Omega^\circ}(\phi+\psi-t ), -\cos_{\Omega^\circ}(\phi + \psi-t)\big ),
    \end{equation}
     the latter being the rotation of $\big(\cos_{\Omega^\circ}(\phi+\psi-t ), \sin_{\Omega^\circ}(\phi + \psi-t)\big )$ by an (Euclidean) angle of $- \frac{\pi}{2}$. Then, the scalar product is negative for every $t \in \ointerval{0}{\pi_{\Omega^{\circ}}}$. In particular, recalling Lemma \ref{cor:diff_points_positive_measure} and that $h$ is absolutely continuous, we conclude that 
     \begin{equation}
         \mathcal{K}(\phi , \psi )= h(\psi) = \int_0^\psi h'(s) \de s > 0,
     \end{equation}
    proving our claim. Analogously, equation \eqref{eq:dJK} combined with Lemma \ref{cor:diff_points_positive_measure} allows us to deduce that 
    $\J_R(\phi, \psi) > 0$ for every $\phi \in \R/2\pi_{\Omega^\circ}\mathbb{Z}$ and $\psi \in \linterval{0}{\pi_{\Omega^\circ}}$, concluding the proof.
\end{proof}

\subsection{The \texorpdfstring{$\MCP$}{MCP} condition in the Heisenberg group}\label{sec:CDMCPH}

In this section we discuss several results that will help us in addressing the validity of the $\MCP(K,N)$ condition in the \sF Heisenberg groups. In these results the Jacobian of the exponential map studied in the previous section plays a fundamental role, as it describes the infinitesimal volume contraction along geodesics.

First of all, we show that, since the Heisenberg group admits a one-parameter family of dilations, it is sufficient to study the validity of the $\MCP(K,N)$ (and the $\cd(K,N)$ condition) with curvature parameter $K=0$ and having $\Leb^3$ as reference measure.

\begin{prop}\label{prop:Knoncipiace}
    Let $\hei$ be the sub-Finsler Heisenberg group, equipped with a norm $\normdot$, and with a smooth measure $\m$. Assume that the metric measure space $(\hei, \di, \m)$ satisfies the $\MCP(K,N)$ (resp. $\cd(K, N)$) condition, for some $K\in \R$ and $N\in (1,\infty)$. Then, the metric measure $(\hei, \di, \Leb^3)$ satisfies the $\MCP(0,N)$ (resp. $\cd(0, N)$) condition.
\end{prop}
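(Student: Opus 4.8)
The plan is to exploit the one-parameter family of dilations $\delta_\lambda(x,y,z) = (\lambda x, \lambda y, \lambda^2 z)$, $\lambda > 0$, of the Heisenberg group (cf.\ \cite{MR3283670}), which are homotheties of the sub-Finsler distance, namely $\di(\delta_\lambda p, \delta_\lambda q) = \lambda\, \di(p,q)$ for all $p, q \in \hei$. In particular, $\delta_\lambda$ realizes an isometry of metric spaces $(\hei, \lambda\di) \to (\hei, \di)$. I will combine this self-similarity with the scaling and pmGH-stability properties recalled above to let both the curvature parameter $K$ and the reference density flow to their desired limits as $\lambda \to \infty$.

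First I would apply the scaling property with $\alpha = \lambda$ to deduce that $(\hei, \lambda\di, \m)$ satisfies $\MCP(\lambda^{-2}K, N)$. Since $\MCP$ is invariant under isometric isomorphisms of metric measure spaces and $\delta_\lambda \colon (\hei, \lambda\di) \to (\hei, \di)$ is an isometry, pushing forward $\m$ yields that $(\hei, \di, (\delta_\lambda)_\# \m)$ is an $\MCP(\lambda^{-2}K, N)$ space. A direct change of variables, using that the Jacobian of $\delta_\lambda$ equals $\lambda^4$, shows that $(\delta_\lambda)_\# \m = \lambda^{-4}\, (\rho \circ \delta_{1/\lambda})\, \Leb^3$, where $\rho$ is the smooth positive density of $\m$ with respect to $\Leb^3$. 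Multiplying the reference measure by the positive constant $\lambda^4$ (which, by the scaling property with $\alpha = 1$, preserves both the condition and its parameters), I obtain that $(\hei, \di, \m_\lambda)$ is an $\MCP(\lambda^{-2}K, N)$ space, where $\m_\lambda := (\rho \circ \delta_{1/\lambda})\, \Leb^3$.

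It remains to pass to the limit $\lambda \to \infty$. On the one hand, $\lambda^{-2}K \to 0$. On the other hand, since $\delta_{1/\lambda} q \to \e$ uniformly on compact sets and $\rho$ is continuous, the densities $\rho \circ \delta_{1/\lambda}$ converge locally uniformly to the constant $\rho(\e) > 0$; as the underlying metric space is fixed, this yields $\int f \, \de\m_\lambda \to \rho(\e)\int f\, \de\Leb^3$ for every $f \in C_c(\hei)$, hence the pointed convergence $(\hei, \di, \m_\lambda, \e) \to (\hei, \di, \rho(\e)\Leb^3, \e)$ in the pointed measured Gromov--Hausdorff sense (realizing every space through the identity map). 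Applying pmGH-stability along $\lambda = n \to \infty$ with $K_n = n^{-2}K \to 0$ and $N_n = N$, the limit space $(\hei, \di, \rho(\e)\Leb^3)$ satisfies $\MCP(0,N)$. Dividing the measure by the positive constant $\rho(\e)$ gives the claim for $(\hei, \di, \Leb^3)$. The argument for the $\cd(K,N)$ condition is identical, replacing $\MCP$ with $\cd$ throughout.

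The main obstacle is the limiting step: one must correctly track how the dilation transforms the reference measure---the density gets precomposed with $\delta_{1/\lambda}$ and picks up a constant Jacobian factor that can be absorbed---and verify that the resulting rescaled spaces converge in the measured Gromov--Hausdorff sense to the space carrying the constant limiting density. Once the homothety property of the dilations and this measure-convergence are in place, the scaling and stability properties do the rest; in particular, it is the stability theorem that makes it possible to reach $K = 0$ and the constant density $\rho(\e)$ \emph{exactly}, which no single finite $\lambda$ could achieve.
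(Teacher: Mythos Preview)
Your proof is correct and follows essentially the same approach as the paper's: both use the one-parameter family of dilations together with the scaling property and pmGH-stability to pass to $K=0$ and the Lebesgue measure. Your version is more explicit in that you first push everything onto the fixed metric space $(\hei,\di)$ via the isometry $\delta_\lambda$ and then reduce the pmGH convergence to weak convergence of the measures $\m_\lambda=(\rho\circ\delta_{1/\lambda})\Leb^3$, whereas the paper states the convergence $(\hei,n\cdot\di,n^4\cdot\m,\e)\to(\hei,\di,\rho(\e)\Leb^3,\e)$ directly; the two presentations are equivalent.
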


\begin{proof}
    Let $m: \hei\to \R_{>0}$ be the (smooth) density of the measure $\m$ with respect to the Lebesgue measure $\Leb^3$. Since the Heisenberg group admits a one-parameter family of dilations \cite{MR3283670}, it holds that
    \begin{equation}
        \big( \hei, n \cdot \di, n^4 \cdot \m, \e \big) \xlongrightarrow{\text{pmGH}} (\hei, \di, m(\e)\cdot \Leb^3,\e) \quad \text{as }n \to \infty.
    \end{equation}
    Moreover, by the scaling property of the $\MCP$ condition, we have that $\big( \hei, n \cdot \di, n^4 \cdot \m \big)$ is a $\MCP(K/n^2,N)$ space.
    Then, the $\mathrm{pmGH}$-stability of the $\MCP$ condition ensures that $(\hei, \di, m(\e)\cdot \Leb^3)$ is a $\MCP(0,N)$ space. Using once again the scaling property, we conclude that $(\hei, \di,\Leb^3)$ is a $\MCP(0,N)$ spaces as well. The same argument, proves the result for the $\cd$ condition.
\end{proof}

Second of all, we characterize the measure contraction property $\MCP(K,N)$ in terms of the reduced Jacobian of the exponential map.

\begin{prop}\label{prop:MCPgeneral}
    Let $\hei$ be the sub-Finsler Heisenberg group, equipped with a norm $\normdot$, and with the Lebesgue measure $\Leb^3$. If the metric measure space $(\hei, \di, \Leb^3)$ satisfies the measure contraction property $\mathsf{MCP}(0, N)$, then
    \begin{equation}\label{eq:equivalent_reducedJ}
        |\J_R (\phi, \omega t)|\geq t^{N-1} |\J_R (\phi, \omega)|,
    \end{equation}
    for every $(r, \phi, \omega)\in \mathscr R$ and $t\in [0,1]$ such that $\phi, \phi+ \omega t, \phi+  \omega  \in \mathsf{D}_0$ and $(\phi, \omega t)$, $(\phi, \omega)$ are Lebesgue points for $\J_R$.
\end{prop}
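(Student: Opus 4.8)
The plan is to convert the measure contraction inequality into an infinitesimal comparison of the Jacobian of the exponential map at times $t$ and $1$, and then to localize. By left-invariance (the translations $L_p$ are isometries and preserve $\Leb^3$), it suffices to test $\MCP(0,N)$ with base point $x=\e$. Since $K=0$, the distortion coefficient degenerates to $\tau_{0,N}^{(t)}(\theta)=t$; moreover, on $\mathscr{R}$ geodesics issuing from $\e$ are unique by Proposition \ref{prop:summary}, so the equivalent formulation of Remark \ref{rmk:SIUUUUUUU} is available. Choosing the test set $A$ inside the image $G_1(\mathscr{R})$, where geodesics from $\e$ are unique, it reads, for every Borel $A'\subset A$,
\begin{equation}
    \Leb^3\big(M_t(\{\e\},A')\big)\geq t^N\,\Leb^3(A').
\end{equation}

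I would then realize both sides as integrals over the parameter space $\mathscr{R}$. Fix a Borel set $V\subset\mathscr{R}$ and put $A':=G_1(V)$. Uniqueness of geodesics on $\mathscr{R}$ forces the only geodesic from $\e$ to $G_1(r,\phi,\omega)$ to be $s\mapsto G_s(r,\phi,\omega)$, so its $t$-midpoint is $G_t(r,\phi,\omega)$ and consequently $M_t(\{\e\},A')=G_t(V)$. Applying the area formula to the maps $G_1$ and $G_t$, which are injective on $\mathscr{R}$ and differentiable on the full-measure set where $\phi,\phi+\omega,\phi+\omega t\in\mathsf{D}_0$ with Jacobians $\J_1$ and $\J_t$ (Proposition \ref{prop:NONO}), the previous inequality becomes
\begin{equation}
    \int_V|\J_t(r,\phi,\omega)|\,\de r\,\de\phi\,\de\omega\geq t^N\int_V|\J_1(r,\phi,\omega)|\,\de r\,\de\phi\,\de\omega.
\end{equation}

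Because $V\subset\mathscr{R}$ is arbitrary, I would localize at a point $(r_0,\phi_0,\omega_0)$ meeting the stated Lebesgue point hypotheses: shrinking $V$ to this point and dividing by $\Leb^3(V)$, the Lebesgue differentiation theorem gives $|\J_t(r_0,\phi_0,\omega_0)|\geq t^N|\J_1(r_0,\phi_0,\omega_0)|$. Here the assumption that $(\phi_0,\omega_0 t)$ and $(\phi_0,\omega_0)$ are Lebesgue points of $\J_R$ is precisely what guarantees that $(r_0,\phi_0,\omega_0)$ is a Lebesgue point of $\J_t$ and of $\J_1$, since these are the products of the continuous factors $r^3t/\omega^4$ and $r^3/\omega^4$ with $\J_R(\phi,\omega t)$ and $\J_R(\phi,\omega)$ respectively. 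Substituting the factorization \eqref{eq:J-redJ} and cancelling the common positive factor $r_0^3/\omega_0^4$ together with one power of $t$ yields the claimed estimate $|\J_R(\phi_0,\omega_0 t)|\geq t^{N-1}|\J_R(\phi_0,\omega_0)|$.

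The main obstacle is the measure-theoretic bookkeeping inside the change of variables. One must justify that $G_1$ and $G_t$ are injective on $\mathscr{R}$ — which follows from uniqueness of geodesics, combined with the dilation identity $G_t(r,\phi,\omega)=G_1(tr,\phi,t\omega)$ of Remark \ref{rmk:chesbatti} and the fact that $\mathscr{R}$ is stable under the rescaling $\omega\mapsto t\omega$ for $t\in(0,1]$ — that the area formula applies although differentiability holds only $\Leb^3$-almost everywhere, and that $M_t(\{\e\},A')$ receives no extraneous midpoints from geodesics outside the family $\{\gamma_{(r,\phi,\omega)}\}$, again a consequence of uniqueness on $\mathscr{R}$. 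Once these points are secured, the localization step is routine.
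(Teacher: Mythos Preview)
Your proof is correct and follows essentially the same route as the paper's: both localize the $\MCP(0,N)$ inequality by taking small parameter sets around a fixed point in $\mathscr{R}$, convert volumes to integrals of $|\J_t|$ and $|\J_1|$ via the area formula for the locally Lipschitz maps $G_t$ and $G_1$, apply Lebesgue differentiation, and then cancel the common factor $r^3/\omega^4$ and one power of $t$ via \eqref{eq:J-redJ}. Your treatment is, if anything, slightly more explicit than the paper's about the injectivity of $G_t$ on $\mathscr{R}$ (needed so that the area formula gives the volume of the image rather than a weighted count) and about why the Lebesgue point hypothesis on $\J_R$ transfers to $\J_t$ and $\J_1$.
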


\begin{proof}
     Fix any $(r,\phi,\omega) \in \mathscr R$ and $t$ as in the hypothesis and assume that $\mathsf{MCP}(0, N)$ holds. For every $\epsilon>0$ sufficiently small, we consider the ball $B_\epsilon(r,\phi,\omega)$ of radius $\epsilon$ (with respect to the Euclidean distance) centred at the point $(r,\phi,\omega)$. Observe that Proposition \ref{prop:summary} ensures that for every $x\in G_1\big(B_\epsilon(r,\phi,\omega)\big)$ there exists a unique geodesic connecting $\e$ to $x$ and it is of the form $\gamma_{(r'
     ,\phi'
     ,\omega'
     )}$, for some $(r'
     ,\phi'
     ,\omega'
     )\in B_\epsilon(r,\phi,\omega)$. Therefore, the characterization \eqref{eq:tj_pantaloncini} yields that for every $\epsilon$ (sufficiently small) we have that
     \begin{equation}
         \Leb^3 \big(G_t\big(B_\epsilon(r,\phi,\omega)\big) \big) \geq t^N \Leb^3 \big(G_1\big(B_\epsilon(r,\phi,\omega)\big) \big).
     \end{equation}
     Then, thanks to our assumptions on the parameters and keeping in mind \eqref{eq:J-redJ}, we deduce that
    \begin{align*}
        |\J_t( r,\phi,\omega)| ={}& \lim\limits_{\varepsilon \to 0^+} \frac{1}{\Leb^3(B_\epsilon(r,\phi,\omega))} \int_{B_\epsilon(r,\phi,\omega)} |\J_t(r'
        ,\phi'
        ,\omega'
        )| \de r'
        \de \phi'
        \de \omega'
        \\
        ={}& \lim\limits_{\varepsilon \to 0^+} \frac{\Leb^3 \big(G_t\big(B_\epsilon( r,\phi,\omega)\big) \big)}{\Leb^3(B_\epsilon(r,\phi,\omega))} \geq \lim\limits_{\varepsilon \to 0^+} 
        t^N \frac{\Leb^3 \big(G_1\big(B_\epsilon(r,\phi,\omega)\big) \big)}{\Leb^3(B_\epsilon(r,\phi,\omega))} \\
        ={}& t^N \lim\limits_{\varepsilon \to 0^+} \frac{1}{\Leb^3(B_\epsilon(r,\phi,\omega))} \int_{B_\epsilon(r, \phi,\omega)} |\mathcal{J}_1(r'
        ,\phi'
        ,\omega'
        )| \de r'
        \de \phi'
        \de \omega'
        = t^N |\J_1(r,\phi,\omega)|,
    \end{align*}
    where in the second and third equality we were able to use the area formula because $G_t$ is locally Lipschitz for every $t$. The thesis immediately follows from \eqref{eq:J-redJ}.
\end{proof}

Combining Proposition \ref{prop:Knoncipiace} and Proposition \ref{prop:MCPgeneral}, we immediately deduce the following corollary, that provides an effective strategy to disprove the measure contraction property. 

\begin{corollary}\label{cor:NO-MCP}
    Let $\hei$ be the sub-Finsler Heisenberg group, equipped with a norm $\normdot$, and with a smooth measure $\m$. Suppose there exist $(  r,  \phi,   \omega) \in \mathscr R$ and $t\in [0,1]$, such that $  \phi,   \phi+   \omega t,   \phi+   \omega \in \mathsf{D}_0$, $(  \phi,  \omega t)$, $(  \phi,  \omega)$ are Lebesgue points for $\J_R$ and 
    \begin{equation}\label{eq:criterionNOMCP}
        |\J_R ( \phi,  \omega t)|< t^{N-1} |\J_R ( \phi,  \omega)|.
    \end{equation}
     Then, the metric measure space $(\hei, \di, \m)$ does not satisfy the measure contraction property $\mathsf{MCP}(K, N)$, for every $K\in \R$ and $N\in(1,\infty)$.
\end{corollary}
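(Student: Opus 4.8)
The plan is to prove the statement by contraposition, simply chaining the two preceding propositions. Fix the parameters $(r,\phi,\omega)\in\mathscr R$ and the time $t\in[0,1]$ furnished by the hypothesis, together with the value $N\in(1,\infty)$ for which the strict inequality \eqref{eq:criterionNOMCP} holds. Suppose, towards a contradiction, that $(\hei,\di,\m)$ satisfies $\MCP(K,N)$ for some $K\in\R$.

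First, I would normalize both the curvature parameter and the reference measure. The hypotheses of Proposition \ref{prop:Knoncipiace} are met verbatim — a smooth measure $\m$ together with a valid $\MCP(K,N)$ bound — so invoking it gives that $(\hei,\di,\Leb^3)$ satisfies $\MCP(0,N)$. This is precisely where the arbitrariness of $K$ is disposed of: the dilation argument behind that proposition collapses any $K$ to $0$ while keeping $N$ fixed, so it suffices to contradict the Lebesgue, $K=0$ case. Next, I would feed the chosen parameters into Proposition \ref{prop:MCPgeneral}. Every requirement of that proposition is already encoded in the hypothesis — namely $\phi,\phi+\omega t,\phi+\omega\in\mathsf{D}_0$ and the fact that $(\phi,\omega t)$ and $(\phi,\omega)$ are Lebesgue points of $\J_R$ — so its conclusion \eqref{eq:equivalent_reducedJ} yields $|\J_R(\phi,\omega t)|\geq t^{N-1}|\J_R(\phi,\omega)|$. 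This flatly contradicts the strict inequality \eqref{eq:criterionNOMCP} postulated in the statement. Since $K\in\R$ was arbitrary, $(\hei,\di,\m)$ fails $\MCP(K,N)$ for every $K$, and the conclusion holds for each $N$ admitting suitable parameters.

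I do not anticipate any genuine difficulty, since all the analytic substance is already sealed inside Propositions \ref{prop:Knoncipiace} and \ref{prop:MCPgeneral}; the corollary is essentially a logical repackaging. The only point demanding care is the bookkeeping of the quantifier on $N$: one must ensure that the $N$ appearing in the hypothesized inequality \eqref{eq:criterionNOMCP} is literally the same $N$ for which $\MCP(K,N)$ is being refuted, so that the contradiction with \eqref{eq:equivalent_reducedJ} is exact rather than merely formal.
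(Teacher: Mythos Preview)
Your proof is correct and follows exactly the approach intended by the paper, which merely states that the corollary is obtained ``combining Proposition \ref{prop:Knoncipiace} and Proposition \ref{prop:MCPgeneral}''. Your remark about the quantifier on $N$ is apt: the $N$ in the conclusion must be read as the same $N$ for which \eqref{eq:criterionNOMCP} holds, and the paper's later applications (e.g.\ Theorem \ref{thm:noMCPnonstrctly}) indeed verify the hypothesis for each $N$ separately.
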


In the following statement we equivalently characterize the validity of the $\MCP(0,N)$ condition, when the reference norm $\normdot$ is $C^1$ and strictly convex. In the sequel, we denote by $\mathcal{U}$ the projection of $\mathscr{U}$ in the variables $(\phi,\omega)$, namely 

\begin{equation}
\mathcal{U}:=\R/2\pi_{\Omega^\circ}\mathbb{Z}\times(-2\pi_{\Omega^\circ},2\pi_{\Omega^\circ})\setminus\{0\}.
\end{equation}

\begin{prop}\label{prop:phoenix}
    Let $\hei$ be the sub-Finsler Heisenberg group, equipped with a $C^1$ and strictly convex norm $\normdot$, and with the Lebesgue measure $\Leb^3$. Then, the metric measure space $(\hei, \di, \Leb^3)$ satisfies the $\mathsf{MCP}(0, N)$ if and only if 
    \begin{equation}
    \label{eq:equivalent_reducedJ2}
        |\J_R (\phi, \omega t)|\geq t^{N-1} |\J_R (\phi, \omega)|,
    \end{equation}
    for every $(\phi,\omega)\in \mathcal{U}$ and every $t\in [0,1]$.
\end{prop}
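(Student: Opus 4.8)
The plan is to prove the two implications separately: necessity will follow almost immediately from the results already established, while sufficiency requires a genuine change-of-variables computation in the coordinates $(r,\phi,\omega)$.

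For the necessity (``only if''), I would simply specialize Proposition \ref{prop:MCPgeneral}. Under the standing hypothesis that $\normdot$ is $C^1$ and strictly convex, Proposition \ref{prop:summary} gives $\mathscr{R}=\mathscr{U}$, while Proposition \ref{prop:regularityCo} yields $\mathsf{D}_0=\R/2\pi_{\Omega^\circ}\mathbb{Z}$ and, by Remark \ref{rmk:Jdefinedae}, the reduced Jacobian $\J_R$ is continuous. In particular every point is a Lebesgue point of $\J_R$, so the conclusion of Proposition \ref{prop:MCPgeneral} holds with no exceptional set, delivering \eqref{eq:equivalent_reducedJ2} for all $(\phi,\omega)\in\mathcal{U}$ and $t\in[0,1]$.

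For the sufficiency (``if''), assume \eqref{eq:equivalent_reducedJ2}. First I would reduce to the base point $x=\e$: the left-translations $L_p$ are isometries (cf.\ the remark following the Chow--Rashevskii theorem) and preserve $\Leb^3$, since a direct computation shows $\det \diff L_p = 1$; hence it suffices to verify the $\MCP(0,N)$ inequality for the marginals $\delta_\e$ and $\frac{\Leb^3|_A}{\Leb^3(A)}$. Because $\normdot$ is $C^1$ and strictly convex, geodesics issuing from $\e$ are unique (Proposition \ref{prop:summary}), so I may invoke the equivalent formulation \eqref{eq:tj_pantaloncini}. Evaluating the distortion coefficient \eqref{eq:tau} at $K=0$ gives $\tau^{(t)}_{0,N}\equiv t$, so the goal reduces to
\begin{equation}
    \Leb^3\big(M_t(\{\e\},A')\big)\geq t^N\,\Leb^3(A'),\qquad\text{for every Borel }A'\subset A.
\end{equation}

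The core is then a volume computation. The ``bad set'' $\{z=0\}\cup\{(x,y)=(0,0)\}$ is $\Leb^3$-negligible, and $G_1$ is a homeomorphism from $\mathscr{U}$ onto its complement (Proposition \ref{prop:summary}); hence, setting $V:=G_1^{-1}\big(A'\cap\{z\neq0,\,(x,y)\neq(0,0)\}\big)$, we have $\Leb^3(A')=\Leb^3(G_1(V))$ and, by uniqueness of geodesics, $M_t(\{\e\},A')\supseteq G_t(V)$ (this inclusion is exactly what a \emph{lower} bound requires). By Remark \ref{rmk:chesbatti}, $G_t$ is injective on $\mathscr{U}$ for $t\in(0,1]$, and it is locally Lipschitz since it is $C^1$ (Remark \ref{rmk:Jdefinedae}); the area formula with multiplicity one therefore gives $\Leb^3(G_t(V))=\int_V|\J_t(r,\phi,\omega)|\,\de r\,\de\phi\,\de\omega$, and likewise for $t=1$. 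Thus it suffices to prove the pointwise bound $|\J_t(r,\phi,\omega)|\geq t^N|\J_1(r,\phi,\omega)|$ on $V$, and by the factorization \eqref{eq:J-redJ} this is precisely the hypothesis \eqref{eq:equivalent_reducedJ2} once the common factor $r^3/\omega^4$ and one power of $t$ are cancelled. The case $t=0$ is trivial, as both sides vanish.

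I expect the main obstacle to be measure-theoretic bookkeeping rather than a conceptual difficulty: one must carefully justify that the negligible bad set may be discarded on both sides, that the one-sided inclusion $M_t(\{\e\},A')\supseteq G_t(V)$ combined with the injectivity of $G_t$ indeed suffices, and that the area formula applies with unit multiplicity. Once these points are settled, the passage from the volume inequality to the reduced-Jacobian inequality \eqref{eq:equivalent_reducedJ2} is immediate via \eqref{eq:J-redJ}, which is what makes the equivalence clean.
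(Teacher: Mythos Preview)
Your proposal is correct and follows essentially the same route as the paper's proof: for necessity you specialize Proposition \ref{prop:MCPgeneral} using that $\mathscr R=\mathscr U$, $\mathsf D_0$ is everything, and $\J_R$ is continuous; for sufficiency you reduce to $\e$ via left-invariance (the paper's Remark \ref{rmk:sneaky}), discard the $\Leb^3$-null bad set, pull back through the homeomorphism $G_1$, and apply the area formula together with the factorization \eqref{eq:J-redJ}. Your use of the one-sided inclusion $M_t(\{\e\},A')\supseteq G_t(V)$ is in fact slightly cleaner than the paper's claimed equality $\Leb^3(M_t(\e,A))=\Leb^3(G_t(B))$, since only the lower bound is needed.
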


\begin{remark}\label{rmk:sneaky}
    The left-translations \eqref{eq:left_translations} are isometries and $\Leb^3$ is a left-invariant measure. As a consequence, in order to test the validity of the measure contraction property $\MCP(0,N)$, it is sufficient to prove the condition (cf. Definition \ref{def:mcp} and Remark \ref{rmk:SIUUUUUUU}) for $x=\e$.
\end{remark}

\begin{proof}[Proof of Proposition \ref{prop:phoenix}]
    The ``only if'' part is a consequence of Proposition \ref{prop:MCPgeneral} and Proposition \ref{prop:stronglyJacpositivity}. In fact, in the case we are considering, we have that $\mathsf{D}_0= \R/2 \pi_{\Omega^\circ}\mathbb{Z}$, $\mathscr R = \mathscr{U}$ and the reduced Jacobian $\J_R(\cdot,\cdot)$ is defined and continuous on $\mathcal{U}$. 
    
    For the ``if'' part, we assume that \eqref{eq:equivalent_reducedJ2} holds. Given any Borel set $A \subset \hei$, we observe that $A\subset \{(x,y,z)\in\hei: z\neq 0,(x,y)\neq(0,0)\}$ up to a $\Leb^3$-null set. In particular, according to the last part of Proposition \ref{prop:summary}, there exists a Borel set $B\subset \mathscr{U}$ such that $A = G_1(B)$ up to a $\Leb^3$-null set and $G_1$ is an homeomorphism between $B$ and $G_1(B)$. Moreover, Remark \ref{rmk:chesbatti} guarantees that $G_t$ is injective for every $t\in(0,1]$. Then, using the area formula and \eqref{eq:equivalent_reducedJ2}, we deduce that
    \begin{multline*}
            \Leb^3(M_t(\e,A)) = \Leb^3(G_t(B)) = \int_{B} |\J_t(r,\phi,\omega)| \de r\de \phi \de \omega = \int_{B} \frac{r^3 t}{\omega^4}|\J_R(\phi,t\omega)| \de r\de \phi \de \omega \\
            \geq t^N \int_{B} \frac{r^3}{\omega^4}|\J_R(\phi,\omega)| \de r\de \phi \de \omega  = t^N\int_{B} |\J_1(r,\phi,\omega)| \de r\de \phi \de \omega = t^N\Leb^3(G_1(B)) = t^N \Leb^3(A).
    \end{multline*}
    Finally, we note that this inequality is sufficient to prove the $\MCP(0,N)$ condition, according to Remarks \ref{rmk:SIUUUUUUU} and \ref{rmk:sneaky}.
\end{proof}
\noindent We provide an alternative version of Proposition \ref{prop:phoenix}, under the further assumption that the norm $\normdot$ is strongly convex. In this case, Proposition \ref{prop:phoenix}, combined with Proposition \ref{prop:stronglyJacpositivity}, guarantees that the $\MCP(0,N)$ condition holds if and only if 
\begin{equation}\label{eq:equivalent_reducedJ3}
        \J_R (\phi, \omega t)\geq t^{N-1} \J_R (\phi, \omega),
    \end{equation}
    for every $(\phi,\omega)\in \mathcal{U}$ and every $t\in [0,1]$.
Moreover, when $\normdot$ is $C^1$ and strongly convex, Proposition \ref{prop:regularityCo} ensures the map $C_\circ$ is Lipschitz and differentiable $\Leb^1$-almost everywhere. We call $\mathsf{D}_1\subset \R/2\pi_{\Omega^\circ}\mathbb{Z}$ the (full-measure) set of differentiability points.

\begin{corollary}
    \label{prop:diffcharacMCP}
    Let $\hei$ be the sub-Finsler Heisenberg group, equipped with a $C^1$ and strongly convex norm $\normdot$, and with the Lebesgue measure $\Leb^3$. Then, the metric measure space $(\hei, \di, \Leb^3)$ satisfies the $\mathsf{MCP}(0, N)$ if and only if
    \begin{equation}
        \label{eq:nec&sufMCPdJacineq}
        N(\phi, \omega) := 1 + \frac{\omega \partial_\omega \mathcal{J}_R(\phi, \omega)}{\mathcal{J}_R(\phi, \omega)} \leq N,
    \end{equation}
    for all $(\phi,\omega)\in \mathcal{U}$ with $\phi+\omega\in \mathsf{D}_1$.
\end{corollary}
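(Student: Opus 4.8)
The plan is to start from the characterization of $\MCP(0,N)$ already available under these hypotheses and reduce the scaling inequality to a monotonicity statement for a one-parameter family of functions, which we then differentiate. Since $\normdot$ is $C^1$ and strongly convex (hence strictly convex), Proposition \ref{prop:phoenix} together with Proposition \ref{prop:stronglyJacpositivity} tells us that $(\hei,\di,\Leb^3)$ satisfies $\MCP(0,N)$ if and only if inequality \eqref{eq:equivalent_reducedJ3} holds, namely $\J_R(\phi,\omega t)\ge t^{N-1}\J_R(\phi,\omega)$ for every $(\phi,\omega)\in\mathcal U$ and every $t\in[0,1]$, where moreover $\J_R>0$ off $\{\omega=0\}$. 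For fixed $\phi$, I would introduce the auxiliary function
\begin{equation*}
    h_\phi(\omega):=\frac{\J_R(\phi,\omega)}{|\omega|^{N-1}},\qquad \omega\in(-2\pi_{\Omega^\circ},2\pi_{\Omega^\circ})\setminus\{0\},
\end{equation*}
which is strictly positive. The substitution $s=\omega t$ turns \eqref{eq:equivalent_reducedJ3} into the statement $h_\phi(s)\ge h_\phi(\omega)$ whenever $0<|s|\le|\omega|$ and $s,\omega$ have the same sign. Thus $\MCP(0,N)$ is equivalent to requiring that, for every $\phi$, the map $h_\phi$ be non-increasing in $|\omega|$ on each of the two half-intervals $(0,2\pi_{\Omega^\circ})$ and $(-2\pi_{\Omega^\circ},0)$.

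Next I would settle the regularity of $h_\phi$. Because $\normdot$ is $C^1$, the functions $\sinom,\cosom,\sinomp,\cosomp$ are everywhere differentiable with continuous derivatives (this uses Proposition \ref{prop:difftrig} and the fact that a $C^1$ norm makes both $C^\circ$ and $C_\circ$ single-valued), while strong convexity makes $C_\circ$ Lipschitz (Proposition \ref{prop:regularityCo}). Reading off \eqref{eq:reduced_Jac}, the $\omega$-dependence of $\J_R(\phi,\cdot)$ enters only through $\sinomp(\phi+\omega),\cosomp(\phi+\omega)$ and through $\sinom(C_\circ(\phi+\omega)),\cosom(C_\circ(\phi+\omega))$; hence $\omega\mapsto\J_R(\phi,\omega)$ is Lipschitz, and it is differentiable at every $\omega$ with $\phi+\omega\in\mathsf D_1$, since the chain rule applies precisely when $C_\circ$ is differentiable at $\phi+\omega$ (the outer functions $\sinom,\cosom$ being differentiable everywhere). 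Consequently $h_\phi$ is locally absolutely continuous on each half-interval and differentiable wherever $\phi+\omega\in\mathsf D_1$. A direct computation then gives
\begin{equation*}
    h_\phi'(\omega)=|\omega|^{-N}\,\mathrm{sgn}(\omega)\,\J_R(\phi,\omega)\big(N(\phi,\omega)-N\big),
\end{equation*}
with $N(\phi,\omega)$ as in \eqref{eq:nec&sufMCPdJacineq}. Since $\J_R(\phi,\omega)>0$, the sign of $h_\phi'(\omega)$ is that of $\mathrm{sgn}(\omega)\big(N(\phi,\omega)-N\big)$, so on both half-intervals the requirement that $h_\phi$ be non-increasing in $|\omega|$ translates exactly into $N(\phi,\omega)\le N$.

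Finally I would close both implications. If $\MCP(0,N)$ holds, then by the first step $h_\phi$ is monotone in $|\omega|$ on each half-interval; evaluating this monotonicity at any point $\omega$ with $\phi+\omega\in\mathsf D_1$, where $h_\phi$ is differentiable, forces $h_\phi'(\omega)$ to have the correct sign, i.e.\ $N(\phi,\omega)\le N$. Conversely, if $N(\phi,\omega)\le N$ for all $(\phi,\omega)\in\mathcal U$ with $\phi+\omega\in\mathsf D_1$, then, since $\mathsf D_1$ has full measure, $h_\phi'$ has the correct sign almost everywhere; combined with the local absolute continuity of $h_\phi$, this yields the desired monotonicity and hence \eqref{eq:equivalent_reducedJ3}, which is $\MCP(0,N)$. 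The main obstacle, and the only genuinely delicate point, is the regularity bookkeeping of the second step: one must combine the $C^1$ hypothesis (to differentiate the convex-trigonometric functions everywhere) with strong convexity (to control $C_\circ$) in order to locate exactly where $\J_R(\phi,\cdot)$ is differentiable, and then pass safely between the almost-everywhere derivative inequality and the everywhere-on-$\mathsf D_1$ pointwise statement via absolute continuity.
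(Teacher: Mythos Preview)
Your proposal is correct and follows essentially the same approach as the paper: both reduce $\MCP(0,N)$ to \eqref{eq:equivalent_reducedJ3} via Propositions \ref{prop:phoenix} and \ref{prop:stronglyJacpositivity}, use the Lipschitz regularity of $C_\circ$ (hence of $\J_R(\phi,\cdot)$) coming from strong convexity to obtain absolute continuity, and then pass between the scaling inequality and the pointwise derivative bound. The only cosmetic difference is that the paper works directly with $\omega\mapsto\log\J_R(\phi,\omega)$ and integrates the inequality $\tfrac{d}{d\omega}\log\J_R(\phi,\omega)\le (N-1)/\omega$, whereas you package the same content as monotonicity of the auxiliary function $h_\phi(\omega)=\J_R(\phi,\omega)/|\omega|^{N-1}$; both computations are equivalent and your regularity bookkeeping is accurate.
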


\begin{proof}
    It is sufficient to prove that \eqref{eq:nec&sufMCPdJacineq} is equivalent to \eqref{eq:equivalent_reducedJ3}. Assume the latter holds and let $(\phi,\omega)\in\mathcal{U}$ such that $\phi + \omega \in \mathsf{D}_1$. Then, $\mathcal{J}_R(\phi, \cdot)$ is differentiable at $\omega$ and we obtain that
    \begin{align*}
    \frac{\partial_\omega \mathcal{J}_R(\phi, \omega)}{\mathcal{J}_R(\phi, \omega)} ={}& \frac{\diff}{\diff \omega} \log(\mathcal{J}_R(\phi, \omega)) = \lim_{z \to \omega} \frac{\log(\mathcal{J}_R(\phi, z)) - \log(\mathcal{J}_R(\phi, \omega))}{z - \omega}\\
    ={}& \frac{1}{\omega}\lim_{t \to 1^-} \frac{\log(\mathcal{J}_R(\phi, \omega t)) - \log(\mathcal{J}_R(\phi, \omega))}{t - 1}\\
    \leq{}& \frac{1}{\omega}\lim_{t \to 1^-} \frac{(N - 1)\log t}{t - 1} = \frac{N - 1}{\omega},
\end{align*} 
where the inequality follows from \eqref{eq:equivalent_reducedJ3}.

For the other implication, observe that, by Proposition \ref{prop:regularityCo},
$C_\circ$ is Lipschitz and therefore map $\mathcal{J}_R(\phi, \cdot)$ is Lipschitz and thus absolutely continuous. Then, \eqref{eq:nec&sufMCPdJacineq} implies that
    \[
    \int_{\omega t}^\omega \frac{\diff}{\diff z} \log(\mathcal{J}_R(\phi, z)) \diff z \leq (N - 1) \int_{\omega t}^\omega \frac{\diff}{\diff z} \log z \diff z,
    \]
    which simplifies to \eqref{eq:equivalent_reducedJ3}.
\end{proof}

 We conclude the section with a useful criterion that ensures the validity of the $\MCP(0,N)$ condition for some $N \in (1,\infty)$. 

\begin{prop}
    \label{prop:MCPlimsup}
    Let $\hei$ be the sub-Finsler Heisenberg group, equipped with a $C^1$ and strongly convex norm $\normdot$, and with the Lebesgue measure $\Leb^3$. Then, the metric measure space $(\hei, \di, \Leb^3)$ satisfies the $\mathsf{MCP}(0, N)$ for some $N \in (1,\infty)$ if and only if for any $\phi_\infty\in\R/2\pi_{\Omega^\circ}\mathbb Z$,
    \begin{equation}
        \label{eq:MCPdiffineqBoundary}
        \limsup_{(\phi,\omega)\to(\phi_{\infty},0)}N(\phi,\omega)<+\infty,
    \end{equation}
    where the $\limsup$ is taken over all $(\phi,\omega)\in \mathcal{U}$ such that $\phi+\omega\in \mathsf{D}_1$.
\end{prop}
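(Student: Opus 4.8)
The plan is to read the statement entirely through the differential characterization of Corollary~\ref{prop:diffcharacMCP}, which asserts that $(\hei,\di,\Leb^3)$ satisfies $\MCP(0,N)$ if and only if $N(\phi,\omega)\leq N$ for all $(\phi,\omega)\in\mathcal U$ with $\phi+\omega\in\mathsf D_1$. Thus validity of $\MCP(0,N)$ for \emph{some} $N\in(1,\infty)$ is equivalent to the global bound $\sup N(\phi,\omega)<+\infty$, the supremum being taken over all such admissible pairs. The \emph{only if} implication is then immediate: if $\MCP(0,N)$ holds, then $N(\phi,\omega)\leq N$ everywhere, so in particular $\limsup_{(\phi,\omega)\to(\phi_\infty,0)}N(\phi,\omega)\leq N<+\infty$ for every $\phi_\infty$. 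The real content is the \emph{if} direction, where I would show that the local bounds \eqref{eq:MCPdiffineqBoundary} force $\sup N<+\infty$.

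First I would record two facts about $N(\phi,\omega)=1+\omega\,\partial_\omega\J_R(\phi,\omega)/\J_R(\phi,\omega)$. Its numerator is \emph{globally bounded}: since $\normdot$ is strongly convex, $C_\circ$ is Lipschitz by Proposition~\ref{prop:regularityCo}, and arguing as in the proof of Proposition~\ref{prop:stronglyJacpositivity} one has $\partial_\omega\J_R(\phi,\omega)=C_\circ'(\phi+\omega)\,\mathcal K(\phi,\omega)$ at every admissible pair, cf.\ \eqref{eq:dJK}--\eqref{def:functionK}; as $|C_\circ'|$ is bounded by the Lipschitz constant, $\mathcal K$ is continuous on the compact closure of $\mathcal U$, and $|\omega|\leq 2\pi_{\Omega^\circ}$, there is a constant $M$ with $|\omega\,\partial_\omega\J_R|\leq M$ uniformly. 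Hence $N(\phi,\omega)\leq 1+M/\J_R(\phi,\omega)$, so $N$ can only be large where $\J_R$ is small. Second, by Proposition~\ref{prop:stronglyJacpositivity} we have $\J_R>0$ on $\mathcal U$, and using periodicity of the trigonometric functions together with the Pythagorean identity \eqref{eq:pytagorean} one checks that $\J_R$ extends continuously to $\overline{\mathcal U}=\R/2\pi_{\Omega^\circ}\mathbb Z\times[-2\pi_{\Omega^\circ},2\pi_{\Omega^\circ}]$ and vanishes there \emph{exactly} when $\omega\in\{0,\pm 2\pi_{\Omega^\circ}\}$.

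With these facts I would argue by contradiction: suppose $\sup N=+\infty$ and choose admissible $(\phi_n,\omega_n)$ with $N(\phi_n,\omega_n)\to+\infty$; by compactness of $\overline{\mathcal U}$ assume $(\phi_n,\omega_n)\to(\phi_\infty,\omega_\infty)$. The bound $N\leq 1+M/\J_R$ forces $\J_R(\phi_n,\omega_n)\to0$, hence $\J_R(\phi_\infty,\omega_\infty)=0$ and $\omega_\infty\in\{0,\pm 2\pi_{\Omega^\circ}\}$. If $\omega_\infty=0$, then $(\phi_n,\omega_n)\to(\phi_\infty,0)$ along admissible pairs, so $N(\phi_n,\omega_n)\to+\infty$ directly contradicts hypothesis \eqref{eq:MCPdiffineqBoundary} at $\phi_\infty$.

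The \textbf{main obstacle} is the case $\omega_\infty=\pm 2\pi_{\Omega^\circ}$, which is \emph{not} covered by the hypothesis, since \eqref{eq:MCPdiffineqBoundary} controls only the limit $\omega\to0$. Here I would resolve the difficulty by a sign computation rather than a growth bound. A direct evaluation of \eqref{def:functionK} via periodicity and \eqref{eq:pytagorean} gives $\mathcal K(\phi,2\pi_{\Omega^\circ})=-2\pi_{\Omega^\circ}<0$ for every $\phi$; by continuity of $\mathcal K$ on $\overline{\mathcal U}$ we get $\mathcal K<0$ on a neighbourhood of $\{\omega=2\pi_{\Omega^\circ}\}$. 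Thus for large $n$ one has $\partial_\omega\J_R(\phi_n,\omega_n)=C_\circ'(\phi_n+\omega_n)\,\mathcal K(\phi_n,\omega_n)\leq 0$, while $\omega_n>0$ and $\J_R(\phi_n,\omega_n)>0$, whence $N(\phi_n,\omega_n)\leq 1$, contradicting $N(\phi_n,\omega_n)\to+\infty$. The case $\omega_\infty=-2\pi_{\Omega^\circ}$ is identical, using the analogous evaluation of $\mathcal K$ for $\omega<0$ (so that $\omega\,\partial_\omega\J_R\leq0$ again), yielding $N\leq 1$ near that boundary. Having exhausted all cases, $\sup N<+\infty$, and choosing any $N\geq\max\{\sup N,\,2\}$ we conclude $\MCP(0,N)$ by Corollary~\ref{prop:diffcharacMCP}.
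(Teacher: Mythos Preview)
Your proposal is correct and follows essentially the same route as the paper's proof: both reduce to Corollary~\ref{prop:diffcharacMCP}, argue by contradiction via a sequence with $N(\phi_n,\omega_n)\to+\infty$, use boundedness of $\omega\,\partial_\omega\J_R$ together with positivity of $\J_R$ away from $\omega\in\{0,\pm 2\pi_{\Omega^\circ}\}$ to force $\omega_\infty\in\{0,\pm 2\pi_{\Omega^\circ}\}$, and then rule out $\omega_\infty=\pm 2\pi_{\Omega^\circ}$ by the sign computation $\omega\,\mathcal K(\phi,\omega)<0$ near those boundaries. Your exposition is slightly more explicit (computing $\J_R(\phi,\pm 2\pi_{\Omega^\circ})=0$ and $\mathcal K(\phi,2\pi_{\Omega^\circ})=-2\pi_{\Omega^\circ}$ directly), but the argument is the same.
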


\begin{proof}
    The ``only if'' part is follows immediately from Corollary \ref{prop:diffcharacMCP}, as \eqref{eq:nec&sufMCPdJacineq} implies \eqref{eq:MCPdiffineqBoundary}. Let us investigate the ``if'' part of the statement. For $k \in \mathbb{N}$, denote by $\mathcal{U}_k$ the set
    \[
    \mathcal{U}_k := \{ (\phi, \omega) \in \mathcal{U} \, |\, \phi + \omega \in \mathsf{D}_1 \text{ and } N(\phi, \omega) \geq k \}.
    \]
    By Corollary \ref{prop:diffcharacMCP}, satisfying the $\mathsf{MCP}(0, N)$ condition for some $N\in (1,\infty)$ is equivalent to the existence of a $k \in \mathbb{N}$ such that $\mathcal{U}_k = \emptyset$. Assume by contradiction that for all $k \in \mathbb{N}$, the set $\mathcal{U}_k$ is not empty. Then, there exists a sequence $\{\phi_k, \omega_k\}_{k\in \N} \subset \mathcal{U}$ such that $\lim_{k \to +\infty} N(\phi_k, \omega_k) = +\infty$ and $\phi_k + \omega_k \in \mathsf{D}_1$ for all $k \in \mathbb{N}$. Up to extracting a converging subsequence, we can assume that $(\phi_k, \omega_k)$ converges to $(\phi_\infty, \omega_\infty)$.
    Firstly, we claim that $\omega_{\infty} =0, \pm 2 \pi_{\Omega^\circ}$. Indeed, if this were not the case, keeping in mind \eqref{eq:dJK}, \eqref{def:functionK} and that $C_\circ$ is Lipschitz, we would have that
    \begin{align*}
        N(\phi_k, \omega_k) &= 1 + C'_\circ(\phi_k + \omega_k) \frac{\omega_k \mathcal{K}(\phi_k, \omega_k)}{\J_R(\phi_k, \omega_k)} \\
        &\leq 1 + \mathsf{Lip}(C_\circ) \frac{\omega_k \mathcal{K}(\phi_k, \omega_k)}{\J_R(\phi_k, \omega_k)} \to 1 + \mathsf{Lip}(C_\circ) \frac{\omega_\infty \mathcal{K}(\phi_\infty, \omega_\infty)}{\J_R(\phi_\infty, \omega_\infty)} < \infty \text{  as } k \to +\infty,
    \end{align*}
    where we used continuity of $\omega \mathcal{K}(\phi, \omega)/\J_R(\phi, \omega)$ and that the last term is finite because Proposition \ref{prop:stronglyJacpositivity} guarantees that $\J_R(\phi_\infty, \omega_\infty) > 0$. Moreover, we can exclude that $\omega_{\infty} = \pm 2 \pi_{\Omega^\circ}$ because $\omega\mathcal{K}(\phi,\omega)<0$ near $\omega=\pm 2\pi_{\Omega^\circ}$,
    while $C'_\circ(\phi+\omega)\geq 0$ and $\J_R(\phi,\omega)>0$ by Proposition \ref{prop:stronglyJacpositivity}. Finally, $\omega_\infty=0$ is excluded by the assumption \eqref{eq:MCPdiffineqBoundary} and we obtain the desire contradiction.
\end{proof}

\section{Failure of the measure contraction property in the sub-Finsler Heisenberg group}\label{sec:nomcp}

In this section we study the validity of the measure contraction property $\MCP(K,N)$ in the sub-Finsler Heisenberg group $\hei$, equipped with the norm $\normdot$. In particular, we are going to prove Theorem \ref{thm:MAINnomcp}, as a result of the combination of Theorem \ref{thm:noCDnonC1} and Theorem \ref{thm:noMCPnonstrctly}, and Theorem \ref{thm:MAINmcp}.

\subsection{Failure for non-\texorpdfstring{$C^1$}{C1} norms}\label{sec:nomcp1}

In this first subsection we prove Theorem \ref{thm:MAINnomcp} for every non-$C^1$ reference norm $\normdot$. The proof we are going to present follows the same strategy developed in \cite[Thm.\ 5.26]{magnabosco2023failure}, properly adapted taking advantage of Proposition \ref{prop:summary}, in order to include the case of non-strictly convex norms. As in \cite[Thm.\ 5.26]{magnabosco2023failure}, the main idea is to exploit a branching behaviour of geodesics which is caused by the singularities of the reference norm $\normdot$.

\begin{theorem}\label{thm:noCDnonC1}
    Let $\hei$ be the \sF Heisenberg group, equipped with a norm $\normdot$ which is not $C^1$, and let $\m$ be a smooth measure on $\hei$. Then, the metric measure space $(\hei, \di, \m)$ does not satisfy the measure contraction property $\MCP(K,N)$ for every $K\in \R$ and $N\in (1,\infty)$.
\end{theorem}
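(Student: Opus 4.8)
The plan is to contradict the necessary condition for $\MCP$ furnished by Corollary \ref{cor:NO-MCP}, by exhibiting a single configuration $(r,\phi,\omega)\in\mathscr R$ together with a time $t\in(0,1)$ at which the reduced Jacobian degenerates. The geometric mechanism behind this degeneration is precisely the branching of geodesics caused by the singularity of $\normdot$: when $\normdot$ is not $C^1$, Proposition \ref{prop:regularityCo}\,\textsl{(i)} tells us that the monotone map $C_\circ$ fails to be strictly increasing, hence it is constant, say equal to $c$, on some nondegenerate interval $(\alpha,\beta)$. This interval corresponds to a flat segment of $\partial\Omega^\circ$, and since $C_\circ$ is single valued there, Proposition \ref{prop:difftrig} gives $(\alpha,\beta)\subset\mathsf{D}_0$; moreover, recalling \eqref{eq:interval}, for $\phi\in(\alpha,\beta)$ one has $\delta_-(\phi)=\phi-\alpha$ and $\delta_+(\phi)=\beta-\phi$.

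The heart of the argument is that $\J_R$ vanishes identically on this flat region. Indeed, if $\phi,\phi+\psi\in(\alpha,\beta)$ with $\psi>0$, then $\phi_\circ=(\phi+\psi)_\circ=c$, so the last line of \eqref{eq:reduced_Jac} vanishes (it contains $\sinom(c)\cosom(c)-\cosom(c)\sinom(c)$), while each of the two remaining brackets equals $1$ by the Pythagorean identity \eqref{eq:pytagorean} applied to the correspondences $\phi\xleftrightarrow{\Omega^\circ}c$ and $(\phi+\psi)\xleftrightarrow{\Omega^\circ}c$. Hence $\J_R(\phi,\psi)=2-1-1-0=0$. This is the analytic shadow of branching: by \eqref{eq:xyz} and Proposition \ref{prop:difftrig}, every geodesic $\gamma_{(r,\phi,\omega)}$ whose parameter $\phi+\omega s$ stays inside $(\alpha,\beta)$ projects to the straight segment through the origin with constant velocity $r(\cosom(c),\sinom(c))$ and sweeps zero area, so that $G_t(r,\phi,\omega)$ does not depend on $\omega$ there; distinct values of $\omega$ thus produce geodesics coinciding on an initial interval and then splitting.

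With this in hand, I fix $r>0$ and $\phi\in(\alpha,\beta)$ and choose $\omega\in(\pi_{\Omega^\circ},2\pi_{\Omega^\circ}-\delta_-(\phi))$, an interval that is nonempty since $\delta_-(\phi)<\beta-\alpha<\pi_{\Omega^\circ}$. Then $(r,\phi,\omega)\in\mathscr R$, and Proposition \ref{lem:reducedJ} yields $\J_R(\phi,\omega)>0$, after selecting $\omega$ in the full-measure set ensuring $\phi+\omega\in\mathsf{D}_0$ and that $(\phi,\omega)$ is a Lebesgue point of $\J_R$. Next I pick any $t\in(0,1)$ with $t<(\beta-\phi)/\omega$, which is admissible because $(\beta-\phi)/\omega<1$; then $\phi+\omega t\in(\alpha,\beta)\subset\mathsf{D}_0$ and, by the previous paragraph, $\J_R(\phi,\omega t)=0$, with $(\phi,\omega t)$ automatically a Lebesgue point since $\J_R\equiv0$ near it. Therefore
\begin{equation*}
    |\J_R(\phi,\omega t)|=0<t^{N-1}\,|\J_R(\phi,\omega)|\qquad\text{for every }N\in(1,\infty),
\end{equation*}
and Corollary \ref{cor:NO-MCP} gives the failure of $\MCP(K,N)$ for all $K\in\R$ and $N\in(1,\infty)$.

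I expect the genuine work to lie not in the final contradiction but in setting up the flat interval cleanly: verifying that $(\alpha,\beta)\subset\mathsf{D}_0$ with the stated values of $\delta_\pm$, that the genericity requirements of Corollary \ref{cor:NO-MCP} (membership in $\mathsf{D}_0$ and the Lebesgue point conditions) can be met simultaneously, and that the interval for $\omega$ is nonempty, i.e.\ $\delta_-(\phi)<\pi_{\Omega^\circ}$ — which follows from the fact that a flat segment of a centrally symmetric convex body cannot occupy half of its boundary, as the segment and its antipode are disjoint. The most delicate conceptual point is that this single scheme must also cover the non-strictly-convex case, and this is exactly where the uniqueness guaranteed on $\mathscr R$ (rather than on all of $\mathscr U$) by Proposition \ref{prop:summary} is indispensable.
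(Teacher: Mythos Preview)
Your argument is correct and rests on the same geometric mechanism as the paper's—the flat segment of $\partial\Omega^\circ$ forces geodesics to project onto a fixed line for an initial stretch—but you package the contradiction differently. The paper proceeds by a direct set-theoretic violation of \eqref{eq:tj_pantaloncini}: it fixes an open neighbourhood $\mathscr A\subset\mathscr R$ around a point $(\bar r,\bar\phi,\bar\omega)$ with $\bar\omega>\pi_{\Omega^\circ}$, uses Proposition \ref{lem:reducedJ} to get $\m(G_1(\mathscr A))>0$, and then shows that $G_t(\mathscr A)\subset\{y=0,z=0\}$ for all sufficiently small $t$, so that $M_t(\{\e\},G_1(\mathscr A))$ is $\m$-null. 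You instead work pointwise through the Jacobian criterion of Corollary \ref{cor:NO-MCP}, computing $\J_R(\phi,\omega t)=0$ on the flat interval and pairing it with $\J_R(\phi,\omega)>0$ for $\omega\in(\pi_{\Omega^\circ},2\pi_{\Omega^\circ}-\delta_-(\phi))$. Your route is more economical—it reuses the very tool built for the $C^1$ cases (Theorems \ref{thm:noMCPnonstrctly} and \ref{thm:noMCPC1strictlynotstrongly}) and dispenses with the neighbourhood construction—while the paper's route is more hands-on and sidesteps the Lebesgue-point bookkeeping you had to address. Both hinge on exactly the same two ingredients: uniqueness of geodesics on $\mathscr R$ from Proposition \ref{prop:summary} (indispensable in the absence of strict convexity, as you note) and the positivity of $\J_R$ for $\psi\in(\pi_{\Omega^\circ},2\pi_{\Omega^\circ}-\delta_-(\phi))$ from Proposition \ref{lem:reducedJ}.
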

\begin{proof}
According to Proposition \ref{prop:propunderduality}, since $\normdot$ is not $C^1$, its dual norm $\normdot_*$ is not strictly convex. In particular, there exists a straight segment contained in the sphere $S^{\norm{\cdot}_*}_1(0)=\partial\Omega^\circ$. Since the distribution generating the Heisenberg group is invariant under rotations around the $z$-axis, we can assume without losing generality that this segment is vertical in $\R^2\cap\{x>0\}$, i.e. there exists $\bar x \in \R_{>0}$ and an interval $I:=[y_0,y_1]\subset \R$ such that 
\begin{equation*}
    \{\bar x\}\times I \subset \partial \Omega^\circ.
\end{equation*}
Moreover, we can take the interval $I$ to be maximal, namely for every $y\not\in I$ we have $(\bar x,y)\not\in \Omega^\circ$. Let $\psi_0,\psi_1 \in \R/2\pi_{\Omega^\circ}\mathbb{Z}$ be such that $Q_{\psi_0}=(\bar x,y_0)$ and $Q_{\psi_1}=(\bar x,y_1)$ (see Figure \ref{fig:MCP1}). Observe that, for every $y\in I$, it holds that 
\begin{equation}\label{eq:sincosflat}
    (\bar x, y)= Q_{\psi_0 + (y- y_0)\bar x},
\end{equation}
as a consequence, we deduce that 
\begin{equation}\label{eq:120.5}
   \cosomp(\psi_0 + (y-y_0)\bar x)= \bar x \quad\text{and}\quad \sinomp(\psi_0 + (y-y_0)\bar x)= y, \qquad \text{for }y\in I.
\end{equation}
Consider $\bar \phi := \frac 12 (\psi_0 + \psi_1)$ and $y_2= \frac 12 (y_0+y_1)$, so that $(\bar x, y_2)= Q_{\bar \phi}$, according to \eqref{eq:sincosflat}. 

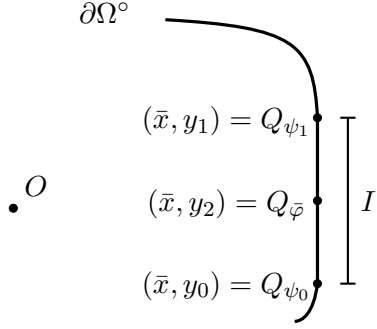
\begin{figure}
    \centering
    \begin{tikzpicture}
        \filldraw[black] (-2,-0.5) circle (1.5pt);
        \filldraw[black] (2,-1.5) circle (1.5pt);
        \filldraw[black] (2,-0.4) circle (1.5pt);
        \filldraw[black] (2,0.7) circle (1.5pt);
        
        \draw[very thick] (1.7,-2) .. controls (1.8,-2) and (1.95,-1.9) ..(2,-1.5) -- (2,0.7).. controls (1.95,1.7) and (1.7,1.9) ..(0,2);
        \draw [thick](2.4,-1.5)--(2.4,0.7);
        \draw [thick](2.3,-1.5)--(2.5,-1.5);
        \draw [thick](2.3,0.7)--(2.5,0.7);
        \node at (-1.7,0.2)[label=south:$O$] {};
        \node at (-0.8,2.5)[label=south:$\partial\Omega^\circ$] {};
        \node at (0.8,1.12)[label=south:${(\bar x, y_1)= Q_{\psi_1}}$] {};
        \node at (0.8,0.02)[label=south:${(\bar x, y_2)= Q_{\bar \phi}}$] {};
        \node at (0.8,-1.06)[label=south:${(\bar x, y_0)= Q_{\psi_0}}$] {};
        \node at (2.3,-0.4)[label=east:$I$] {};
        
    \end{tikzpicture}
    \caption{The flat part of $\partial\Omega^\circ$.}
    \label{fig:MCP1}
\end{figure}

Fix $\bar r >0$ and $\bar \omega> \pi_{\Omega^\circ}$ such that $(\bar r ,\bar \phi,\bar \omega) \in \mathscr R$ and take a neighborhood $\mathscr A \subset \mathscr R$ of $(\bar r, \bar \phi,\bar \omega)$, such that $\omega > \pi_{\Omega^\circ}$ for every $(r,\phi,\omega)\in \mathscr A$. Proposition \ref{prop:summary} guarantees that, for every $(r,\phi,\omega)\in \mathscr A$, the curve $\gamma_{(r,\phi,\omega)}$ is the unique geodesic connecting $\e$ and $G_1(r,\phi,\omega)$.
Therefore, we deduce that 
\begin{equation}
     M_t\big(\{\e\},G_1 (\mathscr A)\big) = G_t (\mathscr A),\qquad\text{for every }t\in[0,1].
\end{equation}
As observed in Remark \ref{rmk:Jdefinedae}, we have $\phi, \phi+t\omega, \phi+\omega\in\mathsf D_0$ for $\Leb^{3}$-a.e. $(r,\phi,\omega)\in\mathscr A$. 
Moreover, at those points, Proposition \ref{lem:reducedJ} ensures that
\begin{equation}
\label{eq:positivity_jacobian}
    \J_1 (r, \phi,  \omega) = \frac{ r^3 }{ \omega^4} \J_R ( \phi,  \omega ) > 0.
\end{equation}
Therefore, since $G_1(\cdot)$ is locally Lipschitz on $\mathscr A$, by the area formula and \eqref{eq:positivity_jacobian}, we deduce that 
\begin{equation}
\label{eq:positive_measure_set}
    \m \big(G_1(\mathscr A)\big) >0.
\end{equation}
We can now disprove the measure contraction property $\MCP(K,N)$, taking as marginals
\begin{equation}
\mu_0:=\delta_\e\qquad\text{and}\qquad\mu_1:= \frac{1}{\Leb^3(G_1(\mathscr A))}\, \Leb^3|_{G_1(\mathscr A)}.
\end{equation}
 According to Remark \ref{rmk:SIUUUUUUU}, it is enough to contradict \eqref{eq:tj_pantaloncini} with $ A'=A= G_1(\mathscr A)$. In particular, we are going to find $t_0\in(0,1)$ such that
\begin{equation}
\label{eq:very_thin_support}
    M_t(\{\e\},A) = G_t (\mathscr A) \subset \{ y=0,z=0\}, \qquad \forall\,t<t_0.
\end{equation}
 To this aim, fix any $(r,\phi,\omega)\in \mathscr A$ and note that, for every $t<\frac{\psi_1-\phi}{\omega}$, \eqref{eq:120.5} implies that
\begin{equation*}
    \cosomp(\phi+\omega  t ) = \bar x \qquad\text{and}\qquad \sinomp(\phi+\omega  t )= \sinomp(\phi) + \frac{\omega  t}{\bar x}.
\end{equation*}
From these relations and \eqref{eq:xyz}, it follows immediately that 
\begin{equation*}
    y_t(r,\phi,\omega)=0\qquad\text{and}\qquad z_t(r, \phi,\omega) = 0,
\end{equation*}
for every $s<\frac{\psi_1-\phi}{\omega}$. Then, according to our choice of $\mathscr A$, \eqref{eq:very_thin_support} holds with $t_0=\frac{\psi_1-\phi}{\pi_{\Omega^\circ}}$.
\end{proof}

\begin{remark}
     Observe that the previous theorem generalizes \cite[Thm.\ 5.26]{magnabosco2023failure}. While the strategy of its proof is similar, we highlight that now we are able to prove \eqref{eq:positive_measure_set} using Proposition \ref{lem:reducedJ} and working in a neighborhood $\mathscr A$ where $\omega > \pi_{\Omega^\circ}$. This is in contrast with the geometric construction of \cite[Thm.\ 5.26]{magnabosco2023failure} which was local around the flat part of $\partial\Omega^\circ$.  
\end{remark}

\begin{remark}
    As already observed in \cite[Rmk.\ 5.27]{magnabosco2023failure}, the construction presented in the last proof highlights the existence of a family of branching geodesics, originating from the presence of a flat part in $\partial\Omega^\circ$. In particular, when $\hei$ is equipped with a singular norm, geodesics can branch, although they are unique (in the sense of Proposition \ref{prop:summary}). 
\end{remark}

\subsection{Failure for \texorpdfstring{$C^1$}{C1} and non-strictly convex norms} \label{sec:nomcp2}

In this subsection we complete another step for the proof of Theorem \ref{thm:MAINnomcp}, considering the \sF Heisenberg group $\hei$, equipped with a non-strictly convex $C^1$ norm $\normdot$. 
The main idea behind our strategy is to exploit the discontinuities of the generalized trigonometric functions, caused by the ``flat parts'' of $\partial \Omega$ (cf. Lemma \ref{lem:limits}). According to Proposition \ref{prop:NONO}, these will result in discontinuities of the Jacobian of the exponential map, which, if properly utilized, will allow us to show the failure of the $\MCP(K,N)$ condition.

In particular, in this case there exists a straight segment $L$ contained in the sphere $S^{\norm{\cdot}}_1(0)=\partial\Omega$. As done in the proof of Theorem \ref{thm:noCDnonC1}, we can assume without losing generality that this segment is vertical in $\R^2\cap\{x>0\}$, i.e. there exists $\bar x \in \R_{>0}$ and a maximal interval $I:=[y_0,y_1]\subset \R$ such that $L= \{\bar x\} \times I \subset \partial \Omega$. 

\begin{lemma}\label{lem:limits}
    We have that 
    \begin{equation}
        \lim_{\begin{smallmatrix} \psi \uparrow 0 & \\ \psi \in \mathsf{D}_0  \end{smallmatrix}} \cosom(\psi_\circ) = \lim_{\begin{smallmatrix} \psi \downarrow 0 & \\ \psi \in \mathsf{D}_0  \end{smallmatrix}} \cosom(\psi_\circ) = \bar x \quad \text{and}\quad \lim_{\begin{smallmatrix} \psi \uparrow 0 & \\ \psi \in \mathsf{D}_0  \end{smallmatrix}} \sinom(\psi_\circ)=y_0<y_1= \lim_{\begin{smallmatrix} \psi \downarrow 0 & \\ \psi \in \mathsf{D}_0  \end{smallmatrix}} \sinom(\psi_\circ) .
    \end{equation}
\end{lemma}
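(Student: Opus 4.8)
The plan is to transport everything through the polar correspondence and then exploit the monotonicity of $C_\circ$. First I would record the dual picture of the flat segment. Since $L=\{\bar x\}\times[y_0,y_1]$ is a face of $\Omega$ and the origin is interior, the supporting line of $\Omega$ along $L$ is $\{x=\bar x\}$, so $\bar x$ is the maximal first coordinate attained on $\Omega$ and the polar dual of $L$ is the single point $(1/\bar x,0)\in\partial\Omega^\circ$. This point lies on the positive $x$-axis, hence it is $Q_0$, and in particular $\cosomp(0)=1/\bar x$ and $\sinomp(0)=0$. Because $\normdot$ is $C^1$ its dual $\normdot_*$ is strictly convex (Proposition \ref{prop:propunderduality}), so $\Omega^\circ$ is a strictly convex set and $Q_0$ is a corner: the correspondence set at $0$ is the whole face, i.e.\ $C_\circ(0)=[\theta_0,\theta_1]$ with $P_{\theta_0}=(\bar x,y_0)$ and $P_{\theta_1}=(\bar x,y_1)$. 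Here I fix the orientation once and for all: both $P$ and $Q$ are parametrised counterclockwise by area, and on the right half-plane counterclockwise motion increases the ordinate, so the lower endpoint carries the smaller angle, $\theta_0<\theta_1$.

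Next I would use that $\normdot$ being $C^1$ forces $C_\circ$ to be strictly increasing (Proposition \ref{prop:regularityCo}(i)). Monotonicity guarantees that the one-sided limits $\theta_\pm:=\lim_{\psi\to 0^\pm,\,\psi\in\mathsf{D}_0}C_\circ(\psi)$ exist. Moreover, since $C_\circ$ is increasing and $C_\circ(0)=[\theta_0,\theta_1]$, every value $C_\circ(\psi)$ with $\psi<0$ lies $\le\theta_0$ while every value with $\psi>0$ lies $\ge\theta_1$, whence $\theta_-\le\theta_0$ and $\theta_+\ge\theta_1$. For the cosine statement I would then invoke the Pythagorean identity \eqref{eq:pytagorean} through the symmetry $\psi\xleftrightarrow{\Omega^\circ}\psi_\circ\iff\psi_\circ\xleftrightarrow{\Omega}\psi$ (Proposition \ref{prop:correspondence}): for $\psi\in\mathsf{D}_0$ one has $\cosom(\psi_\circ)\cosomp(\psi)+\sinom(\psi_\circ)\sinomp(\psi)=1$. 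Letting $\psi\to 0^\pm$ and using continuity of $\cosomp,\sinomp$ (Remark \ref{rmk:Lip}) with $\cosomp(\psi)\to 1/\bar x$ and $\sinomp(\psi)\to 0$, together with continuity of $P$ (Lemma \ref{lem:facile_cazzo}) giving $\cosom(\psi_\circ)\to\cosom(\theta_\pm)$ and $\sinom(\psi_\circ)\to\sinom(\theta_\pm)$, the identity collapses to $\cosom(\theta_\pm)/\bar x=1$. This already proves $\lim_{\psi\uparrow 0}\cosom(\psi_\circ)=\lim_{\psi\downarrow 0}\cosom(\psi_\circ)=\bar x$.

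Finally I would upgrade this to the sine statement. The equality $\cosom(\theta_\pm)=\bar x$ says that $P_{\theta_\pm}$ lies in $\{x=\bar x\}\cap\partial\Omega=L$, hence $\theta_\pm\in[\theta_0,\theta_1]$. Combining with the bounds of the previous step, $\theta_+\ge\theta_1$ forces $\theta_+=\theta_1$ and $\theta_-\le\theta_0$ forces $\theta_-=\theta_0$; reading off ordinates then yields $\lim_{\psi\downarrow 0}\sinom(\psi_\circ)=\sinom(\theta_1)=y_1$ and $\lim_{\psi\uparrow 0}\sinom(\psi_\circ)=\sinom(\theta_0)=y_0$, with $y_0<y_1$ by maximality of $I$. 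The main obstacle, in my view, is not any single estimate but the careful bookkeeping of the orientation and of the monotone multi-valued map $C_\circ$ near the corner: one must ensure that the left limit genuinely reaches $\theta_0$ (and not some $\theta<\theta_0$) and the right limit reaches $\theta_1$. This is precisely what the combination of the cosine identity (pinning $\theta_\pm$ into the face interval $[\theta_0,\theta_1]$) with the monotonicity bounds $\theta_-\le\theta_0\le\theta_1\le\theta_+$ delivers.
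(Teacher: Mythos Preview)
Your proof is correct and takes a genuinely different route from the paper's. The paper argues via an explicit $\varepsilon$--$\delta$ scheme: it fixes $\varepsilon>0$, chooses angles $\tilde\theta_0<\theta_0$ and $\tilde\theta_1>\theta_1$ on which $\cosom,\sinom$ are $\varepsilon$-close to the target values, sets $\psi_0=C^\circ(\tilde\theta_0)$, $\psi_1=C^\circ(\tilde\theta_1)$, and then uses that $C^\circ$ is single-valued and monotone (since $\normdot$ is $C^1$) to trap $\psi_\circ=C_\circ(\psi)$ in $[\tilde\theta_0,\theta_0]$ for $\psi\in(\psi_0,0)\cap\mathsf D_0$, and in $[\theta_1,\tilde\theta_1]$ on the other side. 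Your approach instead first invokes strict monotonicity of $C_\circ$ to get the existence of the one-sided limits $\theta_\pm$ together with the coarse bounds $\theta_-\le\theta_0\le\theta_1\le\theta_+$, and then passes to the limit in the Pythagorean identity \eqref{eq:pytagorean} to obtain $\cosom(\theta_\pm)=\bar x$, which forces $P_{\theta_\pm}\in L$ and hence $\theta_\pm\in[\theta_0,\theta_1]$; combining the two gives equality. The paper's argument is more elementary and never touches the Pythagorean identity, while yours is shorter and more conceptual, turning the identification of the limits into a single algebraic equation. One small remark: the inequality $y_0<y_1$ comes from the fact that $L$ is a genuine (nondegenerate) flat segment, not from the maximality of $I$.
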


\begin{proof}
    Let $v\in \partial\Omega^\circ$ be the dual vector of every vector in $L$, i.e. 
    \begin{equation}
        v = \diff_{(\bar x,y)} \normdot, \qquad \text{for every }y \in I. 
    \end{equation}
    Notice that $v$ is an horizontal vector in $\R^2$, therefore $v=Q_{0}$. In particular, setting $\theta_0$ and $\theta_1$ to be such that $(\bar x, y_0)= P_{\theta_0}$ and $(\bar x, y_1)= P_{\theta_1}$, we have
    \begin{equation}
    \label{eq:image_Ccirc}
        C^\circ(\theta)=0\qquad\text{if and only if}\qquad \theta \in [\theta_0,\theta_1].
    \end{equation}
    Fix $\varepsilon>0$ and let $\tilde\theta_0 < \theta_0$ and $\tilde \theta_1 > \theta_1$ such that 
    \begin{equation}
    \begin{split}
     \cosom(\theta) &\in (\bar x- \varepsilon, \bar x],\!\!\quad\qquad \text{for every } \theta\in [\tilde\theta_0, \theta_0] \cup[\theta_1,\tilde \theta_1],\\
    \sinom(\theta) &\in (y_0- \varepsilon, y_0],\qquad \text{for every } \theta\in [\tilde\theta_0, \theta_0],\\
    \sinom(\theta) &\in [y_1, y_1 +\varepsilon),\qquad \text{for every } \theta\in [\theta_1,\tilde \theta_1].
    \end{split}
    \end{equation}
    Call $\psi_0 := C^\circ(\tilde\theta_0)< 0$ and $\psi_1 := C^\circ(\tilde\theta_1)> 0$, cf. \eqref{eq:image_Ccirc}. Now, consider $\psi \in (\psi_0, 0) \cap \mathsf{D}_0$ (so that $C_\circ$ is a single-valued map at $\psi$) and note that $C_\circ(\psi)=\psi_\circ\in [\tilde \theta_0, \theta_0]$ and therefore it holds that 
    \begin{equation}
    \label{eq:magical_inclusion1}
        \cosom (\psi_\circ) \in (\bar x- \varepsilon, \bar x],\qquad \sinom(\psi_\circ) \in (y_0- \varepsilon, y_0].
    \end{equation}
    With an analogous argument, we deduce that for every $\psi \in (0,\psi_1)\cap \mathsf{D}_0$, we have that $\psi_\circ\in[\theta_1,\tilde \theta_1]$ and
    \begin{equation}
    \label{eq:magical_inclusion2}
         \cosom (\psi_\circ) \in (\bar x- \varepsilon, \bar x],\qquad \sinom(\psi_\circ) \in [y_1, y_1 +\varepsilon).
    \end{equation}
    By the arbitrariness of $\varepsilon>0$, the combination of \eqref{eq:magical_inclusion1} and \eqref{eq:magical_inclusion2} yields the thesis.
\end{proof}

\begin{theorem}\label{thm:noMCPnonstrctly}
    Let $\hei$ be the \sF Heisenberg group, equipped with a norm $\normdot$ which is $C^1$ and not strictly convex, and let $\m$ be a smooth measure on $\hei$. Then, the metric measure space $(\hei, \di, \m)$ does not satisfy the measure contraction property $\MCP(K,N)$ for every $K\in \R$ and $N\in (1,\infty)$.
\end{theorem}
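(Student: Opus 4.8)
The plan is to convert the jump of the generalized trigonometric functions recorded in Lemma \ref{lem:limits} into an \emph{upward} jump of the reduced Jacobian $\J_R(\phi,\cdot)$, placed inside the region $(\pi_{\Omega^\circ},2\pi_{\Omega^\circ})$ where $\J_R$ is already positive, and then to contradict the necessary condition of Corollary \ref{cor:NO-MCP} by straddling this jump with $t\to 1$. Concretely, I would fix $\phi=\pi_{\Omega^\circ}-\epsilon$ for a small $\epsilon>0$ (so that $\phi\in\mathsf{D}_0$) and study $\psi\mapsto\J_R(\phi,\psi)$ near $\psi^\ast:=2\pi_{\Omega^\circ}-\phi\in(\pi_{\Omega^\circ},2\pi_{\Omega^\circ})$, the point where $\phi+\psi\equiv 0\pmod{2\pi_{\Omega^\circ}}$ and hence where $C_\circ(\phi+\cdot)$ is discontinuous. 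In the explicit expression \eqref{eq:reduced_Jac}, only the terms carrying $\sinom\big((\phi+\psi)_\circ\big)$ jump at $\psi^\ast$: by Lemma \ref{lem:limits} combined with the periodicity \eqref{eq:periodicity_Ccirc}, $\sinom\big((\phi+\psi)_\circ\big)$ has left limit $y_0$ and right limit $y_1$, whereas $\cosom\big((\phi+\psi)_\circ\big)\to\bar x$ from both sides. Writing $\J_R^{L},\J_R^{R}$ for the one-sided limits of $\J_R(\phi,\cdot)$ at $\psi^\ast$, reading off \eqref{eq:reduced_Jac} then gives
\begin{equation*}
\J_R^{R}-\J_R^{L}=-(y_1-y_0)\big(\sinomp(\phi)+\psi^\ast\cosom(\phi_\circ)\big).
\end{equation*}

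The heart of the argument is the sign of this jump. Let $\theta_0$ be such that $P_{\theta_0}=(\bar x,y_0)$. Using the symmetries \eqref{eq:symmetry1}, \eqref{eq:symmetry2} (and their $\cosom$-analogue), as $\epsilon\to 0^+$ one has $\sinomp(\phi)\to\sinomp(\pi_{\Omega^\circ})=0$ and $C_\circ(\phi)\to\theta_0+\pi_\Omega$, whence $\cosom(\phi_\circ)\to\cosom(\theta_0+\pi_\Omega)=-\bar x$; since also $\psi^\ast\to\pi_{\Omega^\circ}$, the jump tends to $(y_1-y_0)\pi_{\Omega^\circ}\bar x>0$. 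Hence, for $\epsilon$ small but fixed, $\J_R^{R}-\J_R^{L}>0$. Moreover $\psi^\ast$ together with a whole neighbourhood of it lies in $(\pi_{\Omega^\circ},2\pi_{\Omega^\circ})$, so Proposition \ref{lem:reducedJ} (with $\delta_-\equiv 0$, as $\normdot$ is $C^1$) gives $\J_R^{L}\ge 0$; combined with the positive jump this yields $0\le\J_R^{L}<\J_R^{R}$, i.e. $\big|\J_R\big|$ genuinely jumps \emph{upwards} at $\psi^\ast$.

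With this in hand I would contradict Corollary \ref{cor:NO-MCP}. Fixing $r>0$ and the above $\phi$, for small $\eta,\eta'>0$ set $\omega:=\psi^\ast+\eta$ and take $t:=(\psi^\ast-\eta')/(\psi^\ast+\eta)\in(0,1)$, so that $\omega t=\psi^\ast-\eta'$ sits just below the jump and $\omega$ just above it. As $\normdot$ is $C^1$ we have $(r,\phi,\omega)\in\mathscr R=\mathscr U$, and by density $\phi,\phi+\omega t,\phi+\omega$ may be taken in $\mathsf{D}_0$ and at Lebesgue points of $\J_R$. Letting $\eta,\eta'\to 0$ one gets $|\J_R(\phi,\omega t)|\to\J_R^{L}$ while $t^{N-1}|\J_R(\phi,\omega)|\to\J_R^{R}>\J_R^{L}$, so for $\eta,\eta'$ small enough the strict inequality \eqref{eq:criterionNOMCP} holds. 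Since this can be carried out for \emph{every} $N\in(1,\infty)$, Corollary \ref{cor:NO-MCP} implies that $(\hei,\di,\m)$ fails $\MCP(K,N)$ for all $K\in\R$ and $N\in(1,\infty)$.

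The step I expect to be the main obstacle is pinning down the sign of $\J_R^{R}-\J_R^{L}$ in a way that is robust across all $C^1$, non-strictly convex norms: the clean cancellations arise precisely from evaluating at $\phi$ near $\pi_{\Omega^\circ}$ and exploiting the antipodal symmetry of $\Omega^\circ$, and some care is needed to check that $\cosom(\phi_\circ)\to-\bar x$ (so that the jump does not degenerate to $0$) and that the straddling configuration remains admissible for Corollary \ref{cor:NO-MCP}.
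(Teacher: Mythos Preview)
Your proposal is correct and follows essentially the same approach as the paper: isolate the term $\big(-\sinomp(\phi)-\psi\cosom(\phi_\circ)\big)\sinom\big((\phi+\psi)_\circ\big)$ as the unique source of discontinuity in $\J_R(\phi,\cdot)$ at $\psi^\ast=2\pi_{\Omega^\circ}-\phi$, show the jump is positive by placing $\phi$ just below $\pi_{\Omega^\circ}$, use Proposition~\ref{lem:reducedJ} for positivity below the jump, and straddle with Corollary~\ref{cor:NO-MCP}. The only cosmetic difference is that the paper proves positivity of the coefficient $-\sinomp(\phi)-\psi^\ast\cosom(\phi_\circ)$ via an explicit inequality valid on a whole interval $[\pi_{\Omega^\circ}-\varepsilon,\pi_{\Omega^\circ})$, whereas you obtain it by sending $\epsilon\to 0$; both are fine, and your limiting argument for $\cosom(\phi_\circ)\to-\bar x$ via \eqref{eq:symmetry2} is exactly the underlying reason the paper's inequality holds.
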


\noindent Before going through the proof, we recall from section \ref{subsec:jacobian} that there is a $\Leb^1$-full measure set $\bar {\mathsf{D}}_0\subset\mathsf{D}_0$ such that, for every $\phi \in \bar {\mathsf{D}}_0$, we have that $\phi+ \psi \in \mathsf{D}_0$ and $(\phi,\psi)$ is a Lebesgue point for $\J_R$, for $\Leb^1$-almost every $\psi \in \R$.

\begin{proof}[Proof of Theorem \ref{thm:noMCPnonstrctly}]
    According to Lemma \ref{lem:limits}, the following limits exist
    \begin{equation}
        y_0= \lim_{\begin{smallmatrix} \psi \uparrow 0 & \\ \psi \in \mathsf{D}_0  \end{smallmatrix}} \sinom(\psi_\circ) \qquad \text{and}\qquad y_1 =\lim_{\begin{smallmatrix} \psi \downarrow 0 & \\ \psi \in \mathsf{D}_0  \end{smallmatrix}} \sinom(\psi_\circ),
    \end{equation}
    and we know that $\delta:= y_1 - y_0>0$. Observe that, by the symmetry of the norm, $\cosomp(\pi_{\Omega^\circ})= - \cosomp(0)<0$ and $\sinomp( \pi_{\Omega^\circ})= \sinomp( 0)= 0$. Let $\rho>0$ be sufficiently small, then, by continuity of the trigonometric functions, there exists $\varepsilon>0$ such that, for every $\phi\in [\pi_{\Omega^\circ}-\varepsilon,\pi_{\Omega^\circ}]$,
    \begin{equation}
    \label{eq:continuity_trig_fun}
        \sinomp(\phi)\in \left[0,\frac{\rho}{M}\right]\qquad\text{and}\qquad -\cosomp(\phi)\in [\cosomp(0)-\rho,\cosomp(0)+\rho],
    \end{equation}
    where $M:=\max_\theta \sinom(\theta)$ is positive and finite. Thus, by the Pythagorean identity \eqref{eq:pytagorean} and the first relation in \eqref{eq:continuity_trig_fun}, we deduce that 
    \begin{equation}
        \cosom(\phi_\circ)\cosomp(\phi)\in [1-\rho,1+ \rho], \qquad\forall\,\phi\in [\pi_{\Omega^\circ}-\varepsilon,\pi_{\Omega^\circ}]\cap\mathsf{D}_0.
    \end{equation}
    Combining this with the second relation in \eqref{eq:continuity_trig_fun}, we deduce that 
    \begin{equation}
        -\cosom(\phi_\circ)\in \left[\frac{1-\rho}{\cosomp(0)+\rho},\frac{1+\rho}{\cosomp(0)-\rho}\right],\qquad\forall\,\phi\in [\pi_{\Omega^\circ}-\varepsilon,\pi_{\Omega^\circ}]\cap\mathsf{D}_0.
    \end{equation}
    In addition, up to restricting $\varepsilon>0$, by continuity, we may also assume that 
    \begin{equation}
        \frac{2}{\pi_{\Omega^\circ}}\sinomp(\phi)<\frac{1-\rho}{\cosomp(0)+\rho},\qquad\forall\,\phi\in [\pi_{\Omega^\circ}-\varepsilon,\pi_{\Omega^\circ}]\cap\mathsf{D}_0.
    \end{equation}
    In conclusion, we can find $\varepsilon>0$ such that   
    \begin{equation}\label{eq:METRO}
        - \cosom(\phi_\circ) > \frac{2}{\pi_{\Omega^\circ}} \sinomp( \phi) \qquad \text{for every }\phi \in [\pi_{\Omega^\circ}- \varepsilon, \pi_{\Omega^\circ}] \cap \mathsf{D}_0.
    \end{equation}
    
    Fix $\bar \phi \in [\pi_{\Omega^\circ}- \varepsilon, \pi_{\Omega^\circ}) \cap \bar {\mathsf{D}}_0$, call $\bar k (\psi):= -\sinomp( \bar\phi) - \psi \cosom( \bar \phi_\circ)$ and observe that \eqref{eq:METRO} ensures that
    \begin{equation}\label{eq:it'slit}
        \bar k (\psi) > -\sinomp( \bar\phi) - \frac{\pi_{\Omega^\circ}}{2} \cosom( \bar \phi_\circ) >0 \qquad \text{for every }\psi \geq \frac{\pi_{\Omega^\circ}}{2}.
    \end{equation}
    Note that the function $\psi \mapsto \J_R (\bar \phi, \psi)$ is not continuous at $2\pi_{\Omega^\circ} - \bar \phi$. In fact, looking at the explicit expression \eqref{eq:reduced_Jac} of the reduced Jacobian, thanks to Lemma \ref{lem:limits}, we have that every term of $\J_R(\phi,\psi)$ is continuous (in $\psi$, at $2\pi_{\Omega^\circ} - \bar \phi$) except for 
    \begin{equation}
        - \sinom\big(( \bar\phi +  \psi)_\circ\big) \sinomp( \bar\phi)
         - \psi \sinom\big(( \bar \phi +  \psi)_\circ\big) \cosom( \bar \phi_\circ) =  \bar k(\psi) \sinom\big(( \phi +  \psi)_\circ\big). 
    \end{equation}
    As a consequence, calling 
    \begin{equation}\label{eq:Ttime}
          J_1:=\lim_{\begin{smallmatrix} \psi \uparrow (2 \pi_{\Omega^\circ} - \bar \phi)& \\ \bar \phi + \psi \in \mathsf{D}_0 \end{smallmatrix}}\J_R (\bar\phi, \psi) \qquad \text{and} \qquad J_2:= \lim_{\begin{smallmatrix} \psi \downarrow (2 \pi_{\Omega^\circ} - \bar \phi)& \\ \bar \phi + \psi \in \mathsf{D}_0 \end{smallmatrix}}\J_R (\bar\phi, \psi), 
    \end{equation}
    we deduce that 
    \begin{equation}
        J_2 > J_2 - \bar k (2 \pi_{\Omega^\circ} - \bar \phi) \cdot  \delta = J_1 >0
    \end{equation}
    where the first inequality is a consequence \eqref{eq:it'slit}, while the second one follows from Proposition \ref{lem:reducedJ}. Recall that, since we chose $\bar\phi\in \bar{\mathsf{D}}_0$, we have that, for $\Leb^1$-almost every $\psi \in \R$, $\bar\phi + \psi \in \mathsf{D}_0$ and $(\phi,\psi)$ is a Lebesgue point for $\J_R$. Now, we consider the quantity
    \begin{equation}
        \bar t:=\frac{ J_2 - \frac 23 \bar k (2 \pi_{\Omega^\circ} - \bar \phi)}{ J_2 - \frac13\bar k (2 \pi_{\Omega^\circ} - \bar \phi)}< 1.
    \end{equation}
    Keeping in mind \eqref{eq:Ttime}, for any $N\in(1,\infty)$, we can find $\psi_1 < 2\pi_{\Omega^\circ} - \bar \phi$ and $\psi_2 > 2\pi_{\Omega^\circ} - \bar \phi$ such that $\bar\phi+ \psi_1, \bar\phi+ \psi_2 \in \mathsf{D}_0$, $(\bar \phi,\psi_1)$ and $(\bar \phi,\psi_2)$ are Lebesgue points for $\J_R$, $\frac{\psi_1}{\psi_2}=:t> \bar t^{\frac{1}{N-1}}$, 
    \begin{equation}
        \J_R (\bar\phi, \psi_1) < J_1 + \frac13\bar k (2 \pi_{\Omega^\circ} - \bar \phi) = J_2 - \frac23\bar k (2 \pi_{\Omega^\circ} - \bar \phi) \quad \text{and} \quad \J_R (\bar\phi, \psi_2) > J_2 - \frac13\bar k (2 \pi_{\Omega^\circ} - \bar \phi).
    \end{equation}
    Now, taking $\omega=\psi_2$, we conclude that 
    \begin{equation}
    \begin{split}
        \J_R (\bar\phi, \omega t) &= \J_R (\bar\phi, \psi_1) < J_2 - \frac23\bar k (2 \pi_{\Omega^\circ} - \bar \phi) = \bar t \left(J_2 - \frac13\bar k (2 \pi_{\Omega^\circ} - \bar \phi)\right) < \bar t \cdot \J_R (\bar\phi, \psi_2 ) \\ &< t^{N-1} \cdot \J_R (\bar\phi, \omega ).
    \end{split}
    \end{equation}
    Then, we can apply Corollary \ref{cor:NO-MCP} and deduce the thesis, thanks to the arbitrariness of $N$.
\end{proof}

\subsection{Failure for \texorpdfstring{$C^1$}{C1} and strictly but not strongly convex norms} \label{sec:nomcp3}

In this subsection, we consider the \sF Heisenberg group $\hei$, equipped with a norm $\normdot$ which is $C^1$ and strictly convex, but not strongly convex and prove the failure of the $\MCP(K,N)$ condition in these structures. In this way, we complete the proof of Theorem \ref{thm:MAINnomcp}. The strategy used in this section is similar to the one developed to prove Theorem \ref{thm:noMCPnonstrctly}. But, instead of looking at discontinuities of the Jacobian, which is continuous in this case, we exploit the existence of suitable parameters at which the Jacobian has big variations. This case includes the Heisenberg group equipped with the sub-Finsler $\ell^p$-norm when $p \in (2, +\infty)$, and recovers the result in \cite[Thm.\ A.2]{borzatashiro}.

    \begin{theorem}
    \label{thm:noMCPC1strictlynotstrongly}
    Let $\hei$ be the sub-Finsler Heisenberg group, equipped with a $C^1$ and strictly but not strongly convex norm $\normdot$, and with a smooth measure $\m$. Then, the metric measure space $(\hei, \di, \m)$ does not satisfy the $\mathsf{MCP}(K, N)$ for every $K\in \R$ and $N\in (1,\infty)$.
\end{theorem}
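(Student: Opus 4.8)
The plan is to disprove $\MCP(K,N)$ for every $N$ by producing, for each such $N$, parameters that violate the necessary inequality \eqref{eq:criterionNOMCP} of Corollary \ref{cor:NO-MCP}. Since $\normdot$ is $C^1$ and strictly convex, Proposition \ref{prop:regularityCo} and Remark \ref{rmk:Jdefinedae} guarantee that $\mathsf{D}_0=\R/2\pi_{\Omega^\circ}\mathbb{Z}$, that $C_\circ$ is single-valued, continuous and strictly increasing, and that $\J_R$ is continuous; moreover $\mathscr{R}=\mathscr{U}$ by Proposition \ref{prop:summary}, so the Lebesgue-point and uniqueness hypotheses of Corollary \ref{cor:NO-MCP} are automatic and any $r>0$ will do. Taking logarithms in \eqref{eq:criterionNOMCP} and writing $\psi_1=\omega t<\psi_2=\omega$, it suffices to exhibit, with arbitrarily large values, angles $\phi$ and $0<\psi_1<\psi_2$ for which the discrete log--log slope $\big(\log\J_R(\phi,\psi_2)-\log\J_R(\phi,\psi_1)\big)/(\log\psi_2-\log\psi_1)$ exceeds $N-1$.

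Because $\normdot$ is not strongly convex, Proposition \ref{prop:regularityCo} ensures that $C_\circ$ is not Lipschitz, so there are pairs $\beta_1^{(n)}<\beta_2^{(n)}$ whose difference quotients $L_n:=\big(C_\circ(\beta_2^{(n)})-C_\circ(\beta_1^{(n)})\big)/(\beta_2^{(n)}-\beta_1^{(n)})$ tend to $+\infty$; by compactness and continuity of $C_\circ$ we may assume $\beta_1^{(n)},\beta_2^{(n)}\to\beta^*$ for a common $\beta^*$ (whence both $\beta_2^{(n)}-\beta_1^{(n)}\to0$ and $C_\circ(\beta_2^{(n)})-C_\circ(\beta_1^{(n)})\to0$). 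I then fix $\phi:=\beta^*-\psi^*$, where $\psi^*\in(\pi_{\Omega^\circ},2\pi_{\Omega^\circ})$ is chosen close to $\pi_{\Omega^\circ}$, and set $\psi_i^{(n)}:=\beta_i^{(n)}-\phi\to\psi^*$. This choice serves two purposes. First, Proposition \ref{lem:reducedJ} gives $\J_R(\phi,\cdot)>0$ on $(\pi_{\Omega^\circ},2\pi_{\Omega^\circ})$, so $\J_R$ stays bounded away from $0$ near $\psi^*$. Second, a direct computation using the symmetry \eqref{eq:symmetry1} and the Pythagorean identity \eqref{eq:pytagorean} yields
\begin{equation*}
\mathcal{K}(\phi,\pi_{\Omega^\circ})=\pi_{\Omega^\circ}>0\qquad\text{for every }\phi,
\end{equation*}
so by continuity of $\mathcal{K}$ we have $\mathcal{K}(\phi,\psi^*)>0$ once $\psi^*$ is close enough to $\pi_{\Omega^\circ}$.

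The heart of the argument is a first-order expansion of the increment of $\J_R$. Inspecting \eqref{eq:reduced_Jac}, the only non-smooth dependence on $\psi$ enters through $\cosom(C_\circ(\phi+\psi))$ and $\sinom(C_\circ(\phi+\psi))$, while all remaining coefficients are $C^1$ (indeed affine, or built from the $C^1$ functions $\sinomp,\cosomp$). Expanding each factor to first order and using that $\sinom,\cosom$ are $C^1$ with derivatives $\cosomp\circ C^\circ$ and $-\sinomp\circ C^\circ$, the smooth coefficients recombine exactly into $\mathcal{K}$ as in \eqref{eq:dJK}--\eqref{def:functionK}, giving
\begin{equation*}
\J_R(\phi,\psi_2^{(n)})-\J_R(\phi,\psi_1^{(n)})=\mathcal{K}(\phi,\psi^*)\big(C_\circ(\beta_2^{(n)})-C_\circ(\beta_1^{(n)})\big)(1+o(1))+O\big(\psi_2^{(n)}-\psi_1^{(n)}\big).
\end{equation*}
Since $C_\circ(\beta_2^{(n)})-C_\circ(\beta_1^{(n)})=L_n(\psi_2^{(n)}-\psi_1^{(n)})$ with $L_n\to\infty$, the first term dominates the error, so the difference quotient $\big(\J_R(\phi,\psi_2^{(n)})-\J_R(\phi,\psi_1^{(n)})\big)/(\psi_2^{(n)}-\psi_1^{(n)})\sim L_n\,\mathcal{K}(\phi,\psi^*)$ diverges; as the log--log slope equals this difference quotient times $\psi^*/\J_R(\phi,\psi^*)$ up to a factor $1+o(1)$, it tends to $+\infty$. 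Thus for every $N$ there is an $n$ with $\J_R(\phi,\omega t)<t^{N-1}\J_R(\phi,\omega)$ for $\omega:=\psi_2^{(n)}$ and $t:=\psi_1^{(n)}/\psi_2^{(n)}\in(0,1)$, and Corollary \ref{cor:NO-MCP} rules out $\MCP(K,N)$; arbitrariness of $N$ gives the claim.

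The main obstacle, and the reason this case escapes the derivative characterization of Corollary \ref{prop:diffcharacMCP}, is exactly the failure of strong convexity: $C_\circ$, though continuous and monotone, need not be absolutely continuous, so one cannot write $\J_R(\phi,\psi_2)-\J_R(\phi,\psi_1)=\int C_\circ'\,\mathcal{K}$. The expansion above circumvents this by isolating the single rough term and estimating its increment directly through the (large) increment of $C_\circ$, which is legitimate precisely because $\sinom,\cosom$ are genuinely $C^1$. The remaining delicate point is the non-degeneracy $\mathcal{K}(\phi,\psi^*)\neq0$, without which the leading term would vanish; this is secured by the choice $\psi^*\approx\pi_{\Omega^\circ}$ together with the identity $\mathcal{K}(\phi,\pi_{\Omega^\circ})=\pi_{\Omega^\circ}$, while the same choice keeps $\J_R$ positive via Proposition \ref{lem:reducedJ}.
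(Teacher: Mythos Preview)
Your proposal is correct and follows essentially the same approach as the paper's proof: both exploit the failure of Lipschitzness of $C_\circ$ at some $\beta^*$, fix $\phi$ so that the problematic angle corresponds to $\psi$ at (or near) $\pi_{\Omega^\circ}$, and use the identity $\mathcal{K}(\phi,\pi_{\Omega^\circ})=\pi_{\Omega^\circ}$ together with the first-order expansion of $\J_R(\phi,\psi_2)-\J_R(\phi,\psi_1)$ to show that $\log\J_R(\phi,\cdot)$ fails to be Lipschitz there. The only cosmetic differences are that the paper sets $\omega=\pi_{\Omega^\circ}$ exactly (where one checks directly $\J_R(\phi,\pi_{\Omega^\circ})=4$) while you take $\psi^*$ slightly above and invoke Proposition~\ref{lem:reducedJ}, and that the paper frames the contradiction via the criterion \eqref{eq:MCPsuffC1strictly} derived from Proposition~\ref{prop:phoenix} whereas you invoke Corollary~\ref{cor:NO-MCP}; these are equivalent.
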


     \noindent Before going to the proof, we observe that the characterization \eqref{eq:equivalent_reducedJ2} of Proposition \ref{prop:phoenix} is equivalent to
    \[
    \frac{\log|\mathcal{J}_R(\varphi, \omega t)| - \log|\mathcal{J}_R(\varphi, \omega)|}{\omega t - \omega}
    \leq \frac{(N - 1)\log t}{\omega t - \omega},
    \]
    for every $(\varphi,\omega)\in \mathcal{U}$ and $t\in[0,1]$. In particular, if $(\hei, \di, \Leb^3)$ satisfies the $\mathsf{MCP}(0, N)$ for some $N \in (1,\infty)$, then

\begin{multline}\label{eq:MCPsuffC1strictly}
  \limsup\limits_{\substack{(\omega_1,\omega_2)\to(\omega, \omega) \\ \omega_1 \neq \omega_2}} \frac{\log|\mathcal{J}_R(\varphi, \omega_1)| - \log|\mathcal{J}_R(\varphi, \omega_2)|}{\omega_1  - \omega_2} \\
  \leq{} (N - 1) \limsup\limits_{\substack{(\omega_1,\omega_2)\to(\omega, \omega) \\ \omega_1 \neq \omega_2}} \frac{\log |\omega_1| - \log |\omega_2|}{\omega_1 - \omega_2} = \frac{N - 1}{\omega} <  +\infty,
\end{multline}
for every $(\varphi,\omega)\in\mathcal{U}$. The last equality is justified since
\[
\lim\limits_{\substack{(x,y)\to(a, a) \\ x \neq y}} \frac{f(x) - f(y)}{x - y} = f'(a),
\]
whenever $f$ is $C^1$ in a neighbourhood of $a$ (by the mean value theorem). In the proof, we will also use the property
\begin{equation}\label{eq:limsupworks}
\limsup\limits_{\substack{(x,y)\to(a, a) \\ x \neq y}} \frac{f(g(x)) - f(g(y))}{g(x) - g(y)} = \limsup\limits_{\substack{(x,y)\to(g(a), g(a)) \\ x \neq y}} \frac{f(x) - f(y)}{x - y}
\end{equation}
whenever $g$ is strictly monotone and continuous.

\begin{proof}[Proof of Theorem \ref{thm:noMCPC1strictlynotstrongly}]

    According to Proposition \ref{prop:regularityCo}, we know that the angle correspondence $C_\circ$ is strictly increasing and continuous but not Lipschitz.
   Then, we can fix $\psi \in \R/2\pi_{\Omega^\circ}\mathbb{Z}$ at which $C_\circ$ fails to be Lipschitz, i.e. there exists a sequence $\{\psi_n \}_{n\in \N} \to \psi$ such that
   \[
   \lim_{n \to \infty} \limsup_{\eta \to \psi_n} \frac{C_\circ(\psi_n) - C_\circ(\eta)}{\psi_n - \eta} = \infty, \text{ and thus } \limsup\limits_{\substack{(x,y)\to(\psi, \psi) \\ x \neq y}} \frac{C_\circ(x) - C_\circ(y)}{x - y} = +\infty.
   \]
 We set $\omega := \pi_{\Omega^\circ}$ and $\phi := \psi - \omega$ and we shall see that $\J_R(\phi, \cdot)$ fails to be Lipschitz at $\omega$.
   For every $\omega_1, \omega_2$ near $\omega$, we can explicitly compute that
   \begin{gather}
   \label{eq:lipschitzquotientJ}
       \begin{aligned}
       \frac{\J_R(\phi,\omega_1)-\J_R(\phi,\omega_2)}{\omega_1-\omega_2} ={}& - \left(\frac{\cos_{\Omega^\circ}(\phi +\omega_1)-\cos_{\Omega^\circ}(\phi +\omega_2)}{\omega_1-\omega_2}\right) \cos_{\Omega}(\phi_\circ) \\
       & - \left(\frac{\sin_{\Omega^\circ}(\phi+\omega_1)-\sin_{\Omega^\circ}(\phi+\omega_2)}{\omega_1-\omega_2}\right) \sin_{\Omega}(\phi_\circ) \\
       & -\cos_{\Omega}(\phi_\circ)\sin_{\Omega}((\phi+\omega_1)_\circ)+\sin_{\Omega}(\phi_\circ)\cos_{\Omega}((\phi+\omega_1)_\circ) \\
       & + \left(\frac{C_\circ(\phi+\omega_1)-C_\circ(\phi+\omega_2)}{\omega_1-\omega_2} \right) \times \left(\frac{F(\phi,\omega_1,\omega_2)}{C_\circ(\phi+\omega_1)-C_\circ(\phi+\omega_2)} \right),
   \end{aligned}
   \end{gather}
   where
   \begin{align*}
       F(\phi,\omega_1,\omega_2):=&  \big(\sin_{\Omega}(C_\circ(\phi+\omega_1))-\sin_{\Omega}(C_\circ(\phi+\omega_2))\big)(-\sin_{\Omega^\circ}(\phi)-\omega_2\cos_{\Omega}(\phi_\circ))\\
       &+ \big(\cos_{\Omega}(C_\circ(\phi+\omega_1))-\cos_{\Omega}(C_\circ(\phi+\omega_2))\big)(-\cos_{\Omega^\circ}(\phi)+\omega_2\sin_{\Omega}(\phi_\circ)).
   \end{align*}
Note that the continuity of $C_\circ$ yields that
   \[
   \lim\limits_{\substack{(\omega_1,\omega_2)\to(\omega, \omega) \\ \omega_1 \neq \omega_2}} \sin_{\Omega}((\phi+\omega_1)_\circ) = \sin_{\Omega}((\phi+\omega)_\circ)\text{ and } \lim\limits_{\substack{(\omega_1,\omega_2)\to(\omega, \omega) \\ \omega_1 \neq \omega_2}} \cos_{\Omega}((\phi+\omega_2)_\circ) = \cos_{\Omega}((\phi+\omega)_\circ).
   \]
    Moreover, since the generalised trigonometric functions $\cos_{\Omega^\circ}$ and $\sin_{\Omega^\circ}$ are $C^1$, we also have that
    \[
    \lim\limits_{\substack{(\omega_1,\omega_2)\to(\omega, \omega) \\ \omega_1 \neq \omega_2}} \frac{\cos_{\Omega^\circ}(\phi +\omega_1)-\cos_{\Omega^\circ}(\phi +\omega_2)}{\omega_1-\omega_2} = -\sin_{\Omega}((\phi + \omega)_\circ),
    \]
    and, similarly,
    \[
    \lim\limits_{\substack{(\omega_1,\omega_2)\to(\omega, \omega) \\ \omega_1 \neq \omega_2}} \frac{\sin_{\Omega^\circ}(\phi+\omega_1)-\sin_{\Omega^\circ}(\phi+\omega_2)}{\omega_1-\omega_2} = \cos_{\Omega}((\phi + \omega)_\circ).
    \]
    Finally, since $C_\circ$ is strictly increasing and continuous, we find that
    \begin{multline*}
    \lim\limits_{\substack{(\omega_1,\omega_2)\to(\omega, \omega) \\ \omega_1 \neq \omega_2}} \frac{\sin_{\Omega}(C_\circ(\phi+\omega_1))-\sin_{\Omega}(C_\circ(\phi+\omega_2))}{C_\circ(\phi+\omega_1)-C_\circ(\phi+\omega_2)} \\
    = \lim\limits_{\substack{(\phi_1,\phi_2)\to((\phi + \omega)_\circ, (\phi + \omega)_\circ) \\ \phi_1 \neq \phi_2}} \frac{\sin_{\Omega}(\phi_1)-\sin_{\Omega}(\phi_2)}{\phi_1 - \phi_2}= \cos_{\Omega^{\circ}}(\phi + \omega)
    \end{multline*}
    where the first inequality follows from \eqref{eq:limsupworks}, while the second is a consequence of the differentiability of $\sin_\Omega$, cf. Proposition \ref{prop:difftrig}. In the same way, we also have that
    \[
    \lim\limits_{\substack{(\omega_1,\omega_2)\to(\omega, \omega) \\ \omega_1 \neq \omega_2}} \frac{\cos_{\Omega}(C_\circ(\phi+\omega_1))-\cos_{\Omega}(C_\circ(\phi+\omega_2))}{C_\circ(\phi+\omega_1)-C_\circ(\phi+\omega_2)} = -\sin_{\Omega^{\circ}}(\phi + \omega).
    \]
    As a consequence, keeping in mind \eqref{eq:symmetry1}, we obtain that
    \begin{align*}
        \lim\limits_{\substack{(\omega_1,\omega_2)\to(\omega, \omega) \\ \omega_1 \neq \omega_2}} \frac{F(\phi,\omega_1,\omega_2)}{C_\circ(\phi+\omega_1)-C_\circ(\phi+\omega_2)}  ={}& \cos_{\Omega^{\circ}}(\phi + \omega)(-\sin_{\Omega^\circ}(\phi)-\omega\cos_{\Omega}(\phi_\circ))\\
        &\ -\sin_{\Omega^{\circ}}(\phi + \omega) (-\cos_{\Omega^\circ}(\phi)+\omega\sin_{\Omega}(\phi_\circ))= \omega > 0.
    \end{align*}

    Putting everything together, since $C_\circ$ is not Lipschitz at $\psi = \phi +\omega$, we deduce that $\J_R(\phi,\cdot)$ fails to be Lipschitz at $\omega$. Furthermore, the logarithm function $\log$ is locally bi-Lipschitz and the value of $\J_R(\phi,\cdot)$ stays in a compact subset of $(0,+\infty)$ near $\omega$, therefore
    also $\log (\J_R(\phi,\cdot))$ fails to be Lipschitz at $\omega$.
    We conclude that the Heisenberg group is not $\mathsf{MCP}(0, N)$ for any $N \in (0,\infty)$, as \eqref{eq:MCPsuffC1strictly} is not satisfied. The thesis then follows from Proposition \ref{prop:Knoncipiace}.
\end{proof}

\section{Measure contraction property for \texorpdfstring{$C^1$}{C1} and strongly convex norms}\label{sec:afterstrongly}

In this section, we study the measure contraction property for the \sF Heisenberg group equipped with a norm $\normdot$ that is $C^1$ and strongly convex. Firstly, we provide further necessary (but not sufficient) conditions for the $\mathsf{MCP}$ to hold in this case.  Secondly, we identify sufficient (but not necessary) conditions for the validity of $\MCP$, providing estimates for the curvature exponent (cf. Definition \ref{def:curvatureexponent}). In particular, we show that $\MCP$ holds under $C^{1,1}$ regularity of the reference norm $\normdot$. Thirdly, we investigate possible necessary and sufficient conditions for $\MCP$ through several meaningful examples. This study relies on the behavior of (the derivative of) the angle correspondence $C_\circ$. We summarize the results of this section as follows.
\begin{itemize}
    \item[(i)] If $C_\circ^\prime$ has discontinuous zero points or non-negligible zero points, then $(\hei,\di,\Leb^3)$ does not satisfy $\MCP$ (Theorems \ref{thm:discontinuouszeropoint} and \ref{thm:manyzeropoints}). {This happens, for example, when $C_\circ^\prime(\phi)=1_{\mathsf{Z}^c}(\phi)$ near one of its zero points,
    where $\mathsf{Z}$ is a fat Cantor set.}
    
    \item[(ii)]If $C_\circ^\prime$ has infinite order at one of its zero points, then $(\hei,\di,\Leb^3)$ does not satisfy $\MCP$ (Theorem \ref{thm:veryflatzeropoint}). {This happens, for example, when $C_\circ^\prime(\phi)=e^{-1/|\phi|}$ near $\phi=0$.}
    {\item[(iii)] If $C_\circ^\prime$ is uniformly asymptotic to a fractional polynomial near its zero points, then $(\hei,\di,\Leb^3)$ satisfies $\MCP(0,N)$ for some $N \in (1, +\infty)$ (Theorem \ref{thm:MCPinstrongly} and \ref{thm:MCPinstronglysmallo}). This happens, for example, for the sub-Finsler Heisenberg group equipped with the $\ell^p$-norm with $p \in (1, 2)$.
    }
\item[(iv)] If $C_\circ^\prime$ is not a fractional polynomial but is ``monotone'' near its zero points, then there are examples of sub-Finsler structures on the Heisenberg group that satisfy the $\MCP(0,N)$ for any $N \in (1, +\infty)$ 
, see Example \ref{example:monotone}.

    \item[(v)] If $C_\circ^\prime$ ``oscillates with large variation'' near one of its zero points, then there are examples of sub-Finsler structures on the Heisenberg group that do not satisfy the $\mathsf{MCP}(0, N)$ for any $N \in (1, +\infty)$
    , see Example \ref{example:oscilliation}.
\end{itemize}

\subsection{Zero points of \texorpdfstring{$C_\circ^\prime$}{Ccirc} and non \texorpdfstring{$\MCP$}{MCP}}

Under the assumptions of this section, Proposition \ref{prop:regularityCo} ensures that the angle $C_\circ$ strictly increasing and Lipschitz continuous (thus differentiable almost everywhere). Let $\mathsf{Z}=\mathsf{D}_1\setminus\mathsf{D}_1^+\subset \R/2\pi_{\Omega^\circ}\mathbb{Z}$ be the set of zero points of $C_\circ^\prime$.
Under the assumption on the norm $\normdot$ to be $C^1$ and strongly convex,
we do not have information on the size of $\mathsf{Z}$ and the behavior of $C_\circ^\prime$ near its zero points. In this section, we investigate how the zero points in $\mathsf{Z}$ affect the validity (or failure) of the measure contraction property. We start with a lemma which gives a lower bound of the crucial ratio $\omega\partial_\omega\J_R/\J_R$ solely using $C_\circ$.

\begin{lemma}\label{prop:necessary4}
Let $\hei$ be the sub-Finsler Heisenberg group, equipped with a $C^1$ and strongly convex norm $\normdot$. For a fixed angle $\phi^\star\in\R/2\pi_{\Omega^\circ}\mathbb{Z}$,
we have 
\begin{equation}\label{ineq:sufficient3}
\limsup_{(\phi,\omega)\to (\phi^\star,0)}\frac{\omega\partial_\omega\J_R(\phi,\omega)}{\J_R(\phi,\omega)}\geq \limsup_{(\phi,\omega)\to (\phi^\star,0)}\frac{\omega C_\circ^{\prime}(\phi+\omega)}{C_\circ(\phi+\omega)-C_\circ(\phi)},
\end{equation}
where the $\limsup$ is taken over all $(\phi,\omega)\in \mathcal{U}$ such that $\phi+\omega\in \mathsf{D}_1$. In particular,
if there is $\phi^\star\in\R/2\pi_{\Omega^\circ}\mathbb{Z}$ such that
\[\limsup_{(\phi,\omega)\to(\phi^\star, 0)}\frac{\omega C_\circ^\prime(\phi+\omega)}{C_\circ(\phi+\omega)-C_\circ(\phi)}=+\infty,\]
    then the metric measure space $(\hei, \di, \Leb^3)$ does not satisfy the $\mathsf{MCP}(0, N)$ for any $N \in (1,\infty)$.
\end{lemma}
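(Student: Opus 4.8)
The plan is to reduce the statement to a comparison between the reduced Jacobian $\J_R$ and the correspondence map $C_\circ$, exploiting the factorization of $\partial_\omega\J_R$ provided by \eqref{eq:dJK}. Since the norm is $C^1$ and strongly convex, Proposition \ref{prop:regularityCo} gives that $C_\circ$ is Lipschitz, so for fixed $\phi$ the map $\omega\mapsto\J_R(\phi,\omega)$ is absolutely continuous; combined with $\J_R(\phi,0)=0$ (Proposition \ref{prop:stronglyJacpositivity}) and \eqref{eq:dJK} this yields the integral representation
\begin{equation}
\J_R(\phi,\omega)=\int_0^\omega C_\circ'(\phi+\psi)\,\mathcal K(\phi,\psi)\,\de\psi ,
\end{equation}
while by definition $C_\circ(\phi+\omega)-C_\circ(\phi)=\int_0^\omega C_\circ'(\phi+\psi)\,\de\psi$. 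Recalling that $\omega\partial_\omega\J_R(\phi,\omega)=\omega C_\circ'(\phi+\omega)\mathcal K(\phi,\omega)$ by \eqref{eq:dJK}, I observe that the desired inequality \eqref{ineq:sufficient3} follows, after taking $\limsup$, from the pointwise estimate
\begin{equation}\label{eq:plan_key}
\J_R(\phi,\omega)\le\mathcal K(\phi,\omega)\,\big(C_\circ(\phi+\omega)-C_\circ(\phi)\big),
\end{equation}
valid for $|\omega|$ small: indeed \eqref{eq:plan_key} lets one cancel the common nonnegative factor $\omega C_\circ'(\phi+\omega)$ in the two quotients, using the positivity of $\J_R$, of $\mathcal K(\phi,\omega)$ and of $C_\circ(\phi+\omega)-C_\circ(\phi)$ for $\omega>0$ (Proposition \ref{prop:stronglyJacpositivity}), and the sign-reversed analogues for $\omega<0$, which are treated identically.

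The heart of the proof is therefore \eqref{eq:plan_key}, which I would deduce from the monotonicity claim that $\psi\mapsto\mathcal K(\phi,\psi)$ is non-decreasing on $[0,\pi_{\Omega^\circ}]$ (and non-increasing on $[-\pi_{\Omega^\circ},0]$). Granting this, since $C_\circ'\ge0$ and $\mathcal K(\phi,\psi)\le\mathcal K(\phi,\omega)$ for $0\le\psi\le\omega$, one estimates
\begin{equation}
\J_R(\phi,\omega)=\int_0^\omega C_\circ'(\phi+\psi)\mathcal K(\phi,\psi)\,\de\psi\le\mathcal K(\phi,\omega)\int_0^\omega C_\circ'(\phi+\psi)\,\de\psi ,
\end{equation}
which is exactly \eqref{eq:plan_key}; equivalently, integrating by parts and using $\mathcal K(\phi,0)=0$, the inequality \eqref{eq:plan_key} is the assertion $\int_0^\omega\big(C_\circ(\phi+\psi)-C_\circ(\phi)\big)\partial_\psi\mathcal K(\phi,\psi)\,\de\psi\ge0$, which again reduces to $\partial_\psi\mathcal K(\phi,\cdot)\ge0$ since the weight $C_\circ(\phi+\psi)-C_\circ(\phi)$ is nonnegative.

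To establish the monotonicity I would differentiate \eqref{def:functionK} in $\psi$; crucially, the derivatives of $\cos_{\Omega^\circ},\sin_{\Omega^\circ}$ from Proposition \ref{prop:difftrig} do not reintroduce the factor $C_\circ'$, so $\partial_\psi\mathcal K$ is continuous and a direct computation gives
\begin{equation}
\partial_\psi\mathcal K(\phi,\psi)=\big\langle P_{(\phi+\psi)_\circ}-P_{\phi_\circ},\,Q_\phi+Q_{\phi+\psi}\big\rangle+\psi\,\big(P_{\phi_\circ}\times P_{(\phi+\psi)_\circ}\big),
\end{equation}
where $\times$ is the planar cross product and $P,Q$ are the boundary parametrizations of $\Omega,\Omega^\circ$. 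The second summand is nonnegative for $\psi\in[0,\pi_{\Omega^\circ}]$ because $C_\circ$ is monotone (so the arc from $P_{\phi_\circ}$ to $P_{(\phi+\psi)_\circ}$ is traversed counterclockwise and spans less than $\pi_\Omega$, cf.\ \eqref{eq:symmetry2}), while the first summand vanishes identically in the Euclidean case and must be controlled by polar duality in general. I expect this last sign analysis to be the main obstacle, and I would attack it by the same substitution device as in the proof of Proposition \ref{prop:stronglyJacpositivity}, rewriting the quantity as an integral of a manifestly signed derivative taken along the correspondence. Finally, the ``in particular'' assertion is immediate: if the right-hand side of \eqref{ineq:sufficient3} equals $+\infty$ at some $\phi^\star$, then so does $\limsup_{(\phi,\omega)\to(\phi^\star,0)}\omega\partial_\omega\J_R/\J_R$, whence $\limsup N(\phi,\omega)=+\infty$ for $N(\phi,\omega)$ as in \eqref{eq:nec&sufMCPdJacineq}, and Proposition \ref{prop:MCPlimsup} rules out $\MCP(0,N)$ for every $N\in(1,\infty)$.
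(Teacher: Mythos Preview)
Your overall strategy coincides with the paper's: both reduce \eqref{ineq:sufficient3} to the pointwise estimate $\J_R(\phi,\omega)\le\mathcal K(\phi,\omega)\big(C_\circ(\phi+\omega)-C_\circ(\phi)\big)$ for small $\omega$ (this is exactly \eqref{eq:greaterthan1} in the paper), and both reduce that to the monotonicity $\partial_\psi\mathcal K(\phi,\psi)\ge 0$ near $\psi=0$. The paper reaches the latter reduction by differentiating the difference, obtaining $\partial_\omega\big[(C_\circ(\phi+\omega)-C_\circ(\phi))\mathcal K(\phi,\omega)-\J_R(\phi,\omega)\big]=(C_\circ(\phi+\omega)-C_\circ(\phi))\,\partial_\omega\mathcal K(\phi,\omega)$, which is your integration-by-parts observation. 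The ``in particular'' part via Proposition~\ref{prop:MCPlimsup} is identical.

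The genuine gap is the sign analysis of $\partial_\psi\mathcal K$, which you explicitly leave open. Your decomposition into $\langle P_{(\phi+\psi)_\circ}-P_{\phi_\circ},\,Q_\phi+Q_{\phi+\psi}\rangle$ plus the cross-product term is correct, but the first summand has no obvious sign (it vanishes in the Euclidean case but need not be nonnegative in general), so trying to bound the two pieces separately is likely a dead end. The paper instead rewrites $\partial_\omega\mathcal K$ via the fundamental theorem of calculus applied to $\sinomp,\cosomp$, obtaining the clean splitting
\[
\partial_\omega\mathcal K(\phi,\omega)=\underbrace{1-\langle P_{\phi_\circ},Q_{\phi+\omega}\rangle}_{\ge 0\ \text{by \eqref{eq:NO_PYTHAGOREAN_IDENTITY_COMETIPERMETTI}}}+\int_0^\omega f(\phi,\omega,t)\,\de t,
\]
and then shows $f\ge 0$ for small $\omega$ by checking that $\partial_t f(\phi,\omega,t)=C_\circ'(\phi+t)\,\langle P_{(\phi+\omega)_\circ},Q_{\phi+t}\rangle$, which is nonnegative by continuity since it equals $C_\circ'(\phi)$ at $\omega=t=0$. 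This is the ``substitution device'' you allude to, but the specific rearrangement that isolates the Pythagorean term $1-\langle P_{\phi_\circ},Q_{\phi+\omega}\rangle$ is the step your proposal is missing. Note also that the paper only claims $\partial_\omega\mathcal K\ge 0$ for \emph{small} $\omega$, not on all of $[0,\pi_{\Omega^\circ}]$; this is all that is needed, and your stronger monotonicity claim on the full interval would require additional justification.
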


\begin{proof}
By using the function $\mathcal{K}(\phi,\omega)$ defined in \eqref{def:functionK},
we can write
\begin{equation}\label{eq:ratiofixedphi}
\frac{\omega\partial_{\omega}\J_R(\phi,\omega)}{\J_R(\phi,\omega)}=\frac{\omega C_\circ^\prime(\phi+\omega)}{C_\circ(\phi+\omega)-C_\circ(\phi)}\frac{(C_\circ(\phi+\omega)-C_\circ(\phi))\mathcal{K}(\phi,\omega)}{\J_R(\phi,\omega)}.
\end{equation}
We claim that
\begin{equation}\label{eq:greaterthan1}
\frac{(C_\circ(\phi+\omega)-C_\circ(\phi))\mathcal{K}(\phi,\omega)}{\J_R(\phi,\omega)}\geq 1,\end{equation}
for sufficiently small $\omega\in\mathsf{D}_1$. We prove \eqref{eq:greaterthan1} for $\omega>0$, as the proof for $\omega<0$ is completely analogous. At every differentiable point $\phi+\omega\in \mathsf{D}_1$,
we have
\begin{equation}\label{eq:tydolla}
\begin{split}
    \partial_\omega \Big{[} (C_\circ(\phi&+\omega)-C_\circ(\phi)) \mathcal{K}(\phi,\omega)-\J_R(\phi,\omega)\Big{]} =(C_\circ(\phi+\omega)-C_\circ(\phi))\partial_\omega \mathcal{K}(\phi,\omega)\\
    ={}&(C_\circ(\phi+\omega)-C_\circ(\phi))\Big{[}\sinom((\phi+\omega)_\circ)\big(\sinomp(\phi)+\omega\cosom(\phi_\circ)\big)\\
&\hspace{110pt}+\cosom((\phi+\omega)_\circ)\big(\cosomp(\phi)-\omega\sinom(\phi_\circ)\big)\\
    &\hspace{110pt}-\cosomp(\phi+\omega)\cosom(\phi_\circ)-\sinomp(\phi+\omega)\sinom(\phi_\circ)\Big{]}.
    \end{split}
\end{equation}
We are going to show that $\partial_\omega\mathcal{K}(\phi,\omega)$ is positive for $\omega>0$ sufficiently small.
By the fundamental theorem of calculus,
we have
\begin{align*}
    \partial_\omega\mathcal{K}(\phi,\omega)={}&\sinom((\phi+\omega)_\circ)\Big[\sinomp(\phi+\omega)-\int_0^\omega \Big(\cosom((\phi+t)_\circ)-\cosom(\phi_\circ)\Big)\diff t\Big]\\
    &+\cosom((\phi+\omega)_\circ)\Big[\cosomp(\phi+\omega)+\int_0^\omega \Big(\sinom((\phi+t)_\circ)-\sinom(\phi_\circ)\Big)\diff t\Big]\\
    &-\cosomp(\phi+\omega)\cosom(\phi_\circ)-\sinomp(\phi+\omega)\sinom(\phi_\circ)\\
    ={}&1-\cosomp(\phi+\omega)\cosom(\phi_\circ)-\sinomp(\phi+\omega)\sinom(\phi_\circ) + \int_0^\omega f(\phi, \omega, t)\diff t \\
    \geq{}& \int_0^\omega f(\phi, \omega, t)\diff t,
\end{align*}
where we have used the Pythagorean identity \eqref{eq:pytagorean} and where $f(t) := f(\phi, \omega, t)$ is given by
\[
f(t) := \cosom((\phi+\omega)_\circ)\big[\sinom((\phi+t)_\circ)-\sinom(\phi_\circ)\big]-\sinom((\phi+\omega)_\circ)\big[\cosom((\phi+t)_\circ)-\cosom(\phi_\circ)\big].
\]
The integral $\int_0^\omega f(t) \diff t$ is positive,
since again by \eqref{eq:pytagorean},
\[
\partial_tf(t)= C_\circ'(\phi + t) \underbrace{\Big[\sinom((\phi+\omega)_\circ)\sinomp(\phi+t)+\cosom((\phi+\omega)_\circ)\cosomp(\phi+t)\Big]}_{= 1 \text{ when } \omega = 0\text{ and }t=0},
\]
and thus by continuity $\partial_t f(t) \geq 0$
for all $\omega>0$ sufficiently small and all $t\in(0,\omega)$. Since $f(0)=0$, we deduce that $f(t)\geq 0$, and $\partial_\omega\mathcal{K}\geq 0$ for sufficiently small $\omega>0$. 
Then, since $C_\circ$ is non-increasing, the partial derivative in \eqref{eq:tydolla} is non-negative and therefore \eqref{eq:greaterthan1} holds. The second part of the statement follows from Propositions \ref{prop:MCPlimsup}.
\end{proof}

By using Lemma \ref{prop:necessary4},
we are able to give several necessary conditions for $\mathsf{MCP}$ to hold.

\begin{theorem}
\label{thm:discontinuouszeropoint}
Let $\hei$ be the sub-Finsler Heisenberg group, equipped with a $C^1$ and strongly convex norm $\normdot$, and with the Lebesgue measure $\Leb^3$. If $\mathsf{MCP}(0, N)$ holds for some $N\in (1,\infty)$, then the map $C_\circ' : \mathsf{D}_1 \to \R$ is continuous at every $\phi \in \mathsf{Z}$.

\end{theorem}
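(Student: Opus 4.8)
The plan is to argue by contraposition, reducing everything to the necessary condition already packaged in Lemma \ref{prop:necessary4}. Suppose $C_\circ'$ fails to be continuous at some zero point $\phi^\star\in\mathsf{Z}$; I will show that $(\hei,\di,\Leb^3)$ cannot satisfy $\MCP(0,N)$ for any $N\in(1,\infty)$. By Lemma \ref{prop:necessary4} it suffices to exhibit a single sequence $(\phi_n,\omega_n)\to(\phi^\star,0)$ in $\mathcal{U}$ with $\phi_n+\omega_n\in\mathsf{D}_1$ along which
\[
\frac{\omega_n\,C_\circ'(\phi_n+\omega_n)}{C_\circ(\phi_n+\omega_n)-C_\circ(\phi_n)}\longrightarrow+\infty,
\]
since this makes the relevant $\limsup$ infinite and hence, via Proposition \ref{prop:MCPlimsup}, forces the failure of $\MCP$.

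The crucial structural observation is that $\phi^\star\in\mathsf{Z}=\mathsf{D}_1\setminus\mathsf{D}_1^+$ lies in $\mathsf{D}_1$, so $C_\circ$ is genuinely differentiable at $\phi^\star$ with $C_\circ'(\phi^\star)=0$. In particular the increment of $C_\circ$ at $\phi^\star$ is infinitesimal compared with the step size:
\[
\frac{C_\circ(\phi^\star+\omega)-C_\circ(\phi^\star)}{\omega}\to C_\circ'(\phi^\star)=0\qquad\text{as }\omega\to0,
\]
that is, $C_\circ(\phi^\star+\omega)-C_\circ(\phi^\star)=o(\omega)$. On the other hand, the assumed discontinuity of $C_\circ'$ at $\phi^\star$ (where its value is $0$ and where $C_\circ'\geq0$) yields $\epsilon>0$ and a sequence $\xi_n\to\phi^\star$ in $\mathsf{D}_1$ with $C_\circ'(\xi_n)\geq\epsilon$; after passing to a subsequence I may assume all the $\xi_n$ lie on one side of $\phi^\star$, say $\xi_n>\phi^\star$ (the case $\xi_n<\phi^\star$ is identical, with signs reversed). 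Setting $\phi_n:=\phi^\star$ and $\omega_n:=\xi_n-\phi^\star$ gives, for $n$ large, a sequence in $\mathcal{U}$ (as $\omega_n\to0$ and $\omega_n\neq0$) with $\phi_n+\omega_n=\xi_n\in\mathsf{D}_1$; the denominator is nonzero because $C_\circ$ is strictly increasing (Proposition \ref{prop:regularityCo}\textsl{(i)}). The numerator is then at least $\epsilon\,\omega_n$ while the denominator is $o(\omega_n)$, so the ratio diverges to $+\infty$, and Lemma \ref{prop:necessary4} closes the argument.

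I do not expect a serious obstacle here: the entire mechanism rests on the dichotomy that a zero point $\phi^\star$ of $C_\circ'$ which is nonetheless a true differentiability point ($\phi^\star\in\mathsf{D}_1$) makes $C_\circ$ locally flat, so that $C_\circ(\xi_n)-C_\circ(\phi^\star)=o(\omega_n)$, whereas a discontinuity supplies nearby points $\xi_n$ with uniformly positive derivative, keeping the numerator of order $\omega_n$. The only care required is routine bookkeeping: verifying $(\phi_n,\omega_n)\in\mathcal{U}$ and $\phi_n+\omega_n\in\mathsf{D}_1$, and treating the two sides symmetrically (when $\xi_n<\phi^\star$ one has $\omega_n<0$, and both numerator and denominator of the ratio are negative, so the quotient is again positive and blows up).
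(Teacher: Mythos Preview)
Your proof is correct and follows essentially the same approach as the paper: fix the base point at the zero $\phi^\star\in\mathsf{Z}$, use differentiability there with $C_\circ'(\phi^\star)=0$ to make the denominator $o(\omega_n)$, and let the discontinuity supply a sequence $\xi_n\to\phi^\star$ in $\mathsf{D}_1$ with $C_\circ'(\xi_n)$ bounded below, so that the ratio in Lemma~\ref{prop:necessary4} blows up. The only cosmetic difference is that the paper passes to a subsequence with $C_\circ'(\phi_n)\to C>0$ rather than $C_\circ'(\xi_n)\geq\epsilon$, and does not split into the cases $\omega_n>0$ and $\omega_n<0$.
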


\begin{proof}
The claim is that if $\phi \in \mathsf{Z}$ and if $\mathsf{MCP}(0,N)$ holds, then $C_\circ'$ must be continuous at $\phi$. Assume by contradiction this is not the case, then there exists a sequence $\{\phi_n\}_{n \in \N} \subseteq \mathsf{D}_1$ such that $\phi_n \to \phi$ as $n \to +\infty$ and $\lim_{n \to + \infty} C_\circ'(\phi_n) = C > 0$. Now, setting $\omega_n := \phi_n - \phi$ and recalling that $\phi \in \mathsf{Z}$ by assumption, we compute that
\[
\frac{\omega_n C_\circ^{\prime}(\phi+\omega_n)}{C_\circ(\phi+\omega_n)-C_\circ(\phi)} = \frac{C_\circ^{\prime}(\phi+\omega_n)}{\frac{C_\circ(\phi+\omega_n)-C_\circ(\phi)}{\omega_n}} \to  +\infty,\qquad \text{ as } n\to+\infty.
\]
By Lemma \ref{prop:necessary4}, the metric measure space $(\hei,\di,\Leb^3)$ does not satisfy $\mathsf{MCP}(0,N)$ for any $N\in (1,\infty)$.
\end{proof}

\begin{theorem}\label{thm:manyzeropoints}
Let $\hei$ be the sub-Finsler Heisenberg group, equipped with a $C^1$ and strongly convex norm $\normdot$, and with the Lebesgue measure $\Leb^3$.
Assume that $\Leb^1(\mathsf{Z})>0$.
Then, the metric measure space $(\hei, \di, \Leb^3)$ does not satisfy the $\mathsf{MCP}(0, N)$ for any $N\in (1,\infty)$.
\end{theorem}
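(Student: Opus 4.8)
The plan is to contradict the necessary condition of Lemma \ref{prop:necessary4}: it suffices to exhibit a single angle $\phi^\star\in\R/2\pi_{\Omega^\circ}\mathbb Z$ along which
\[
\limsup_{(\phi,\omega)\to(\phi^\star,0)}\frac{\omega\,C_\circ'(\phi+\omega)}{C_\circ(\phi+\omega)-C_\circ(\phi)}=+\infty,
\]
since then $(\hei,\di,\Leb^3)$ fails $\MCP(0,N)$ for every $N\in(1,\infty)$ by Proposition \ref{prop:MCPlimsup}. I will produce such a point from the positivity of $\Leb^1(\mathsf Z)$. First, two structural facts. Since $\normdot$ is $C^1$ and strongly convex, Proposition \ref{prop:regularityCo} gives that $C_\circ$ is strictly increasing and Lipschitz, hence absolutely continuous with $C_\circ'\ge 0$; consequently $\mathsf Z=\{C_\circ'=0\}$ has empty interior, for an interval contained in $\mathsf Z$ would force $C_\circ$ to be constant there, contradicting strict monotonicity, while $\Leb^1(\{C_\circ'>0\})>0$ because $C_\circ$ increases. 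By the Lebesgue density theorem I then fix a density point $\phi^\star$ of $\mathsf Z$, and I set $G(\omega):=C_\circ(\phi^\star+\omega)-C_\circ(\phi^\star)=\int_{\mathsf Z^c\cap[\phi^\star,\phi^\star+\omega]}C_\circ'$.

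The key quantitative input is that at a density point the increment is negligible compared with the length. Writing $m(\omega):=\Leb^1\big(\mathsf Z^c\cap[\phi^\star,\phi^\star+\omega]\big)=o(\omega)$ and $L:=\mathsf{Lip}(C_\circ)$, one has $G(\omega)\le L\,m(\omega)=o(\omega)$, so the average slope $G(\omega)/\omega$ tends to $0$. On the other hand the whole increment $G(\omega)$ is carried by the small active set $\mathsf Z^c\cap[\phi^\star,\phi^\star+\omega]$ of measure $m(\omega)$, on which $C_\circ'$ has average $G(\omega)/m(\omega)$; hence there is a point $\psi$ of this set, which I may take to be a Lebesgue point of $C_\circ'$, with $C_\circ'(\psi)\ge G(\omega)/m(\omega)$. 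Comparing the pointwise slope at $\psi$ with the full average of $C_\circ'$ over $[\phi^\star,\psi]$ produces a ratio bounded below by a quantity of order $\omega/m(\omega)\to+\infty$. Selecting the left endpoint $\phi=\phi^\star$ (or a point closer to $\psi$ when $\psi$ lands too near $\phi^\star$) and letting $\omega\to0$ along a suitable sequence yields pairs $(\phi_n,\psi_n)\to\phi^\star$ with $\psi_n-\phi_n\to0$ realizing the divergent $\limsup$ above.

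The delicate point, and the main obstacle, is to guarantee that the selected right endpoint $\psi$ lies at a definite fraction of the current scale, so that $\psi-\phi$ stays comparable to $\omega$ while the increment $C_\circ(\psi)-C_\circ(\phi)$ remains of order $G(\omega)$; otherwise the favourable factor $\omega/m(\omega)$ is lost. I would organise this through a dichotomy on the regularity of $C_\circ'$ on $\mathsf Z$. If $C_\circ'$ is discontinuous at some point of $\mathsf Z$, then along a sequence $\psi_n\to\phi^\star$ one has $C_\circ'(\psi_n)\ge c_0>0$ bounded below; taking the left endpoint equal to $\phi^\star$ (where $C_\circ'(\phi^\star)=0$ forces $\big(C_\circ(\psi_n)-C_\circ(\phi^\star)\big)/(\psi_n-\phi^\star)\to0$) makes the ratio diverge at once, which is precisely the content of Theorem \ref{thm:discontinuouszeropoint}. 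In the complementary case, where $C_\circ'$ is continuous and equal to $0$ at every point of $\mathsf Z$, the endpoint slopes $C_\circ'(\psi)$ themselves tend to $0$ as $\psi\to\phi^\star$, and I must run the active-set averaging argument at an adaptively chosen scale, using a Vitali-type (stopping-scale) selection to place $\psi$ at a controlled distance from the left endpoint; here the density estimate $m(\omega)=o(\omega)$ is exactly what forces the ratio of the active-set average to the full average to be unbounded, completing the contradiction with $\MCP$.
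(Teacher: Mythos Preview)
Your overall strategy matches the paper's: pick a Lebesgue density point $\phi^\star$ of $\mathsf Z$ and show that the ratio in Lemma~\ref{prop:necessary4} blows up along a sequence based at $\phi^\star$. The genuine gap is precisely the one you flag yourself. The averaging step gives a point $\psi\in\mathsf Z^c\cap[\phi^\star,\phi^\star+\omega]$ with $C_\circ'(\psi)\ge G(\omega)/m(\omega)$, but you have no control on $\psi-\phi^\star$, and the asserted lower bound ``of order $\omega/m(\omega)$'' for $(\psi-\phi^\star)\,C_\circ'(\psi)/G(\psi-\phi^\star)$ is unjustified: the factor $G(\omega)/m(\omega)$ may itself tend to zero, and there is no a~priori comparison between the active-set average at scale $\omega$ and the full average on the smaller interval $[\phi^\star,\psi]$. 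The dichotomy you then propose is not a proof: the discontinuous branch simply quotes Theorem~\ref{thm:discontinuouszeropoint}, and in the continuous branch the ``Vitali-type stopping-scale selection'' is never specified.

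The paper closes this gap with a single elementary selection that makes the dichotomy unnecessary. Rather than averaging to locate a point inside $(0,\omega]$, one chooses the scale $\omega_n$ itself to be a near-maximiser of $C_\circ'(\phi^\star+\cdot)$: for each $\delta>0$ there exists $\omega\in(0,\delta]$ with $\phi^\star+\omega\in\mathsf D_1$ and
\[
C_\circ'(\phi^\star+\omega)>\tfrac12\sup_{t\in(0,\delta]}C_\circ'(\phi^\star+t)\ \ge\ \tfrac12\sup_{t\in(0,\omega]}C_\circ'(\phi^\star+t).
\]
With this choice the integral is controlled directly by the endpoint value,
\[
C_\circ(\phi^\star+\omega_n)-C_\circ(\phi^\star)=\int_0^{\omega_n}C_\circ'(\phi^\star+t)\,1_{\mathsf Z^c}(\phi^\star+t)\diff t\le m(\omega_n)\sup_{(0,\omega_n]}C_\circ'(\phi^\star+\cdot)<2\,m(\omega_n)\,C_\circ'(\phi^\star+\omega_n),
\]
so that
\[
\frac{\omega_n\,C_\circ'(\phi^\star+\omega_n)}{C_\circ(\phi^\star+\omega_n)-C_\circ(\phi^\star)}\ >\ \frac{\omega_n}{2\,m(\omega_n)}\ \longrightarrow\ +\infty
\]
by the density of $\mathsf Z$ at $\phi^\star$. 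This is the concrete ``adaptive scale'' you were aiming for, and it handles both of your cases at once without any appeal to Theorem~\ref{thm:discontinuouszeropoint}.
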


\begin{proof}
Let $1_{\mathsf{Z}^c}$ be the indicator function of the complementary subset $\mathsf{Z}^c\subset \R/2\pi_{\Omega^\circ}\mathbb{Z}$.
By the Lebesgue differentiation theorem and the assumption $\Leb^1(\mathsf{Z})>0$,
there is a Lebesgue point $\phi^\star\in \mathsf{Z}$ of $1_{\mathsf{Z}^c}$.
Denote by $\mathsf{D}_{\phi^\star}$ the set of points $\omega$ such that $\phi^\star+\omega\in\mathsf{D}_1$. Note that, since the norm $\normdot$ is strongly convex, $C_\circ$ is Lipschitz (cf. Proposition \ref{prop:regularityCo}) and therefore for all $\delta > 0$,
    \[
    \sup_{t\in(0,\delta]\cap\mathsf
{D}_{\phi^\star}}C_\circ^\prime(\phi^\star+t)<+\infty.
    \]
    Observe also that Lemma \ref{cor:diff_points_positive_measure} ensures that this supremum is strictly positive. Then, for all $\delta > 0$, there exists $\omega \in (0,\delta]\cap\mathsf{D}_{\phi^\star}$ such that
    \[
    C_\circ^\prime(\phi^\star+\omega) > \frac{1}{2} \sup_{t\in(0,\delta]\cap\mathsf{D}_{\phi^\star}}C_\circ^\prime(\phi^\star+t) \geq \frac{1}{2} \sup_{t\in(0,\omega]\cap\mathsf{D}_{\phi^\star}}C_\circ^\prime(\phi^\star+t),
    \]
    where the last inequality holds because $(0,\omega]\cap\mathsf{D}_{\phi^\star} \subseteq (0,\delta]\cap\mathsf{D}_{\phi^\star}$. In particular, there exists a sequence $\{\omega_n\}_{n \in \N} \subseteq \mathsf{D}_{\phi^\star}$ such that $\omega_n \to 0$ as $n \to +\infty$, and
    \begin{equation}
        \label{eq:omegan1}C_\circ^\prime(\phi^\star+\omega_n)>\frac{1}{2}\sup_{\omega\in(0,\omega_n]\cap \mathsf{D}_{\phi^\star}}C_\circ^{\prime}(\phi^\star+\omega), \qquad\forall\, n \in \N.
    \end{equation}
    We make also the following simple observation:
    \begin{align*}
        \frac{1}{\omega_n}\int_{0}^{\omega_n}C_\circ^{\prime}(\phi^\star+t)\diff t ={}& \frac{1}{\omega_n}\int_{0}^{\omega_n}C_\circ^{\prime}(\phi^\star+t) 1_{\mathsf{Z}^c}(\phi^\star+t) \diff t \\
        \leq{}& \Big(\frac{1}{\omega_n}\int_{0}^{\omega_n} 1_{\mathsf{Z}^c}(\phi^\star+t) \diff t \Big) \sup_{\omega\in(0,\omega_n]\cap \mathsf{D}_{\phi^\star}}C_\circ^{\prime}(\phi^\star+\omega)
    \end{align*}
    Consequently, we have the estimate
    \begin{equation}
    \label{eq:estimate_below}
    \begin{split}
        \frac{\omega_n C_\circ^\prime(\phi^\star+\omega_n)}{C_\circ(\phi^\star+\omega_n)-C_\circ(\phi^\star)}&=\frac{\omega_n C_\circ^\prime(\phi+\omega_n)}{\int_{0}^{\omega_n}C_\circ^{\prime}(\phi^\star+\omega)\diff \omega}\\
        &\geq \frac{ C_\circ^\prime(\phi^\star+\omega_n)}{\Big(\frac{1}{\omega_n}\int_{0}^{\omega_n} 1_{\mathsf{Z}^c}(\phi^\star+t) \diff t \Big) \sup_{\omega\in(0,\omega_n]\cap \mathsf{D}_{\phi^\star}}C_\circ^{\prime}(\phi^\star+\omega)}\\
        &>\frac{1}{\frac{2}{\omega_n}\int^{\omega_n}_{0}1_{\mathsf{Z}^c}(\phi^\star+t)\de t},
    \end{split}
    \end{equation}
    where we have used \eqref{eq:omegan1} for the last inequality. Since $\phi^\star\in\mathsf{Z}$ is a Lebesgue point of $1_{\mathsf{Z}^c}$,
    we have
    \[\lim_{n\to \infty}\frac{1}{\omega_n}\int^{\omega_n}_{0}1_{\mathsf{Z}^c}(\phi^\star+t)\de t=1_{\mathsf{Z}^c}(\phi^\star)=0.\]
    Thus, taking the limit as $n \to +\infty$ in \eqref{eq:estimate_below}, we obtain
    \[
    \lim_{n \to +\infty} \frac{\omega_n C_\circ^\prime(\phi^\star+\omega_n)}{C_\circ(\phi^\star+\omega_n)-C_\circ(\phi^\star)} = +\infty.
    \]
    This concludes the proof, as a consequence of Lemma \ref{prop:necessary4}.
\end{proof}

\noindent By Theorems \ref{thm:discontinuouszeropoint} and \ref{thm:manyzeropoints},
the following conditions can always be assumed when investigating a sufficient condition for the measure contraction property:
\begin{equation}\label{condition:ast}
\Leb(\mathsf{Z})=0~\text{ and }~C_\circ^\prime\text{ is continuous at every }\phi\in\mathsf{Z}\tag{$\ast$}    
\end{equation}

However, as shown in the following theorem,
the condition \eqref{condition:ast} does not give a sufficient condition for the measure contraction property. In particular, the $\mathsf{MCP}$ may fail due to a \emph{single} zero point of $C_\circ^\prime$.

\begin{theorem}\label{thm:veryflatzeropoint}
Let $\hei$ be the sub-Finsler Heisenberg group, equipped with a $C^1$ and strongly convex norm $\normdot$, 
and equipped with the Lebesgue measure $\Leb^3$. Assume that there is $\varphi^\star\in\mathsf{Z}$ such that for any $\alpha\geq 2$,
    \begin{equation}\label{eq:rapidconvergence}
        C_\circ^\prime(\varphi)=o(|\varphi-\varphi^\star|^{\alpha-2})\qquad \text{as }\varphi\to\varphi^\star \text{ in } \mathsf{D}_1.
    \end{equation}
    Then, the metric measure space $(\hei, \di, \Leb^3)$ does not satisfy the $\mathsf{MCP}(0, N)$ for any $N\in (1,\infty)$.

\end{theorem}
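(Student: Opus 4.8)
The plan is to invoke Lemma \ref{prop:necessary4} with the fixed angle $\phi^\star=\varphi^\star$, so that the whole theorem reduces to proving
\[
\limsup_{(\phi,\omega)\to(\varphi^\star,0)}\frac{\omega\,C_\circ^\prime(\phi+\omega)}{C_\circ(\phi+\omega)-C_\circ(\phi)}=+\infty ,
\]
the $\limsup$ being over admissible $(\phi,\omega)\in\mathcal U$ with $\phi+\omega\in\mathsf D_1$. Since this is a supremum over all such pairs, it suffices to exhibit one sequence along which the ratio diverges, and the natural place to look is the slice $\phi=\varphi^\star$, letting $\omega\downarrow 0$ through values with $\varphi^\star+\omega\in\mathsf D_1$ (a full-measure condition).

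On that slice I would set $g(\omega):=C_\circ(\varphi^\star+\omega)-C_\circ(\varphi^\star)=\int_0^\omega C_\circ^\prime(\varphi^\star+s)\,\diff s$, so that the quantity to control becomes the logarithmic elasticity
\[
R(\omega):=\frac{\omega\,g'(\omega)}{g(\omega)}=\frac{\diff\log g}{\diff\log\omega}.
\]
By Proposition \ref{prop:regularityCo} the map $C_\circ$ is strictly increasing and Lipschitz, hence $g$ is strictly increasing, positive for $\omega>0$, absolutely continuous, and $g'(\omega)=C_\circ^\prime(\varphi^\star+\omega)$ for a.e.\ $\omega$.

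The heart of the argument is a Gronwall-type dichotomy. I would argue by contradiction, assuming $\limsup_{\omega\downarrow 0}R(\omega)=:L<+\infty$ and setting $M:=L+1$. Then $g'(\omega)/g(\omega)\le M/\omega$ for a.e.\ sufficiently small $\omega$; since $\log g$ is Lipschitz on any $[\omega,\delta']\subset(0,\delta]$ (there $g$ is bounded away from $0$, being increasing and positive), integrating this inequality yields $g(\omega)\ge c_0\,\omega^{M}$ for some $c_0>0$ and all small $\omega$. Thus a finite elasticity forces $C_\circ$ to vanish at most polynomially at $\varphi^\star$. On the other hand, the infinite-order hypothesis \eqref{eq:rapidconvergence} applied with $\alpha=M+2\ (\ge 2$, since $L\ge 0)$ gives $C_\circ^\prime(\varphi^\star+s)=o(s^{M})$, whence $g(\omega)=\int_0^\omega o(s^{M})\,\diff s=o(\omega^{M+1})=o(\omega^{M})$. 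Comparing the two bounds as $\omega\downarrow 0$ contradicts $c_0>0$. Hence $\limsup_{\omega\downarrow 0}R(\omega)=+\infty$; evaluating the ratio of Lemma \ref{prop:necessary4} along the corresponding sequence $(\varphi^\star,\omega_n)$ produces $+\infty$, and the failure of $\MCP(0,N)$ for every $N\in(1,\infty)$ follows.

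The main obstacle is the only genuinely delicate passage, namely turning the pointwise a.e.\ bound $R(\omega)\le M$ into the polynomial lower bound $g(\omega)\ge c_0\omega^{M}$: one must justify integrating $g'/g$ against $M/\omega$, which requires the absolute continuity of $\log g$ on intervals bounded away from the origin together with the remark that, $\mathsf D_1$ being of full measure, the a.e.\ inequality is enough. Once this is in place, everything reduces to the clean comparison of a super-polynomial upper bound for $g$ against a polynomial lower bound, which cannot coexist.
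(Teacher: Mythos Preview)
Your argument is correct and takes a genuinely different route from the paper's. Both proofs reduce, via Lemma~\ref{prop:necessary4}, to showing that the elasticity $\omega\,C_\circ'(\varphi^\star+\omega)/(C_\circ(\varphi^\star+\omega)-C_\circ(\varphi^\star))$ blows up along some sequence $\omega_n\to 0$, but the mechanisms differ. The paper works constructively: it introduces the exponent function $\beta(\omega)$ defined by $C_\circ'(\varphi^\star+\omega)=\omega^{\beta(\omega)}$, shows $\liminf\beta=+\infty$, and then builds by hand a sequence $\{\omega_n\}$ on which $\beta$ is nearly minimal over $(0,\omega_n]$ (up to a $|\log 2/\log\omega|$ slack), which allows a direct comparison $\int_0^{\omega_n}C_\circ'\le \frac{2}{\beta_n+1}\omega_n\,C_\circ'(\varphi^\star+\omega_n)$ and hence the ratio is at least $(\beta_n+1)/2\to+\infty$. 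Your approach is a cleaner proof by contradiction via a Gronwall-type integration: a uniform bound on the elasticity integrates to a polynomial lower bound $g(\omega)\ge c_0\omega^M$, which is incompatible with the super-polynomial upper bound $g(\omega)=o(\omega^{M+1})$ forced by the hypothesis. Your route avoids the somewhat delicate sequence construction and the auxiliary exponent $\beta$, at the cost of being non-constructive; the paper's approach, by contrast, yields an explicit quantitative lower bound $(\beta_n+1)/2$ for the elasticity, which could be useful if one wanted finer information.
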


\begin{proof}
    We define the function $\beta:\R_{>0}\to \R\cup\{+\infty\}$ as follows: 
    \begin{equation}
        \beta(\omega):=
        \begin{cases}
                \frac{\log(C_\circ^{\prime}(\phi^\star+\omega))}{\log\omega} & \text{if }\omega\in {\sf D}_1\cap {\sf Z}^c \cap \R_{>0},\\
                +\infty & \text{otherwise.}
        \end{cases}
    \end{equation}
    In other words, $\beta$ is such that $C_\circ^\prime(\phi^\star+\omega)=\omega^{\beta(\omega)}$, whenever is finite. In addition, since the norm is strongly convex and $\Leb^1(\mathsf{Z})=0$, $\beta$ is finite almost everywhere.
    The assumption \eqref{eq:rapidconvergence} implies that for any $\alpha\geq 2$,
    there is $\delta>0$ such that if $0<\omega<\delta$,
    \[C_\circ^\prime(\phi^\star+\omega)\leq \omega^{\alpha-2}.\]
    Therefore, we have $\beta(\omega)\geq \alpha-2$ for sufficiently small $\omega$. Since $\alpha\in[2,+\infty)$ is arbitrary,
    we deduce
    \begin{equation}\label{eq:betalower}
        \liminf_{\omega\to 0}\beta(\omega)=+\infty.
    \end{equation}
    We claim that there exists a sequence of positive numbers $\{\omega_n\}_{n\in \N}$ with $\omega_n\to0$ such that \begin{equation}\label{eq:choiceomegan}
        \beta(\omega_n)<\inf_{\omega\in(0,\omega_n]}\left\{\beta(\omega)+\left|\frac{\log 2}{\log\omega}\right| \right\}.
    \end{equation}
    Indeed, by \eqref{eq:betalower}, for any positive number $\psi_1\in \mathsf{D}_1$,
    there is $\delta_1=\delta_1(\psi_1)>0$ with the property that
    \begin{equation}\label{eq:truccone1}
        \beta(\psi)<\inf_{\omega\in(0,\psi_1)}\beta(\omega)+1\quad\Rightarrow\quad|\psi|>\delta_1.
    \end{equation}
    Without loss of generality,
    we can choose $\delta_1<2$.
    On the other hand,
    for any $\epsilon_1>0$,
    there is $\omega_1\in(0,\psi_1)$ such that
    \[\beta(\omega_1)<\inf_{\omega\in(0,\psi_1)}\beta(\omega)+\epsilon_1.\]
    Choose $\epsilon_1>0$ sufficiently small so that $\epsilon_1<\left|\frac{\log 2}{\log\delta_1}\right|$.
    Then, $\omega_1$ satisfies 
     \begin{multline}
    \beta(\omega_1)<\inf_{\omega\in(0,\psi_1)}\beta(\omega) +\epsilon_1<\inf_{\omega\in(0,\psi_1)}\beta(\omega) +\left|\frac{\log 2}{\log\delta_1}\right| \\
    \leq \inf_{\omega\in(0,\psi_1)}\left\{\beta(\omega)+\left|\frac{\log 2}{\log\omega}\right| \right\}
    \leq \inf_{\omega\in(0,\omega_1]}\left\{\beta(\omega)+\left|\frac{\log 2}{\log\omega}\right| \right\},
    \end{multline}
    where the second-to-last inequality follows from \eqref{eq:truccone1}.
    We can now repeat the same procedure starting from any $\psi_2<\omega_1$ to get a desired number $\omega_2$,
    and recursively get the desired sequence $\{\omega_n\}_{n\in \N}$ which satisfies \eqref{eq:choiceomegan}.
    
    Let $\beta_n:=\beta(\omega_n)$.
    By \eqref{eq:choiceomegan},
    for every $\omega\in(0,\omega_n)$,
    \[C_\circ^\prime(\phi^\star+\omega)\leq 2\omega^{\beta_n}\qquad\text{and}\qquad C_\circ^\prime(\phi^\star+\omega_n)=\omega_n^{\beta_n}.\]
    Then, as $n\to \infty$, we have:
    \begin{equation*}
        \frac{\omega_n C_\circ^\prime(\phi^\star+\omega_n)}{C_\circ(\phi^\star+\omega_n)-C_\circ(\phi^\star)}=\
        \frac{\omega_n^{\beta_n+1}}{\int_{0}^{\omega_n}C_\circ^\prime(\phi^\star+\omega)\diff \omega}\geq \ \frac{\omega_n^{\beta_n+1}}{2\int_{0}^{\omega_n}\omega^{\beta_n}\diff \omega}
        =\frac{\omega_n^{\beta_n+1}}{\frac{2}{\beta_n+1}\omega_n^{\beta_n+1}}=\frac{\beta_n+1}{2}\to+\infty.
    \end{equation*}
    By Lemma \ref{prop:necessary4}, the conclusion holds.
\end{proof}

\begin{remark}
    The condition \eqref{eq:rapidconvergence} in Theorem \ref{thm:veryflatzeropoint} implies that the norm $\normdot$ is not $C^{1,\alpha}$ for any $\alpha\in(0,1]$.
    Indeed,
    by \cite[Prop.\ 1.6]{MR1079061},
    the norm $\normdot$ is not $C^{1,\alpha}$ if and only if there is a sequence $\{(\phi_n,\omega_n)\}_{n\in \N}$ such that
    \begin{equation}\label{eq:coHolder}\frac{C_\circ(\phi_n+\omega_n)-C_\circ(\phi_n)}{\omega_n^{1/\alpha}}\to 0\qquad \text{as }n\to\infty.\end{equation}
    However, the converse is not necessarily true: if the norm $\normdot$ not being $C^{1, \alpha}$ for any $\alpha \in (0, 1]$ does not automatically imply the condition \eqref{eq:rapidconvergence}.
     \end{remark}

\subsection{Sufficient condition by using \texorpdfstring{$C_\circ^{\prime}$}{Ccircprime}}

In this section, we shall give a sufficient condition for the measure contraction property.
We start from rewriting the reduced Jacobian function $\mathcal{J}_R$ in terms the function $C_\circ$. 
Recall that the differential equality $\sin_{\Omega^\circ}''(x) = -C_\circ'(x) \sin_{\Omega^\circ}(x)$ holds for $x \in \mathsf{D}_1$.
Since $\mathsf{D}_1$ is a full measure subset,
we are allowed to use the integration by parts to write
\begin{equation}
\label{eq:firstsinexpansion}
    \begin{split}
    \sin_{\Omega^\circ}(x) ={}& \sin_{\Omega^\circ}(y) + \int_{y}^x \cos_{\Omega} (t_\circ) \de t  \\
    ={}& \sin_{\Omega^\circ}(y) + \cos_{\Omega}(y_\circ) (x - y) - \int_{y}^x \sin_{\Omega^\circ} (t) C_\circ'(t) (x - t) \de t, 
 \end{split}
\end{equation}
which is the Taylor's formula with integral remainder of order 2. Note that the integrand of the last term of \eqref{eq:firstsinexpansion} contains $\sin_{\Omega^\circ}(t)$, and it can be replaced recursively by the same expression \eqref{eq:firstsinexpansion}, i.e.
\begin{equation}
    \begin{split}
    \sin_{\Omega^\circ}(x) ={}& \sin_{\Omega^\circ}(y) + \cos_{\Omega}(y_\circ) (x - y) \\
    &- \int_{y}^x \left(\sin_{\Omega^\circ}(y) + \cos_{\Omega}(y_\circ) (t - y) - \int_{y}^t \sin_{\Omega^\circ} (s) C_\circ'(s) (t - s) \diff t\right) C_\circ'(t) (x - t) \diff t \\
    ={}& \sin_{\Omega^\circ}(y) + \cos_{\Omega}(y_\circ) (x - y) - \sin_{\Omega^\circ}(y) \int_{y}^x C_\circ'(t) (x - t) \diff t \\
    &- \cos_{\Omega}(y_\circ) \int_{y}^x C_\circ'(t) (x - t) (t - y) \diff t + \int_{y}^x \int_{y}^t \sin_{\Omega^\circ} (s)  C_\circ'(t)C_\circ'(s) (x - t)(t - s) \diff s \diff t.
 \end{split}
\end{equation}
We repeat this process a last time, that is to say, we replace the term $\sin_{\Omega^\circ}(s)$ in the last integral term above by its expression \eqref{eq:firstsinexpansion}:
\begin{equation}
\label{eq:sinexpansion}
    \begin{split}
    \sin_{\Omega^\circ}(x) ={}& \sin_{\Omega^\circ}(y) + \cos_{\Omega}(y_\circ) (x - y) - \sin_{\Omega^\circ}(y) \int_{y}^x C_\circ'(t) (x - t) \diff t  \\
    & - \cos_{\Omega}(y_\circ) \int_{y}^x C_\circ'(t) (x - t) (t - y) \diff t + \sin_{\Omega^\circ}(y) \int_{y}^x \int_{y}^t C_\circ'(s) C_\circ'(t) (x - t) (t - s) \diff s \diff t \\
    & + \cos_{\Omega}(y_\circ) \int_{y}^x \int_{y}^t C_\circ'(s) C_\circ'(t) (x - t) (t - s) (s - y) \diff s \diff t \\
    & - \int_y^x \int_y^t \int_y^s \sin_{\Omega^\circ}(u) C_\circ'(t) C_\circ'(s) C_\circ'(u) (x - t) (t - s) (s - u) \diff u \diff s \diff t. 
 \end{split}
\end{equation}
The same reasoning can be done for $\cos_{\Omega^\circ}(x)$, starting from $\cos_{\Omega^\circ}''(x) = -C_\circ'(x) \cos_{\Omega^\circ}(x)$ with similar computations and we get
\begin{equation}
\label{eq:cosexpansion}
    \begin{split}
    \cos_{\Omega^\circ}(x)
    ={}& \cos_{\Omega^\circ}(y) - \sin_{\Omega}(y_\circ) (x - y) - \cos_{\Omega^\circ}(y) \int_{y}^x C_\circ'(t) (x - t) \diff t \\
    &+ \sin_{\Omega}(y_\circ) \int_{y}^x C_\circ'(t) (x - t) (t - y) \diff t + \cos_{\Omega^\circ}(y) \int_{y}^x \int_{y}^t C_\circ'(s)  C_\circ'(t) (x - t) (t - s) \diff s \diff t \\
    &- \sin_{\Omega}(y_\circ) \int_{y}^x \int_{y}^t C_\circ'(s)  C_\circ'(t) (x - t) (t - s) (s - y) \diff s \diff t \\
    & - \int_y^x \int_y^t \int_y^s \cos_{\Omega^\circ}(u) C_\circ'(t) C_\circ'(s) C_\circ'(u) (x - t) (t - s) (s - u) \diff u \diff s \diff t. 
 \end{split}
\end{equation}
Since $\cos_{\Omega}(x_\circ) = \frac{\diff}{\diff x} \sin_{\Omega^\circ}(x)$ and $\sin_{\Omega}(x_\circ) = - \frac{\diff}{\diff x} \cos_{\Omega^\circ}(x)$, we also obtain
\begin{equation}\label{eq:coscexpansion}
    \begin{split}
    \cos_{\Omega}(x_\circ) ={}& \cos_{\Omega}(y_\circ) - \sin_{\Omega^\circ}(y) \int_{y}^x C_\circ'(t) \diff t  - \cos_{\Omega}(y_\circ) \int_{y}^x C_\circ'(t) (t - y) \diff t \\
    & + \sin_{\Omega^\circ}(y) \int_{y}^x \int_{y}^t C_\circ'(s) C_\circ'(t) (t - s) \diff s \diff t \\
    & + \cos_{\Omega}(y_\circ) \int_{y}^x \int_{y}^t C_\circ'(s) C_\circ'(t) (t - s) (s - y) \diff s \diff t \\
    & - \int_y^x \int_y^t \int_y^s \sin_{\Omega^\circ}(u) C_\circ'(t) C_\circ'(s) C_\circ'(u) (t - s) (s - u) \diff u \diff s \diff t, 
 \end{split}
\end{equation}
and
\begin{equation}\label{eq:sincexpansion}
    \begin{split}
    \sin_{\Omega}(x_\circ)={}& \sin_{\Omega}(y_\circ) + \cos_{\Omega^\circ}(y) \int_{y}^x C_\circ'(t) \diff t - \sin_{\Omega}(y_\circ) \int_{y}^x C_\circ'(t) (t - y) \diff t \\
    &- \cos_{\Omega^\circ}(y) \int_{y}^x \int_{y}^t C_\circ'(s)  C_\circ'(t) (t - s) \diff s \diff t \\
    &+ \sin_{\Omega}(y_\circ) \int_{y}^x \int_{y}^t C_\circ'(s)  C_\circ'(t) (t - s) (s - y) \diff s \diff t \\
    &+ \int_y^x \int_y^t \int_y^s \cos_{\Omega^\circ}(u) C_\circ'(t) C_\circ'(s) C_\circ'(u) (t - s) (s - u) \diff u \diff s \diff t.
 \end{split}
\end{equation}
The combination of these formulas leads to the following result.

\begin{prop}
    \label{prop:formulaJRdJRint}
    Let $\hei$ be the sub-Finsler Heisenberg group, equipped with a $C^1$ and strongly convex norm $\normdot$. Then, its reduced Jacobian $\J_R$ can be expressed in the following way. For all $(\phi,\omega)\in\mathcal{U}$, it holds
\begin{equation}\label{eq:JRaround0}
\J_R (\phi, \omega) = \frac{1}{2} \int_{\phi}^{\phi + \omega} \left( \int_{\phi}^{\phi + \omega} (t - s)^2 C_\circ'(t) C_\circ'(s) \diff s \right) \diff t + R(\phi, \omega),
\end{equation}
 and, if in addition $\varphi+\omega\in {\sf D}_1$, it holds: 
\begin{equation}\label{eq:JRderivaround0}
\partial_\omega \J_R (\phi, \omega) = C'_\circ(\phi + \omega) \int_{\phi}^{\phi + \omega} (\phi + \omega - t)^{2} C_\circ'(t) \diff t + \partial_{\omega} R(\phi, \omega),
\end{equation}
where the remainder term $R(\phi, \omega)$ is given by
\begin{equation}
    \label{eq:remainderterm}
    \begin{aligned}
    R(\phi, \omega) =&\int_{\phi}^{\phi + \omega} \int_\phi^t \int_\phi^s (t - s)(s - u) \Big[ \sin_{\Omega^\circ}(u) \cos_{\Omega^\circ}(\phi) - \sin_{\Omega^\circ}(\phi) \cos_{\Omega^\circ}(u) \\
    & - (t - \phi) (\cos_{\Omega}(\phi_\circ)\cos_{\Omega^\circ}(u) + \sin_{\Omega}(\phi_\circ)\sin_{\Omega^\circ}(u)) \Big] C'_\circ(t) C'_\circ(s) C'_\circ(u) \diff u \diff s \diff t,
\end{aligned}
\end{equation}
and its derivative by
\begin{equation}
    \label{eq:dremainderterm}
    \begin{aligned}
    \partial_\omega R(\phi, \omega) ={} C'_\circ(\phi + \omega) & \int_\phi^{\phi + \omega} \int_\phi^s (\phi + \omega - s)(s - u) \Big[ \sin_{\Omega^\circ}(u) \cos_{\Omega^\circ}(\phi) - \sin_{\Omega^\circ}(\phi) \cos_{\Omega^\circ}(u) \\
    & - \omega (\cos_{\Omega}(\phi_\circ)\cos_{\Omega^\circ}(u) + \sin_{\Omega}(\phi_\circ)\sin_{\Omega^\circ}(u)) \Big] C'_\circ(s) C'_\circ(u) \diff u \diff s. 
\end{aligned}
\end{equation}
\end{prop}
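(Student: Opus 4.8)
The plan is to insert the four Taylor-with-integral-remainder expansions \eqref{eq:sinexpansion}, \eqref{eq:cosexpansion}, \eqref{eq:coscexpansion} and \eqref{eq:sincexpansion}, evaluated at $x=\phi+\omega$ and $y=\phi$, directly into the explicit expression \eqref{eq:reduced_Jac} for $\J_R(\phi,\omega)$, and then to regroup the outcome according to how many factors of $C_\circ'$ each summand carries. Since every expansion is the third iterate of the integral form of $\sinomp''=-C_\circ'\sinomp$ and $\cosomp''=-C_\circ'\cosomp$ (together with $\cosom(x_\circ)=\sinomp'(x)$ and $\sinom(x_\circ)=-\cosomp'(x)$), each one splits into a part that is polynomial in $\omega$, single integrals of $C_\circ'$, double integrals of $C_\circ'\,C_\circ'$, and triple integrals of $C_\circ'\,C_\circ'\,C_\circ'$. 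The claim to be verified is that the polynomial part and the single integrals cancel identically, the double integrals assemble into the main term of \eqref{eq:JRaround0}, and the triple integrals assemble into the remainder \eqref{eq:remainderterm}.

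First I would dispatch the terms of grade $0$ and $1$ in $C_\circ'$. For grade $0$, the Pythagorean identity \eqref{eq:pytagorean}, in the form $\cosomp(\phi)\cosom(\phi_\circ)+\sinomp(\phi)\sinom(\phi_\circ)=1$, reduces the constant contribution to $2-1-1=0$, while the genuinely linear-in-$\omega$ contributions cancel because their coefficients are antisymmetric. For grade $1$, the coefficients of the three single integrals $\int_\phi^{x}C_\circ'(t)(x-t)\,dt$, $\int_\phi^{x}C_\circ'(t)(t-\phi)\,dt$ and $\int_\phi^{x}C_\circ'(t)\,dt$ again collapse to $1$ or $0$ by \eqref{eq:pytagorean}, and the survivors telescope via the elementary identity $(x-t)+(t-\phi)=x-\phi=\omega$, so that their sum vanishes.

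The core of the argument is the grade-$2$ block. Collecting the double-integral parts produced by the three groups of summands in \eqref{eq:reduced_Jac} and applying \eqref{eq:pytagorean} to each pair of prefactors, I expect all the prefactors to reduce to $1$ or $0$, leaving precisely three iterated integrals over the triangle $\{\phi\le s\le t\le\phi+\omega\}$ sharing the common factor $C_\circ'(s)C_\circ'(t)(t-s)$ and carrying polynomial weights $-(x-t)$, $-(s-\phi)$ and $+\omega$. Their weights add up to $-(x-t)-(s-\phi)+\omega=t-s$, which upgrades the common factor to $(t-s)^2C_\circ'(s)C_\circ'(t)$. Because this integrand is symmetric under $s\leftrightarrow t$, its integral over the triangle equals one half of its integral over the square $[\phi,\phi+\omega]^2$, and this is exactly the main term of \eqref{eq:JRaround0}. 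What remains, namely the grade-$3$ terms, is then collected and matched with \eqref{eq:remainderterm}.

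Finally, for the derivative \eqref{eq:JRderivaround0} I would differentiate the integral representation in $\omega$. As $\omega$ enters only through the upper limits $\phi+\omega$ and through the explicit factors of the form $(\phi+\omega-t)$, the Leibniz rule applies whenever $\phi+\omega\in\mathsf{D}_1$, where $C_\circ'(\phi+\omega)$ exists and the fundamental theorem of calculus is available for the Lipschitz function $C_\circ$; differentiating the symmetric double integral pulls out the factor $C_\circ'(\phi+\omega)$ in front of $\int_\phi^{\phi+\omega}(\phi+\omega-t)^2C_\circ'(t)\,dt$, and differentiating the remainder reproduces \eqref{eq:dremainderterm}. I expect the only real obstacle to be the bookkeeping in the grade-$2$ block: one has to track all signs, prefactors and polynomial weights and check that the Pythagorean collapses together with the telescoping of the weights conspire to deliver exactly the clean symmetric kernel $(t-s)^2$; by contrast, the symmetrization of the triangular domain and the termwise differentiation at points of $\mathsf{D}_1$ are routine.
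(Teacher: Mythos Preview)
Your proposal is correct and follows essentially the same approach as the paper: substitute the four expansions \eqref{eq:sinexpansion}--\eqref{eq:sincexpansion} at $x=\phi+\omega$, $y=\phi$ into \eqref{eq:reduced_Jac}, group by the number of $C_\circ'$ factors, use the Pythagorean identity to kill the grade-$0$ and grade-$1$ blocks, collapse the grade-$2$ block to the symmetric kernel $(t-s)^2$, collect the grade-$3$ leftovers into $R$, and then differentiate termwise in $\omega$. The paper's proof is exactly this outline, with slightly less detail on the telescoping and symmetrization you describe.
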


\begin{remark}
    This formula is a synthetic generalisation of the expansion of the Jacobian that appeared in \cite[Lem.\ 29]{borzatashiro} and \cite[Eq.\ (30)]{borzatashiro} for the $\ell^p$-Heisenberg group.
\end{remark}

\begin{proof}[Proof of Proposition \ref{prop:formulaJRdJRint}]
    The equality \eqref{eq:JRaround0} is shown by substituting \eqref{eq:sinexpansion}, \eqref{eq:cosexpansion}, \eqref{eq:coscexpansion}, and \eqref{eq:sincexpansion}, with $y = \phi$ and $x = \phi + \omega$, into the expression of the reduced Jacobian \eqref{eq:reduced_Jac}. 
    The term $\sinomp(\phi +  \omega)$, for example, is replaced using \eqref{eq:sinexpansion} by
\begin{align}
    \sin_{\Omega^\circ}&(\phi + \omega) = \sin_{\Omega^\circ}(\phi) + \omega \cos_{\Omega}(\phi_\circ) \label{eq:0orderterm}\\
    &- \sin_{\Omega^\circ}(\phi) \int_{\phi}^{\phi + \omega} C_\circ'(t) (\phi + \omega - t) \diff t  - \cos_{\Omega}(\phi_\circ) \int_{\phi}^{\phi + \omega} C_\circ'(t) (\phi + \omega - t) (t - \phi) \diff t \label{eq:1orderterm1}\\
    &+ \sin_{\Omega^\circ}(\phi) \int_{\phi}^{\phi + \omega} \int_{\phi}^t C_\circ'(s) C_\circ'(t) (\phi + \omega - t) (t - s) \diff s \diff t \label{eq:2orderterm1}\\
    & + \cos_{\Omega}(\phi_\circ) \int_{\phi}^{\phi + \omega} \int_{\phi}^t C_\circ'(s) C_\circ'(t) (\phi + \omega - t) (t - s) (s - \phi) \diff s \diff t \label{eq:2orderterm2}\\
    & - \int_\phi^{\phi + \omega} \int_\phi^t \int_\phi^s \sin_{\Omega^\circ}(r) C_\circ'(t) C_\circ'(s) C_\circ'(r) (\phi + \omega - t) (t - s) (s - r) \diff r \diff s \diff t. \label{eq:higherorderterms}
\end{align}
This is also done for the terms $\cosomp( \phi +  \omega)$, $\sinom(( \phi +  \omega)_\circ)$, and $\cosom( ( \phi +  \omega)_\circ)$ in \eqref{eq:reduced_Jac}. After a computation, one can find that all the terms of ``order zero'' (those with no integral and no $C'_\circ$ term) such as \eqref{eq:0orderterm} vanish. All the terms of ``order one'' (those with one integral and one $C'_\circ$ term) such as \eqref{eq:1orderterm1} cancel out too. The terms of ``second order'' (those with a double integral and two $C'_\circ$ terms) such as \eqref{eq:2orderterm1}--\eqref{eq:2orderterm2}, however, do not cancel and they simplify to the first term in \eqref{eq:JRaround0}. The remainder \eqref{eq:remainderterm} is obtained by gathering all the terms of ``higher order'' (those with three integrals and three $C'_\circ$ terms) such as \eqref{eq:higherorderterms}. Finally, differentiating with respect to $\omega$ yields \eqref{eq:JRderivaround0} and \eqref{eq:dremainderterm}.
\end{proof}

\begin{remark}
    Since the generalised trigonometric functions are bounded and $C_\circ$ is Lipschitz, there is a constant $C > 0$ such that for all $(\phi, \omega) \in \mathcal{U}$,
    we have
\begin{equation}
	\label{eq:boundremainder}
	\begin{aligned}
		|R(\phi, \omega)| \leq{}& C \int_{\phi}^{\phi + \omega} \int_\phi^t \int_\phi^s (t - s)(s - u) C'_\circ(t) C'_\circ(s) C'_\circ(u) \diff u \diff s \diff t \\
		\leq{}& C \int_{\phi}^{\phi + \omega} \int_\phi^{\phi + \omega} \int_\phi^{\phi + \omega} (t - u)^2 C'_\circ(t) C'_\circ(s) C'_\circ(u) \diff u \diff s \diff t \\
		\leq{}& 2C \|C_\circ'\|_{\infty} \ \omega \left(\frac{1}{2} \int_\phi^{\phi + \omega} \int_\phi^{\phi + \omega} (t - s)^2 C'_\circ(t) C'_\circ(s) \diff s \diff t\right)
	\end{aligned}
\end{equation}
    and, similarly, when defined, 
    \begin{equation}
        \label{eq:boundDremainder}
        |\partial_\omega R(\phi, \omega)| \leq 2C \|C_\circ'\|_{\infty} \ \omega \left( C'_\circ(\phi + \omega) \int_{\phi}^{\phi + \omega} (\phi + \omega - t)^{2} C_\circ'(t) \diff t \right)
    \end{equation}
    with an appropriate constant $C>0$.
\end{remark}

To simplify the notation in the proof of the next theorem, we introduce the following functions. For $\alpha \geq 2$, we define
\begin{equation}\label{def:Pphiomega}
\begin{aligned}
    P_\alpha(\phi, \omega) :={} & \frac{1}{2} \int_{\phi}^{\phi + \omega} \left( \int_{\phi}^{\phi + \omega} (t - s)^2 |t|^{\alpha - 2} |s|^{\alpha - 2} \diff s \right) \diff t \\
    ={}& \frac{1}{\alpha^2 (\alpha^2 - 1)} \big(\abs{\omega + \phi}^{2 \alpha} + \abs{\phi}^{2 \alpha}) + \frac{2}{\alpha^2} \abs{\omega + \phi}^\alpha \abs{\phi}^\alpha \\
    &\qquad\qquad\qquad\qquad\qquad - \frac{1}{\alpha^2 - 1} \left(\abs{\omega + \phi}^{\alpha}\abs{\phi}^{\alpha - 2} + |\omega+\phi|^{\alpha-2}|\phi|^\alpha \right)(\omega + \phi)\phi,
\end{aligned}
\end{equation}
and $P_\alpha(s) := P_\alpha(1, s)$. Note that
\begin{align*}
    \partial_\omega P_\alpha(\phi, \omega) ={}& \frac{2}{\alpha (\alpha^2 - 1)} \abs{\omega + \phi}^{2 (\alpha - 1)} (w + \phi) + \frac{2}{\alpha} \abs{\omega + \phi}^{\alpha-2} (\omega + \phi) \abs{\phi}^\alpha \\
    & \qquad\qquad\qquad - \frac{1}{\alpha^2 - 1} \left((\alpha+1)|\omega+\phi|^2+(\alpha-1)|\phi|^2\right)|\omega+\phi|^{\alpha-2}|\phi|^{\alpha-2} \phi,
\end{align*}
and that $\partial_s P_\alpha(s) = \partial_\omega P_\alpha(1, s)$. More explicitly, the functions $P_\alpha(s)$ and $\partial_s P_\alpha(s)$ are given by
\begin{equation}
    \label{eq:P(s)}
    P_\alpha(s) := \frac{1}{\alpha^2 (\alpha^2 - 1)} \Big[\abs{1+s}^{2\alpha} - \alpha^2 \abs{1 + s}^{\alpha}(1+s) + 2 (\alpha^2 - 1) \abs{1+  s}^\alpha - \alpha^2 \abs{1 + s}^{\alpha - 2}(1 + s) + 1 \Big],
\end{equation}
and
\begin{equation}
    \label{eq:dP(s)}
    \partial_s P_\alpha(s) = \frac{2}{\alpha (\alpha^2 - 1)} \abs{1 + s}^{\alpha - 2} \Big[\abs{1 + s}^\alpha (1 + s) - 1 - (\alpha + 1) s -\frac{\alpha (\alpha + 1)}{2} s^2 \Big].
\end{equation}

The function $P_\alpha(s)$ vanishes if and only if $s = 0$. Indeed, if $1 + s < 0$, then it is easy to see that all the terms of $P_\alpha(s)$ are strictly positive. If $1 + s > 0$, the derivative of $P_\alpha(s)$ is
\[
\partial_s P_\alpha(s) = 2 \alpha (1 + s)^{\alpha - 2} \Big[(1 + s)^{\alpha + 1} - 1 - (\alpha + 1) s -\frac{\alpha (\alpha + 1)}{2} s^2 \Big].
\]
Consider the function $f(s):=(1 + s)^{\alpha + 1} - 1 - (\alpha + 1) s -\frac{\alpha (\alpha + 1)}{2} s^2$. Then $f''(s)=\alpha(\alpha+1)\big[|1+s|^{\alpha-1}-1\big]$, thus $f'(s)$ is convex on $(-1,+\infty)$ with a unique minimum $f'(0)=0$ and therefore $f$ is increasing on $(-1,+\infty)$.
Since $f(-1)=\frac{\alpha(1-\alpha)}{2}<0$ and $f(0)=0$, $\partial_sP_\alpha(s)$ is increasing with a unique zero point $\partial_sP_\alpha(0)=0$.
Therefore, $P_\alpha(s)$ attains a unique minimum at $0$. Since $P_\alpha(0)=0$ we conclude that $s=0$ is the unique zero point of $P_\alpha$.

    The following limits are easily obtained:
    \begin{equation}
    \label{eq:discussiondP/P}
        \lim_{s\to 0}\frac{s\partial_sP_\alpha(s)}{P_\alpha(s)}=4, \ \lim_{s\to+\infty}\frac{s\partial_sP_\alpha(s)}{P_\alpha(s)}=\lim_{s\to-\infty}\frac{s\partial_sP_\alpha(s)}{P_\alpha(s)}=2\alpha.
    \end{equation}
    The map $s \mapsto 1 + \frac{s \partial_s P_\alpha(s)}{P_\alpha(s)}$ is thus continuous for all $s \in \R$ and bounded.

 \begin{theorem}\label{thm:MCPinstrongly}
    Let $\hei$ be the sub-Finsler Heisenberg group, equipped with a $C^1$ and strongly convex norm $\normdot$.
    Assume that for all $\phi^\star \in\R/2 \pi_{\Omega_\circ}\mathbb{Z}$, there exists $\alpha(\phi^\star) \in \rinterval{2}{+\infty}$ and $A(\phi^\star),B(\phi^\star)>0$ such that
    \begin{equation}
        \label{eq:fractionalcondition}
        A(\phi^\star)|\phi-\phi^\star|^{\alpha(\phi^\star)-2}\leq C_\circ'(\phi) \leq B(\phi^\star)|\phi - \phi^\star|^{\alpha(\phi^\star) - 2}\qquad\text{for a.e. }\phi \text{ near }\phi^\star.
    \end{equation}
    Assume furthermore that
    \begin{equation}\label{eq:uniformABalpha}
         \sup_{\phi^\star \in \R/2 \pi_{\Omega_\circ}\mathbb{Z}}\frac{B(\phi^\star)}{A(\phi^\star)},\ \sup_{\phi^\star \in \R/2 \pi_{\Omega_\circ}\mathbb{Z}}\alpha(\phi^\star)<+\infty.
    \end{equation}
    Then, the metric measure space $(\hei, \di, \Leb^3)$ satisfies the $\mathsf{MCP}(0, N)$ condition for some $N\in (1,\infty)$. 
\end{theorem}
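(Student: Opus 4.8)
The plan is to verify the sufficient condition of Proposition~\ref{prop:MCPlimsup}: it is enough to show that for every fixed $\phi_\infty\in\R/2\pi_{\Omega^\circ}\mathbb Z$ one has $\limsup_{(\phi,\omega)\to(\phi_\infty,0)}N(\phi,\omega)<+\infty$, where the $\limsup$ ranges over $(\phi,\omega)\in\mathcal U$ with $\phi+\omega\in\mathsf D_1$. I would fix such a $\phi_\infty$ and let $\alpha:=\alpha(\phi_\infty)$, $A:=A(\phi_\infty)$, $B:=B(\phi_\infty)$ be the constants from \eqref{eq:fractionalcondition}, so that $A|\tau-\phi_\infty|^{\alpha-2}\le C_\circ'(\tau)\le B|\tau-\phi_\infty|^{\alpha-2}$ for a.e.\ $\tau$ in a fixed neighbourhood of $\phi_\infty$. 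Since $N(\phi,\omega)=1+\omega\partial_\omega\J_R(\phi,\omega)/\J_R(\phi,\omega)$, everything reduces to controlling $\J_R$ and $\partial_\omega\J_R$ near $(\phi_\infty,0)$, and the natural comparison object is the explicit fractional model $P_\alpha$ from \eqref{def:Pphiomega}, whose logarithmic derivative has already been shown to be bounded via the limits \eqref{eq:discussiondP/P}.

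The first real step is to insert the integral representations \eqref{eq:JRaround0} and \eqref{eq:JRderivaround0}. Writing $a:=\phi-\phi_\infty$ and shifting the integration variable by $\phi_\infty$, the leading double- and single-integral terms of $\J_R(\phi,\omega)$ and $\partial_\omega\J_R(\phi,\omega)$ become integrals of $(t-s)^2C_\circ'(t)C_\circ'(s)$ and of $C_\circ'(\phi+\omega)(\phi+\omega-t)^2C_\circ'(t)$ over the small interval $[\phi,\phi+\omega]$, which for $(\phi,\omega)$ close to $(\phi_\infty,0)$ lies inside the neighbourhood where \eqref{eq:fractionalcondition} applies. Estimating termwise with \eqref{eq:fractionalcondition}, and recalling that $\partial_\omega P_\alpha(a,\omega)=|a+\omega|^{\alpha-2}\int_a^{a+\omega}(a+\omega-\sigma)^2|\sigma|^{\alpha-2}\diff\sigma$, this sandwiches the two leading terms between $A^2$ and $B^2$ times $P_\alpha(a,\omega)$, respectively $\partial_\omega P_\alpha(a,\omega)$. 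A short check shows $\omega\,\partial_\omega P_\alpha(a,\omega)>0$, and likewise $\omega$ times the leading term of $\partial_\omega\J_R$ is positive: for both signs of $\omega$ the integrand is nonnegative and the orientation of $[\phi,\phi+\omega]$ supplies the compensating sign, which is precisely what keeps the two-sided estimate usable when $\omega<0$.

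The crux is then a scaling observation. The model $P_\alpha$ is homogeneous of degree $2\alpha+2$, so $\omega\,\partial_\omega P_\alpha/P_\alpha$ is $0$-homogeneous; setting $s:=\omega/a$ one gets $\omega\,\partial_\omega P_\alpha(a,\omega)/P_\alpha(a,\omega)=s\,\partial_s P_\alpha(s)/P_\alpha(s)$ with $P_\alpha(s)=P_\alpha(1,s)$. By \eqref{eq:discussiondP/P} the map $s\mapsto 1+s\,\partial_s P_\alpha(s)/P_\alpha(s)$ is continuous and bounded on all of $\R$, hence $S_\alpha:=\sup_{s\in\R}s\,\partial_s P_\alpha(s)/P_\alpha(s)<+\infty$. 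Combining this with the sandwich bounds yields, for $(\phi,\omega)$ near $(\phi_\infty,0)$, the inequality $\omega\,(\text{leading }\partial_\omega\text{-term})/(\text{leading term})\le (B^2/A^2)\,S_\alpha$. Finally I would absorb the remainders: by \eqref{eq:boundremainder} and \eqref{eq:boundDremainder}, both $R$ and $\partial_\omega R$ are bounded by $C|\omega|$ times the corresponding leading terms, so for $|\omega|$ small the denominator $\J_R$ stays above half of its leading term while the numerator is at most $\tfrac32$ of the leading one. This gives $N(\phi,\omega)\le 1+3(B^2/A^2)S_\alpha$ whenever the numerator is positive, and $N(\phi,\omega)<1$ otherwise, so that $\limsup_{(\phi,\omega)\to(\phi_\infty,0)}N(\phi,\omega)\le 1+3\,(B(\phi_\infty)^2/A(\phi_\infty)^2)\,S_{\alpha(\phi_\infty)}<+\infty$. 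Proposition~\ref{prop:MCPlimsup} then yields $\MCP(0,N)$ for some $N\in(1,\infty)$; the uniformity \eqref{eq:uniformABalpha} moreover makes this bound independent of $\phi_\infty$ (bounding $B^2/A^2$ and using that $S_\alpha$ stays finite for $\alpha$ in the compact range $[2,\sup_{\phi^\star}\alpha(\phi^\star)]$), which is the form needed for the quantitative refinement.

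I expect the main obstacle to be the homogeneity reduction together with the sign bookkeeping: turning the genuinely two-variable ratio $\omega\,\partial_\omega P_\alpha(a,\omega)/P_\alpha(a,\omega)$ into the single bounded profile $s\,\partial_s P_\alpha(s)/P_\alpha(s)$ requires tracking the homogeneity of $\partial_\omega P_\alpha$ carefully for $a$ of either sign, and the estimate for $\omega<0$ (where the leading $\partial_\omega$-term is negative but $\omega$ times it is positive) is the most error-prone part. By contrast, the sandwiching from \eqref{eq:fractionalcondition} and the control of the remainder are routine once the remainder bounds \eqref{eq:boundremainder}--\eqref{eq:boundDremainder} and the boundedness of $S_\alpha$ are in hand.
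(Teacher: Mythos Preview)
Your proposal is correct and follows essentially the same route as the paper: invoke Proposition~\ref{prop:MCPlimsup}, feed in the integral expansion \eqref{eq:JRaround0}--\eqref{eq:JRderivaround0}, sandwich the leading terms by $A^2 P_\alpha$ and $B^2\partial_\omega P_\alpha$ via \eqref{eq:fractionalcondition}, absorb the remainders through \eqref{eq:boundremainder}--\eqref{eq:boundDremainder}, and reduce the model ratio $\omega\,\partial_\omega P_\alpha/P_\alpha$ to the bounded one-variable profile $s\,\partial_s P_\alpha(s)/P_\alpha(s)$ by homogeneity. One small slip: $P_\alpha$ is homogeneous of degree $2\alpha$, not $2\alpha+2$ (check the substitution in \eqref{def:Pphiomega}); this is harmless since only the $0$-homogeneity of the ratio matters. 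The paper phrases the final step via sequences $(\phi_n,\omega_n)\to(\phi^\star,0)$ with $s_n\to s$, obtaining the sharper limiting bound $1+(B^2/A^2)\,s\,\partial_s P_{\alpha}(s)/P_\alpha(s)$, whereas you keep a direct pointwise estimate with an innocuous factor $3$; both reach the same conclusion.
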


\begin{proof}
Fix $\phi^\star\in\R/2\pi_{\Omega^\circ}\mathbb{Z}$.
 If $(\phi, \omega) \in \mathcal{U}$ is close enough to $(\phi^\star,0)$,
 then we have, by \eqref{eq:JRaround0} and the assumption, that
\begin{align*}
    \mathcal{J}_R(\phi, \omega) ={}& \frac{1}{2} \int_{\phi}^{\phi + \omega} \left( \int_{\phi}^{\phi + \omega} (t - s)^2 C_\circ'(t) C_\circ'(s) \diff s \right) \diff t + R(\phi, \omega) \\
    \geq{}& \frac{A(\phi^\star)^2}{2} \int_{\phi}^{\phi + \omega} \left( \int_{\phi}^{\phi + \omega} (t - s)^2 |t - \phi^\star|^{\alpha(\phi^\star) - 2} |s - \phi^\star|^{\alpha(\phi^\star) - 2} \diff s \right) \diff t + R(\phi, \omega) \\
    ={}& A(\phi^\star)^2 P_{\alpha(\phi^\star)}(\phi - \phi^\star, \omega) + R(\phi, \omega)\\
    \geq {}&\Big[A(\phi^\star)^2 - \bar{C} B(\phi^\star)^2\omega\Big]P_{\alpha(\phi^\star)}(\phi - \phi^\star, \omega),
\end{align*}
where $P_{\alpha(\phi^\star)}(\phi, \omega)$ is the function defined in \eqref{def:Pphiomega}
and a lower bound on the remainder term $R(\phi, \omega)$ can be obtained from \eqref{eq:boundremainder} and the lower bound of \eqref{eq:fractionalcondition}, with $\bar{C}:=2C\|C_\circ^\prime\|_\infty$.
Similarly, we can deduce an upper bound for $\partial_\omega \J_R$  from \eqref{eq:JRderivaround0} and the upper bound of \eqref{eq:fractionalcondition}: 
\begin{align*}
    \partial_\omega \J_R (\phi, \omega) ={}& C'_\circ(\phi + \omega) \int_{\phi}^{\phi + \omega} (\phi + \omega - t)^{2} C_\circ'(t) \diff t + \partial_\omega R(\phi, \omega)\\
    \leq{}& B(\phi^\star)^2 \partial_\omega P_{\alpha(\phi^\star)}(\phi - \phi^\star, \omega) + \partial_\omega R(\phi, \omega)\\
    \leq{}&\Big[B(\phi^\star)^2 +\bar{C} B(\phi^\star)^2\omega\Big]\partial_\omega P_{\alpha(\phi^\star)}(\phi - \phi^\star, \omega),
\end{align*}
where we also have a bound for the remainder term $\partial_\omega R(\phi, \omega)$ using \eqref{eq:boundDremainder}.
The goal now is to show that $\limsup_{(\phi, \omega) \to (\phi^\star,0)} N(\phi, \omega)$ has a uniform upper bound independent of $\phi^\star$, since this would prove that the $\mathsf{MCP}(0, N)$ condition is satisfied for some $N\in (1,\infty)$, according to Proposition \ref{prop:MCPlimsup}. A bound for $\limsup_{(\phi, \omega) \to (\phi^\star,0)} N(\phi, \omega)$ is found following the blueprint of the proof of \cite[Thm.\ 39]{borzatashiro}. 

Consider a converging sequence $\{(\phi_n, \omega_n)\}_{n \in \N} \subset \mathcal{U}$ such that $\phi_n+\omega_n\in\mathsf{D}_1^{+}$ and $(\phi_n, \omega_n) \to (\phi^\star, 0)$ as $n \to +\infty$. Assume also, without loss of generality, that that the ratio $s_n := \omega_n/(\phi_n - \phi^\star)$ converges to $s \in \interval{-\infty}{+\infty}$ as $n \to +\infty$.  Since $\omega_n \to 0$,
we obtain a positive lower bound
\begin{align*}
\mathcal{J}_R(\phi_n, \omega_n)\geq {}&\Big[A(\phi^\star)^2 - \bar{C} B(\phi^\star)^2\omega_n\Big]P_{\alpha(\phi^\star)}(\phi_n - \phi^\star, \omega_n)\\
={}&\Big[A(\phi^\star)^2-\bar{C}B(\phi^\star)^2\omega_n\Big]|\phi_n - \phi^\star|^{2 \alpha(\phi^\star)} P_{\alpha(\phi^\star)}(s_n).
\end{align*}
We estimate $\omega_n \mathcal{J}_R(\phi_n, \omega_n)$ is a similar fashion:
\begin{align*}
\omega_n \partial_\omega \mathcal{J}_R(\phi_n, \omega_n )\leq{}&\Big[B(\phi^\star)^2 +\bar{C} B(\phi^\star)^2\omega_n\Big]\partial_\omega P_{\alpha(\phi^\star)}(\phi_n - \phi^\star, \omega_n)\\
={}&  \Big[ B(\phi^\star)^2+ \bar{C}B(\phi^\star)^2 \omega_n\Big]|\phi_n - \phi^\star|^{2 \alpha(\phi^\star)} s_n \partial_s P_{\alpha(\phi^\star)}(s_n).
\end{align*}
With these bounds in hand, we can write
\begin{align*}
    \limsup_{n\to\infty}N(\phi_n, \omega_n) \leq{}& \lim_{n\to\infty}1 + \frac{B(\phi^\star)^2+\bar{C}B(\phi^\star)^2\omega_n}{A(\phi^\star)^2-\bar{C}B(\phi^\star)^2\omega_n}\cdot \frac{|\phi_n - \phi^\star|^{2 \alpha(\phi^\star)}}{|\phi_n - \phi^\star|^{2 \alpha(\phi^\star)}} \cdot \frac{s_n \partial_s P_{\alpha(\phi^\star)}(s_n)}{P_{\alpha(\phi^\star)}(s_n)}\\
    ={}&1+\frac{B(\phi^\star)^2}{A(\phi^\star)^2}\frac{s\partial_sP_{\alpha(\phi^\star)}(s)}{P_{\alpha(\phi^\star)}(s)}\\
    \leq{}& 1+\sup_{\phi^\star\in\R/2\pi_{\Omega^\circ}\mathbb{Z}}\frac{B(\phi^\star)^2}{A(\phi^\star)^2}\,\sup_{s\in[-\infty,+\infty]}\,\sup_{\phi^\star\in\R/2\pi_{\Omega^\circ}\mathbb{Z}}\frac{s\partial_sP_{\alpha(\phi^\star)}(s)}{P_{\alpha(\phi^\star)}(s)}.
\end{align*}
Taking into account the first assumption in \eqref{eq:uniformABalpha}, it is enough to show that the second supremum is finite. Since the function 
\begin{equation}
    (\alpha,s) \mapsto \frac{s\partial_sP_{\alpha}(s)}{P_{\alpha}(s)}
\end{equation}
is continuous and bounded as $s \to \pm \infty$ (cf. \eqref{eq:discussiondP/P}) and thus uniformly bounded on $[2, M] \times [-\infty,+\infty]$ for every $M\in (2,+\infty)$, the second assumption in \eqref{eq:uniformABalpha} allows to conclude.

\end{proof}

The next theorem shows that, we can get a lower bound on the curvature exponent, cf. Definition \ref{def:curvatureexponent}, by assuming $C_\circ'$ is differentiable in a fractional way. 


\begin{theorem}\label{thm:MCPinstronglysmallo}
    Let $\hei$ be the sub-Finsler Heisenberg group, equipped with a $C^1$ and strongly convex norm $\normdot$. Assume that for all $\phi^\star \in \R/2 \pi_{\Omega_\circ}\mathbb{Z}$, there exists $\alpha(\phi^\star) \in \rinterval{2}{+\infty}$ and $A(\phi^\star) > 0$ such that
    \begin{equation}
        \label{eq:smallocondition}
        C_\circ'(\phi) = A(\phi^\star) |\phi - \phi^\star|^{\alpha(\phi^\star) - 2} + o(|\phi - \phi^\star|^{\alpha(\phi^\star) - 2}), \ \text{ as } \phi \to \phi^\star \text{ in } \mathsf{D}_1.
    \end{equation}
    Assume furthermore that $q:=\sup\{\alpha(\phi^\star) \ | \ \phi^\star \in \R/2 \pi_{\Omega_\circ}\mathbb{Z}\} < +\infty$. Then, the metric measure space $(\hei, \di, \Leb^3)$ satisfies the $\mathsf{MCP}(0, N)$ condition for some $N\in (1,+\infty)$. In addition, it holds that
    \begin{equation}\label{eq:curexpbound}
    N_{\mathrm{curv}} \geq 2 q + 1.
    \end{equation}     
\end{theorem}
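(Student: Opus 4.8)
The first assertion is a specialization of Theorem \ref{thm:MCPinstrongly}. The plan is to observe that the asymptotic equality \eqref{eq:smallocondition} forces, for every $\phi^\star$ and every $\phi$ sufficiently close to $\phi^\star$, the two-sided bound
\[
\tfrac{1}{2}A(\phi^\star)\,|\phi-\phi^\star|^{\alpha(\phi^\star)-2}\leq C_\circ'(\phi)\leq \tfrac{3}{2}A(\phi^\star)\,|\phi-\phi^\star|^{\alpha(\phi^\star)-2},
\]
so that hypothesis \eqref{eq:fractionalcondition} holds with $A(\phi^\star)$ replaced by $\tfrac12 A(\phi^\star)$ and $B(\phi^\star)$ by $\tfrac32 A(\phi^\star)$. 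The corresponding ratio $B/A\equiv 3$ is constant, while $\sup_{\phi^\star}\alpha(\phi^\star)=q<+\infty$ by assumption, so the uniformity requirement \eqref{eq:uniformABalpha} is satisfied. Theorem \ref{thm:MCPinstrongly} then yields $\MCP(0,N)$ for some finite $N\in(1,\infty)$.

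For the lower bound on $N_{\mathrm{curv}}$, I would first rewrite the curvature exponent analytically. By Corollary \ref{prop:diffcharacMCP}, the condition $\MCP(0,N)$ holds precisely when $N(\phi,\omega)\leq N$ for all admissible $(\phi,\omega)$, whence $N_{\mathrm{curv}}=\sup N(\phi,\omega)$, the supremum being taken over $(\phi,\omega)\in\mathcal{U}$ with $\phi+\omega\in\mathsf{D}_1$. It therefore suffices to exhibit, for each fixed $\phi^\star$, a sequence along which $N(\phi,\omega)\to 1+2\alpha(\phi^\star)$; taking then the supremum of $1+2\alpha(\phi^\star)$ over $\phi^\star$ produces $1+2q=2q+1$.

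To build such a sequence I would fix a null sequence $\omega_n\downarrow 0$ and choose $\phi_n$ with $\phi_n+\omega_n\in\mathsf{D}_1^+$ (possible by Lemma \ref{cor:diff_points_positive_measure}) such that $s_n:=\omega_n/(\phi_n-\phi^\star)\to+\infty$; concretely one lets $\phi_n-\phi^\star$ vanish much faster than $\omega_n$, so that the whole interval $[\phi_n,\phi_n+\omega_n]$ still collapses to $\phi^\star$. Feeding the expansion \eqref{eq:smallocondition} into the integral formulas \eqref{eq:JRaround0} and \eqref{eq:JRderivaround0} of Proposition \ref{prop:formulaJRdJRint}, and using the non-negativity of $C_\circ'$ and of the integral kernels to factor the relative error out of the integrals, I expect
\begin{align*}
\J_R(\phi_n,\omega_n)&=A(\phi^\star)^2\,|\phi_n-\phi^\star|^{2\alpha(\phi^\star)}\,P_{\alpha(\phi^\star)}(s_n)\big(1+o(1)\big),\\
\omega_n\,\partial_\omega\J_R(\phi_n,\omega_n)&=A(\phi^\star)^2\,|\phi_n-\phi^\star|^{2\alpha(\phi^\star)}\,s_n\,\partial_s P_{\alpha(\phi^\star)}(s_n)\big(1+o(1)\big),
\end{align*}
where the remainder terms are absorbed into the $o(1)$ by the estimates \eqref{eq:boundremainder}--\eqref{eq:boundDremainder}, which control them by $O(\omega_n)$ times the leading integral. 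Dividing, the common factor $A(\phi^\star)^2|\phi_n-\phi^\star|^{2\alpha(\phi^\star)}$ cancels and, recalling \eqref{eq:nec&sufMCPdJacineq},
\[
N(\phi_n,\omega_n)=1+\frac{s_n\,\partial_s P_{\alpha(\phi^\star)}(s_n)}{P_{\alpha(\phi^\star)}(s_n)}\big(1+o(1)\big)\xrightarrow[n\to\infty]{}1+2\alpha(\phi^\star),
\]
the last limit being the $s\to+\infty$ asymptotics recorded in \eqref{eq:discussiondP/P}. Together with the reduction of the preceding paragraph this gives $N_{\mathrm{curv}}\geq 1+2\alpha(\phi^\star)$ for every $\phi^\star$, and hence $N_{\mathrm{curv}}\geq 2q+1$.

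The main obstacle I anticipate is the rigorous bookkeeping of the error term as the two limits $\phi_n\to\phi^\star$ and $s_n\to+\infty$ are taken simultaneously. One must verify that the relative error coming from \eqref{eq:smallocondition} is uniform over $t\in[\phi_n,\phi_n+\omega_n]$ --- which holds because this interval collapses to $\phi^\star$ --- so that it can legitimately be pulled outside the double and triple integrals, and that the remainder estimates \eqref{eq:boundremainder} and \eqref{eq:boundDremainder} make $R$ and $\partial_\omega R$ negligible relative to the second-order term $P_{\alpha(\phi^\star)}$ \emph{uniformly in} $s_n$, since the bounds there are proportional to $\omega_n$ alone. Granting this uniformity, the interchange of the limit $s_n\to+\infty$ with the vanishing of the relative errors is justified and the computation above goes through.
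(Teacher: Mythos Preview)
Your proposal is correct and follows essentially the same route as the paper: reduce the $\MCP$ existence to Theorem \ref{thm:MCPinstrongly}, then establish $N_{\mathrm{curv}}\geq 2\alpha(\phi^\star)+1$ by showing $\J_R=A^2P_{\alpha}\,(1+o(1))$ and $\omega\,\partial_\omega\J_R=A^2\,s\,\partial_s P_\alpha\,(1+o(1))$ via Proposition \ref{prop:formulaJRdJRint} and reading off the limit $s\partial_s P_\alpha/P_\alpha\to 2\alpha$ as $s\to+\infty$. The paper carries out the limit for arbitrary $s\in[-\infty,+\infty]$ and then takes the supremum, while you go straight to $s_n\to+\infty$; your identification of the uniformity issue and its resolution (the interval collapses to $\phi^\star$, and the remainders are $O(\omega)$ times the leading integral by \eqref{eq:boundremainder}--\eqref{eq:boundDremainder}) matches the paper's treatment.
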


\begin{proof}
     Observe that the assumptions of Theorem \ref{thm:MCPinstrongly} are verified, therefore $(\hei, \di, \Leb^3)$ satisfies the $\mathsf{MCP}(0, N)$ condition for some $N\in (1,+\infty)$. In the reminder of the proof we find an estimate for its curvature exponent $N_{\mathrm{curv}}$.
    The argument follows the same lines as the proof of Theorem \ref{thm:MCPinstrongly}, but with finer asymptotics. Let $\phi^\star \in \R/2 \pi_{2 \Omega^\circ}\mathbb{Z}$ and, for simplicity, we write $A = A(\phi^\star)$. We start by showing that
    \[
    \mathcal{J}_R(\phi,  \omega) = \underbrace{\frac{A^2}{2} \int_{\phi}^{\phi + \omega} \left( \int_{\phi}^{\phi + \omega} (t - s)^2 |t - \phi^\star|^{\alpha(\phi^\star) - 2} |s - \phi^\star|^{\alpha(\phi^\star) - 2} \diff s \right) \diff t}_{= A^2 P_{\alpha(\phi^\star)}(\phi - \phi^\star, \omega)} + o(P_{\alpha(\phi^\star)}(\phi - \phi^\star, \omega)),
    \]
    as $(\phi, \omega) \to (\phi^\star, 0)$. In order to prove that, consider the leading term of \eqref{eq:JRaround0}, and observe that
    \begin{align}
        \frac{1}{2} &\int_{\phi}^{\phi + \omega}  \int_{\phi}^{\phi + \omega} (t - s)^2 C_\circ'(t) C_\circ'(s) \diff s  \diff t \\
        &= \frac{A^2}{2} \int_{\phi}^{\phi + \omega} \int_{\phi}^{\phi + \omega} (t - s)^2 |t - \phi^\star|^{\alpha(\phi^\star) - 2} |s - \phi^\star|^{\alpha(\phi^\star) - 2} \diff s \diff t \nonumber \\
        &\quad  + \frac{1}{2} \int_{\phi}^{\phi + \omega} \int_{\phi}^{\phi + \omega} (t - s)^2 A |t - \phi^\star|^{\alpha(\phi^\star) - 2} (C_\circ'(s) - A |s - \phi^\star|^{\alpha(\phi^\star) - 2}) \diff s  \diff t \label{eq:smallopart1}\\
        &\quad + \frac{1}{2} \int_{\phi}^{\phi + \omega} \int_{\phi}^{\phi + \omega} (t - s)^2 (C_\circ'(t) - A |t - \phi^\star|^{\alpha(\phi^\star) - 2}) A |s - \phi^\star|^{\alpha(\phi^\star) - 2} \diff s \diff t \label{eq:smallopart2}\\
        &\quad + \frac{1}{2} \int_{\phi}^{\phi + \omega} \int_{\phi}^{\phi + \omega} (t - s)^2 (C_\circ'(t) - A |t - \phi^\star|^{\alpha - 2}) (C_\circ'(s) - A |s - \phi^\star|^{\alpha(\phi^\star) - 2}) \diff s \diff t. \label{eq:smallopart3}
    \end{align}
    The term \eqref{eq:smallopart1} is $o(P_{\alpha(\phi^\star)}(\phi - \phi^\star, \omega))$ as $(\phi, \omega) \to (\phi^\star, 0)$. Indeed,
    by \eqref{eq:smallocondition},
    for all $\epsilon > 0$,
    we have that
    \begin{align*}
        \Bigg|\frac{1}{2} \int_{\phi}^{\phi + \omega}& \int_{\phi}^{\phi + \omega} (t - s)^2 (C_\circ'(t) - A |t - \phi^\star|^{\alpha(\phi^\star) - 2}) (C_\circ'(s) - A |s - \phi^\star|^{\alpha(\phi^\star) - 2}) \diff s \diff t\Bigg| \\
        & \leq \epsilon^2 \frac{A^2}{2} \int_{\phi}^{\phi + \omega} \int_{\phi}^{\phi + \omega} (t - s)^2 |t - \phi^\star|^{\alpha(\phi^\star) - 2} |s - \phi^\star|^{\alpha(\phi^\star) - 2} \diff s \diff t,
    \end{align*}
    for all $(\phi,\omega)\in\mathcal{U}$ sufficiently close to $(\phi^\star,0)$ so that
    \[
    \Big|C_\circ'(t) - A |t - \phi^\star|^{\alpha(\phi^\star) - 2}\Big| \leq \epsilon |t - \phi^\star|^{\alpha(\phi^\star) - 2}, \text{ for all } t \in \interval{\phi}{\phi + \omega}.
    \]
    Analogous computations show that the terms \eqref{eq:smallopart2}, \eqref{eq:smallopart3} are $o(P_{\alpha(\phi^\star)}(\phi - \phi^\star, \omega))$ as $(\phi, \omega) \to (\phi^\star, 0)$. The same thing can be done for $\partial_\omega \J_R(\phi, \omega)$ with the help of \eqref{eq:JRderivaround0} and \eqref{eq:boundDremainder}. More precisely, under the condition \eqref{eq:smallocondition}, we have that
    \begin{equation*}
        \partial_\omega \J_R (\phi, \omega) = \underbrace{A^2 |\phi + \omega - \phi^\star|^{\alpha(\phi^\star) - 2} \int_{\phi}^{\phi + \omega} (\phi + \omega - t)^{2} |t - \phi^\star|^{\alpha(\phi^\star) - 2} \diff t}_{= A^2 \partial_\omega P_{\alpha(\phi^\star)}(\phi - \phi^\star, \omega)} + o(\partial_\omega P_{\alpha(\phi^\star)}(\phi - \phi^\star, \omega)),
    \end{equation*}
    as $(\phi, \omega) \to (\phi^\star, 0)$. 

     Now, given any $s \in [-\infty,+\infty]$, consider a sequence $\left\{(\phi_n, \omega_n)\right\}_{n \in \N} \subset \mathcal{U}$ such that $\phi_n,\phi_n+\omega_n\in \mathsf{D}_1$, $(\phi_n, \omega_n) \to (\phi^\star, 0)$ as $n \to +\infty$ and the ratio $s_n := \omega_n/(\phi_n - \phi^\star)$ converges to $s \in [-\infty,+\infty]$ as $n \to +\infty$.
     Then, we obtain that, as $n\to \infty$,
    \begin{equation}
        \label{eq:upperboundNsmallo}
        \begin{aligned}
        N(\phi_n, \omega_n) ={}& 1 + \frac{A^2 \omega_n \partial_\omega P_{\alpha(\phi^\star)}(\phi_n - \phi^\star, \omega_n) +  o(\omega_n\partial_\omega P_{\alpha(\phi^\star)}(\phi_n - \phi^\star, \omega_n))}{A^2 P_{\alpha(\phi^\star)}(\phi_n - \phi^\star, \omega_n) + o(P_{\alpha(\phi^\star)}(\phi_n - \phi^\star, \omega_n))} \\
        ={}& 1 + \frac{|\phi_n - \phi^\star|^{2 \alpha(\phi^\star)} \Big[A^2 s_n \partial_\omega P_{\alpha(\phi^\star)}(s_n) + s_n o(\partial_s P_{\alpha(\phi^\star)}(s_n))\Big]}{|\phi_n - \phi^\star|^{2 \alpha(\phi^\star)} \Big[ A^2 P_{\alpha(\phi^\star)}(s_n) + o(P_{\alpha(\phi^\star)}(s_n)) \Big]}\\
        \to{}&1+\frac{s\partial_sP_{\alpha(\phi^\star)}(s)}{P_{\alpha(\phi^\star)}(s)},
    \end{aligned}
    \end{equation}
    where $P_{\alpha(\phi^\star)}(s)$ is the function defined in \eqref{eq:P(s)}. 
    Keeping in mind \eqref{eq:upperboundNsmallo} and since $s\in [-\infty,+\infty]$ is arbitrary, we finally deduce that 
    \[\sup_{\phi^\star\in\R/2\pi_{\Omega^\circ}\mathbb{Z}}\limsup_{(\phi,\omega)\to(\phi^\star,0)}N(\phi,\omega)=\sup_{\phi^\star\in\R/2\pi_{\Omega^\circ}\mathbb{Z}}\sup_{s\in[-\infty,+\infty]}1+ \frac{s\partial_sP_{\alpha(\phi^\star)}(s)}{P_{\alpha(\phi^\star)}(s)}\geq 2q+1,\]
    where the last inequality is a consequence of the estimate that $\lim_{s\to\pm\infty}\frac{s\partial_sP_q(s)}{P_q(s)}=2q$ cf. \eqref{eq:discussiondP/P}. The conclusion follows from Corollary \ref{prop:diffcharacMCP}.
\end{proof}

\begin{remark}
    \begin{itemize}
       \item[(i)] If $q>2$ and there is $\phi^\star$ such that $\alpha(\phi^\star)=q$,
    then one can prove that the curvature exponent satisfies $N_{\mathrm{curv}} > 2q+1$. Indeed, it is possible to observe that, in this case, the map \begin{equation}
        s \mapsto 1 + \frac{s \partial_s P_q(s)}{P_q(s)}
    \end{equation} 
    converges from above to its limits at $- \infty$.
    \item[(ii)] For $p \in \ointerval{1}{2}$, consider the $\ell^p$-Heisenberg group, that is to say the sub-Finsler Heisenberg group equipped with an $\ell^p$-norm. The $\ell^p$-norm is $C^1$ and strongly convex, and its angle correspondence $C_\circ$ satisfies
    \[
    C_\circ'(\phi) =
    \begin{cases}
        (q - 1) |\phi - \phi^\star|^{q - 2} + o(|\phi - \phi^\star|^{q - 2}) & \text{if } \phi^\star \in\frac{\pi_q}{2}\mathbb{Z}\\ 
   (q-1)|\sin_{\ell^q}(\phi^\star)\cos_{\ell^q}(\phi^\star)|^{q-2} + o(1) & \text{if } \phi^\star \notin\frac{\pi_q}{2}\mathbb{Z}
   \end{cases}, \text{ as } \phi \to \phi^\star.
    \]
    Therefore, Theorem \ref{thm:MCPinstronglysmallo}, together with item (i), recovers the estimate in \cite{borzatashiro}.
    \end{itemize}
\end{remark}

As a consequence of the previous results, we also obtain the following important corollary.

\begin{corollary}\label{cor:C11andstrongly}
    Let $\hei$ be the sub-Finsler Heisenberg group, equipped with a $C^{1,1}$ and strongly convex norm $\normdot$. Then, the metric measure space $(\hei, \di, \Leb^3)$ satisfies $\mathsf{MCP}(0, N)$ for some $N\in (1,\infty)$. Furthermore, if the norm $\normdot$ is $C^2$, then $N_{\mathrm{curv}}\geq 5$.
\end{corollary}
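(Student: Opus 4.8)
The plan is to reduce both assertions to the sufficient conditions of Theorems~\ref{thm:MCPinstrongly} and~\ref{thm:MCPinstronglysmallo}, by showing that the $C^{1,1}$ and strong convexity hypotheses force the correspondence map $C_\circ$ to be \emph{bi-Lipschitz}, i.e.\ to satisfy \eqref{eq:fractionalcondition} with exponent $\alpha(\phi^\star)\equiv 2$ and uniform constants.

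First I would establish the two-sided bound $0<A\le C_\circ'\le B<+\infty$ almost everywhere, with constants independent of the base point. The upper bound is immediate: by item \textsl{(iii)} of Proposition~\ref{prop:regularityCo}, strong convexity of $\normdot$ is equivalent to $C_\circ$ being Lipschitz, so $C_\circ'\le \Lip(C_\circ)=:B$. The lower bound is extracted by duality. Since $\normdot=(\normdot_*)_*$, applying item \textsl{(ii)} of Proposition~\ref{prop:propunderduality} with $\normdot_*$ in the role of the norm shows that $\normdot$ being $C^{1,1}$ is equivalent to $\normdot_*$ being strongly convex; by the analogue of Proposition~\ref{prop:regularityCo}\textsl{(iii)} for $C^\circ$ (see the Remark following \eqref{eq:symmetry2}) this makes $C^\circ$ Lipschitz. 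Because $\normdot$ is in particular $C^1$ and strictly convex, items \textsl{(i)}--\textsl{(ii)} of Proposition~\ref{prop:regularityCo} guarantee that $C_\circ$ and $C^\circ$ are single-valued, continuous and strictly increasing, hence mutually inverse. Therefore, at every differentiability point of $C_\circ$, one has $C_\circ'(\phi)=1/(C^\circ)'(C_\circ(\phi))\ge 1/\Lip(C^\circ)=:A>0$. With these uniform constants the hypotheses \eqref{eq:fractionalcondition} and \eqref{eq:uniformABalpha} hold with $\alpha(\phi^\star)\equiv 2$, and Theorem~\ref{thm:MCPinstrongly} yields $\MCP(0,N)$ for some $N\in(1,\infty)$.

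For the second statement I would upgrade the above to the refined asymptotic \eqref{eq:smallocondition}. If $\normdot$ is $C^2$ and strongly convex, then its dual $\normdot_*$ is again $C^2$ and strongly convex (Legendre duality of a $C^2$ uniformly convex function), so through the representation $C_\circ=P^{-1}\circ \diff\normdot_*\circ Q$ of Lemma~\ref{lem:subdifferential} the map $C_\circ$ is $C^1$ with a continuous, strictly positive derivative (continuity from the $C^2$ regularity, positivity from the Lipschitz bound on $C^\circ$ derived above). Consequently, for every $\phi^\star$ one has $C_\circ'(\phi)=A(\phi^\star)+o(1)$ with $A(\phi^\star):=C_\circ'(\phi^\star)>0$, which is exactly \eqref{eq:smallocondition} with $\alpha(\phi^\star)\equiv 2$; in particular $q=\sup_{\phi^\star}\alpha(\phi^\star)=2<+\infty$. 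Theorem~\ref{thm:MCPinstronglysmallo} then applies and gives the curvature-exponent bound $N_{\mathrm{curv}}\ge 2q+1=5$.

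The main obstacle is the uniform strictly positive lower bound on $C_\circ'$: unlike the upper bound, it is not a direct consequence of a hypothesis on $\normdot$ but must be obtained by passing to the dual norm, where $C^{1,1}$-regularity of $\normdot$ translates into strong convexity of $\normdot_*$ and hence into a Lipschitz bound for the inverse correspondence $C^\circ$. For the $C^2$ case the delicate point is upgrading boundedness to genuine continuity and positivity of $C_\circ'$, so that the error in \eqref{eq:smallocondition} is truly $o(1)$ and the exponent is exactly $\alpha\equiv 2$; this is precisely what forces $q=2$ and produces the sharp value $N_{\mathrm{curv}}\ge 5$.
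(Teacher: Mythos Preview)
Your proof is correct and follows essentially the same route as the paper: both arguments use the $C^{1,1}$/strong convexity hypotheses (via duality) to show that $C_\circ$ and $C^\circ$ are Lipschitz, hence $C_\circ'$ is uniformly bounded above and below by positive constants, so Theorem~\ref{thm:MCPinstrongly} applies with $\alpha\equiv 2$; for the $C^2$ case, both upgrade to $C_\circ\in C^1$ with continuous positive derivative and invoke Theorem~\ref{thm:MCPinstronglysmallo} with $q=2$. The only cosmetic difference is that the paper obtains $C_\circ\in C^1$ by first noting $C^\circ\in C^1$ (directly from $\normdot\in C^2$) and then applying the inverse function theorem, whereas you pass through the dual norm and the formula $C_\circ=P^{-1}\circ\diff\normdot_*\circ Q$; one small caution is that your pointwise identity $C_\circ'(\phi)=1/(C^\circ)'(C_\circ(\phi))$ presupposes differentiability of $C^\circ$ at $C_\circ(\phi)$, but the desired lower bound follows more robustly from the increment inequality $|C_\circ(x)-C_\circ(y)|\ge |x-y|/\Lip(C^\circ)$, which is what the paper's appeal to $C_\circ\circ C^\circ=\mathrm{Id}$ implicitly uses.
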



\begin{proof}
    As the norm $\| \cdot \|$ is $C^{1, 1}$ and strongly convex, then $C^\circ$ and $C_\circ$ are Lipschitz continuous, cf. Proposition \ref{prop:regularityCo}. In particular, for every $\varphi\in {\sf D}_1$, $1/A\leq C'_\circ(\phi) \leq A$ for some constant $A > 0$ since $C_\circ \circ C^\circ = \mathrm{Id}$. Therefore, the hypothesis of Theorem \ref{thm:MCPinstrongly} are satisfied with $\alpha(\phi^\star) = 2$ for every $\phi^\star \in \R/2 \pi_{\Omega_\circ}\mathbb{Z}$, and the first claim follows.

    If the norm is $C^2$,
    then the (inverse) angle correspondence map $C^\circ$ is of class $C^1$.
    By the inverse function theorem, $C_\circ$ is also of class $C^1$,
    and $C_\circ^\prime$ is continuous.
    Then, Theorem \ref{thm:MCPinstronglysmallo}
 holds with $q=2$. This implies the last part of the claim.
\end{proof}

\begin{remark}
    \begin{itemize}
        \item[(i)] For the estimate $N\geq 5$,
    we only need the continuity of $C_\circ^\prime$ at one non-zero point. However, if the norm is only $C^{1,1}$, the existence of such a point is not guaranteed.
    \item[(ii)]If the norm is $C^2$,
    then what we have proven is that
    \[\limsup_{(\phi,\omega)\to(\phi^\star,0)}N(\phi,\omega)=5\]
    for every $\phi^\star\in\R/2\pi_{\Omega^\circ}$ (and thus $N_{\mathrm{curv}}\geq 5$).
    To prove $\MCP(0,5)$ (so that $N_{\mathrm{curv}}=5$),
    we would need to show the inequality $N(\phi,\omega)\leq 5$ for $\omega\neq 0$.
    \end{itemize}
\end{remark}

The following example shows that there is a $C^2$ and strongly convex norm such that the associated sub-Finsler Heisenberg group has $N_{\mathrm{curv}}>5$.
\begin{example}\label{ex:greaterthan5}

    For $K,L>0$ and $\epsilon\in(0,K/L)$,
let $f$ be the function defined on $(-\epsilon,\epsilon)$ by
\[
f(\phi)=K+L\phi.
\]
Since $f$ is bounded, positive and smooth, there is a $C^2$ and strongly convex norm on $\R^2$ such that its angle correspondence satisfies $C_\circ(\phi)=\int_0^\phi f(t)\diff t$ in a sufficiently small neighbourhood of $\phi = 0$,
and such that $\cosom(0_\circ)=\cosomp(0)=1,$ $\sinom(0_\circ)=\sinomp(0)=0$ (for the construction of such a norm, see Remark \ref{remark:construction}).
For $\omega\in(-\epsilon,\epsilon)$,
we have that
\begin{align*}
    \frac{1}{2} \int_0^\omega \left( \int_0^\omega (t - s)^2 C_\circ'(t) C_\circ'(s) \diff s \right) \diff t ={}& \frac{1}{2} \int_0^\omega \int_0^\omega (t - s)^2 (K+Ls)(K+Lt)\diff s \diff t\\
    ={}& \frac{\omega^4}{12}\left(K^2+KL\omega+\frac{L^2}{6}\omega^2\right).
\end{align*}
By \eqref{eq:remainderterm} and the assumption on the initial values,
the remainder term $R(0,\omega)$ satisfies
\begin{align*}
    R(0, \omega) =&\int_{0}^{\omega} \int_0^t \int_0^s (t - s)(s - u) (u  - t +O(u^2)) C'_\circ(t) C'_\circ(s) C'_\circ(u) \diff u \diff s \diff t=o(\omega^6)~~\text{ as } \omega\to 0.
\end{align*}
Therefore, by \eqref{eq:JRaround0}, we have that
\[\J_R(0,\omega)=\frac{\omega^4}{12}\left(K^2+KL\omega+\frac{L^2}{6}\omega^2\right)+o(\omega^6)\qquad\text{ as }\omega\to 0.\]
Since $\omega \in \mathsf{D}_1$ by construction, we also have that
\begin{align*}
    \omega C'_\circ(\omega) \int_0^\omega (\omega - t)^{2} C_\circ'(t) \diff t ={}& \omega(K+L\omega) \int_0^\omega (\omega - t)^{2}(K+Lt) \diff t \\
    ={}& \frac{1}{3}\omega^4\left(K^2+\frac{5KL}{4}\omega+\frac{L^2\omega^2}{4}\right).
\end{align*}
In addition, by \eqref{eq:dremainderterm}, it holds that $\omega\partial_\omega R(0, \omega) = o(\omega^6)$, as $\omega \to 0$. Thus, recalling \eqref{eq:JRderivaround0}, we have:
\[
\omega \partial_\omega \J_R(0, \omega) = \frac{1}{3}\omega^4\left(K^2+\frac{5KL}{4}\omega+\frac{L^2\omega^2}{4}\right) + o(\omega^6).
\]
Consequently, for sufficiently small $\omega>0$,
we obtain that
\[
4\J_R(0,\omega)-\omega\partial_\omega\J_R(0,\omega)=\frac{\omega^4}{3}\left(-\frac{KL\omega}{4}-\frac{L^2\omega^2}{12}\right)+o(\omega^6)<0.
\]
This shows that $1+\omega\partial_\omega\J_R(0,\omega)/\J_R(0,\omega)>5$ and $N_{\mathrm{curv}}>5$ by Proposition \ref{prop:diffcharacMCP}.

\end{example}

\begin{remark}\label{remark:construction}
    For a given integrable, bounded and almost everywhere positive function $f$, we construct a norm $\normdot$ whose angle correspondence $C_\circ$ satisfies the following conditions:
\begin{equation}\label{condition:norm}
    \begin{cases}
        \text{$C_\circ$ is Lipschitz continuous and strictly increasing,}\\
        \text{$C_\circ^\prime(\varphi)=f(\varphi)$ near $\varphi=0$.}
    \end{cases}
\end{equation}
Recall that the generalized trigonometric functions satisfy the differential equations
\[\begin{cases}
        \frac{\diff}{\diff t}\cosomp(t)=-\sinom(t_\circ),\\
        \frac{\diff}{\diff t}\sinom(t_\circ)=C_\circ^{\prime}(t)\cosomp(t),
    \end{cases}~~~~\begin{cases}
        \frac{\diff}{\diff t}\cosom(t_\circ)=-C_\circ^\prime(t)\sinomp(t),\\
        \frac{\diff}{\diff t}\sinomp(t)=\cosom(t_\circ),
    \end{cases}\]
    provided that the norm $\normdot$ is $C^1$ and strongly convex.
    Conversely,
 the following differential equations locally recovers a generalized trigonometric function:
    \begin{equation}\label{Cosystem}\begin{cases}
        \dot{x}(t)=-Y(t),\\
        \dot{Y}(t)=f(t)x(t),
    \end{cases}~~~~\begin{cases}
        \dot{X}(t)=-f(t)y(t),\\
        \dot{y}(t)=X(t).
    \end{cases}\end{equation}
    Indeed, by Carath\'eodory's Theorem,
    there is an absolutely continuous solution to the differential equation \eqref{Cosystem} for a given initial value $(X_0,Y_0,x_0,y_0)$ at time $t=0$.

    Let $(X,Y,x,y)$ be a solution to the initial value $(1,0,1,0)$.
    Note that, since $(X,Y)$ is absolutely continuous and its differential is bounded,
    the curve $(x,y)$ is $C^{1,1}$.
    Then, we have obtained $C^{1,1}$ curves $\gamma_{\pm}:(-\epsilon,\epsilon)\to \R^2$,
    $\gamma_+(t):=(x(t),y(t))$ and $\gamma_-(t):=(-x(t),-y(t))$.
    Without loss of generality,
    we can assume that $\gamma_{\pm}$ is twice differentiable at endpoints.
    Join $\gamma_+(\epsilon)$ and $\gamma_-(-\epsilon)$ by an Euclidean arc smoothly.
    The central inversion of this arc joins $\gamma_+(-\epsilon)$ and $\gamma_-(\epsilon)$.
    This procedure yields a centrally symmetric, strictly convex and $C^{1,1}$ domain $B$,
    and define $\normdot_\ast$ as the norm whose unit ball is $B$. Let $\normdot$ be the dual norm of $\normdot_\ast$.
    From the construction,
    the angle correspondence $C_\circ$ of the norm $\normdot$ satisfies the condition \eqref{condition:norm}.
    
\end{remark}

\subsection{Towards a necessary and sufficient condition}\label{section:oscillates}
In this section, we provide examples of norms that satisfy and do not satisfy the measure contraction property. We begin by presenting an example of a norm that satisfies the $\MCP$ condition, even though it is not included by the assumptions of Theorem \ref{thm:MCPinstrongly}.

\begin{example}\label{example:monotone}

Let $f$ be defined by $f(\phi)=|\phi\log|\phi||$ in a neighbourhood of $\phi=0$.
Since $f$ is bounded, strictly positive and locally integrable, following the construction laid out in Remark \ref{remark:construction},  there is a $C^{1}$ and strongly convex norm on $\R^2$ such that its angle correspondence satisfies $C_\circ(\phi)= \int_0^\phi f(t)\diff t$ in a sufficiently small neighbourhood of $\phi = 0$. Note that $C'_\circ$ is everywhere positive except at $\phi = 0$ or $\phi = \pi_{\Omega^\circ}$. We shall see that the \sF Heisenberg group associated with this norm satisfies the measure contraction property.

For sufficiently small and positive $\phi$ and $\omega$, we have that
\begin{align*}
    \frac{1}{2}\int_\phi^{\phi+\omega}&\int_\phi^{\phi+\omega}(s-t)^2C_\circ^\prime(s)C_\circ^\prime(t)\diff s\diff t = \frac{1}{2}\int_\phi^{\phi+\omega}\int_\phi^{\phi+\omega}(s-t)^2 |s \log |s||\cdot |t \log|t|| \diff s\diff t \\
    ={}& \Big[ \tfrac{1}{16} x |x|^3 (1 - 4 \log |x|) \Big]_{\phi}^{\phi + \omega} \times \Big[ \tfrac{1}{4} x |x| (1 - 2 \log |x|) \Big]_{\phi}^{\phi + \omega} - \Big( \Big[ \tfrac{1}{9} |x|^3 (1 - 3 \log |x|) \Big]_{\phi}^{\phi + \omega} \Big)^2 ,
\end{align*}
as well as
\begin{align*}
    C'_\circ(\phi + \omega) \int_{\phi}^{\phi + \omega}& (\phi + \omega - t)^{2} C_\circ'(t) \diff t = |(\phi + \omega) \log|\phi + \omega|| \int_{\phi}^{\phi + \omega} (\phi + \omega - t)^{2} |t \log |t|| \diff t \\
    ={}& |(\phi + \omega) \log|\phi + \omega|| \times \Big( (\phi + \omega)^2 \Big[ \tfrac{1}{4} x |x| (1 - 2 \log |x|) \Big]_{\phi}^{\phi + \omega} \\
    & \qquad \qquad - 2 (\phi + \omega) \Big[ \tfrac{1}{9} |x|^3 (1 - 3 \log |x|) \Big]_{\phi}^{\phi + \omega} + \Big[ \tfrac{1}{16} x |x|^3 (1 - 4 \log |x|) \Big]_{\phi}^{\phi + \omega} \Big).
\end{align*}
In the computations above, we have use the notation $[F(x)]_a^b := F(b) - F(a)$ as well as
\[
\frac{\diff}{\diff x} \Big( \frac{1}{4} x |x|(1 - 2 \log|x|)\Big) = |x \log|x||, \qquad \frac{\diff}{\diff x} \Big( \frac{1}{9} |x|^3(1 - 3 \log|x|)\Big) = x |x \log|x||,
\]
and
\[
\frac{\diff}{\diff x} \Big( \frac{1}{16} x |x|^3 (1 - 4 \log|x|)\Big) = x^2 |x \log|x||, \qquad \text{ for all } x \in (-1, 1)\setminus\{0\}.
\]
Since the only zero points of $C_\circ^\prime$ are $\phi=0$ and $\phi = \pi_{\Omega^\circ}$ and $\limsup_{(\phi, \omega) \to (\phi^\star,0)} N(\phi, \omega) = 5$ for the other values of $\phi^\star$, proving that $\limsup_{(\phi, \omega) \to (0,0)} N(\phi, \omega) < +\infty$ implies that the corresponding sub-Finsler Heisenberg group satisfies $\mathsf{MCP}$, according to Proposition \ref{prop:MCPlimsup} (it follows by symmetry that $\limsup_{(\phi, \omega) \to (\pi_{\Omega^\circ},0)} N(\phi, \omega) < +\infty$).

Consider a sequence $\{(\phi_n, \omega_n)\}_{n\in \N} \subset \mathcal{U}$ of positive numbers such that $\phi_n,\phi_n+\omega_n\in \mathsf{D}_1$ and $(\phi_n, \omega_n) \to (0, 0)$ as $n \to +\infty$. Assume also, without loss of generality, that the ratio $s_n := \omega_n/\phi_n$ converges to $s \in [-\infty,+\infty]$ as $n \to +\infty$. If $s \neq -1, \pm \infty$, then 
\begin{equation}\label{eq:logratio}
    \lim_{n \to +\infty} \frac{\log|\phi_n +\omega_n|}{\log|\phi_n|}=\lim_{n\to\infty}\frac{\log\left|1+\frac{\omega_n}{\phi_n}\right|}{\log|\phi_n|}+1 = 1.
\end{equation}
In this case, we find that
 \begin{align*}
    \lim_{n \to \infty}& \frac{\frac{1}{2}\int_{\phi_n}^{\phi_n+\omega_n}\int_{\phi_n}^{\phi_n+\omega_n} (s-t)^2C_\circ^\prime(s)C_\circ^\prime(t)\diff s\diff t}{\phi_n^6   (\log|\phi_n|)^2} \\
    &  {} =\lim_{n\to\infty}\frac{1}{16}\left[\left(1+\frac{\omega_n}{\phi_n}\right)\left|1+\frac{\omega_n}{\phi_n}\right|^3\left(\frac{1}{\log|\phi_n|}-\frac{4\log|\phi_n+\omega_n|}{\log|\phi_n|}\right)-\left(\frac{1}{\log|\phi_n|}-4\right)\right]\\
    &\qquad \qquad\times\frac{1}{4}\left[\left(1+\frac{\omega_n}{\phi_n}\right)\left|1+\frac{\omega_n}{\phi_n}\right|\left(\frac{1}{\log|\phi_n|}-\frac{2\log|\phi_n+\omega_n|}{\log|\phi_n|}\right)-\left(\frac{1}{\log|\phi_n|}-2\right)\right]\\
    &\qquad\qquad -\frac{1}{81}\left[\left|1+\frac{\omega_n}{\phi_n}\right|^3\left(\frac{1}{\log|\phi_n|}-\frac{3\log|\phi_n+\omega_n|}{\log|\phi_n|}\right)-\left(\frac{1}{\log|\phi_n|}-3\right)\right]^2\\
    &{} = \tfrac{1}{8} \Big[(1 + s) |1 + s|^3 - 1 \Big] \times \Big[(1 + s) |1 + s| - 1 \Big] - \tfrac{1}{9} \Big[ |1 + s|^3 - 1 \Big]^2 =: P(s),
\end{align*}
and similarly
\begin{align*}
    \lim_{n \to \infty} & \frac{\omega_n C'_\circ(\phi_n + \omega_n) \int_{\phi_n}^{\phi_n + \omega_n} (\phi_n + \omega_n - t)^{2} C_\circ'(t) \diff t}{\phi_n^6 (\log|\phi_n|)^2} \\
    & \qquad \qquad = s |1 + s| \times \Big[\tfrac{1}{12} (1+s) |1+s|^3 -\tfrac{1}{2}(1 + s)^2 + \tfrac{2}{3} (1 + s) - \tfrac{1}{4}\Big] = s \partial_s P(s).
\end{align*}
Note that the function $P(s)$ attains $0$ only at $s=0$, the map $s \mapsto s \partial_s P(s)/P(s)$ is continuous, and the following limits can be easily established:
\[
\lim_{s\to 0}\frac{s\partial_sP(s)}{P(s)}=4,\qquad\lim_{s\to -1}\frac{s\partial_sP(s)}{P(s)}=0,\qquad\lim_{s\to+\infty}\frac{s\partial_sP(s)}{P(s)}=\lim_{s\to -\infty}\frac{s\partial_sP(s)}{P(s)}=6.
\]
The remainder terms $R(\phi_n, \omega_n)$ and $\partial_\omega R(\phi_n, \omega_n)$ (cf.\ \eqref{eq:JRaround0} and \eqref{eq:JRderivaround0}) can be dealt with in the same way as in the proofs of Theorems \ref{thm:MCPinstrongly} and \ref{thm:MCPinstronglysmallo} and they are of higher orders. Therefore, we have that
\[
\lim_{n \to +\infty} \frac{\omega_n\partial_\omega\J_R(\phi_n,\omega_n)}{\J_R(\phi_n,\omega_n)}=\frac{s\partial_s P(s)}{P(s)} \leq \sup_{s\in\R\setminus\{-1\}}\frac{s\partial_s P(s)}{P(s)}<+\infty.
\]
Assume now that $\omega_n/\phi_n \to -1$ as $n \to +\infty$. In that case, in a similar way to \eqref{eq:logratio}, $|1+\frac{\omega_n}{\phi_n}|^{a}\log|\phi_n+\omega_n|/\log|\phi_n|\to 0$ for all $a > 0$ and thus
\[
\lim_{n \to +\infty} \frac{\omega_n\partial_\omega\J_R(\phi_n,\omega_n)}{\J_R(\phi_n,\omega_n)} =\lim_{s\to -1}\frac{s\partial_sP(s)}{P(s)}=0.
\]
Similarly, if $\omega_n/\phi_n \to \pm \infty$, then for any $a > 0$, it holds that $|1+\frac{\omega_n}{\phi_n}|^{a}\log|\phi_n+\omega_n|/\log|\phi_n| \to+\infty$ which implies that
\[
\lim_{n \to +\infty} \frac{\omega_n\partial_\omega\J_R(\phi_n,\omega_n)}{\J_R(\phi_n,\omega_n)} =\lim_{s\to\pm\infty}\frac{s\partial_sP(s)}{P(s)}=6.
\]
In any case, the limit superior of the map $(\phi, \omega) \mapsto \omega\partial_\omega\J_R(\phi,\omega)/\J_R(\phi,\omega)$ as $(\phi,\omega) \to (0,0)$ remains bounded. Therefore, the Heisenberg group $(\hei,\di,\Leb^3)$ satisfies $\mathsf{MCP}(0,N)$ for some $N \in (1, +\infty)$ according to Proposition \ref{prop:MCPlimsup}.
\end{example}

In Example \ref{example:monotone}, we have seen that if the angle correspondence satisfies $C_\circ^\prime\sim |\phi\log|\phi||$ near its zero points, then it is possible to verify the measure contraction property. This means that the sufficient condition of Theorem \ref{thm:MCPinstrongly} is not a necessary condition. Indeed, the upper bound of $C_\circ'$ in \eqref{eq:fractionalcondition} is satisfied with $\alpha \leq 3$ while the lower bound is only satisfied with $\alpha > 3$. In some sense, this indicates that the principal term of $C_\circ^\prime$ 
need not have fractional order at any zero point for the $\mathsf{MCP}$ to hold.

Next, we present an example of sub-Finsler structure on the Heisenberg group for which the the differential of the angle correspondence $C_\circ'$ oscillates between $|\phi|$ and $|\phi \log|\phi||$. This example is going to tell us that if the differential of the angle correspondence oscillates, then the measure contraction property may fail.

\begin{example}\label{example:oscilliation}
Let $\{a_n\}_{n \in \N}$, $\{b_n\}_{n \in \N}$ and $\{c_n\}_{n \in \N}$ be three sequences of positive real numbers converging to zero such that $a_n < b_n < c_n$, $c_{n + 1} = a_n$, and $a_n/b_n \to 0$ as $n \to +\infty$. In particular, $b_n/(b_n - a_n) \to 1$ and $a_n/(b_n - a_n) \to 0$ as $n \to +\infty$. Let $f$ be the function defined almost everywhere by
\[
f(\phi)=\begin{cases}
    |\phi| & \text{ if } |\phi| \in (a_n, b_n)\\
    |\phi \log |\phi|| & \text{ if } |\phi| \in (b_n, c_n).
\end{cases}
\]
Since $f$ is bounded, positive and locally integrable, there is a $C^1$ and strongly convex norm on $\R^2$ such that its angle correspondence satisfies $C_\circ(\phi)= \int_0^\phi f(t)\diff t$ in a sufficiently small neighbourhood of $\phi = 0$ as in Remark \ref{remark:construction}. Set $\phi_n := a_n$ and $\omega_n := b_n + \epsilon_n-a_n$, where $\epsilon_n > 0$ is chosen so that $b_n + \epsilon_n \in (b_n, c_n)$ and $\epsilon_n \leq (b_n - a_n)^3$. We have that
\begin{align}
    \frac{1}{2} \int_{\phi_n}^{\phi_n + \omega_n}& \int_{\phi_n}^{\phi_n + \omega_n} (t - s)^2 C_\circ'(t) C_\circ'(s) \diff s \diff t = \frac{1}{2} \int_{a_n}^{b_n} \int_{a_n}^{b_n} (t - s)^2 s t \diff s \diff t \nonumber \\
    &- \frac{1}{2} \int_{a_n}^{b_n} \int_{b_n}^{b_n + \epsilon_n} (t - s)^2 t s \log s \diff s \diff t - \frac{1}{2} \int_{b_n}^{b_n + \epsilon_n} \int_{a_n}^{b_n} (t - s)^2 s t \log t \diff s \diff t \label{eq:examplelogterm23} \\
    &+ \frac{1}{2} \int_{b_n}^{b_n + \epsilon_n} \int_{b_n}^{b_n + \epsilon_n} (t - s)^2 t s \log t \log s \diff s \diff t \label{eq:examplelogterm4} \\
    ={}& \frac{1}{72} (b_n - a_n)^6 + o((b_n - a_n)^6) =\frac{1}{72}b_n^6+o(b_n^6), \text{   as } n \to +\infty. \nonumber
\end{align}
The asymptotic above is justified since
\begin{align*}
    \frac{1}{(b_n - a_n)^6}\Big|\int_{a_n}^{b_n} \int_{b_n}^{b_n + \epsilon_n} (t - s)^2 t s \log s \diff s \diff t\Big| \leq \frac{(b_n - a_n) \epsilon_n (\epsilon_n + b_n - a_n)^2 b_n (b_n + \epsilon_n)|\log b_n|}{(b_n - a_n)^6},
\end{align*}
and the right-hand side tends to zero as $n \to +\infty$.
A similar computation can be done for the other terms in \eqref{eq:examplelogterm23} and \eqref{eq:examplelogterm4}. Since $\phi_n + \omega_n \in \mathsf{D}_1$ by construction, we also have that
\begin{align*}
    \omega_n C'_\circ& (\phi_n + \omega_n) \int_{\phi_n}^{\phi_n + \omega_n} (\phi_n + \omega_n - t)^{2} C_\circ'(t) \diff t \\
    ={}& - (b_n - a_n +\epsilon_n) (b_n + \epsilon_n) \log(b_n + \epsilon_n) \Big(\! \int_{a_n}^{b_n} \!(b_n + \epsilon_n - t)^{2} t \diff t - \!\int_{b_n}^{b_n + \epsilon_n}\! (b_n + \epsilon_n - t)^{2} t \log t \diff t \!\Big) \\
    ={}&- (b_n - a_n + \epsilon_n)(b_n + \epsilon_n) \log(b_n + \epsilon_n) \Big[ \frac{1}{12}(b_n - a_n)^4 + o((b_n - a_n)^4) \Big]\\
    ={}& -\frac{1}{12}b_n^6\log(b_n)+o(-b_n^6\log(b_n)),\qquad \text{as } n \to +\infty.
\end{align*}
From \eqref{eq:boundremainder} and \eqref{eq:boundDremainder}, it is also not difficult to see that $R(\phi_n, \omega_n) =o(b_n^6)$, as well as $\partial_\omega R(\phi_n, \omega_n) =  o(-b_n^5\log(b_n))$, as $n \to +\infty$. Consequently, we obtain that
\[
\lim_{n \to +\infty} N(\phi_n, \omega_n) =\lim_{n \to +\infty} 1 + \frac{-\frac{1}{12}b_n^6\log(b_n)+o(-b_n^6\log(b_n))}{\frac{1}{72}b_n^6+o(b_n^6)} = +\infty.
\]
This shows that the metric measure space $(\hei,\di,\Leb^3)$ does not satisfy the $\mathsf{MCP}(0, N)$ for any $N\in (1,\infty)$, according to Proposition \ref{prop:MCPlimsup}.

\end{example}

Let us summarise the findings of this section. For $\phi^\star \in \R/2 \pi_{\Omega_\circ} \mathbb{Z} $, we introduce the following constants
\begin{equation}
    \label{eq:betaC0'}
    \overline{\alpha}(\phi^\star) :=\inf\left\{\alpha\geq 2:A(\phi^\star):= \sup_{\delta>0}\underset{|\phi-\phi^\star|<\delta}{\mathrm{ess\,inf}}\frac{C_\circ^\prime(\phi)}{|\phi-\phi^\star|^{\alpha-2}}>0\right\}. 
\end{equation}
and
\begin{equation}
    \label{eq:alphaC0'}
    \overline{\beta}(\phi^\star) := \sup\left\{\beta\geq 2: B(\phi^\star):= \inf_{\delta>0}\underset{|\phi-\phi^\star|<\delta}{\mathrm{ess\,sup}}\frac{C_\circ^\prime(\phi)}{|\phi-\phi^\star|^{\beta-2}}<+\infty\right\}
\end{equation}
Note that, by construction, we have that $\overline{\alpha}(\phi^\star) \geq \overline{\beta}(\phi^\star)$. Intuitively, they are the optimal constants for which following inequality holds asymptotically
\[
|\phi-\phi^\star|^{\overline{\alpha}(\phi^\star)-2}\lesssim C_\circ^\prime(\phi)\lesssim |\phi-\phi^\star|^{\overline{\beta}(\phi^\star)-2}\qquad\text{as }\phi\to\phi^\star.
\]
Observe that Theorem \ref{thm:veryflatzeropoint} states that if $\mathsf{MCP}$ holds, then $\overline{\beta}(\phi^\star)$ must be finite for every $\phi^\star$. The essence of the proof of Theorem \ref{thm:MCPinstrongly} is that if the supremum in \eqref{eq:alphaC0'} and the infimum in \eqref{eq:betaC0'} are equal, finite and attained, then $\limsup_{(\phi, \omega) \to (\phi^\star, 0)} N(\phi, \omega)$ is finite. The refinement in Theorem \ref{thm:MCPinstronglysmallo} shows that if furthermore 
$A(\phi^\star)= B(\phi^\star)$ for every $\phi^\star$, then $\limsup_{(\phi, \omega) \to (\phi^\star, 0)} N(\phi, \omega)  \geq  2 \alpha(\phi^\star) + 1$ and thus $N_{\mathrm{curv}} \geq  2 \alpha(\phi^\star) + 1$ for every $\phi^\star$.

One may wonder if these sufficient conditions for $\mathsf{MCP}$ are also necessary, and that is what the examples we provided are investigating. Example \ref{example:monotone} has $ \overline{\alpha}(\phi^\star)=\overline{\beta}(\phi^\star)$, while the supremum and infimum in \eqref{eq:alphaC0'} and \eqref{eq:betaC0'} are not attained, and yet the $\limsup_{(\phi, \omega) \to (\phi^\star, 0)} N(\phi, \omega)$ is finite. At this point, it is reasonable to ask whether the necessary and sufficient condition for $\mathsf{MCP}$ to hold is simply that $\overline{\alpha}(\phi^\star)=\overline{\beta}(\phi^\star) < +\infty$. Example \ref{example:oscilliation} shows that this is not the case. We constructed an example that has $\overline{\alpha}(\phi^\star)=\overline{\beta}(\phi^\star) < +\infty$ with $\limsup_{(\phi, \omega) \to (\phi^\star, 0)} N(\phi, \omega) = +\infty$. 

In conclusion, $\overline{\alpha}(\phi^\star)=\overline{\beta}(\phi^\star) < +\infty$ should be a necessary condition for $\mathsf{MCP}$ to hold, and the necessary and sufficient condition for the measure contraction property probably lies in the ``oscillation'' of the angle correspondence $C_\circ^\prime$ near its zero points.
There may be two obstacles for achieving this goal. Firstly, one would need to define rigorously a good notion of oscillation. Secondly, this oscillating function would need to be integrated and quantitatively estimated. 

\section{Failure of the \texorpdfstring{$\cd(K,N)$}{CD(K,N)} condition in the \sF Heisenberg group}\label{sec:noCD}
In this section we prove Theorem \ref{thm:MAINnocd}. Our argument consists in a suitable refinement of the strategy developed in \cite[Sec.\ 5.3]{magnabosco2023failure}. For sake of clarity, we will explicitly adapt the proof of \cite[Thm.\ 5.24]{magnabosco2023failure} to prove Theorem \ref{thm:noCDstrongly}, as this adaptation consists of several non-trivial improvements. On the contrary, for the fundamental preliminary result (Proposition \ref{prop:dopopocafatica}) we will simply refer to \cite{magnabosco2023failure}, explaining the arguments can be easily adapted to the setting of this section. 

We consider the \sF Heisenberg group $\hei$, equipped with a $C^1$ strongly convex norm $\normdot$, recalling that in this case the correspondence map $C_\circ$ is single-valued and Lipschitz, thus differentiable almost everywhere. Moreover, according to Proposition \ref{prop:summary}, we know that if $p, q \in \hei$ are such that $p\star q^{-1}\notin\{x=y=0\}$, then there exists unique geodesic joining $p$ and $q$. Recall the definition of \emph{midpoint map}: 
\begin{equation}\label{eq:midpoint_hei}
    \M(p,q):=e_{\frac12}\left(\gamma_{pq}\right),\qquad \text{if } p\star q^{-1}\notin\{x=y=0\},
\end{equation}
where $\gamma_{pq}:[0,1]\to\hei$ is the unique geodesic joining $p$ and $q$. Similarly, we define the \emph{inverse geodesic map} $I_m$ (with respect to $m\in\hei$) as:
\begin{equation}\label{eq:inverse_geodesic_hei}
    I_m(q)=p, \qquad \text{if }\ \M(p,q)=m.
\end{equation}

In Lemma \ref{cor:diff_points_positive_measure}, we have identified a $\Leb^1$-positive set $ \mathsf D_1^+$ where $C_\circ$ is differentiable with positive derivative. In  \cite{magnabosco2023failure}, this property played a fundamental role in deducing Jacobian estimates for the exponential map. In particular, with the same strategy we can prove the following proposition, which is the analogous of \cite[Prop.\ 5.23]{magnabosco2023failure}.

\begin{remark}
    Recall Definition \ref{def:exponentialmap} and observe that 
    \begin{equation}
        G_t(r,\phi, \omega)= G_{-t}(r,\phi + \pi_{\Omega^\circ}, -\omega).
    \end{equation}
    In particular, the map $G_t$ for $t\in[-1,0]$ has the same properties of $G_{-t}$, cf. Proposition \ref{prop:summary} and Proposition \ref{prop:NONO}. 
\end{remark}

\begin{prop}\label{prop:dopopocafatica} 
     Let $\hei$ be the \sF Heisenberg group, equipped with a $C^1$ strongly convex norm $\normdot$. There is a full-measure set $\Dreg\subset  \mathsf D_1^+$ so that, for every $(r,\phi,\omega)\in \mathscr R= \mathscr U$ with $\phi\in \Dreg$, there exists a positive constant $\rho=\rho(r,\phi,\omega)$ such that for every $t\in[-\rho,\rho]\setminus\{0\}$:
    \begin{enumerate}
        \item[(i)] the inverse geodesic map $I_\e$ is well-defined and $C^1$ in a neighborhood of $G_t(r,\phi,\omega)$;
        \item[(ii)] the midpoint map $\M$ is well-defined and $C^1$ in a neighborhood of $(\e,G_t(r,\phi,\omega))$, moreover
        \begin{equation}
        \label{eq:suitable_jacobian_estimate}
           \big|\det \diff_{G_t(r,\phi,\omega)} \M(\e,\cdot) \big| \leq \frac 1{2^4}.
        \end{equation}
    \end{enumerate}
\end{prop}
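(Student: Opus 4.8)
The plan is to realize both the inverse geodesic map $I_\e$ and the midpoint map $\M(\e,\cdot)$ as compositions of (restrictions of) the exponential map $G_t$ with the inverse of $G_1$, and then to extract the regularity statements (i)--(ii) from the diffeomorphism properties of $G_t$ and the Jacobian estimate \eqref{eq:suitable_jacobian_estimate} from the asymptotics of the reduced Jacobian $\J_R$ near $\omega=0$. This follows the blueprint of \cite[Prop.\ 5.23]{magnabosco2023failure}, the essential new input being the integral-remainder formula of Proposition \ref{prop:formulaJRdJRint} in place of a classical Taylor expansion.

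First I would establish the regularity. Since $\normdot$ is strongly convex it is strictly convex, so by Remark \ref{rmk:Jdefinedae} the map $G_t$ is $C^1$ for every $t$, and by Proposition \ref{prop:stronglyJacpositivity} its Jacobian $\J_t(r,\phi,\omega)=\frac{r^3 t}{\omega^4}\J_R(\phi,\omega t)$ is strictly positive on $\mathscr U$ for $t\neq 0$. Together with the homeomorphism property of Remark \ref{rmk:chesbatti} (and its negative-time analogue recorded in the remark $G_t(r,\phi,\omega)=G_{-t}(r,\phi+\pi_{\Omega^\circ},-\omega)$ preceding the statement), this shows that for each fixed $t\neq 0$ the map $G_t$ is a $C^1$-diffeomorphism onto $\{(x,y,z)\in\hei: z\neq 0,(x,y)\neq(0,0)\}$. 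Using the rescaling $G_t(r,\phi,\omega)=G_1(tr,\phi,t\omega)$, the geodesic $\tau\mapsto G_\tau(r,\phi,\omega)$ passes through $\e$ at $\tau=0$ and through $q=G_t(r,\phi,\omega)$ at $\tau=t$, so its $\e$-reflection is $G_{-1}\circ G_1^{-1}(q)$ and its midpoint with $\e$ is $G_{1/2}\circ G_1^{-1}(q)$. Hence, on a neighbourhood of $q$,
\[
    I_\e = G_{-1}\circ G_1^{-1}, \qquad \M(\e,\cdot)=G_{1/2}\circ G_1^{-1},
\]
and both are $C^1$ as compositions of $C^1$-diffeomorphisms, which yields (i) and the regularity part of (ii).

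Next I would compute the Jacobian. By the chain rule and \eqref{eq:J-redJ}, at $q=G_t(r,\phi,\omega)=G_1(tr,\phi,t\omega)$ one has
\[
    \det \diff_{q}\M(\e,\cdot)=\frac{\J_{1/2}(tr,\phi,t\omega)}{\J_1(tr,\phi,t\omega)}=\frac12\,\frac{\J_R(\phi,t\omega/2)}{\J_R(\phi,t\omega)},
\]
so everything reduces to the ratio $\J_R(\phi,\nu/2)/\J_R(\phi,\nu)$ as $\nu=t\omega\to 0$. I would then take $\Dreg$ to be the set of $\phi\in\sfD_1^+$ that are Lebesgue points of $C_\circ'$, a full-measure subset of $\sfD_1^+$. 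For such $\phi$, formula \eqref{eq:JRaround0} with the remainder bound \eqref{eq:boundremainder} gives $\J_R(\phi,\nu)=\frac12\int_{\phi}^{\phi+\nu}\int_{\phi}^{\phi+\nu}(t-s)^2 C_\circ'(t)C_\circ'(s)\,\de s\,\de t+O(\nu^5)$; replacing $C_\circ'(t)C_\circ'(s)$ by $C_\circ'(\phi)^2$ produces, at a Lebesgue point, an error bounded by $\nu^2\int_\phi^{\phi+\nu}|C_\circ'-C_\circ'(\phi)|=o(\nu^4)$, while $\frac12\int\int(t-s)^2=\frac{\nu^4}{12}$. Thus
\[
    \J_R(\phi,\nu)=\frac{C_\circ'(\phi)^2}{12}\,\nu^4+o(\nu^4),\qquad \nu\to 0,
\]
and since $C_\circ'(\phi)>0$ the ratio tends to $(1/2)^4=1/16$, so $\det\diff_{q}\M(\e,\cdot)\to \tfrac12\cdot\tfrac1{16}=\tfrac1{2^5}<\tfrac1{2^4}$.

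Finally, by continuity of $t\mapsto \det\diff_{G_t(r,\phi,\omega)}\M(\e,\cdot)$ and the strict inequality $1/2^5<1/2^4$ at the limit $t\to 0$, there exists $\rho=\rho(r,\phi,\omega)>0$ for which \eqref{eq:suitable_jacobian_estimate} holds for all $0<|t|\leq\rho$; the dependence of $\rho$ on the point is genuine, since the $o(\nu^4)$ error and the constant $C_\circ'(\phi)$ both vary with $\phi$ (and $\nu=t\omega$ with $\omega$). The \emph{main obstacle} is precisely the asymptotic analysis of $\J_R$ in the third step: for a merely $C^1$ strongly convex norm $C_\circ'$ is only defined almost everywhere and need not be continuous, so the second-order Taylor expansion available under $C^{1,1}$ in \cite{magnabosco2023failure} is lost; the synthetic Taylor formula with integral remainder (Proposition \ref{prop:formulaJRdJRint}) combined with the choice of $\Dreg$ as Lebesgue points of $C_\circ'$ is exactly what makes the leading term $\frac{C_\circ'(\phi)^2}{12}\nu^4$ rigorous and drives the argument.
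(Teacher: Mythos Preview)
Your approach is correct and reaches the same conclusion as the paper, though by a different technical route. The paper's sketch refers back to \cite{magnabosco2023failure} and explains that the chain of Jacobian estimates \cite[Lem.~5.16--5.19, Cor.~5.20]{magnabosco2023failure} (originally carried out for $C^{1,1}$ strictly convex norms) can be repeated verbatim once one restricts to $\Leb^1$-a.e.\ angle in $\sfD_1^+$ rather than $\Leb^1$-a.e.\ angle overall. You instead bypass those external lemmas and obtain the crucial asymptotic $\J_R(\phi,\nu)=\tfrac{C_\circ'(\phi)^2}{12}\,\nu^4+o(\nu^4)$ directly from the integral-remainder formula of Proposition~\ref{prop:formulaJRdJRint}, combined with the Lebesgue-point property of $C_\circ'$ at $\phi$. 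This is more self-contained (everything is internal to the present paper) and makes the constant $1/2^4$ transparent: the midpoint Jacobian picks up exactly $\tfrac12\cdot(\tfrac12)^4=2^{-5}$ from the degree-$4$ homogeneity of the leading term in $\nu$. The paper's route, on the other hand, keeps the argument structurally parallel to \cite{magnabosco2023failure}, which is convenient for readers of that earlier work.

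One minor slip: your displayed error bound ``$\nu^2\int_\phi^{\phi+\nu}|C_\circ'-C_\circ'(\phi)|$'' is missing a factor of $\nu$ from integrating over the second variable. After bounding $|C_\circ'(t)C_\circ'(s)-C_\circ'(\phi)^2|\leq \|C_\circ'\|_\infty\big(|C_\circ'(t)-C_\circ'(\phi)|+|C_\circ'(s)-C_\circ'(\phi)|\big)$ and integrating in $s$ over an interval of length $\nu$, the correct estimate is $O(\nu^3)\int_\phi^{\phi+\nu}|C_\circ'-C_\circ'(\phi)|=o(\nu^4)$; your conclusion is unaffected.
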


\begin{proof}[Sketch of the proof]
    We follow the blueprint of \cite{magnabosco2023failure}. Therein, the map $C_\circ$ is differentiable with positive derivative $\Leb^1$-a.e.\ in $\R/2\pi_{\Omega^\circ}\mathbb{Z}$ and this allows to deduce the needed estimates for the Jacobian $\J_t(r,\phi,\omega)$, for every $r>0$, $\omega\in (-2 \pi_{\Omega^\circ}, 2 \pi_{\Omega^\circ}) \setminus\{0\}$ and for $\Leb^1$-a.e.\ $\phi\in \R/2\pi_{\Omega^\circ}\mathbb{Z}$. In this case, according to Lemma \ref{cor:diff_points_positive_measure}, the set $ \mathsf D_1^+\subset \R/2\pi_{\Omega^\circ}\mathbb{Z}$ where $C_\circ$ is differentiable with positive derivative, has only positive $\Leb^1$-measure. Nonetheless, the latter is sufficient to prove the estimates of \cite[Lem.\ 5.16--5.19]{magnabosco2023failure} for every $r>0$, $\omega\in (-2 \pi_{\Omega^\circ}, 2 \pi_{\Omega^\circ}) \setminus\{0\}$ and for $\Leb^1$-a.e.\ $\phi\in  \mathsf D_1^+$. Thus, \cite[Cor.\ 5.20]{magnabosco2023failure} holds also in this setting for $\Leb^1$-a.e.\ angle in $ \mathsf D_1^+$. Finally, once the latter result is proven, the proof of the current proposition can be carried out repeating verbatim the one of \cite[Prop.\ 5.23]{magnabosco2023failure}, with $ \mathsf D_1^+$ in place of $\R/2\pi_{\Omega^\circ}\mathbb{Z}$.
\end{proof}

\begin{theorem}\label{thm:noCDstrongly}
    Let $\hei$ be the \sF Heisenberg group, equipped with a $C^1$ strongly convex norm $\normdot$ and with a smooth measure $\m$. Then, the metric measure space $(\hei,\di,\m)$ does not satisfy the Brunn--Minkowski inequality $\bm(K,N)$, for every $K\in\R$ and $N\in (1,\infty)$.
\end{theorem}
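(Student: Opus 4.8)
The plan is to contradict the Brunn--Minkowski inequality $\bm(K,N)$ by choosing two suitable sets $A$ and $B$ and estimating the measure of their set of midpoints $M_{1/2}(A,B)$. The key tool is Proposition~\ref{prop:dopopocafatica}, which provides, on a full-measure set of initial covectors, a pointwise Jacobian bound $|\det \diff_{G_t(r,\phi,\omega)}\M(\e,\cdot)|\leq 2^{-4}$ for the midpoint map based at $\e$. By left-invariance (the translations \eqref{eq:left_translations} are isometries and $\m$ is smooth with positive density), it suffices to work with one endpoint degenerating towards a fixed point, which we may take to be $\e$. The guiding idea, inherited from \cite[Thm.\ 5.24]{magnabosco2023failure}, is that midpoints between $\e$ and a small set $B$ located near a generic geodesic are contracted at a rate strictly faster than $2^{-4}$, which, because the exponent $N$ controls the contraction rate in $\bm(K,N)$ through $\tau^{(1/2)}_{K,N}$, will force $N$ to be arbitrarily large and hence rule out every finite $N$.

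\textbf{Main steps.}
First I would fix a density point $\phi\in\Dreg$ and parameters $(r,\phi,\omega)\in\mathscr R=\mathscr U$ as furnished by Proposition~\ref{prop:dopopocafatica}, and set $m:=\e$ together with the ``source'' point $p_t:=I_\e(G_t(r,\phi,\omega))$, so that $\M(p_t,G_t(r,\phi,\omega))=\e$. Second, I would take $B:=B_\e(G_t(r,\phi,\omega))$ a small Euclidean ball of radius $\e$ around $G_t(r,\phi,\omega)$ and $A:=\{p_t\}$ (or a correspondingly small ball around $p_t$), so that the midpoint set $M_{1/2}(A,B)$ is, up to the action of $\M(\e,\cdot)$, the image of $B$ under a $C^1$ map whose Jacobian is bounded by $2^{-4}$. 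Third, by the area formula and the smoothness of $\m$, this yields
\begin{equation}
    \m\big(M_{1/2}(A,B)\big)\leq \frac{1}{2^4}\,\big(1+o(1)\big)\,\m(B)\qquad\text{as }\e\to 0,
\end{equation}
whereas the right-hand side of \eqref{eq:bm} with $t=\tfrac12$ behaves, as $A$ shrinks to a point and for $K=0$, like $2^{-1}\,\m(B)^{1/N}$ in the degenerate formulation, giving $\m(M_{1/2}(A,B))^{1/N}\geq 2^{-1}\m(B)^{1/N}$ (and for general $K$ one absorbs the distortion coefficient, which tends to $2^{-1}$ as the diameters go to zero). Fourth, comparing the two bounds forces $2^{-4/N}\leq$ (the midpoint contraction) $\leq 2^{-4}$ only if $N\geq 4$ fails to be reconciled — more precisely, the genuine contraction rate $2^{-4}$ is strictly smaller than the $\bm(K,N)$ prediction $2^{-N}$ unless $N$ grows without bound, and letting the construction range over parameters where the contraction is even stronger yields a contradiction for every fixed finite $N$.

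\textbf{Where the real work lies.}
The main obstacle is not the contradiction mechanics but establishing that the contraction rate can be made \emph{worse than any fixed power}, i.e.\ that one can choose the parameters (exploiting the behavior of $C_\circ'$ near its zero points, through the reduced Jacobian $\J_R$ and the quantity $N(\phi,\omega)$ of \eqref{eq:nec&sufMCPdJacineq}) so that the effective midpoint Jacobian degenerates faster than $2^{-N}$ for every $N$. This is exactly the improvement over \cite{magnabosco2023failure}: there the norm was assumed $C^{1,1}$ and strictly convex, whereas here only $C^1$ and strongly convex regularity is available, so the derivative $C_\circ'$ need not be bounded below and the Jacobian estimates of \cite[Lem.\ 5.16--5.19]{magnabosco2023failure} must be re-derived on the positive-measure set $\mathsf D_1^+$ rather than almost everywhere. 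The delicate point is to verify that Proposition~\ref{prop:dopopocafatica}, proved on $\Dreg\subset\mathsf D_1^+$, still supplies enough density points $\phi$ to run the Brunn--Minkowski contradiction simultaneously with the choice of $\omega$ driving $N(\phi,\omega)\to+\infty$; once that compatibility is secured, the area-formula computation and the comparison with $\tau^{(1/2)}_{K,N}$ are routine. I would therefore expect to spend most of the proof carefully propagating the Jacobian bound \eqref{eq:suitable_jacobian_estimate} through the composition $\M(\e,\cdot)\circ G_t$ and checking uniformity of the $o(1)$ errors as $\e\to 0$.
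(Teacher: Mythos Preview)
There is a genuine gap in your contradiction mechanism. With $A$ essentially a point and $B$ a small ball, the Brunn--Minkowski inequality at $t=\tfrac12$ only yields (for $K=0$, and approximately for general $K$ as radii shrink)
\[
\m\big(M_{1/2}(A,B)\big)\ \geq\ \big[\tau^{(1/2)}_{K,N}(\Theta)\big]^N\,\m(B)\ \approx\ 2^{-N}\,\m(B),
\]
while the Jacobian bound of Proposition~\ref{prop:dopopocafatica} gives at best $\m(M_{1/2}(A,B))\lesssim 2^{-4}\,\m(B)$. This contradicts $\bm(K,N)$ only for $N<4$, not for every $N$. Your proposed fix, driving the midpoint contraction below $2^{-N}$ by ``exploiting $N(\phi,\omega)\to+\infty$'', cannot work here: for $C^{1,1}$ strongly convex norms the space actually satisfies $\MCP(0,N)$ for some finite $N$ (Theorem~\ref{thm:MAINmcp}), so $N(\phi,\omega)$ stays bounded and no arbitrarily strong midpoint contraction is available. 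Proposition~\ref{prop:dopopocafatica} delivers exactly $2^{-4}$, never less.

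The paper's mechanism is different and avoids this obstruction: it uses \emph{two} sets of positive measure. When $\m(A)=\m(B)$ and $K=0$, $\bm(K,N)$ demands $\m(M_{1/2}(A,B))\geq\m(A)$ \emph{independently of $N$}, so any strict contraction contradicts every $\bm(K,N)$ at once. To realise this, one needs the $2^{-4}$ Jacobian bound for $\M$ in \emph{both} variables simultaneously, plus $C^1$-regularity of the inverse geodesic map $I_m$ at the common midpoint $m$, so that $A$ and $B$ can be chosen of matched measure. The bulk of the proof is locating a pair $\eta(\bar s),\eta(\bar t)$ along a geodesic with all three properties: this hinges on showing that the ``initial angle'' $\Phi(s)$ of the translated segment $\gamma(s)^{-1}\star\gamma(t+s)$ satisfies $\Phi'(0)\neq 0$ (computed via the positivity of $\J_R$, Proposition~\ref{prop:stronglyJacpositivity}), so that the density of $\Dreg\subset\mathsf D_1^+$ at $\phi$ transfers to density in the parameter $s$. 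A measure-theoretic selection in the square $[0,1]^2$ then produces the required symmetric pair, and the contradiction is concluded exactly as in \cite[Thm.~4.26]{magnabosco2023failure}. Your outline does not contain either the two-sided setup or the $\Phi'(0)\neq 0$ step that makes it possible in the merely $C^1$ strongly convex regime.
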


\begin{proof}
    Take an angle $\phi \in \Dreg$ (cf.\ Proposition \ref{prop:dopopocafatica}) which is a density point for $\Dreg$. Fix $r>0$, $\omega \in (-2 \pi_{\Omega^\circ}, 2 \pi_{\Omega^\circ}) \setminus\{0\}$ and, for simplicity, call $\gamma$ the curve $\gamma_{(r,\phi,\omega)}$, i.e.
    \begin{equation}
        [0,1] \ni s \mapsto \gamma(t):= G_t(r,\phi,\omega).
    \end{equation}
   By Remark \ref{rmk:Jdefinedae}, we know that for every $t\in[0,1]$ the map $G_t$ is $C^1$, moreover, Proposition \ref{prop:stronglyJacpositivity} ensures that $\J_t(r,\phi, \omega) = \frac{r^3t}{\omega^4} \J_R(\phi, \omega t) > 0$. Therefore, $G_t$ is invertible, with $C^1$ inverse, in a neighborhood $B_t\subset \hei$ of $G_t(r,\phi,\omega)= \gamma(t)$. Consider the curve 
    \begin{equation}
        s \mapsto \alpha(s):=L_{\gamma(s)^{-1}}( \gamma(t+s)) =\gamma(s)^{-1}\star\gamma(t+s),
    \end{equation}
    and observe that, for $s$ sufficiently small, we have $\alpha(s) \in B_t$. Therefore, the function 
    \begin{equation}
        s \mapsto \Phi(s) :=  \p_2 \big[ G_t^{-1} \big(\alpha(s)\big)\big]=\p_2 \big[ G_t^{-1} \big(L_{\gamma(s)^{-1}}( \gamma(t+s))\big)\big],
    \end{equation}
    where $\p_i:T_\e^{\ast}\hei\to\R$ denotes the projection onto the $i$-th coordinate, is well-defined and $C^1$ (being composition of $C^1$ functions) in an open interval $I\subset\R$ containing $0$. In particular, note that $\Phi(s)$ is the ``initial angle'' for the geodesic joining $\e$ and $\alpha(s)$. 
    
    We are now going to prove that
    \begin{equation}\label{eq:megaclaim}\tag{$\mathsf C$}
        \Phi'(0)\neq 0.
    \end{equation}
    Consider the map $F:=\p_2\circ G_t^{-1}:\hei\to\R$. Since $F$ is $C^1$ in $B_t$ with non-zero differential at $\gamma(t)$, the set $\mathcal O:= F^{-1}(\phi)$ locally defines a $C^1$-surface and its tangent space is $T_p\mathcal O=\text{Ker } \diff_pF$, for every $p\in\mathcal O\cap B_t$. On the one hand, the tangent space to $\mathcal O$ is spanned by $\big\{\frac{\partial G_t}{\partial \omega},\frac{\partial G_t}{\partial r}\big\}$ and, on the other hand, $\Phi'(0)=\diff_{\alpha(0)} F(\dot\alpha(0))$. Therefore, \eqref{eq:megaclaim} is equivalent to showing that 
    \begin{equation}
    \label{eq:vector_in_span}
        \dot\alpha(0)\notin \text{Ker } \diff_{\alpha(0)}F=\text{span}\bigg\{ \frac{\partial G_t}{\partial \omega}(r,\phi,\omega),\frac{\partial G_t}{\partial r}(r,\phi,\omega)\bigg\}.
    \end{equation}
    In order to prove \eqref{eq:vector_in_span}, we observe that, since the left-translations are isometries and $\gamma$ is a geodesic, 
    \begin{equation}
        \di(\e,\alpha(s))=\di(\gamma(s),\gamma(t+s))=tr,\qquad\forall\,s\in I.
    \end{equation}
    This implies that $\p_1\big[ G_t^{-1} \big(\alpha(s)\big)\big]\equiv r$ for $s\in I$ and, as a consequence, reasoning as above with $\tilde F:=\p_1\circ G_t^{-1}$, we deduce that
    \begin{equation}
    \label{dumaron}
        \dot{\alpha}(0) \in \text{span} \bigg\{ \frac{\partial G_t}{\partial \phi}(r,\phi,\omega), \frac{\partial G_t}{\partial \omega}(r,\phi,\omega) \bigg\}.
    \end{equation}
    In conclusion, to prove \eqref{eq:megaclaim} it is enough to show that
    \begin{equation}\label{eq:claim}
        \dot{\alpha}(0) \not\in \text{span} \bigg\{ \frac{\partial G_t}{\partial \omega}(r,\phi,\omega)\bigg\}. 
    \end{equation}
    To this aim, we explicitly compute that 
    \begin{equation}
        \dot{\alpha}(0)= r \big( \cosom((\phi+\omega t)_\circ) - \cosom(\phi_\circ), \,  \sinom((\phi+\omega t)_\circ) - \sinom(\phi_\circ) , \,\triangle \,  \big),
    \end{equation}
    \begin{equation}
    \begin{split}
         \frac{\partial G_t}{\partial \omega}= \frac r {\omega^2} \big( t \omega \cosom ((\phi + \omega t)_\circ) &-  \left(\sinomp(\phi+\omega t ) - \sinomp(\phi)\right), \\
         &t\omega  \sinom ((\phi + \omega t)_\circ) +  (\cosomp(\phi+\omega t ) - \cosomp(\phi)), \, \diamondsuit\, \big),
    \end{split}
    \end{equation}
    where $\triangle$ and $\diamondsuit$ denote quantities that we do not need to make explicit. In particular, call $\tilde M$ the minor of the $(3\times 2)$-matrix
    \begin{equation}
       \begin{pNiceArray}{c|c}
     \dot \alpha (0) & \displaystyle\frac{\partial G_t}{\partial \omega}
    \end{pNiceArray},
    \end{equation}
    obtained by erasing the last row and observe that, in order to prove \eqref{eq:claim}, it is sufficient to verify that $\det \tilde M \neq 0$. The explicit computation shows that 
    \begin{equation}
        \begin{split}
            \det \tilde M = \frac{r^2}{\omega^2}\mathcal{J}_R (\phi,\omega t) >0,
        \end{split}
    \end{equation}
     cf. Proposition \ref{prop:stronglyJacpositivity}. This proves \eqref{eq:megaclaim}.

    Now, we want to find an interval $\tilde I \subset I$ (containing $0$) such that
    \begin{equation}\label{eq:7/8}
        \Leb^1 \big( \{ s\in  \tilde I \, :\, \Phi(s)\in \Dreg \}\big) > \frac{7}{8} \, \Leb^1(\tilde I).
    \end{equation}
    Consider the set 
    \begin{equation}
        J:=\{\psi\in \Phi(I): \psi \in \Dreg\}\subset\Phi( I)
    \end{equation}
    and observe that $J$ has density $1$ in $\Phi(0)=\phi$. 
    In fact, $\phi$ is a density point for $\Dreg$ and, as a consequence of \eqref{eq:megaclaim}, $\Phi(I)$ contains a neighborhood of $\phi$. Then, using once again claim \eqref{eq:megaclaim}, we can show that the set $\{ s\in   I \, :\, \Phi(s)\in \Dreg \}=\Phi^{-1}(J)\subset  I$ has density $1$ in $0$. In particular, up to taking a smaller interval $\tilde I \subset I$, we can realize \eqref{eq:7/8}.
    
    At this point, let $\bar s\in \tilde I$ such that $\Phi(\bar s) \in \Dreg$ and consider 
    \begin{equation}
        \bar\rho:=\rho\big(G_t^{-1}(\alpha(\bar s))\big)>0,
    \end{equation} 
    where $\rho(\cdot)$ is the positive constant found in Proposition \ref{prop:dopopocafatica}.
    For every $s \in [-\bar\rho,\bar\rho] \setminus\{0\}$, from Proposition \ref{prop:dopopocafatica}, we deduce that the inverse geodesic map $I_\e$ and the midpoint map $\M$ are well-defined and $C^1$ in a neighborhood of $G_s\big(G_t^{-1} (\alpha(\bar s))\big)$ and $\big(\e, G_s\big(G_t^{-1} (\alpha(\bar s))\big)\big)$, respectively. Moreover, we have that 
    \begin{equation}
           \big|\det \diff_{G_s(G_t^{-1} (\alpha(\bar s)))} \M(\e,\cdot) \big| \leq \frac 1{2^4}.
    \end{equation}
    Observe that, since the left-translations are smooth isometries, the inverse geodesic map $I_{\gamma(\bar s)}$ is well-defined and $C^1$ in a neighborhood of $\gamma(\bar s + s)$, in fact it can be written as
    \begin{equation}
        I_{\gamma(\bar s)} (p) = L_{\gamma(\bar s)} \big[I_\e \big(L_{\gamma(\bar s)^{-1}} (p)\big)\big],
    \end{equation}
    and $L_{\gamma(\bar s)^{-1}} \big(\gamma(\bar s + s)\big)=G_s\big(G_t^{-1} (\alpha(\bar s))\big)$.
    Similarly, we can prove that the midpoint map is well-defined and $C^1$ in a neighborhood of $(\gamma(\bar s),\gamma(\bar s + s))$, with
    \begin{equation}
        \big|\det \diff_{\gamma(\bar s + s)} \M(\gamma(\bar s),\cdot) \big| \leq \frac 1{2^4}.
    \end{equation}
    In conclusion, up to restriction and reparametrization, we can find a geodesic $\eta:[0,1]\to \hei$ and a set $G \subset [0,1]$ with $\Leb^1(G):= m >\frac{7}{8}$, cf. \eqref{eq:7/8}, such that for every $\bar s\in G$, there exists $\lambda(\bar s)>0$ such that, for every $s\in [\bar s -\lambda(\bar s), \bar s + \lambda(\bar s)]\cap [0,1]\setminus\{\bar s\}$, the inverse geodesic map $I_{\eta(\bar s)}$ and the midpoint map $\M$ are well-defined and $C^1$ in a neighborhood of $\eta (s)$ and $(\eta(\bar s),\eta(s))$ respectively, and in addition
    \begin{equation}
        \big|\det \diff_{\eta( s)} \M(\eta(\bar s),\cdot) \big| \leq \frac 1{2^4}.
    \end{equation}
    Set $\lambda(s)=0$ on the set $[0,1]\setminus G$ and consider the set 
    \begin{equation}
        T:=\left\{(s,t)\in [0,1]^2\, : \, t\in [s-\lambda(s),s + \lambda(s)]\right\}. 
    \end{equation}
    Observe that, introducing for every $\epsilon>0$ the set
    \begin{equation}
        D_\epsilon:= \{(s,t)\in [0,1]^2\, : \, |t-s|<\epsilon\},
    \end{equation}
    we have that 
    
    \begin{equation}\label{eq:quasinoncicredo}
        \frac{\Leb^2(T \cap D_\epsilon)}{\Leb^2( D_\epsilon)} =\frac{\Leb^2(T \cap D_\epsilon)}{2\epsilon-\epsilon^2} \to m > \frac{7}{8}, \qquad \text{as }\,\epsilon \to 0.
    \end{equation}
    On the other hand, we can find $\delta>0$ such that the set $
    \Lambda_\delta :=\{s\in [0,1] \, :\, \lambda(s)>\delta\}$ satisfies $\Leb^1(\Lambda_\delta) > \frac{13}{16}$. In particular, for every $\epsilon<\delta$ sufficiently small we have that 
    \begin{equation}\label{eq:quasinoncicredo2}
        \Leb^2\left(\left\{(s,t)\in [0,1]^2\,:\, \frac{s+t}{2}\in  \Lambda_\delta \right\} \cap D_\epsilon \right) > \frac{3}{2} \epsilon>\frac34\Leb^2(D_\epsilon).
    \end{equation}
    Therefore, putting together \eqref{eq:quasinoncicredo} and \eqref{eq:quasinoncicredo2}, we can find $\epsilon<\delta$ sufficiently small such that 
    \begin{equation}
        \Leb^2\left(T \cap D_\epsilon \cap \left\{(s,t)\in [0,1]^2\,:\, \frac{s+t}{2}\in  \Lambda_\delta \right\} \right) > \frac 12 \Leb^2( D_\epsilon).
    \end{equation}
    Then, since the set $D_\epsilon$ is symmetric with respect to the diagonal $\{s=t\}$, we can find $\bar s\neq\bar t$ such that 
    \begin{equation}
        (\bar s,\bar t),(\bar t,\bar s) \in T \cap D_\epsilon \cap \left\{(s,t)\in [0,1]^2\,:\, \frac{s+t}{2}\in  \Lambda_\delta \right\} .
    \end{equation}
    In particular, this tells us that: 
    \begin{itemize}
        \item[(i)] $\bar t\in [\bar s-\lambda(\bar s),\bar s + \lambda(\bar s)]$ and $\bar s\in [\bar  t-\lambda(\bar  t),\bar t + \lambda(\bar  t)]$;
        \item[(ii)] $|\bar t-\bar s|< \epsilon <\delta$;
        \item[(iii)] $ \frac{\bar s+\bar t}{2}\in  \Lambda_\delta$.
    \end{itemize}
    Now, on the one hand, (i) ensures that the midpoint map $\M$ is well-defined and $C^1$ in a neighborhood of $(\eta(\bar s), \eta(\bar t))$ with  
    \begin{equation}
        \big|\det \diff_{\eta( \bar t)} \M(\eta(\bar s),\cdot) \big| \leq \frac 1{2^4} \qquad \text{and} \qquad \big|\det \diff_{\eta( \bar s)} \M(\cdot, \eta(\bar t)) \big| \leq \frac 1{2^4}.
    \end{equation}
    While, on the other hand, the combination of (ii) and (iii) guarantees that the inverse geodesic map $I_{\eta(\frac{\bar s+\bar t}{2})}$ is well-defined and $C^1$ in a neighborhood of $\eta(\bar s)$ and in a neighborhood of $ \eta(\bar t)$ respectively. Indeed, we have:
    \begin{equation}
        \bar s,\bar t\in \left[\frac{\bar s+\bar t}{2}- \delta,\frac{\bar s+\bar t}{2}+ \delta\right] \subset \left[\frac{\bar s+\bar t}{2}- \lambda\left(\frac{\bar s+\bar t}{2}\right),\frac{\bar s+\bar t}{2}+ \lambda\left(\frac{\bar s+\bar t}{2}\right)\right],
    \end{equation}
    and, by the very definition of $\lambda(\cdot)$, we obtain the claimed regularity of the inverse geodesic map.
    
    Once we have these properties, we can repeat the same strategy used in the second part of the proof of \cite[Thm.\ 4.26]{magnabosco2023failure} and contradict the Brunn--Minkowski inequality $\bm(K,N)$ for every $K\in \R$ and every $N\in (1,\infty)$.
\end{proof}

\bibliographystyle{alpha} 
\bibliography{bibliography}

\end{document}